\newtheorem{theorem}{Theorem}[section]
\newtheorem*{theorem*}{Theorem}
\newtheorem{claim}[theorem]{Claim}
\newtheorem{lemma}[theorem]{Lemma}
\newtheorem{define}[theorem]{Definition}
\newtheorem*{define*}{Definition}
\newtheorem{definition}[theorem]{Definition}
\newtheorem*{definition*}{Definition}
\newtheorem{corollary}[theorem]{Corollary}
\newtheorem*{remark}{Remark}
\newtheorem{notation}[theorem]{Notation}
\newtheorem{conjecture}[theorem]{Conjecture}
\newtheorem{fact.}[theorem]{Fact}
\newcommand\F{{\mathbb{F}}}
\newcommand\K{{\mathbb{K}}}
\newcommand\Q{{\mathbb{Q}}}
\newcommand\Z{{\mathbb{Z}}}
\newcommand\N{{\mathbb{N}}}
\newcommand\C{{\mathbb{C}}}
\newcommand\perm{\textsf{perm}}
\newcommand\ord{\textsf{ord}}
\newcommand\rep{\textsf{rep}}
\newcommand\mult{\textsf{mult}}
\newcommand\red{\textsf{red}}
\newcommand\rank{\textsf{rank}}
\newcommand\fact{\textsf{fact}}
\DeclareMathOperator{\lcm}{lcm}
\DeclareMathOperator{\Gal}{Gal}  %
\newcommand{\ts}{\textsuperscript}
\newcommand{\eps}{\epsilon}
\newcommand{\ignore}[1]{}
\DeclareMathOperator{\DFT}{DFT}
\newcommand{\expref}[2]{#1 \ref{#2}}
\begin{document}

\title{Fourier and Circulant Matrices are Not Rigid} 
\author{Zeev Dvir\thanks{Research supported by NSF CAREER award DMS-1451191 and NSF grant CCF-1523816.} \and Allen Liu}
\date{}

\maketitle
\begin{abstract}
The concept of \emph{matrix rigidity}
was first introduced by Valiant in 1977.
Roughly speaking, a matrix is rigid if its rank cannot be reduced 
significantly by changing a small number of entries.  There has been
considerable interest in  the explicit construction of 
rigid matrices as Valiant showed in his MFCS'77 paper that
explicit families of rigid matrices  can be used to prove
lower bounds for arithmetic circuits.

In a surprising recent result, Alman and Williams (FOCS'19)
showed that the $2^n\times 2^n$ Walsh--Hadamard matrix,  
which was conjectured to be rigid, is actually not very rigid.
This line of work was extended by 
Dvir and Edelman (\emph{Theory of Computing}, 2019) 
to a family of matrices related to the Walsh--Hadamard matrix, 
but over finite fields.  In the present paper we take another
step in this direction and show that for any abelian group $G$ and function $f: G \rightarrow \C$,
the \emph{$G$-circulant matrix,}
given by $M_{xy} = f(x - y)$ for $x,y \in G$, is not rigid over $\C$.  Our results also hold if we replace $\C$ with a finite field $\F_q$ and require that $\gcd(q,|G|) = 1$. 
En route to our main result, we show  
that circulant and Toeplitz matrices (over finite fields or $\C$) and
Discrete Fourier Transform (DFT)  matrices (over $\C$) are
not sufficiently rigid %
to carry out Valiant's approach to
proving circuit lower bounds.  
This complements a recent result of Goldreich and Tal
(\emph{Comp. Complexity}, 2018)  
who showed that Toeplitz matrices are nontrivially rigid (but not enough
for Valiant's method).  Our work differs from previous non-rigidity
results in that those papers 
considered matrices whose underlying group of symmetries was of the form 
$\Z_p^n$ with $p$ fixed and $n$ tending to infinity, while in the families
of matrices we study, the underlying group of symmetries can be any abelian group and, in particular, the cyclic group $\Z_N$, which has very different structure. Our results also suggest natural new candidates for rigidity in the form of matrices whose symmetry groups are highly non-abelian.
\end{abstract}
\newpage

\tableofcontents
\newpage

\section{Introduction}

\subsection{Background}
A major goal in complexity theory is to prove lower bounds on the size and depth of arithmetic circuits that compute certain functions.  One specific problem that remains open despite decades of effort is to find functions for which we can show super-linear size lower bounds for circuits of logarithmic depth.  In \cite{rigidity}, Valiant introduced the notion of matrix rigidity as a possible method of proving such lower bounds for arithmetic circuits.  More precisely, over a field $\F$, an $m \times n$ matrix $M$  is said to be $(r,s)$-rigid if any $m \times n$ matrix of rank at most $r$ differs from $M$ in at least $s$ entries.  Valiant showed that for any linear function $f:\F^n \rightarrow \F^n$ that can be computed by an arithmetic circuit of size $O(n)$ and depth $O(\log n)$, the corresponding matrix can be reduced to rank $O(\frac{n}{\log \log n})$ by changing $O(n^{1+\epsilon})$ entries for any $\epsilon > 0$.  Thus, to prove a circuit lower bound for a function $f$, it suffices to lower bound the rigidity of the corresponding matrix at rank $O(\frac{n}{\log \log n})$.  We call a matrix Valiant-rigid if it is 
$\left( O(\frac{n}{\log \log n}), \Omega(n^{1+\epsilon}) \right)$-rigid
for some $\epsilon > 0$, i.e., sufficiently rigid for Valiant's method to yield circuit lower bounds. Over any infinite field, Valiant shows that almost all $n \times n$ matrices are $(r, (n-r)^2)$-rigid for any $r$, while over a finite field one can get a similar result with a logarithmic loss in the sparsity parameter.  Despite much effort,
explicit constructions of rigid matrices have remained elusive.  

Over infinite (or very large) fields, there are ways to construct  highly rigid matrices using either algebraically independent  entries or entries that have exponentially large description (see \cite{construction1, construction2, Vandermonde}).\footnote{It remains open to construct a matrix
 that is Valiant-rigid, even if we only require that the entries live
 in a number field of dimension polynomial in the size of the matrix.}  
However, these constructions are not considered to be fully explicit as they do not tell us anything about the computational complexity of the corresponding function.  Ideally, we would be able to construct rigid $(0,1)$-matrices, 
but even a construction where the entries are in a reasonably simple field
(such as the $m$\ts{th} cyclotomic field for a small value of $m$)  %
would be a major breakthrough.  The best known constructions of such
matrices are 
$(r,\Omega(\frac{n^2}{r} \log \frac{n}{r}))$-rigid
(see \cite{rigidity_construction, friedman_rigidity}).  There has also been work towards constructing semi-explicit rigid matrices.  Semi-explicit constructions which require $O(n)$ bits of randomness (instead of the usual $O(n^2)$) would still yield circuit lower bounds through Valiant's approach.\footnote{Note however, that it is easy to construct rigid matrices with $O(n^{1 + \epsilon})$ bits of randomness for any $\epsilon > 0$ (for example by taking a random matrix with at most $n^{\epsilon}$ non-zeros per row) but this is not sufficient for Valiant's approach.}  The best result in this realm (see \cite{toeplitz}) shows that random Toeplitz matrices are $(r, \frac{n^3}{r^2 \log n})$-rigid with high probability for $r \geq \Omega(\sqrt n)$.

Note that both of these bounds become trivial when $r$ is
$n/\log\log n$.       %
Other variants of semi-explicit constructions have also been studied.
\cite{npconstruction} gives a construction of 
$(2^{(\log n)^{1/4 - o(1)}}, \Omega(n^2) )$-rigid 
matrices using an NP-oracle.  This construction is not in the regime
for Valiant-rigidity.

Many well-known families of matrices, such as Hadamard matrices
(square matrices with $\pm 1$ entries whose rows are orthogonal) and DFT
(Discrete Fourier Transform)  matrices, have been conjectured to be 
Valiant-rigid \cite{survey}.  However, a recent line of work
(see \cite{old_hadamard, CLP}) shows that certain well-structured matrices
are not rigid.  Alman and Williams show in \cite{old_hadamard} that the
Walsh--Hadamard matrix, i.e., the $2^n \times 2^n$ Hadamard matrix given by 
$H_{xy} = (-1)^{\langle x , y \rangle}$ 
as $x$ and $y$ range over $\{ 0,1 \}^n$, is not Valiant-rigid over $\Q$.
Along similar lines, Dvir and Edelman show in \cite{CLP} that 
$G$-circulant matrices for the additive group of
$\F_p^n$, given by $M_{xy} = f(x-y)$ where $f: \F_p^n \rightarrow \F_p$ and $x,y$ range over $\F_p^n$, are not Valiant-rigid over $\F_p$ (where we view $p$ as fixed and $n$ goes to infinity).  The Walsh--Hadamard 
matrix and the $G$-circulant matrices for the additive group of  
$\F_p^n$ have 
the property that for any $\epsilon > 0$, there exists an $\epsilon'>0$ such that it is possible to change at most $N^{1 + \epsilon}$ entries and reduce the rank to $N^{1 - \epsilon'}$ (where $N$ denotes the size of the matrix).  The proofs of both results rely on constructing a matrix determined by a polynomial $P(x,y)$ that agrees with the given matrix on almost all entries and then arguing that the constructed matrix has low rank.   

\subsection{Our contribution} 
\begin{definition}[$G$-circulant matrices]
Let $G$ be a finite abelian group, $\F$ a field, and $f : G\to\F$
a function.  The $G$-circulant matrix $M(f)$ is defined as the
$|G|\times |G|$ matrix whose rows and columns are labeled by
the elements of $G$ and whose $(x,y)$ entry is $f(x-y)$
(for $x,y\in G$).
\end{definition}   %
In this paper we prove that for an abelian group $G$, over any finite field with characteristic relatively prime to $|G|$
and over the complex numbers, $G$-circulant matrices %
are not Valiant-rigid (Theorem \ref{allabelian_intro}). 
En route to our main result, we prove that the following
commonly studied families of matrices are not Valiant-rigid:
\begin{itemize}
\item  
DFT matrices (over $\C$).   %
    \item Circulant matrices (over finite fields and $\C$): matrices whose rows are obtained by cyclically shifting the top row.
    \item Toeplitz matrices (over finite fields and $\C$): matrices with constant diagonals.\footnote{It is not hard to see that rigidity of circulant and Toeplitz matrices is essentially the same question so for the sake of consistency with our (group theoretic)  approach we will primarily consider circulant matrices.}
\end{itemize}
\begin{remark}
For circulant and Toeplitz matrices over finite fields, we do not require any additional conditions, i.e. we do not require that the size of the matrix and the characteristic of the field are relatively prime.  See the beginning of Section \ref{sec_groupalgebra_finitefield} for a more in-depth discussion about why we require such a condition for general abelian groups.
\end{remark}
The families of matrices we consider in our paper  
have very different underlying group structure than those
considered in previous work.  Both
\cite{old_hadamard} and \cite{CLP}
analyze matrices constructed from an underlying group of the form
$\Z_p^n$ with $p$ a fixed prime number and $n$ tending to infinity.  
In this paper we study matrices whose underlying symmetry group can
be any abelian group.  In fact, the core of our proof is handling the
case when the underlying group is cyclic.

Circulant matrices are the special case of
$G$-circulants
for cyclic groups $G$.  Similarly, the DFT matrices 
are the special case of the $\DFT_G$ matrices %
(where $\DFT_G$ is the matrix given by the character table 
of an abelian group $G$) when
$G$ is cyclic.
The Walsh--Hadamard matrices are another special case of
the $\DFT_G$ matrices, where $G$ is the group $\Z_2^n$.  %
We use the fact, that 
every finite abelian group can be decomposed into 
the direct product of cyclic groups, 
to extend our results to all abelian groups, although this
extension is by no means immediate.  
While most natural constructions of matrices are highly symmetric,
our results %
show  %
that matrices that are  symmetric under abelian groups are not rigid and that perhaps we should look toward less structured matrices, or matrices whose symmetry group is non-abelian, as candidates for rigidity.   

We now move into a more technical overview of our paper. %
We begin with a few definitions. 
\begin{definition}
Define the \emph{regular-rigidity} 
$\textsf{r}^{\F}_A(r)$  
of a matrix $A$ over a field $\F$  %
as the minimum value of $s$ such that it is possible to change at most $s$ 
entries in each row and column of $A$ to obtain a matrix of rank at most $r$.
\end{definition}
When the field is clear from context, we will omit the superscript.%
The notion of regular-rigidity is weaker than the usual notion of rigidity (and is also weaker than the commonly used notion of row-rigidity) as if $A$ is an $n \times n$ matrix and $A$ is $(r,ns)$-rigid then $\textsf{r}_A(r) \geq s$.  Note that this actually makes our results stronger as we will show that the matrices we consider are not regular-rigid.

To simplify the exposition, we define a
qualitative notion  %
of non-rigidity we call QNR (quasipolynomial non-rigidity).
\begin{define}
We say that
a family $\cal A$ of matrices %
is \emph{quasipolynomially non-rigid} (QNR)  %
over a field $\F$ if there are constants $c_1,c_2 > 0$ such that for
any $\eps > 0$, all sufficiently large matrices $M \in \cal A$ satisfy
\[
\textsf{r}^{\F}_M\left( \frac{N}{\exp\left(\eps^{c_1}(\log N)^{c_2}\right)}
 \right) \leq N^{\eps} \,,
\]
where $M$ is an $N \times N$ matrix.
\end{define}  
We will prove that various families of matrices are QNR.
Note that this immediately implies that they are not Valiant-rigid.  Our main results are stated below.

\begin{theorem}\label{allabelian_intro}
Let $G$ be an abelian group.  The family of $G$-circulant matrices is QNR over $\C$.  For a finite field $\F_q$, if $\gcd(|G|,q) = 1$, then the family of $G$-circulant matrices is QNR over $\F_q$.
\end{theorem}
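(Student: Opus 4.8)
The plan is to reduce the general abelian case to the cyclic case, which we treat as the technical heart of the argument, and then lift non-rigidity along the decomposition $G \cong \Z_{n_1} \times \cdots \times \Z_{n_k}$. The starting point is the group-algebra perspective: a $G$-circulant matrix $M(f)$ is exactly the matrix of the multiplication-by-$f$ operator on the group algebra $\F[G]$ with respect to the standard basis indexed by $G$. When $\gcd(|G|,\operatorname{char}\F)=1$ (automatic over $\C$), $\F[G]$ decomposes, after possibly passing to an extension field, as a product of copies of $\F$ via the characters, which is precisely the statement that $\DFT_G$ diagonalizes every $G$-circulant. So proving non-rigidity of $G$-circulants is equivalent to understanding how much the ``evaluation on characters'' linear map can be approximated by something of low rank after a sparse correction. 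The key observation is that this evaluation map has a tensor/recursive structure coming from $G \cong \Z_{n_1}\times\cdots\times\Z_{n_k}$: a $G$-circulant is built out of $\Z_{n_k}$-circulants whose entries are themselves $(\Z_{n_1}\times\cdots\times\Z_{n_{k-1}})$-circulants, i.e. it is a block-circulant of block-circulants.

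First I would establish the base case: the family of cyclic circulant matrices ($G = \Z_N$) is QNR over $\C$ and over $\F_q$ with $\gcd(N,q)=1$. The approach here mirrors the Alman--Williams / Dvir--Edelman strategy but must be adapted to the cyclic group, whose subgroup structure is governed by divisors of $N$ rather than by vector-space dimension. Concretely, I would write the eigenvalues of $M(f)$ as $\hat f(\chi)$ over characters $\chi$, and then, to kill rank, zero out all ``Fourier coefficients'' lying outside a carefully chosen small collection of residue classes / coset structure, accepting that this forces changes to the matrix entries. The count of changed entries is then a combinatorial estimate: one needs that a polynomial-like object (a low-degree function on $\Z_N$, or a function supported on a structured set of frequencies) agrees with the original circulant on all but an $N^{\eps}$-fraction of entries per row, while the rank after truncation is $N/\exp(\eps^{c_1}(\log N)^{c_2})$. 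I expect the delicate point to be choosing the frequency set so that (a) the corresponding truncated circulant has the target low rank, and (b) the symmetric difference — the set of $x \in \Z_N$ where the truncation changes $f(x)$ — is small; over $\Z_N$ this requires a number-theoretic argument about divisors of $N$ and the distribution of the chosen frequencies, replacing the clean binomial-coefficient counting available for $\Z_p^n$.

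Next, I would handle the induction on the number of cyclic factors. Given $G = H \times \Z_m$ with $H$ abelian, a $G$-circulant $M(f)$ is an $m \times m$ block matrix, circulant in the $\Z_m$ structure, whose blocks are $H$-circulants. Diagonalizing the outer $\Z_m$-circulant structure by $\DFT_{\Z_m}$ turns $M(f)$ into a block-diagonal matrix whose $m$ diagonal blocks are $H$-circulants $M(f_j)$ for the ``slices'' $f_j$ of $f$. Apply the inductive hypothesis (QNR for $H$-circulants) to each block simultaneously; the per-row/per-column sparsity adds up in a controlled way across the $m$ blocks because the $\DFT_{\Z_m}$ conjugation permutes rows into $m$ disjoint groups, so a change of $s$ entries per row in each $H$-block yields at most $s$ changes per row in $M(f)$ after undoing the conjugation — the regular-rigidity notion is exactly what makes this bookkeeping clean, and is the reason the paper works with $\textsf{r}_M$ rather than ordinary rigidity. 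The rank bound multiplies: $m$ blocks of rank $|H|/\exp(\cdots)$ gives total rank $|G|/\exp(\cdots)$, and one checks the quasipolynomial form of the denominator is preserved under this recursion (this is where tracking the constants $c_1,c_2$ and the $\eps$-dependence matters, and where the ``by no means immediate'' remark in the introduction bites).

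The main obstacle, I expect, is twofold. The more serious one is the cyclic base case: unlike $\Z_p^n$, the group $\Z_N$ has a rigid and sparse subgroup lattice (one subgroup per divisor), so the ``low-rank from structured frequency support'' argument cannot rely on many small subgroups and must instead exploit a single chosen modulus $d \mid N$ (or a chain of them) together with a careful analysis — likely via the Chinese Remainder Theorem decomposition $\Z_N \cong \prod \Z_{p_i^{a_i}}$ and then a prime-power analysis — of how truncating to frequencies in $d\Z_N$ perturbs $f$. The secondary obstacle is the finite-field subtlety: over $\F_q$ the characters live in an extension field $\F_{q^t}$ where $t$ is the multiplicative order of $q$ mod $|G|$, and one must ensure $t$ is not too large (it is at most $|G|$, which is fine for the qualitative QNR statement) and that passing to $\F_{q^t}$ and back does not blow up the number of changed entries — rank and sparsity are preserved under field extension, so this is mostly a matter of being careful, but it is the reason the $\gcd(|G|,q)=1$ hypothesis is needed (without it $\F_q[G]$ is not semisimple and $\DFT_G$ fails to diagonalize, so the whole strategy collapses, which matches the Remark in the excerpt).
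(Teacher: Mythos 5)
There are genuine gaps in both halves of your plan. The first is the cyclic base case, which you correctly identify as the heart of the matter but for which you offer no workable mechanism. Truncating the Fourier support of $f:\Z_N\to\C$ to a union of cosets of subgroups cannot work for general $N$: when $N$ is prime, $\Z_N$ has no nontrivial subgroups at all, and the CRT decomposition $\Z_N\cong\prod\Z_{p_i^{a_i}}$ gives only a bounded number of factors, so there is no analogue of the ``many coordinates'' that drive the binomial counting over $\Z_p^n$. The paper's route is entirely different: for $N=q_1\cdots q_l$ a product of distinct primes with each $q_i-1$ smooth, it restricts $\DFT_N$ to submatrices indexed by $\F_{q_1}^{\times}\times\cdots\times\F_{q_l}^{\times}$, converts this \emph{multiplicative} structure into the additive structure of $\Z_{q_1-1}\times\cdots\times\Z_{q_l-1}$, and uses smoothness to force many repeated small prime-power factors, at which point the GWH theorem (Theorem~\ref{Hadamard}) applies (Theorem~\ref{main}). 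Arbitrary $N$ is then reached by embedding an $N\times N$ circulant into one of well-factorable size, and the existence of a well-factorable integer within a factor $(\log N)^2$ of every $N$ rests on the Baker--Harman theorem on smooth shifted primes (Theorem~\ref{analytic_NT}, Lemma~\ref{scales}). None of this is recoverable from your sketch.

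The second gap is the induction step. Your bookkeeping claim --- that conjugating by $\DFT_m\otimes I$ ``permutes rows into $m$ disjoint groups'' so that $s$ changes per row in each $H$-block yield $s$ changes per row in $M(f)$ --- is false: $\DFT_m\otimes I$ is a dense change of basis, not a permutation, and undoing it smears each sparse block correction $E_j$ across all $m^2$ blocks, destroying per-row sparsity. The usable direction of Lemma~\ref{diagonalization} is the opposite one: non-rigidity of the \emph{diagonalizing} matrix $\DFT_G$ implies non-rigidity of everything it diagonalizes, which is why the paper works with $\DFT_G=\DFT_{n_1}\otimes\cdots\otimes\DFT_{n_a}$ and the tensor-product lemma (Lemma~\ref{products}), in which sparsity multiplies while rank savings only add. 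Even with that lemma, a one-factor-at-a-time induction fails when $G$ has many tiny cyclic factors (e.g.\ $G=\Z_2^n$): a single $\DFT_2$ admits no rank reduction, so the per-factor savings is zero and the whole power $\Z_d^{m_d}$ must be attacked at once via Theorem~\ref{Hadamard}; the proof of Theorem~\ref{DFTabelian} therefore buckets the factors by size and treats repeated small factors, dyadic ranges of large factors, and a negligible remainder separately. Finally, your remark that the extension degree $t\le|G|$ over $\F_q$ ``is fine'' is not: the conjugate-summing descent (Lemma~\ref{conjugates}) multiplies the achieved rank by $t$, so $t$ must be far smaller than the rank savings $\exp(\eps^{c_1}(\log N)^{c_2})$; the paper verifies $t\le(x^{0.3})!\le\exp((\log N)^{0.31})$ for well-factorable $N$, and the failure of exactly this bound over $\Q$ (where the degree is $\phi(N)=N^{1-o(1)}$) is why the result does not descend to $\Q$.
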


The formal statement and proof of this result can be found in Section \ref{sec_groupalgebra} (see Theorem \ref{allabelian}) for the complex numbers and Section \ref{sec_groupalgebra_finitefield} (see Theorem \ref{finitefield_allabelian}) for finite fields.
\begin{theorem}\label{thm:extension_degree_intro}
Let $G$ be an abelian group of order $N$.  Then there exists
$m = \tilde{O}(N^3)$, depending only on $G$, such that the rational
$G$-circulant matrices are QNR over the $m$\ts{th}
cyclotomic field.  The same also holds for the matrix $\DFT_G$.
\end{theorem}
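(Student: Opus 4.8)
\emph{Proof sketch.} The plan is to open up the proof of Theorem~\ref{allabelian} (and of the $\DFT_G$ statement established en route to it) and keep a careful ledger of every algebraic number created by the construction. Since $f$ takes rational values, $M(f)$ itself has entries in $\Q$, so the only source of irrationality is the low‑rank matrix $M'$ that witnesses non‑rigidity; the goal therefore reduces to showing that \emph{every entry of $M'$ is a $\Q$‑linear combination of roots of unity whose orders divide a single integer $m$}, together with the quantitative bound $m=\tilde O(N^{3})$. Since the rank of a matrix is invariant under field extension, once the entries of $M'$ are known to lie in $\Q(\zeta_m)$ the QNR conclusion for $M(f)$ transfers verbatim from $\C$ to $\Q(\zeta_m)$. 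We also need $m$ to depend only on $G$; this will be transparent because the roots of unity that appear are dictated by the \emph{shapes} used in the construction (the dimensions, the polynomial degrees, the evaluation grids), and these depend only on $N$ and on a fixed cyclic decomposition of $G$, not on the values of $f$.

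We follow the two‑layer structure of the proof: first the cyclic case, then the reduction of a general abelian group to its cyclic factors. In the cyclic case $G=\Z_n$ the construction works inside the group algebra, using the decomposition $\Q[\Z_n]\cong\prod_{d\mid n}\Q(\zeta_d)$; the characters of $\Z_n$ take values in $\mu_n$, so the Fourier/spectral part of the argument already lives in $\Q(\zeta_n)$. On top of this sits the truncation / low‑degree‑approximation step, which expands elements of $\Z_n$ in a (mixed‑radix) digit representation and interpolates a low‑degree polynomial through a structured product grid of evaluation points. A routine inspection shows that the radices, the twiddle factors, the interpolation nodes, and the coefficients produced by solving the interpolation linear systems can all be taken inside $\Q(\zeta_{m_n})$ for an integer $m_n$ that a careful accounting bounds by $\tilde O(n^{3})$ and that depends only on $n$.

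For the passage to $G=\Z_{n_1}\times\cdots\times\Z_{n_k}$ with $N=\prod_i n_i$, the approximant $M'$ for $M(f)$ is assembled from the approximants for the individual cyclic factors via the tensor/combinatorial glue of the proof of Theorem~\ref{allabelian}; that glue is rational and introduces no algebraic numbers beyond taking the compositum of the fields already in play. Hence every entry of $M'$ lies in $\prod_i\Q(\zeta_{m_{n_i}})=\Q(\zeta_m)$ with $m=\lcm_i m_{n_i}\le\prod_i m_{n_i}\le\prod_i\tilde O(n_i^{3})=\tilde O\!\bigl((\textstyle\prod_i n_i)^{3}\bigr)=\tilde O(N^{3})$, and $m$ depends only on the multiset $\{n_i\}$, i.e.\ only on $G$. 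The statement for $\DFT_G$ is obtained in the same way: $\DFT_G=\bigotimes_i\DFT_{\Z_{n_i}}$ already has entries in $\Q(\zeta_{\exp G})\subseteq\Q(\zeta_N)\subseteq\Q(\zeta_m)$, and running the same en‑route non‑rigidity construction with $\DFT_G$ in place of $M(f)$ yields its approximant over this same $\Q(\zeta_m)$.

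The main obstacle is precisely the bookkeeping in the cyclic case: one must verify that \emph{no} step of the truncation/interpolation argument is forced to leave $\Q(\zeta_n)$ by more than a polynomial amount — that the interpolation nodes need not be generic algebraic numbers but can be chosen among roots of unity of controlled order, that every linear system solved along the way is solved over a field already containing those roots of unity, and that handing a composite cyclic group to its prime‑power cyclic factors via the Chinese Remainder Theorem keeps all relevant orders dividing (a bounded power of) $n$, so that their $\lcm$ stays polynomial in $n$ rather than blowing up to something like $n^{\Omega(\log n)}$. Once this is confirmed, pinning the exponent at $3$ is just a matter of multiplying together the handful of $\tilde O(n)$‑sized contributions identified in the cyclic case and then taking the $\lcm$ over the cyclic factors as above.
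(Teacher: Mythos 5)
Your proposal takes essentially the same route as the paper: since rank and sparsity are preserved under field extension, one only has to track the roots of unity appearing in the sparse-plus-low-rank witness, take their compositum over the cyclic factors of $G$, and bound the order of the resulting cyclotomic field. The bookkeeping you defer as "the main obstacle" is resolved exactly as you anticipate: for a cyclic factor of order $n$ the three $\tilde O(n)$-sized contributions are the $n$\ts{th} roots of unity in $\DFT_n$ itself, the $N'$\ts{th} roots of unity arising from embedding into a circulant matrix of well-factorable size $N'=\tilde O(n)$, and the roots of unity of order dividing $\prod_i (p_i-1)< N'$ (where $N'=p_1\cdots p_k$) used to convert the multiplicative block structure into the additive structure handled by the GWH argument, while the small cyclic factors treated by the GWH lemma contribute only $d$\ts{th} roots of unity for bounded $d$.
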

The formal statement and proof of this result can be found in Section \ref{sec_groupalgebra} (see Theorem \ref{thm:extension_degree}).

In addition to the aforementioned results for circulant, Toeplitz and
DFT matrices, 
our main theorem has a few more consequences that are worth mentioning.
The following two corollaries to our main result were pointed out 
by Babai and Kivva~\cite{kivva}:
\begin{itemize}
\item  The Paley--Hadamard matrices are QNR over $\C$.
\item  The Vandermonde matrices $V_n(x_1, \dots, x_n)$ whose generators
$x_1, \dots, x_n$ form a geometric progression are QNR over $\C$.
\end{itemize}

\ignore{  %
\begin{corollary}
The Paley--Hadamard matrices are QNR over $\C$.
\end{corollary}
This result is interesting because the Paley--Hadamard matrices are an exponentially more frequent family of Hadamard matrices than the Walsh--Hadamard matrices. 
\begin{corollary}
Vandermonde matrices $V_n(x_1, \dots, x_n)$ whose generators
$x_1, \dots, x_n$ form a geometric progression are QNR over any
finite field or $\C$.
\end{corollary}

It has been conjectured that Vandermonde matrices with distinct
generators are rigid.  Our result refutes this conjecture.

For the proofs of these two corollaries, see Section \ref{sec:kivva}.
}  %

\subsection{Overview of the proof}   %
We now take a more detailed look at the techniques used in the proof of
Theorem \ref{allabelian_intro}.

In general, matrices that we deal with will be over $\C$ except in
Sections \ref{sec_finitefield}          %
and \ref{sec_groupalgebra_finitefield}  %
where we extend our results to matrices over finite fields. 
First we define two families of matrices that we will use extensively.
\begin{define}[Generalized Walsh--Hadamard (GWH) matrices] \label{def:GWH1}
The generalized Walsh--Hadamard (GWH) matrix $H_{d,n}$ is a $d^n \times d^n$
complex matrix that has rows and columns indexed by $\Z_d^n$ and entries
$(H_{d,n})_{I,J} = \omega^{I \cdot J}$ where
$\omega = e^{2 \pi i/d}$.  %
\end{define}
Next we define the Discrete Fourier Transform (DFT) matrices. %
\begin{define}[DFT matrix]
The  $(x,y)$ entry of the $N\times N$ matrix $\DFT_N$ \
$(0\le x,y\le N-1)$ 
is $\omega^{xy}$ where $\omega=e^{2\pi i/N}$.
\end{define}
Note $\DFT_N = H_{N,1}$ and 
$H_{d,n} = \underbrace{\DFT_d \otimes \cdots \otimes \DFT_d}_{\text{$n$}}$  
where $\otimes$ denotes the Kronecker product. 

One key idea in our %
argument is the observation that, if all members of a
family $\mathcal A$ of matrices are simultaneously
diagonalizable by a matrix $M$, then
the rigidity of \emph{any} matrix $A \in \mathcal A$ implies the rigidity of
the matrix $M$ (Lemma \ref{diagonalization}).  %
This situation happens, e.g., when $\mathcal A$ is the family of circulant matrices and $M$ is the DFT matrix. 
This simple, yet crucial observation allows us to deduce the non-rigidity of a larger family of matrices.\footnote{The observation that DFT 
matrices diagonalize circulant matrices has similar algorithmic implications as if there were, say, linear-size    
circuits for computing the DFT matrix then we would be able to obtain
linear-size circuits for computing any convolution.}

\subsubsection{Generalized Walsh--Hadamard matrices} 
The first step in the proof of Theorem \ref{allabelian_intro}
is proving that the generalized Walsh--Hadamard matrices are not rigid
in the following sense, which is stronger than QNR.  %
\begin{theorem}[Generalized Walsh--Hadamard matrices are not rigid]  %
\label{Hadamard_informal}
For fixed $d$ and $0 < \epsilon < 0.01$, there exists an $\epsilon'$ such that for all sufficiently large $n$,  $\textsf{r}_{H_{d,n}}\left( d^{n(1 - \epsilon')} \right) \leq d^{n\epsilon}$. \end{theorem}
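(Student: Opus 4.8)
The plan is to follow the template of the Alman--Williams argument for the Walsh--Hadamard matrix and generalize it from $d=2$ to arbitrary fixed $d$. The core idea is that $H_{d,n}$ is essentially the evaluation table of a low-degree polynomial: writing an index $I=(i_1,\dots,i_n)\in\Z_d^n$ and similarly $J$, the entry $\omega^{I\cdot J}=\prod_{k=1}^n \omega^{i_k j_k}$ depends only on the products $i_k j_k \bmod d$, and for each coordinate $\omega^{i_k j_k}$ can be written as a degree-$(d-1)$ polynomial in $i_k$ (interpolating the $d$ values $\omega^{0},\dots,\omega^{(d-1)j_k}$ through the points $i_k=0,\dots,d-1$), with coefficients that are themselves polynomials in $j_k$ of degree at most $d-1$. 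So each entry of $H_{d,n}$ is a polynomial $P(I,J)$ of degree at most $(d-1)$ in each of the $2n$ variables. The matrix of such a polynomial, as a function of $I$ and $J$ ranging over $\{0,\dots,d-1\}^n$, can be factored through the monomial evaluation maps: its rank is at most the number of monomials, i.e. $d^{O(n)}$ — which is useless as stated. The trick, exactly as in prior work, is to only ask the polynomial to be \emph{correct on most entries}, and to exploit that most of the "weight" of $H_{d,n}$ sits on indices $I$ with few nonzero coordinates after a suitable random change of basis / translation.

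Concretely, I would proceed as follows. First, fix a small parameter and argue that after deleting a few entries in each row and column, we may restrict attention to index pairs $(I,J)$ whose coordinatewise product $I\odot J$ (taken in $\Z_d$) has Hamming weight at most $t$, for $t = \Theta(\epsilon n / \log(1/\epsilon))$ or so; the point is that a uniformly random $I$ has, with $J$ fixed, roughly $n(1-1/d)$ nonzero product-coordinates, but we don't want a typical row — we want to change few entries per row, so instead we use the standard combinatorial lemma (essentially a Chernoff/anticoncentration bound together with a union bound, as in \cite{old_hadamard, CLP}) that the set of "bad" pairs where the product weight exceeds $t$ can be covered by changing at most $d^{n\epsilon}$ entries in every row and every column. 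On the remaining "good" pairs, $\omega^{I\cdot J}$ only depends on the at-most-$t$ coordinates where $i_k j_k \neq 0$, so it agrees with a polynomial $P(I,J)$ that is a sum of at most $\binom{n}{\le t} (d-1)^{O(t)}$ monomials. This bounds the rank of the modified matrix by $\binom{n}{\le t}(d-1)^{O(t)} \le d^{n(1-\epsilon')}$ for a suitable $\epsilon'=\epsilon'(d,\epsilon)>0$, which is exactly the claimed bound $\textsf{r}_{H_{d,n}}(d^{n(1-\epsilon')}) \le d^{n\epsilon}$.

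The step I expect to be the main obstacle is making the "restrict to low product-weight, then it's a sparse polynomial" reduction actually give a bound on \emph{regular}-rigidity (few changes per row \emph{and} per column) rather than just total rigidity, and doing the bookkeeping uniformly for all $d$. The subtlety is that the naive "set the entry to $0$ whenever the product weight is large" changes too many entries in a typical row. One fixes this the way \cite{old_hadamard} does: instead of controlling weight of $I\odot J$ directly, one writes the matrix as a small (quasipolynomial in the right parameters) sum/combination of matrices each supported on a structured low-rank piece, where the supports are indexed by the choice of which coordinates are "active," and a counting argument shows each row meets only few of the pieces in which it has been zeroed out. Carrying the $(d-1)$-fold blow-up in the per-coordinate polynomial degree through these estimates is routine but must be tracked to confirm the exponents only cost a constant factor depending on $d$, which is fine since $d$ is fixed; this is where one verifies that $\epsilon' $ can be taken bounded away from $0$ as long as $\epsilon<0.01$. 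The remaining details — the precise Chernoff bound, the choice of $t$, and checking $\binom{n}{\le t}(d-1)^{O(t)} \le d^{n(1-\epsilon')}$ — are standard estimates that I would relegate to a lemma.
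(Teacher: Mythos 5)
There is a genuine gap, and it sits exactly where you suspected it would. Your central combinatorial claim --- that the set of pairs $(I,J)$ whose coordinatewise product $I\odot J$ has Hamming weight exceeding $t=\Theta(\epsilon n/\log(1/\epsilon))$ can be covered by changing at most $d^{n\epsilon}$ entries in each row and column --- is false. For a typical row index $I$ (with about $n(1-1/d)$ nonzero coordinates) and a uniformly random column index $J$, the product weight concentrates around $n(1-1/d)^2\geq n/4$, so the probability that it falls \emph{below} $\epsilon n$ is $2^{-\Omega(n)}$ by the Chernoff bound pointing in the direction opposite to the one you need. In other words, all but a $2^{-\Omega(n)}$ fraction of the entries in a typical row are ``bad,'' and covering them would require changing nearly $d^n$ entries per row, not $d^{n\epsilon}$. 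The sparse-polynomial rank bound for the ``good'' set (via $\omega^{I\cdot J}=\sum_{|S|\le t}\prod_{k\in S}(\omega^{i_kj_k}-1)$) is fine as far as it goes, but the good set is an exponentially small sliver of the matrix, so the argument proves nothing about $H_{d,n}$. No amount of bookkeeping over $d$ repairs this; the fix you gesture at in your final paragraph (re-decomposing into structured pieces) is precisely the missing idea, not a routine detail.

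The paper's proof avoids this by putting the ``low weight'' condition on group elements rather than on pairs of indices. It first rescales the rows and columns of $H_{d,n}$ into a $\Z_d^n$-circulant matrix $M(f)$ with $f(x)=\zeta^{x_1^2+\cdots+x_n^2}$ symmetric (Claim \ref{rescaling}); since $M(f)_{xy}=f(x+y)$, altering $f$ on a subset $T\subseteq\Z_d^n$ changes exactly $|T|$ entries in every row and every column. Taking $T$ to be the tuples with at least $(1-\delta)n$ zero coordinates gives $|T|\le d^{\epsilon n}$ by a binomial tail bound, and a linear-algebra solvability argument based on symmetric-polynomial interpolation (Lemmas \ref{symmetric} and \ref{changes}) shows one can choose the modification so that $P_{f'}$ vanishes on $\omega^{[J]}$ for all $J$ in a symmetric set covering all but $d^{(1-\psi)n}$ of $\Z_d^n$; the diagonalization $H_{d,n}M(f')H_{d,n}=\mathrm{diag}\bigl(d^nP_{f'}(\omega^{[J]})\bigr)$ then converts these roots directly into the rank bound. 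So the mechanism is ``kill almost all Fourier coefficients by a sparse, symmetric modification of the generating function,'' in the style of Dvir--Edelman, rather than ``approximate the entries by a sparse polynomial on most pairs,'' and the per-row/per-column change count comes for free from the circulant structure instead of from a (nonexistent) concentration bound on product weight.
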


Note that Theorem \ref{Hadamard_informal} generalizes the main result of \cite{old_hadamard}
(which deals with $d = 2$).   %
The result of \cite{old_hadamard} is stronger in the sense that it holds over $\Q$ while our results for GWH matrices require working over a field extension.  See Section \ref{sec:differentfield} for a
discussion on rigidity over different fields.

Also, given any $d^n \times d^n$ matrix of the form $M_{xy} = f(x - y)$ with  $f:\Z_d^n  \rightarrow \C$, we can permute its rows so that it is diagonalized by $H_{d,n}$.  Thus, we can apply the diagonalization trick mentioned above and obtain the following result, which extends
the results  %
in \cite{CLP} to matrices over $\C$.
\begin{corollary}\label{additive_informal}
Let $f$ be a function from $\Z_d^n \rightarrow \C$ and let $M$ be a $d^n \times d^n$ matrix with $M_{xy} = f(x - y)$.  Then for any fixed $d$ and $0 < \epsilon < 0.01$, there exists an $\epsilon' > 0$ such that for all sufficiently large $n$, %
we have %
$\textsf{r}_{M}\left( d^{n(1 - \epsilon')} \right) \leq d^{n\epsilon}$.
\end{corollary}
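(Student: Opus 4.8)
The plan is to deduce Corollary~\ref{additive_informal} from Theorem~\ref{Hadamard_informal} via the diagonalization observation (Lemma~\ref{diagonalization}) together with the fact that $G$-circulant matrices for $G=\Z_d^n$ are diagonalized, after a suitable permutation, by the GWH matrix $H_{d,n}$. The key point is that rigidity (and more precisely regular-rigidity) is preserved, up to mild loss, under the operations relating $M$ to $H_{d,n}$: permutation of rows/columns, conjugation by a fixed unitary (here $H_{d,n}$, which is a scalar multiple of a unitary), and left/right multiplication by diagonal matrices.

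\medskip

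First I would make precise the diagonalization. For $f:\Z_d^n\to\C$ and $M_{xy}=f(x-y)$, the standard Fourier-analytic fact is that the eigenvectors of $M$ are the characters of $\Z_d^n$: if $\chi_J(x)=\omega^{J\cdot x}$ then $M\chi_J = \widehat f(J)\,\chi_J$ where $\widehat f(J)=\sum_{z}f(z)\omega^{-J\cdot z}$. Equivalently, writing $H=H_{d,n}$ (whose columns are exactly the $\chi_J$, up to labeling), we have $H^{-1} M' H = \mathrm{diag}(\widehat f(J))$ for an appropriate row-permutation $M'$ of $M$ (the permutation sending $x\mapsto -x$, or none at all depending on sign conventions — this is a permutation matrix $P$, so $M'=PM$). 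Thus $M' = H\,D\,H^{-1}$ with $D$ diagonal. Since $H^{-1}=\frac{1}{d^n}\overline{H}$ and $H$ itself is a GWH-type matrix (conjugate GWH), $M'$ is a scalar multiple of $H \cdot D \cdot \overline{H}$, a product of two Hadamard-type matrices with a diagonal matrix in between.

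\medskip

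Next I would invoke Lemma~\ref{diagonalization} in the following form: since every $M'$ of the above shape is simultaneously diagonalized by $H$, and in particular can be written using $H$, $H^{-1}$ and a diagonal matrix, any low-rank approximation structure for $H$ transfers to $M'$. Concretely, suppose $\textsf{r}_{H_{d,n}}(d^{n(1-\epsilon')})\le d^{n\epsilon}$ as given by Theorem~\ref{Hadamard_informal}: write $H = L + S$ where $\mathrm{rank}(L)\le d^{n(1-\epsilon')}$ and $S$ has at most $d^{n\epsilon}$ nonzeros per row and column. Then $M' = (L+S)\,D\,H^{-1}$. Expanding, $M' = L D H^{-1} + S D H^{-1}$. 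The first term has rank at most $d^{n(1-\epsilon')}$. The problem is the second term $SDH^{-1}$: multiplying the sparse matrix $S$ on the right by $DH^{-1}$ destroys sparsity, since $H^{-1}$ is dense. So this naive substitution does not immediately work, and that is the main obstacle.

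\medskip

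The fix — and I expect this is what the authors do — is to apply the sparse-plus-low-rank decomposition \emph{symmetrically} on both sides, exploiting that $H$ and $H^{-1}$ are both (scalar multiples of) GWH matrices, and to use a decomposition of $H$ in which the sparse part $S$ is itself supported on a product/tensor structure of small "combinatorial rectangles" coming from the proof of Theorem~\ref{Hadamard_informal}. The cleaner route is: decompose $H = L_1 + S_1$ and $H^{-1}$ (equivalently $\overline H$, rescaled) $= L_2 + S_2$ with the same sparsity/rank bounds, then $M' = L_1 D (L_2+S_2) + S_1 D (L_2 + S_2)$; the terms involving at least one $L_i$ contribute rank $O(d^{n(1-\epsilon')})$ total, and the remaining term is $S_1 D S_2$. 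Now $S_1 D S_2$ has, in its $(x,z)$ entry, a sum over $y$ of $(S_1)_{xy} d_y (S_2)_{yz}$; if $S_1$ has $\le k:=d^{n\epsilon}$ nonzeros per row and $S_2$ has $\le k$ nonzeros per column, then $S_1 D S_2$ has at most $k$ nonzeros per row of $S_1$ times $k$ — no, more carefully, the number of nonzeros in row $x$ of $S_1DS_2$ is at most $\sum_{y: (S_1)_{xy}\ne 0}(\#\text{nonzeros in row }y\text{ of }S_2)\le k\cdot k = k^2 = d^{2n\epsilon}$, and similarly per column. So $S_1 D S_2$ is still sparse, with at most $d^{2n\epsilon}$ nonzeros per row and column. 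Since $M'=PM$ and $P$ is a permutation matrix, the same bounds (after undoing $P$, which only permutes rows) hold for $M$ itself: $\textsf{r}_M(O(d^{n(1-\epsilon')})) \le d^{2n\epsilon}$. Finally, absorb constants: given target $\epsilon$, apply the above with $\epsilon/2$ in place of $\epsilon$ in the appeal to Theorem~\ref{Hadamard_informal}, obtaining some $\epsilon''>0$ and then $\textsf{r}_M(d^{n(1-\epsilon'')})\le d^{n\epsilon}$ with $\epsilon':=\epsilon''$ (folding the $O(\cdot)$ constant into a slightly smaller exponent, valid for all large $n$). The main work, then, is just bookkeeping the sparsity under the product $S_1 D S_2$ and the rank under $L_i D (\cdot)$ — all routine — with the one genuine idea being to decompose \emph{both} $H$ and $H^{-1}$ rather than only $H$.
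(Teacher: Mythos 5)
Your proposal is correct and is essentially the paper's own argument: the paper proves exactly your "two-sided" decomposition as Lemma \ref{diagonalization} (writing $B - E^*DE = A^*D(A-E) + (A^*-E^*)DE$, which is your four-term expansion regrouped), and then combines it with Claim \ref{Hadamard_diagonalization} and Theorem \ref{Hadamard} to obtain Corollary \ref{groupalgebra1}, which is the formal version of this statement. The only cosmetic difference is that you re-derive the lemma and track the permutation/conjugation of $H_{d,n}$ explicitly rather than citing it.
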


\subsubsection{DFT matrices} 

Equipped with the machinery for
Generalized Walsh--Hadamard (GWH) matrices,
the next step is to prove non-rigidity for DFT 
matrices.  The result we prove is the following.
\begin{theorem}[DFT Matrices are Not Rigid]\label{fullFourier_intro}
The family of DFT matrices $\DFT_N$
where $N \in \N$ is QNR over $\C$.
\end{theorem}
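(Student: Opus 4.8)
The plan is to reduce the rigidity of $\DFT_N$ to the rigidity of a suitable GWH matrix $H_{d,n}$, for which Theorem \ref{Hadamard_informal} already gives a strong non-rigidity statement, and then handle the ``leftover'' part of $N$ (the part not built from a single prime power) by a direct padding/embedding argument. Concretely, write $N$ with its prime factorization. If $N = d^n$ is a prime power then $\DFT_N = H_{d,n}$ after permuting rows and columns (indexing $\Z_{d^n}$ versus $\Z_d^n$ is immaterial for rank, since a permutation of rows/columns preserves both rank and the number of changes per row and column), so Theorem \ref{Hadamard_informal} finishes this case immediately, giving in fact the stronger exponential non-rigidity, which certainly implies the QNR bound. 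The subtlety is that an arbitrary $N$ need not be a prime power, and $\Z_N \cong \Z_{p_1^{a_1}} \times \cdots \times \Z_{p_k^{a_k}}$ is a product of cyclic groups of \emph{different} moduli, so $\DFT_N$ is not literally a Kronecker power of a single $\DFT_d$.

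First I would record the Kronecker-product structure: by CRT, $\DFT_N$ is, up to permutations of rows and columns, equal to $\DFT_{p_1^{a_1}} \otimes \cdots \otimes \DFT_{p_k^{a_k}}$. The key combinatorial observation is that this quantity is divisible (as a ``sub-Kronecker-factor'') by $H_{d,m}$ for $d = \min_i p_i$ and a suitable $m$: more precisely, I would pick the prime power $q = p_j^{a_j}$ that is largest, so that $\log q \geq \tfrac{1}{k}\log N \geq \tfrac{\log N}{\log N} \cdot (\text{something})$ — here I must be careful, since $k$ could be as large as $\Theta(\log N / \log\log N)$, so $\log q$ could be as small as $\Theta(\log\log N)$, which is too small to directly give a quasipolynomial bound. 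To get around this I would instead choose the structured factor more cleverly: group together enough of the prime-power factors so that their product $Q = \prod_{i \in S} p_i^{a_i}$ has $\log Q = \Theta(\log N)$ (always possible — e.g. take the factors greedily until the partial product first exceeds $\sqrt N$), and then apply non-rigidity to $\DFT_Q$ directly by recursion or, cleaner, apply the GWH machinery after noting that $\DFT_Q$ over $\Z_Q = \prod_{i\in S}\Z_{p_i^{a_i}}$ is itself a ``multi-radix'' analogue; the simplest fully rigorous route is to prove a mixed-radix version of Theorem \ref{Hadamard_informal} first (the polynomial-method proof of \cite{old_hadamard} adapts to products of cyclic groups of bounded-but-varying order), or alternatively to observe that $\DFT_Q$ contains $H_{p,\lfloor \log_p Q\rfloor}$ as a Kronecker factor for $p = \max_{i\in S} p_i$ after an embedding.

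Assuming we have non-rigidity for the structured factor $\DFT_Q$ — say $\textsf{r}_{\DFT_Q}(Q^{1-\epsilon'}) \leq Q^{\epsilon}$ for appropriate $\epsilon'$ — I would lift it to $\DFT_N = P_1 (\DFT_Q \otimes \DFT_{N/Q}) P_2$ (with $P_1, P_2$ permutations) using the standard tensoring behaviour of rigidity: if $A$ is $N_1\times N_1$ and can be written as $L + S$ with $\rank L \le r$ and $S$ having at most $s$ nonzeros per row and column, then $A \otimes B = (L\otimes B) + (S \otimes B)$ has $\rank(L\otimes B) \le r\cdot N_2$ and $S\otimes B$ still has at most $s$ nonzeros per row and column (since each row of $A\otimes B$ is a copy of a row of $A$ ``spread out'', the per-row/column sparsity of the correction is unchanged). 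This would give $\textsf{r}_{\DFT_N}(Q^{1-\epsilon'}\cdot (N/Q)) \le Q^\epsilon \le N^\epsilon$; and since $Q^{1-\epsilon'}\cdot(N/Q) = N \cdot Q^{-\epsilon'} = N/\exp(\epsilon' \log Q) \le N/\exp(\Omega(\epsilon' \log N))$ by our choice of $Q$, this is exactly a bound of the QNR form (with the right dependence on $\epsilon$ tracked through $\epsilon'$, which Theorem \ref{Hadamard_informal} lets us take polynomial in $\epsilon$). I expect the main obstacle to be precisely the point flagged above: ensuring the structured Kronecker factor we extract has logarithm $\Theta(\log N)$ rather than merely $\omega(1)$, which is what forces either the greedy grouping of prime-power factors or a genuine mixed-radix generalization of the GWH non-rigidity theorem; once that is set up, the tensoring and the permutation bookkeeping are routine.
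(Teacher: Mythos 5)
There is a genuine gap, and it sits exactly where you flagged uncertainty: your argument has no way to establish non-rigidity of the ``structured factor'' $\DFT_Q$ when $N$ has few prime factors. In the worst case $N$ is prime, the CRT decomposition is trivial ($k=1$, $Q=N$), and the proof is circular. More generally, $\DFT_{p^a}$ is \emph{not} $H_{p,a}$ --- the group $\Z_{p^a}$ is not $\Z_p^a$, and Lemma \ref{decomp} only splits $\DFT_N$ along \emph{coprime} factors --- so a prime-power factor $\DFT_{p_i^{a_i}}$ does not contain any GWH matrix as a Kronecker factor, and the polynomial method behind Theorem \ref{Hadamard_informal} degrades with $d$ (the theorem needs $n \gtrsim d\log d/\epsilon^2$ copies of $\Z_d$), so no ``mixed-radix'' version can handle $\Z_p$ for a single large prime $p$. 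The paper's actual route is entirely different at this point: for $N$ a product of distinct primes $q_i$ with each $q_i-1$ smooth (``well-factorable'' $N$), it finds submatrices of $\DFT_N$ indexed by $\F_{q_1}^{\times}\times\cdots\times\F_{q_l}^{\times}$, converts the multiplicative structure into the additive group $\Z_{q_1-1}\times\cdots\times\Z_{q_l-1}$, and only \emph{then} gets repeated small factors $\Z_d^m$ to feed to the GWH machinery (Theorem \ref{main}); arbitrary $N$ is then handled by embedding $\DFT_{N'}$ (rescaled) into an $N\times N$ circulant matrix for a nearby well-factorable $N$, whose existence requires the Baker--Harman density result \cite{smooth_primes}. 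None of this is recoverable from the CRT decomposition alone.

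A second, independent error is in your tensoring step. If $\DFT_Q = L + S$ with $S$ having $s$ nonzeros per row, then a row of $S \otimes \DFT_{N/Q}$ has $s\cdot(N/Q)$ nonzeros, not $s$: each nonzero entry of a row of $S$ is replaced by a full (dense) row of $\DFT_{N/Q}$. With your greedy choice $Q \approx \sqrt{N}$ this gives sparsity about $N^{1/2+\epsilon}$ per row, far exceeding $N^{\epsilon}$. This is why the paper's Lemma \ref{products} requires sparse-plus-low-rank decompositions of \emph{both} Kronecker factors (the sparse part of the product is $E\otimes F$), which again forces you to prove non-rigidity of every factor $\DFT_{p_i^{a_i}}$ --- returning you to the first gap.
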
 
Our proof consists of two steps.  First we show that for integers $N$ of a very special form, the $N \times N$ DFT 
 matrix is not rigid because it can be decomposed into submatrices with 
GWH-type structure.  We say an integer $N$ is
\emph{well-factorable}  %
if it is a product of distinct primes $q_1, \dots , q_l$ such that %
$q_i - 1$ has no large prime power divisors for all $i$.
We will make this notion more precise later, but informally, the first step is
as follows. 
\begin{theorem} \label{main_informal}
Let $\cal A$ denote the family of DFT matrices $\DFT_N$
 where $N$ is well-factorable.  Then the family $\cal A$ is QNR over $\C$.
\end{theorem}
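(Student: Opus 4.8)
The plan is to reduce the rigidity of $\DFT_N$, for a well-factorable $N = q_1 \cdots q_l$, to the non-rigidity of generalized Walsh--Hadamard matrices (Theorem \ref{Hadamard_informal}). By the Chinese Remainder Theorem, $\Z_N \cong \Z_{q_1} \times \cdots \times \Z_{q_l}$, and correspondingly $\DFT_N$ is (up to row/column permutations) the Kronecker product $\DFT_{q_1} \otimes \cdots \otimes \DFT_{q_l}$. So the first step is to understand a single prime factor $\DFT_q$ with $q$ prime and $q-1$ having no large prime-power divisor. The key structural fact I would use is that $\Z_q^\times$ is cyclic of order $q-1$: fixing a generator $g$, the map $k \mapsto g^k$ identifies the nonzero rows/columns of $\DFT_q$ with $\Z_{q-1}$, and on this index set the entry $\omega^{g^{j}g^{k}} = \omega^{g^{j+k}}$ depends only on $j+k \bmod (q-1)$. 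In other words, after deleting the all-ones zeroth row and column, $\DFT_q$ becomes a $\Z_{q-1}$-circulant matrix. Writing $q-1 = \prod_t p_t^{a_t}$ with each $p_t^{a_t}$ bounded (that is what well-factorable buys us), $\Z_{q-1} \cong \prod_t \Z_{p_t^{a_t}}$, so this circulant is diagonalized by $\DFT_{q-1} = \bigotimes_t \DFT_{p_t^{a_t}}$; since each factor is small, the relevant "group algebra" has bounded-dimensional cyclic pieces, which is exactly the regime where GWH-type non-rigidity applies.

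Next I would assemble the pieces. Taking the Kronecker product over all $i$, $\DFT_N$ (minus the low-dimensional "boundary" coming from the zero coordinates in each factor) is a $\Gamma$-circulant matrix for the abelian group $\Gamma = \prod_i \Z_{q_i - 1} = \prod_i \prod_t \Z_{p_{i,t}^{a_{i,t}}}$, and by well-factorability every cyclic factor $\Z_{p_{i,t}^{a_{i,t}}}$ has bounded order. Using the diagonalization trick (Lemma \ref{diagonalization}), rigidity of this $\Gamma$-circulant is controlled by rigidity of the character table of $\Gamma$, which is a Kronecker product of many small DFT matrices — precisely a GWH-like matrix $H_{d,n}$ (or a product of a few such, one for each bounded prime power $d$ appearing), with $n \to \infty$ as $N$ grows. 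Theorem \ref{Hadamard_informal} then gives a low-rank approximation changing at most $|\Gamma|^{\eps} \le N^{\eps}$ entries per row and column while dropping the rank to $|\Gamma|^{1-\eps'}$. Finally I would account for the discarded boundary: the rows/columns indexed by tuples with at least one zero coordinate form a union of lower-dimensional pieces; their total count is a $(1 - \Omega(1/\min_i q_i)) \cdot$-fraction smaller, and more to the point each such block is itself (recursively, or by a direct dimension count) of rank at most $N / \min_i q_i$ or can be handled by adding a negligible number of full rows. Adding these back increases the rank additively by a subpolynomial factor and the per-row/column changes by at most the block sizes, which I would bound so that the QNR inequality $\textsf{r}_{\DFT_N}\!\left(N/\exp(\eps^{c_1}(\log N)^{c_2})\right) \le N^{\eps}$ still holds with suitably adjusted constants $c_1, c_2$.

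There are two places where care is needed. The bookkeeping one is propagating the parameters through the Kronecker product: Theorem \ref{Hadamard_informal} is stated for a \emph{fixed} base $d$, whereas here several distinct small bases $p_{i,t}$ occur simultaneously, so I would either invoke a multi-base version of the GWH argument or group the tensor factors by base and apply the theorem to each group, multiplying the resulting rank bounds — this is routine but must be done so that the exponent loss stays quasipolynomial in $\log N$ rather than, say, polynomial in the number of distinct primes. The genuinely delicate step, and the one I expect to be the main obstacle, is the treatment of the boundary terms arising from the zero coordinates: the clean $\Gamma$-circulant structure only holds on the generic part, and one must argue that the "defect" locus (tuples with some coordinate $0 \in \Z_{q_i}$) contributes only a negligible amount to the rank and to the number of altered entries per row and column. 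I would handle this by an inclusion--exclusion over which coordinates vanish, bounding each stratum by a matrix supported on few rows/columns (hence low rank) or by recursing the whole construction on a product of fewer prime factors, and then checking that the total additive rank overhead is $N^{o(1)}$ and the additive per-line change is $N^{\eps/2}$, say. Once that is under control, collecting constants finishes the proof.
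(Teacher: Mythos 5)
Your proposal follows essentially the same route as the paper's proof of Theorem \ref{main}: the multiplicative-to-additive identification of the nonzero part of each $\DFT_{q_i}$ with a $\Z_{q_i-1}$-circulant, the use of well-factorability to obtain many repeated small prime powers and hence GWH structure (the paper's Lemma \ref{productbound} is precisely the multi-base version of the GWH argument you call for), and the stratification over which coordinates vanish (the paper's sets $T_S$ and Lemmas \ref{division}--\ref{blockrigidity}), with the many-zero strata discarded and the few-zero strata handled by recursing on fewer primes. One quantitative caution: the boundary's contribution to the rank is not $N^{o(1)}$ as you assert --- the recursed strata contribute rank comparable to the main term and even the removed rows number about $N\exp\left(-(\log N)^{0.365+o(1)}\right)$ --- but this does not break the argument, since QNR only requires the total rank to stay below $N/\exp\left(\eps^{c_1}(\log N)^{c_2}\right)$, which is exactly what the paper's bookkeeping verifies.
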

The main intuition is that if $N$ is a product of distinct primes
$q_1, \dots , q_l$, then within the DFT matrix $\DFT_N$,
we can find submatrices whose rows and columns can be indexed by
$\F_{q_1}^{\times} \times \cdots \times \F_{q_l}^{\times}$ where
$\F^{\times}$ denotes the multiplicative group of the field $\F$.
This multiplicative structure can be replaced by the additive structure of $\Z_{q_1-1} \times \cdots \times \Z_{q_l-1}$.  We can then factor each additive group $\Z_{q_i-1}$ into prime power components.  If $q_1-1, \dots , q_l-1$ all have no large prime power divisors, we expect prime powers to be repeated many times when all of the terms are factored. This allows us to find submatrices with $\Z_d^l$ additive structure
to  %
which we can apply tools such as Theorem \ref{Hadamard_informal} and Corollary \ref{additive_informal} to reduce the rank while changing a small number of entries.  We then bound the rank and total number of entries changed over all submatrices to deduce that $\DFT_N$ is not rigid.

The second step of our proof that DFT
matrices are not rigid involves extending Theorem \ref{main_informal} to all values of $N$. The diagonalization trick gives that $N \times N$ circulant matrices are not rigid when $N$ is well-factorable.  We then show that for $N' < \frac{N}{2}$, we can rescale the columns of the $N' \times N'$ DFT 
 matrix and embed it into an $N \times N$ circulant matrix.  As long as $N'$ is not too much smaller than $N$ (say $N' > \frac{N}{(\log N)^2}$), we get that the $N' \times N'$ DFT 
 matrix is not rigid.  Thus, for each well-factorable $N$ and all $N'$ in the range $\frac{N}{(\log N)^2} < N' < \frac{N}{2}$, the $N' \times N'$ DFT 
matrix is not rigid.  We then use a number theoretic result of
Baker and Harman  %
\cite{smooth_primes} to show that the
multiplicative gaps  %
between well-factorable integers are not too large. Thus, the above intervals cover all integers as $N$ runs over all well-factorable numbers, finishing the proof.

As a corollary to
Theorem \ref{fullFourier_intro}  
(due to the diagonalization trick), we get that circulant matrices are not rigid.
\begin{theorem}[Circulant Matrices are not Rigid]\label{circulant_intro}
Let $\cal A$ denote the family of circulant matrices.  Then $\cal A$ is QNR over $\C$.
\end{theorem}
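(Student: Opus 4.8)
The plan is to derive Theorem~\ref{circulant_intro} directly from Theorem~\ref{fullFourier_intro} via the diagonalization trick alluded to in the overview (formalized as Lemma~\ref{diagonalization}). The key structural fact is that every $N \times N$ circulant matrix $C$ is simultaneously diagonalized by the (unnormalized) DFT matrix: writing $c_0, c_1, \dots, c_{N-1}$ for the first row (so $C_{xy} = c_{(x-y) \bmod N}$, possibly after a trivial relabeling of rows to match our sign conventions), one has $C = \tfrac{1}{N}\, \DFT_N \cdot \mathrm{diag}(\lambda_0, \dots, \lambda_{N-1}) \cdot \overline{\DFT_N}$, where $\lambda_j = \sum_{k} c_k \omega^{jk}$ are the eigenvalues and $\overline{\DFT_N} = \DFT_N^{-1} \cdot N$ (since $\DFT_N$ is, up to the scalar $N$, a unitary matrix whose inverse is its conjugate, which is again a DFT-type matrix). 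Thus the entire family $\mathcal{A}$ of circulant matrices is simultaneously diagonalizable by the single matrix $\DFT_N$.

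First I would invoke Lemma~\ref{diagonalization} in the contrapositive direction: since $\DFT_N$ is QNR (Theorem~\ref{fullFourier_intro}), and since each circulant $C = P \cdot \DFT_N \cdot D \cdot \DFT_N^{-1}$ for an invertible diagonal matrix $D$, a permutation/relabeling $P$, and scalar factors, the non-rigidity of $\DFT_N$ transfers to $C$. Concretely, if $\DFT_N$ can be written as $L + S$ with $\rank(L) \le r$ and $S$ having at most $N^\varepsilon$ nonzeros per row and column, then $C = P L D \,\DFT_N^{-1} + P S D\, \DFT_N^{-1}$, but this is not yet of the right form because $\DFT_N^{-1}$ destroys sparsity. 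The correct formulation of the trick — and the reason it is a \emph{lemma} and not a one-liner — is to apply the low-rank-plus-sparse decomposition to \emph{both} copies of the DFT: write $\DFT_N = L + S$ and $\DFT_N^{-1} = L' + S'$ with $L, L'$ low rank and $S, S'$ sparse (regular-sparse), then expand $C = P(L+S)D(L'+S') = P L D L' + P L D S' + P S D L' + P S D S'$. The first three terms have rank at most $r$ each (products with a low-rank factor), so their sum has rank $\le 3r$; the last term $P S D S'$ is a product of two regular-sparse matrices with a diagonal in between, hence is itself regular-sparse with at most $N^{2\varepsilon}$ nonzeros per row and column. Since $P$ is a permutation it preserves the sparsity pattern structure up to relabeling, and left/right multiplication by the invertible diagonal $D$ does not change the sparsity pattern at all.

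The parameter bookkeeping then goes as follows: replacing $r$ by $3r$ only multiplies the denominator $\exp(\varepsilon^{c_1}(\log N)^{c_2})$ by a constant, which is absorbed by slightly adjusting $c_1$ (or equivalently absorbing the constant into the $\exp$ for large $N$); replacing $N^\varepsilon$ by $N^{2\varepsilon}$ is handled by running the $\DFT_N$ argument with $\varepsilon/2$ in place of $\varepsilon$ at the outset. Hence for every $\varepsilon > 0$ we obtain $\textsf{r}^{\C}_C\!\left( N / \exp(\varepsilon^{c_1}(\log N)^{c_2}) \right) \le N^\varepsilon$ for all sufficiently large circulant $C$, with the same constants $c_1, c_2$ (up to a harmless adjustment) as in Theorem~\ref{fullFourier_intro}. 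This is exactly the QNR condition, so $\mathcal{A}$ is QNR over $\C$.

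I expect the main obstacle to be purely a matter of careful formalization rather than genuine mathematical difficulty: one must state Lemma~\ref{diagonalization} precisely enough that it already packages the "decompose both conjugating factors" argument, and one must verify that the relevant conjugating matrices ($\DFT_N$ and $\DFT_N^{-1}$, together with a diagonal scaling and a row permutation) are themselves all QNR or rank-preserving as appropriate. Since $\DFT_N^{-1} = \tfrac{1}{N}\overline{\DFT_N}$ and $\overline{\DFT_N}$ is obtained from $\DFT_N$ by the index map $j \mapsto -j \bmod N$ (a permutation of columns), its non-rigidity is immediate from that of $\DFT_N$ — permuting rows or columns and scaling by a nonzero constant preserves both rank and sparsity patterns. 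The one subtlety worth a sentence in the writeup is that the regular-rigidity notion (bounded nonzeros per row \emph{and} column) is precisely what makes the product $S D S'$ controllable: an ordinary sparsity bound on total entries would not survive the matrix product, which is exactly why the paper chose to work with $\textsf{r}_A$ throughout.
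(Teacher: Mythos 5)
Your argument is mathematically sound and the diagonalization computation (decompose \emph{both} conjugating copies of the DFT, collect the low-rank cross terms, and observe that the sparse-times-diagonal-times-sparse term stays regular-sparse with $N^{2\eps}$ nonzeros per row and column) is exactly the content of the paper's Lemma \ref{diagonalization}; your parameter bookkeeping is also fine. However, the logical organization is the reverse of the paper's, and this matters. The paper's formal proof (Theorem \ref{main_circulant}) does \emph{not} invoke the all-$N$ DFT theorem: it only has Theorem \ref{main} (non-rigidity of $\DFT_{N_0}$ for \emph{well-factorable} $N_0$) available at that point. It applies the diagonalization trick to get non-rigidity of circulant matrices of well-factorable size, and then handles arbitrary $N$ by embedding an $N\times N$ circulant into the top-left corner of an $N_0\times N_0$ circulant with $N_0/(\log N_0)^2 \le N \le N_0/2$, where the existence of such a well-factorable $N_0$ is Lemma \ref{scales} (resting on the Baker--Harman density result). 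Only afterwards is Theorem \ref{full_fourier} (all $N$) deduced \emph{from} the circulant theorem via the rescaling of Claim \ref{rescaling}. So if you cite Theorem \ref{fullFourier_intro} as a black box to prove Theorem \ref{circulant_intro}, you are, relative to the paper's actual development, running the derivation in a circle: the all-$N$ DFT statement is proved from the all-$N$ circulant statement. The circularity is avoidable — the introduction's alternative route proves the all-$N$ DFT theorem by embedding a rescaled $\DFT_{N'}$ into a circulant matrix of \emph{well-factorable} size only — but your writeup should either take that route explicitly or, more simply, do what the paper does: diagonalize against $\DFT_{N_0}$ for well-factorable $N_0$ and then use the circulant-into-larger-circulant embedding plus Lemma \ref{scales}. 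That embedding-and-density step is the genuinely new ingredient your proposal omits by pushing it into the cited theorem.
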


Also notice that since any Toeplitz matrix of size at most $\frac{N}{2}$ can be embedded in an $N \times N$ circulant matrix, the above implies an analogous result for all Toeplitz matrices.  While \cite{toeplitz} shows nontrivial rigidity lower bounds for rank much smaller than $N$, our results imply that there are actually no nontrivial rigidity lower bounds for rank close to $N$.
\subsubsection{$G$-circulant matrices}
We extend our results for DFT and circulant matrices to
$\DFT_G$ and %
$G$-circulant matrices
for finite abelian groups $G$ %
by using the fundamental theorem of finite abelian groups to write $G$ as a direct product of cyclic groups.  Note that any $G$-circulant matrix is diagonalized by the $\DFT_G$ matrix which is %
the Kronecker product of the DFT matrices for the individual cyclic groups.  If there are many small cyclic groups in the product, then we can use the same techniques that we use for GWH-matrices while if there are enough large cyclic groups, then we can rely on our results for DFT matrices.
\subsection{Rigidity over different fields}\label{sec:differentfield}
There are several interesting questions that arise when considering
rigidity over different fields.
Our results for circulant and DFT matrices require working over a
field extension.
The matrix $\DFT_N$ is defined over the $N$-th cyclotomic field
$\Q[\omega]$ where $\omega$ is a primitive $N$\ts{th} root of unity.
But we are not able to show non-rigidity of this matrix over
$\Q[\omega]$, only over a larger field that includes additional
roots of unity.  Therefore the same holds for circulant matrices
over $\Q$ whose non-rigidity we derived from the non-rigidity of
$\DFT_N$.  %
The degree of the extension is
$\tilde{O}(N^2)$ (so combined with $\omega$, the entire extension of $\Q$ has degree $\tilde{O}(N^3)$). See Theorem \ref{thm:extension_degree} for more details.
We leave it as an open question whether our results for fields of characteristic $0$ still hold without field extensions or for extensions of lower degree.

In Section \ref{sec_finitefield} we  
extend our results for complex-valued matrices to any finite field, $\F_q$.  The main idea is to first work over an extension, say $\F_q[\alpha]$, where the matrices are not rigid, and then sum over the matrices obtained by replacing $\alpha$ with its conjugates.  A key ingredient in our proof is that over finite fields, primitive $n$\ts{th}
roots of unity may have minimal polynomial with very low degree, even subpolynomial in $n$.  However, over $\Q$, the primitive $n$\ts{th} roots of unity have minimal polynomial with degree $\phi(n)$ (which is $n^{1-o(1)}$) so our argument does not generalize to this case.  We leave it as an open question whether
circulant   %
matrices are rigid over $\Q$.  It is worth noting that the result of Alman and Williams in \cite{old_hadamard} does hold over $\Q$ while the result of Dvir and Edelman in \cite{CLP} holds only when the field is a finite field related to the group structure of the matrix.

Finally, over a finite field $\F_q$, our result for $G$-circulant matrices requires that $\gcd(q,|G|) = 1$.  Our result for circulant matrices (i.e. cyclic groups) over finite fields does not require this assumption.  The reason that we need $\gcd(q,|G|) = 1$ for general abelian groups is that our techniques do not deal with groups such as $G = \Z_{p^2} \times \dots \times \Z_{p^2}$ (where $p$ is the characteristic of $\F_q$) because $p$\ts{th} roots of unity do not exist over any extension of $\F_q$ so we cannot diagonalize $G$-circulant matrices even if we lift to a field extension.  This is not an issue for large cyclic groups because for a cyclic group, say $\Z_N$, we can embed a $\Z_N$-circulant matrix in a circulant matrix of any given size at least $2N$.  We only require the condition $\gcd(q,|G|) = 1$ to rule out the case when $G$ contains a direct product of many copies of the same small cyclic group whose order is not relatively prime to $q$.  See Section \ref{sec_groupalgebra_finitefield} for more details.  We leave it as an open question whether our results for $G$-circulant matrices still hold without the condition that $\gcd(q,|G|) = 1$.  The results of Dvir and Edelman in \cite{CLP} deal with a special case where $\gcd(q,|G|) > 1$, namely when $G$ is a direct product of many copies of $\Z_p$ where $p$ is the characteristic of $\F_q$.

\subsection{Organization}
In Section \ref{sec_prelim}, we introduce notation and prove several basic results that we will use throughout the paper.  In Section \ref{sec_Hadamard}, we show that %
GWH matrices  %
and several closely related families of matrices are not rigid.  In Section \ref{sec_Fourier1}, we show that $N \times N$ %
DFT %
 matrices are not rigid when $N$ satisfies certain number-theoretic
conditions.   %
In Section \ref{sec_Fourier2}, we complete the proof that %
no (sufficiently large) DFT matrix is rigid%
. We then deduce that %
 Toeplitz matrices are not rigid.  In Section \ref{sec_groupalgebra}, we use the results from the previous section to show that 
$G$-circulant    %
matrices for abelian groups $G$   %
are not rigid.  {}From Section \ref{sec_prelim} through Section \ref{sec_groupalgebra}, we work with matrices over $\C$ for ease of exposition.  In Section \ref{sec_finitefield} and Section \ref{sec_groupalgebra_finitefield}, we sketch how to modify the proofs in the previous sections to deal with ``missing'' roots of unity in a finite field.  
Finally, in Section \ref{sec_conclusion}, we discuss a few open questions
and possible directions for future work.

\section{Preliminaries}\label{sec_prelim}
Throughout this paper, we let $d \geq 2$ be an integer and $\omega = e^{2\pi i/d}$ be a primitive $d$\ts{th} root of unity.  When we consider an element of $\Z_d^n$, we will view it as an ordered $n$-tuple with entries in the range $[0,d-1]$.  When we say a list of $d^n$ elements $x_1, \dots , x_{d^n}$ is indexed by $\Z_d^n$, we mean that each $x_i$ is labeled with an element of $\Z_d^n$ such that all labels are distinct and the labels of $x_1, \dots , x_{d^n}$ are in lexicographical order. 

\subsection{Basic notation}     %
We will frequently work with ordered tuples, say $I = (i_1, \dots , i_n) \in \Z_d^n$.  Below we introduce some notation for dealing with ordered tuples that will be used later on.

\begin{define}
For an ordered tuple $I$, %
we let $I^{(i)}$ denote its $i$\ts{th} entry.  
For instance if $I = (i_1, \dots, i_n)$ then $I^{(k)} = i_k$.
\end{define}

\begin{define}\label{def:tupleexponent}
For an ordered $n$-tuple $I = (i_1,i_2, \dots ,i_n)$, define the polynomial
in    %
$n$ variables $x^I = x_1^{i_1} \cdots x_n^{i_n}$.
\end{define}

\begin{define}
For $\omega$ a $d$\ts{th} root of unity and an ordered $n$-tuple $I = (i_1,i_2, \dots ,i_n) \in \Z_d^n$, we define $\omega^{[I]} = (\omega^{i_1}, \dots , \omega^{i_n})$.

\end{define}

\begin{define}
For a function $f: \Z_d^n \rightarrow \C$, define the $n$-variable polynomial $P_f$ as 
\[
P_f = \sum_{I \in \Z_d^n} f(I)x^I \,.
\]
\end{define}

\begin{define}
For an ordered $n$-tuple $I = (i_1,i_2, \dots ,i_n)$, we define the set $\perm(I)$ to be a set of ordered $n$-tuples consisting of all distinct permutations of the entries of $I$.  Similarly, for a set of ordered $n$-tuples $S$, we define $\perm(S)$ to be the set of all ordered $n$-tuples that can be obtained by permuting the entries of some element of $S$.
\end{define}

\begin{define}
We say a set $S \subseteq \Z_d^n$ is symmetric if %
$\perm(I) \subseteq S$ for any $I \in S$. %
\end{define}

\begin{define}
For a set of ordered $n$-tuples $S$, let $\red(S)$ denote the set of equivalence classes under permutation of entries in $S$.  Let $\rep(S)$ be a set of ordered $n$-tuples formed by taking one representative from each equivalence class in $\red(S)$ (note $\rep(S)$ is not uniquely determined but this will not matter for our purposes).  
\end{define}
Note that if $\rep(S) = \{I_1, \dots , I_k \}$, then the sets $\perm(I_1), \perm(I_2), \dots , \perm(I_k)$ are disjoint and their union contains $S$.  If the set $S$ is symmetric then their union is exactly $S$.

\subsection{Special families of matrices} \label{sec:specialmatrices}   %
We now define notation for working with a few special families of matrices.
\begin{define}
An $N \times N$ matrix $M$ is called a Toeplitz matrix if $M_{ij}$ depends only on $i-j$.  An $N \times N$ matrix $M$ is called a Hankel matrix if $M_{ij}$ depends only on $i + j$.  Note that the rows of any Toeplitz matrix can be permuted to obtain a Hankel matrix so any non-rigidity results we show for one family also hold for the other.
\end{define}

\begin{define}[Adjusted $G$-circulant matrices] %
For an abelian group $G$ and a function $f: G \rightarrow \C$, let $M_G(f)$ denote the $|G| \times |G|$ matrix (over $\C$) whose rows and columns are indexed by elements $x,y \in G$ and whose entries are given by $M_{xy} = f(x + y)$.  When it is clear what $G$ is from context, we will simply write $M(f)$.  We let $V_G$ denote the family of matrices $M_G(f)$ as $f$ ranges over all functions from $G$ to $\C$.  We call $V_G$ the family of 
\emph{adjusted   %
$G$-circulant    %
matrices} for the group $G$.  When $G$ is a cyclic group, we call the matrices in $V_G$ adjusted-circulant. 
\end{define}
Compared to the usual 
$G$-circulant    %
(and circulant) matrices defined by $M_{xy} =  f(x-y)$, the matrix $M_G(f)$ differs only in a permutation of the rows.  In the
subsequent  %
sections, we will work with $M_G(f)$ for technical reasons, but it is clear that the same non-rigidity results hold for the usual 
$G$-circulant    %
matrices.  Similarly, we will use adjusted-circulant and Hankel matrices as it is clear that the same non-rigidity results hold for circulant and Toeplitz matrices.  Also note that adjusted-circulant matrices are a special case of Hankel matrices.

Recall that a \emph{character} of an abelian group $G$ is a
homomorphism from $G$ to $\C^{\times}$, the multiplicative group
of complex numbers.   %
\begin{define}[Discrete Fourier Transfrom matrices] %
For a finite abelian group $G$, we
define $\DFT_G$, the DFT matrix for $G$, %
as the $|G| \times |G|$ matrix whose rows correspond to elements of $G$ and
whose columns correspond to the characters of the group. %
To simplify notation, we will write 
$\DFT_N$ for %
$\DFT_{\Z_N}$, the classical $N\times N$ Fourier Transform matrix  %
(for the cyclic group $\Z_N$).  %
\end{define}
The following is immediate from the definition. %
\begin{fact.}\label{fact:dftproduct}
For a finite abelian group $G$, if $G = H \times K$ where $H$ and $K$ are
subgroups,   %
there is an ordering of the rows and columns of $G$ so that $\DFT_G = \DFT_H \otimes \DFT_K$.  In particular, if $G = \Z_{n_1} \times \Z_{n_2} \times \cdots \times \Z_{n_a}$ then 
$$\DFT_G = \DFT_{n_1} \otimes \cdots \otimes \DFT_{n_a}\,.$$
\end{fact.}
\subsection{Matrix rigidity}    %
Here, we review basic notation for matrix rigidity.
\begin{define}
For a matrix $M$ and a real number $r$, we define $\textsf{R}_M(r)$ to be the smallest number $s$ for which there exists a matrix $A$ with at most $s$ nonzero entries and a matrix $B$ of rank at most $r$ such that $M = A+B$.  If $\textsf{R}_M(r) \geq s$, we say $M$ is $(r,s)$-rigid.
\end{define}

\begin{define}
For a matrix $M$ and a real number $r$, we define $\textsf{r}_M(r)$ to be the smallest number $s$ for which there exists a matrix $A$ with at most $s$ nonzero entries in each row and column and a matrix $B$ of rank at most $r$ such that $M = A+B$.  If $\textsf{r}_M(r) \geq s$, we say $M$ is $(r,s)$-regular rigid.
\end{define}
It is clear that if a matrix is $(r,ns)$-rigid, then it must be $(r,s)$-regular rigid.  In
the following  %
sections, we will show that various matrices are not $(\frac{N}{\log \log N}, N^{\epsilon})$-regular rigid for any $\epsilon >0$ and this will imply that Valiant's method for showing circuit lower bounds in \cite{rigidity} cannot be applied for these matrices.

\subsection{Preliminary results}    %
Next, we mention several basic results that will be useful in the proofs later on.
\begin{define}
For an $m \times n$ matrix $A$ and $p \times q$ matrix $B$, the Kronecker product $A \otimes B$ is the $mp \times nq$ matrix given by
\[
\begin{bmatrix}
a_{11}B & \dots & a_{1n}B \\
\vdots & \ddots & \vdots \\
a_{m1}B & \dots & a_{mn}B
\end{bmatrix}
\]
where $a_{ij}$ are the entries of $A$.

\end{define}

\begin{fact.}    \label{product-kronecker}
For matrices $A,B,C,D$ such that matrix products $AC$ and $BD$ are defined,
\[
(A \otimes B)(C \otimes D) = (AC) \otimes (BD) \,.
\]
\end{fact.}

\begin{claim}
$H_{d,n} = \underbrace{\DFT_d \otimes \cdots \otimes \DFT_d}_{\text{$n$}}$
where $\otimes$ denotes the Kronecker product.
\end{claim}
\begin{proof}
This can easily be verified from the definition.
\end{proof}
\begin{claim}\label{orthogonal}
$H_{d,n}H_{d,n}^* = d^nI$ where $H_{d,n}^*$ is the conjugate transpose of $H_{d,n}$ and $I$ is the identity matrix.
\end{claim}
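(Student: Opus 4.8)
The plan is to prove the orthogonality relation $H_{d,n}H_{d,n}^* = d^nI$ by reducing it to the base case $n=1$ and then invoking the Kronecker product structure established in the previous claim. First I would handle the case $n=1$: here $H_{d,1} = \DFT_d$, so the $(I,J)$ entry of $H_{d,1}H_{d,1}^*$ is $\sum_{k=0}^{d-1}\omega^{Ik}\overline{\omega^{Jk}} = \sum_{k=0}^{d-1}\omega^{(I-J)k}$. This is a geometric series: when $I=J$ it equals $d$, and when $I\neq J$ it equals $\frac{\omega^{(I-J)d}-1}{\omega^{I-J}-1} = 0$ since $\omega^d = 1$ and $\omega^{I-J}\neq 1$. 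Hence $H_{d,1}H_{d,1}^* = dI_d$.

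Next I would bootstrap to general $n$ using Claim (the one stating $H_{d,n} = \DFT_d\otimes\cdots\otimes\DFT_d$) together with Fact~\ref{product-kronecker}. Writing $H_{d,n} = \DFT_d^{\otimes n}$ and noting that the conjugate transpose of a Kronecker product is the Kronecker product of the conjugate transposes, i.e. $(A\otimes B)^* = A^*\otimes B^*$, we get
\[
H_{d,n}H_{d,n}^* = \left(\DFT_d^{\otimes n}\right)\left(\DFT_d^{*\,\otimes n}\right) = \left(\DFT_d\DFT_d^*\right)^{\otimes n} = (dI_d)^{\otimes n} = d^n I_{d^n},
\]
where the middle equality applies Fact~\ref{product-kronecker} $n-1$ times (formally by induction on $n$), and the last equality uses that the Kronecker product of scalar multiples of identity matrices is a scalar multiple of the identity, with the scalars multiplying and the sizes multiplying: $(d I_d)^{\otimes n} = d^n (I_d^{\otimes n}) = d^n I_{d^n}$.

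Alternatively — and perhaps cleaner to write out — I could give a direct computation: the $(I,J)$ entry of $H_{d,n}H_{d,n}^*$ for $I,J\in\Z_d^n$ is $\sum_{K\in\Z_d^n}\omega^{I\cdot K}\overline{\omega^{J\cdot K}} = \sum_{K\in\Z_d^n}\omega^{(I-J)\cdot K} = \prod_{i=1}^n\left(\sum_{k_i=0}^{d-1}\omega^{(I^{(i)}-J^{(i)})k_i}\right)$, where the factorization of the sum over the product set $\Z_d^n$ into a product of single-coordinate sums is the key step. Each factor is $d$ if $I^{(i)}=J^{(i)}$ and $0$ otherwise, so the whole product is $d^n$ when $I=J$ and $0$ when $I\neq J$. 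There is no real obstacle here — the only point requiring a moment's care is the geometric-series evaluation $\sum_{k=0}^{d-1}\omega^{mk}=0$ for $m\not\equiv 0\pmod d$, which is standard; I would simply state it. I would present the Kronecker-product argument as the main proof since it fits the paper's group-theoretic spirit and reuses machinery already in place.
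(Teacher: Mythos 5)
Your proposal is correct and matches the paper's proof, which likewise verifies $\DFT_d\DFT_d^* = dI$ and then combines the Kronecker decomposition $H_{d,n} = \DFT_d\otimes\cdots\otimes\DFT_d$ with Fact~\ref{product-kronecker}. You simply spell out the geometric-series base case and the inductive Kronecker step that the paper leaves implicit.
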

\begin{proof}
We verify that %
$\DFT_d\DFT_d^* = dI$, %
and then
use Claim \ref{orthogonal} and
Fact \ref{product-kronecker}.     %
\end{proof}

\begin{claim} \label{Hadamard_diagonalization}
Let $f: \Z_d^n \rightarrow \C$ be a function.  Let $\omega$ be a %
primitive  %
$d$\ts{th}
root of unity and set $P_f = \sum_{I \in \Z_d^n} f(I)x^I$ %
(see Definition \ref{def:tupleexponent})%
.  Let $D = H_{d,n}M_{\Z_d^n}(f)H_{d,n}$.  Then $D$ is a diagonal matrix with diagonal entries $d^nP_f(\omega^{[J]})$ as $J$ ranges over $\Z_d^n$.
\end{claim}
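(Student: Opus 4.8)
The plan is to compute the $(J,K)$ entry of $D = H_{d,n}M_{\Z_d^n}(f)H_{d,n}$ directly and show it equals $d^n P_f(\omega^{[J]})$ when $J = K$ and $0$ otherwise. First I would write out the matrix product over the index set $\Z_d^n$:
\[
D_{JK} = \sum_{I,L \in \Z_d^n} (H_{d,n})_{JI}\, (M_{\Z_d^n}(f))_{IL}\, (H_{d,n})_{LK}
       = \sum_{I,L \in \Z_d^n} \omega^{I\cdot J}\, f(I+L)\, \omega^{L\cdot K}\,,
\]
using the definitions $(H_{d,n})_{I,J} = \omega^{I\cdot J}$ and $(M_{\Z_d^n}(f))_{IL} = f(I+L)$ (and the symmetry of $H_{d,n}$). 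Here all index arithmetic $I+L$ and the exponents are taken in $\Z_d$, which is consistent since $\omega^d = 1$.

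Next I would substitute $S = I + L$ (so $L = S - I$, ranging over $\Z_d^n$ as $L$ does, for each fixed $I$) and reorganize the double sum. This gives
\[
D_{JK} = \sum_{S \in \Z_d^n} f(S)\, \omega^{S\cdot K} \sum_{I \in \Z_d^n} \omega^{I\cdot(J - K)}\,.
\]
The inner sum factors coordinatewise as $\prod_{t=1}^n \sum_{i \in \Z_d} \omega^{i(J^{(t)} - K^{(t)})}$, and by the standard geometric-sum identity each factor is $d$ if $J^{(t)} = K^{(t)}$ and $0$ otherwise; hence the inner sum is $d^n$ when $J = K$ and $0$ when $J \neq K$. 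Therefore $D$ is diagonal, and its $J$-th diagonal entry is
\[
D_{JJ} = d^n \sum_{S \in \Z_d^n} f(S)\, \omega^{S\cdot J} = d^n \sum_{S \in \Z_d^n} f(S)\, (\omega^{[J]})^{S} = d^n P_f(\omega^{[J]})\,,
\]
where the middle equality uses that $\omega^{S \cdot J} = \prod_t \omega^{S^{(t)} J^{(t)}} = \prod_t (\omega^{J^{(t)}})^{S^{(t)}} = (\omega^{[J]})^S$ in the notation of Definitions \ref{def:tupleexponent} and the $\omega^{[I]}$ notation, and the last equality is the definition of $P_f$ evaluated at the point $\omega^{[J]}$.

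I do not expect a serious obstacle here; the only thing to be careful about is bookkeeping between the additive group structure on indices (where $I+L$ lives in $\Z_d^n$) and the multiplicative evaluation of the polynomial $P_f$, and making sure the change of variables $S = I+L$ is a bijection for each fixed $I$ (it is, since $\Z_d^n$ is a group). Alternatively, one could prove this more slickly by the Kronecker-product route — write $M_{\Z_d^n}(f)$ in terms of its Fourier coefficients, use $H_{d,n} = \DFT_d^{\otimes n}$ and Fact \ref{product-kronecker} to reduce to the one-dimensional case $n=1$ — but the direct computation above is short enough that it is not worth the extra setup.
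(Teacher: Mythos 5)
Your computation is correct and is essentially the same argument as the paper's: the paper first evaluates $(M_{\Z_d^n}(f)H_{d,n})_{IJ}=\omega^{-I\cdot J}P_f(\omega^{[J]})$ (your change of variables $S=I+L$ in disguise) and then invokes $H_{d,n}H_{d,n}^*=d^nI$, which is exactly the character-orthogonality sum $\sum_I\omega^{I\cdot(J-K)}=d^n[J=K]$ that you use. The only difference is that you collapse the two steps into a single double sum; no gap either way.
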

\begin{proof}
First, we analyze the product $M_{\Z_d^n}(f)H_{d,n}$.  This is a $d^n \times d^n$ matrix and its rows and columns can naturally be indexed by ordered tuples $I,J \in \Z_d^n$.  The entry with row indexed by $I$ and column indexed by $J$ is
\[
\sum_{I' \in \Z_d^n} f(I + I')\omega^{I' \cdot J} = \omega^{-I \cdot J}\sum_{I' \in \Z_d^n} f(I + I')\omega^{(I'+I) \cdot J} = \omega^{-I \cdot J} P_f(\omega^{[J]}) \,.
\]
Therefore, the columns of $M_{\Z_d^n}(f)H_{d,n}$ are multiples of the columns of $H_{d,n}^*$.  In fact, the column of $M_{\Z_d^n}(f)H_{d,n}$ indexed by $J$ is $P_f(\omega^{[J]})$ times the corresponding column of $H_{d,n}^*$.  Since $H_{d,n}H_{d,n}^* = d^nI$, %
we deduce that %
$D$ must be a diagonal matrix whose entries on the diagonal are $d^nP_f(\omega^{[J]})$ as $J$ ranges over $\Z_d^n$. 
\end{proof}

\begin{claim}\label{Fourier_diagonalization}
Let $M$ be a $d \times d$ adjusted-circulant matrix.  Then %
$\DFT_d \cdot  M  \cdot \DFT_d$ %
is a diagonal matrix.
\end{claim}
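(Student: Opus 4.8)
The plan is to recognize this as the $n = 1$ special case of Claim~\ref{Hadamard_diagonalization}. A $d \times d$ adjusted-circulant matrix $M$ has entries $M_{xy} = f(x+y)$ for $x,y \in \Z_d$ and some function $f : \Z_d \to \C$ (namely $f(k)$ is the common value along the antidiagonal $x+y \equiv k$); in the notation of Section~\ref{sec:specialmatrices} this is exactly $M = M_{\Z_d}(f)$. Since $H_{d,1} = \DFT_d$ by the claim immediately preceding, Claim~\ref{Hadamard_diagonalization} applied with $n = 1$ gives that $\DFT_d \cdot M \cdot \DFT_d = H_{d,1}\, M_{\Z_d}(f)\, H_{d,1}$ is a diagonal matrix, with diagonal entries $d\, P_f(\omega^j)$ as $j$ ranges over $\Z_d$, where $P_f(x) = \sum_{k \in \Z_d} f(k) x^k$.

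The only point to check is that the indexing conventions line up: for $n = 1$ the rows and columns of $H_{d,1}$, indexed by $\Z_d$ in lexicographic order, are simply indexed by $0, 1, \dots, d-1$, which matches the standard indexing of $\DFT_d$ and of a $d \times d$ matrix. There is no genuine obstacle here; all the content is already contained in Claim~\ref{Hadamard_diagonalization}.

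If a self-contained argument is preferred, one can instead repeat the one-variable version of the computation in the proof of Claim~\ref{Hadamard_diagonalization}: the $(x,y)$ entry of $M \cdot \DFT_d$ equals $\sum_{t \in \Z_d} f(x+t)\omega^{ty} = \omega^{-xy}\sum_{t \in \Z_d} f(x+t)\omega^{(x+t)y} = \omega^{-xy} P_f(\omega^y)$, so the $y$-th column of $M \cdot \DFT_d$ is $P_f(\omega^y)$ times the $y$-th column of $\DFT_d^{*}$. Since $\DFT_d \DFT_d^{*} = dI$ (as verified in the proof of Claim~\ref{orthogonal}), left-multiplying by $\DFT_d$ sends each such column to a scalar multiple of a standard basis vector, so $\DFT_d \cdot M \cdot \DFT_d$ is diagonal.
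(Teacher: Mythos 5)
Your proposal is correct and takes exactly the same route as the paper, whose proof of this claim is literally ``Plug $n=1$ into the above,'' i.e., specialize Claim \ref{Hadamard_diagonalization}. Your additional check of the indexing conventions and the optional one-variable recomputation are fine but not needed beyond that specialization.
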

\begin{proof}
Plug $n=1$ into the above.
\end{proof}

Claim \ref{Hadamard_diagonalization} gives us a characterization of the rank of matrices of the form $M_{\Z_d^n}(f)$.
\begin{claim} \label{rankcomp}
Let $f: \Z_d^n \rightarrow \C$ be a function.  Let $\omega$ be a %
$d$\ts{th}
root of unity and 
assume %
$P_f = \sum_{I \in \Z_d^n} f(I)x^I$ has $C$ roots among the set %
$\{ (\omega^{i_1}, \dots , \omega^{i_n}) \mid (i_1, \dots , i_n) \in \Z_d^n \}$ %
.
Then $\rank(M_{\Z_d^n}(f)) = d^n - C$.
\end{claim}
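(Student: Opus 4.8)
The plan is to reduce the rank computation to counting the nonzero diagonal entries produced by the diagonalization in Claim \ref{Hadamard_diagonalization}. First I would record that $H_{d,n}$ is invertible: by Claim \ref{orthogonal} we have $H_{d,n}H_{d,n}^* = d^n I$, so $H_{d,n}^{-1} = d^{-n}H_{d,n}^*$. Hence, writing $D = H_{d,n}M_{\Z_d^n}(f)H_{d,n}$, we get $M_{\Z_d^n}(f) = H_{d,n}^{-1}DH_{d,n}^{-1}$, and since multiplying by invertible matrices on either side preserves rank, $\rank(M_{\Z_d^n}(f)) = \rank(D)$.

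Next I would invoke Claim \ref{Hadamard_diagonalization}, which says that $D$ is a diagonal matrix whose diagonal entry indexed by $J \in \Z_d^n$ equals $d^n P_f(\omega^{[J]})$, where $\omega = e^{2\pi i/d}$ is the primitive $d$\ts{th} root of unity used to define $H_{d,n}$. The rank of a diagonal matrix is precisely the number of its nonzero diagonal entries, so $\rank(D) = \#\{ J \in \Z_d^n : P_f(\omega^{[J]}) \neq 0\} = d^n - \#\{ J \in \Z_d^n : P_f(\omega^{[J]}) = 0\}$.

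Finally I would identify this last count with $C$. Because $\omega$ is a \emph{primitive} $d$\ts{th} root of unity, the map $J = (i_1,\dots,i_n) \mapsto (\omega^{i_1},\dots,\omega^{i_n})$ is a bijection from $\Z_d^n$ onto the set $\{(\omega^{i_1},\dots,\omega^{i_n}) \mid (i_1,\dots,i_n) \in \Z_d^n\}$, so the number of $J \in \Z_d^n$ with $P_f(\omega^{[J]}) = 0$ is exactly the number of roots of $P_f$ in that set, which is $C$ by hypothesis. Combining, $\rank(M_{\Z_d^n}(f)) = \rank(D) = d^n - C$.

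There is essentially no serious obstacle: the entire content is carried by Claim \ref{Hadamard_diagonalization} together with the invertibility of $H_{d,n}$. The only point requiring a moment's care is the (implicit) primitivity of $\omega$, which is what makes the evaluation points $\omega^{[J]}$ pairwise distinct and thus lets "number of vanishing diagonal entries of $D$" and "the number $C$ of roots of $P_f$ in the stated set" coincide.
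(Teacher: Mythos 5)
Your proposal is correct and follows essentially the same route as the paper's proof: diagonalize via Claim \ref{Hadamard_diagonalization}, use the invertibility of $H_{d,n}$ from Claim \ref{orthogonal}, and count nonzero diagonal entries. Your extra remark about primitivity of $\omega$ making the evaluation points distinct is a reasonable clarification the paper leaves implicit, but it does not change the argument.
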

\begin{proof}
Consider the product $D = H_{d,n}M_{\Z_d^n}(f)H_{d,n}$.  Note that $H_{d,n}$ is clearly invertible by Claim \ref{orthogonal}.  Therefore, it suffices to compute the rank of $D$.  By Claim \ref{Hadamard_diagonalization}, $D$ must be a diagonal matrix whose entries on the diagonal are $d^nP_f(\omega^{[J]})$ as $J$ ranges over $\Z_d^n$.  The rank of $D$ is the number of nonzero diagonal entries which is simply $d^n - C$.
\end{proof}

As mentioned in the introduction, we can relate the rigidity of a matrix to the rigidity of matrices  that it diagonalizes.
\begin{lemma}\label{diagonalization}
If $B = A^*DA$ where $D$ is a diagonal matrix and $\textsf{r}_A(r) \leq s$ then $\textsf{r}_B(2r) \leq s^2$.  The same inequality holds also for $B' = ADA$.
\end{lemma}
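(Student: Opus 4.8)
The plan is to unpack the hypothesis $\textsf{r}_A(r)\le s$ into a decomposition $A = L_0 + E$ where $\rank(L_0)\le r$ and $E$ has at most $s$ nonzero entries in every row and every column, substitute into $B = A^*DA$, and then regroup the resulting four terms so that three of them form a matrix of rank at most $2r$ (not $3r$) and the fourth is sparse with at most $s^2$ nonzeros per row and column.

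\textbf{Step 1 (expand).}
Write $B = (L_0+E)^*D(L_0+E) = L_0^*DL_0 + L_0^*DE + E^*DL_0 + E^*DE$. Put $S := E^*DE$ and let $L := L_0^*DL_0 + L_0^*DE + E^*DL_0$, so that $B = L + S$.

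\textbf{Step 2 (rank of $L$).}
The key observation is to absorb $L_0^*DL_0 + L_0^*DE$ into a single product: $L = L_0^*D(L_0+E) + E^*DL_0 = L_0^*DA + E^*DL_0$. Since $\rank(XY)\le\min(\rank X,\rank Y)$ and $\rank(L_0^*)=\rank(L_0)$, each summand has rank at most $\rank(L_0)\le r$, so $\rank(L)\le 2r$.

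\textbf{Step 3 (sparsity of $S$).}
Because $D$ is diagonal, $DE$ has the same zero pattern as $E$: at most $s$ nonzeros per row and per column. Now fix a row index $i$. The entry $(E^*DE)_{ij} = \sum_k \overline{E_{ki}}\,D_{kk}\,E_{kj}$ is nonzero only if some index $k$ satisfies $E_{ki}\ne 0$ and $E_{kj}\ne 0$. There are at most $s$ such $k$ (these are the nonzeros in column $i$ of $E$), and for each such $k$ there are at most $s$ admissible $j$ (the nonzeros in row $k$ of $E$); hence row $i$ of $S$ has at most $s^2$ nonzero entries. The column bound is symmetric (swap the roles of $i$ and $j$). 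Therefore $B = L + S$ certifies $\textsf{r}_B(2r)\le s^2$.

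\textbf{The case $B' = ADA$} is handled identically with all conjugate-transposes deleted: expand $B' = (L_0+E)D(L_0+E)$, group the low-rank part as $L_0DA + EDL_0$ (rank $\le 2r$ by the same product-rank argument) plus $EDE$, and bound the sparsity of $EDE$ exactly as in Step 3.

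\textbf{Main obstacle.}
There is no serious technical difficulty; the only subtlety is the regrouping in Step 2 — a naive split of the three $L_0$-terms would give rank $3r$, and one must notice that folding $L_0^*DE$ back into $L_0^*DA$ keeps each factor at rank $r$. The sparsity count for $E^*DE$ is routine once one uses that multiplication by a diagonal matrix preserves the zero pattern.
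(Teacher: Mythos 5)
Your proposal is correct and takes essentially the same approach as the paper: both write $A = L_0 + E$ with $\rank(L_0)\le r$ and $E$ $s$-sparse per row/column, regroup $B - E^*DE$ as a sum of two products each containing a rank-$\le r$ factor, and observe that $E^*DE$ has at most $s^2$ nonzeros per row and column (your grouping $L_0^*DA + E^*DL_0$ is just the mirror image of the paper's $A^*D(A-E) + (A^*-E^*)DE$). Your explicit verification of the sparsity of $E^*DE$ is a detail the paper leaves to the reader, and it is carried out correctly.
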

\begin{proof}
Let $E$ be the matrix with at most $s$ nonzero entries in each row and column such that $A - E$ has rank at most $r$.  We have
\[
B - E^*DE = A^*D(A - E) + (A^* - E^*)DE \,.
\]
Since $\rank(A - E) \leq r$, %
we get that %
$\rank(B - E^*DE) \leq 2r$.  Also, $E^*DE$ has at most $s^2$ nonzero entries in each row and column so $\textsf{r}_B(2r) \leq s^2$.  The second part can be proved in the exact same way with $A^*$ replaced by $A$.
\end{proof}  
In light of Lemma \ref{diagonalization}, Claim \ref{Fourier_diagonalization}, and Claim \ref{Hadamard_diagonalization}, proving non-rigidity for $d \times d$ circulant matrices reduces to proving non-rigidty for %
$\DFT_d$ %
 and proving non-rigidity for 
$G$-circulant    %
matrices for
$G=\Z_d^n$  %
reduces to proving non-rigidity for $H_{d,n}$.  Below, we show that these statements are actually equivalent.

\begin{claim}\label{rescaling}
It is possible to rescale the rows and columns of $H_{d,n}$ to get a matrix of the form $M_{\Z_d^n}(f)$ for some symmetric function $f:\Z_d^n \rightarrow \C$.  In particular, it is possible to rescale the rows and columns of %
$\DFT_d$ %
 to get an adjusted-circulant matrix.
\end{claim}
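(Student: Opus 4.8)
The plan is to prove the statement for $\DFT_d$ by \emph{completing the square}, and to obtain the general $H_{d,n}$ statement by running the same computation coordinate by coordinate.

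Concretely, I would fix a square root $\eta$ of $\omega$, i.e.\ a complex number with $\eta^2 = \omega$, and rescale $H_{d,n}$ by multiplying the row indexed by $I = (i_1,\dots,i_n)$ by $a_I := \prod_{k=1}^n \eta^{i_k^2}$ and the column indexed by $J = (j_1,\dots,j_n)$ by $b_J := \prod_{k=1}^n \eta^{j_k^2}$. Since $(H_{d,n})_{I,J} = \omega^{I\cdot J} = \prod_k \omega^{i_k j_k}$, the new $(I,J)$ entry is $\prod_k \eta^{i_k^2}\,\eta^{j_k^2}\,\omega^{i_k j_k} = \prod_k \eta^{(i_k+j_k)^2}$, using the elementary identity $\eta^{a^2}\eta^{b^2}\omega^{ab} = \eta^{a^2+2ab+b^2} = \eta^{(a+b)^2}$. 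Defining $f\colon \Z_d^n \to \C$ by $f(K) = \prod_{k=1}^n \eta^{(K^{(k)})^2}$, the rescaled matrix is then exactly $M_{\Z_d^n}(f)$, and $f$ is manifestly symmetric since permuting the entries of $K$ only permutes the factors in the product. Taking $n = 1$ gives the ``in particular'' statement for $\DFT_d$.

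The one point that needs care is ensuring that the factor $\eta^{m^2}$ depends only on $m$ modulo $d$, so that $f$ is well defined on $\Z_d^n$ (the labels $i_k,j_k$ range over $\{0,\dots,d-1\}$, but $i_k+j_k$ may exceed $d-1$). Since $\eta^{(m+d)^2} = \eta^{m^2}\,(\eta^{2d})^m\,\eta^{d^2} = \eta^{m^2}\,\eta^{d^2}$ (using $\eta^{2d} = \omega^d = 1$), what is needed is a square root $\eta$ of $\omega$ with $\eta^{d^2} = 1$, and this is where the parity of $d$ enters. If $d$ is odd, take $\eta = \omega^{(d+1)/2}$, which is a genuine power of $\omega$ and satisfies $\eta^2 = \omega^{d+1} = \omega$ and $\eta^{d^2} = 1$ (the exponent $(d+1)d^2/2$ is a multiple of $d$). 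If $d$ is even, every square root $\eta$ of $\omega$ is a primitive $2d$\ts{th} root of unity, and $\eta^{d^2} = (\eta^{2d})^{d/2} = 1$ since $d/2$ is an integer.

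I do not expect a genuine obstacle here, as the claim is elementary; the only thing one must get right is the choice of $\eta$ and the verification that $\eta^{m^2}$ descends to a function on $\Z_d$. A naive choice such as $\eta = e^{\pi i/d}$ fails for odd $d$ precisely because then $\eta^{d^2} = -1$, so the rescaled entry would depend on $i+j$ as an integer rather than as an element of $\Z_d$, and the result would not have the form $M_{\Z_d^n}(f)$.
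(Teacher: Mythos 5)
Your proof is correct and follows essentially the same route as the paper's: complete the square via a square root $\zeta$ of $\omega$, rescale row $I$ by $\zeta^{I\cdot I}$ and column $J$ by $\zeta^{J\cdot J}$, and verify that $\zeta^{m^2}$ descends to a function on $\Z_d$ by splitting into the cases $d$ odd (choose $\zeta$ a power of $\omega$) and $d$ even (use $\zeta^{2d}=1$). Your treatment of the well-definedness check is in fact slightly more explicit than the paper's, but the argument is the same.
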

\begin{proof}
Let $\zeta$ be such that $\zeta^2 = \omega$.  Multiply each row of $H_{d,n}$ by $\zeta^{(I \cdot I)}$ and each column by $\zeta^{(J \cdot J)}$ to get a matrix $H'$.  We have
\[
H'_{IJ} = \zeta^{(I+J) \cdot (I+J)} \,.
\]
For an ordered tuple $x = (x_1, \dots , x_n) \in \Z_d^n$, we define $f(x) = \zeta^{x_1^2 + \dots + x_n^2}$.  To complete the proof, it suffices to show that $f: \Z_d^n \rightarrow \C$ is well defined.  To do this, we will show that $\zeta^{x^2}$ depends only on the residue of $x \mod d$.  If $d$ is odd, we can choose $\zeta$ to be a $d$\ts{th} root of unity and the claim is clear.  If $d$ is even $\zeta^{(x+d)^2} = \zeta^{x^2}\zeta^{2dx + d^2}$ but since $2dx + d^2$ is a multiple of $2d$, %
we get that  %
$\zeta^{2dx + d^2} = 1$ and thus $\zeta^{(x+d)^2} = \zeta^{x^2}$.  
\end{proof}

\section{Non-rigidity of generalized Walsh--Hadamard matrices}
\label{sec_Hadamard}
In this section, we show that the %
GWH  %
matrix $H_{d,n}$ becomes highly non-rigid for large values of $n$.  The precise result is stated below.
\begin{theorem} \label{Hadamard}
Let $N = d^n$ for positive integers $d,n$.   Let $0 < \epsilon < 0.01 $ and assume $n \geq 1/\psi$ where \[\psi = \frac{\epsilon^2 }{400\log^2 (1/\epsilon) d \log d} \,.\]
Then
\[
\textsf{r}_{H_{d,n}}\left(N^{1 - \psi}\right) \leq N^{ \epsilon} \,.
\]
\end{theorem}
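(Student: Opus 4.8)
The plan is to exhibit, for each matrix $H_{d,n}$, a low-rank matrix that agrees with $H_{d,n}$ on all but a sparse set of entries, where ``sparse'' means few changes per row and column. By Claim \ref{rescaling} it suffices to work with a matrix of the form $M_{\Z_d^n}(f)$ for a symmetric $f$, since rescaling rows and columns changes neither the rank nor the sparsity pattern of any approximating matrix. By Claim \ref{rankcomp}, the rank of $M_{\Z_d^n}(g)$ equals $d^n$ minus the number of points in $\{\omega^{[J]} : J \in \Z_d^n\}$ at which the polynomial $P_g$ vanishes. So the whole problem is translated into a statement about polynomials: I want to write $f = g + (\text{error})$ where $P_g$ vanishes on a $(1 - N^{-\psi})$-fraction of the evaluation grid, and the error function is supported on a set that meets each row and column of the matrix $M_{\Z_d^n}(\cdot)$ (i.e. each ``shift class'') in at most $N^\epsilon$ places.

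First I would set up the combinatorial core following the Alman--Williams strategy (the $d=2$ case of \cite{old_hadamard}), generalized to base $d$. Identify $\Z_d^n$ with the monomials $x^I$. The idea is to keep only the monomials $x^I$ whose exponent vector $I$ is ``balanced'' — has roughly the average number $n/d$ of coordinates equal to each residue $0,1,\dots,d-1$, up to a deviation of about $t$ in each count — and to zero out $f$ on all other $I$. Because $f$ is symmetric, the set of discarded $I$ is a union of permutation classes $\perm(I)$; a Chernoff/large-deviation bound shows that the total number of discarded exponent vectors is at most $N^{1+o(1)}$ times a factor that is $N^{-\Omega(t^2/n)}$-small, and — crucially — one shows each shift class $J + (\text{discarded set})$ is hit at most $N^\epsilon$ times. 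This last point is where the symmetry of $f$ and a careful count of how many balanced-to-unbalanced transitions a fixed shift can cause must be combined; getting the per-row/per-column bound (rather than just a global sparsity bound) is the step I expect to be the main obstacle, and it is exactly what forces the particular shape of $\psi$ in the statement. The second half is the rank bound: after truncation, $P_g$ is a sum over balanced monomials. Restricted to the grid $\{\omega^{[J]}\}$, a balanced monomial $\prod x_k^{i_k}$ evaluates to $\prod \omega^{i_k J^{(k)}}$, and by grouping variables according to their exponent value one sees that $P_g$ lies in the span of products of $d$ elementary-symmetric-type expressions, each in a bounded number of variables; a dimension count (the number of monomials of degree $\le t$ in $\le d$ ``blocks'') bounds the number of nonzero evaluations by $N^{1-\psi}$ for the stated $\psi$, hence $C \ge N - N^{1-\psi}$ and $\rank M_{\Z_d^n}(g) \le N^{1-\psi}$.

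Concretely, the steps in order: (1) reduce $H_{d,n}$ to $M_{\Z_d^n}(f)$, $f$ symmetric, via Claim \ref{rescaling}; (2) define the truncation $g$ supported on balanced exponent vectors with deviation parameter $t \approx \sqrt{\epsilon \, n \log d}$ chosen to make $N^\epsilon$ and $N^{1-\psi}$ balance; (3) bound, for each fixed shift, the number of indices where $f$ and $g$ differ, using symmetry plus a large-deviation estimate — this yields $\textsf{r}$-type (per-row/column) sparsity $\le N^\epsilon$; (4) bound $\rank M_{\Z_d^n}(g)$ by counting the support of $P_g$ on the evaluation grid through the elementary-symmetric-function span argument, giving rank $\le N^{1-\psi}$; (5) assemble: $M_{\Z_d^n}(f) = M_{\Z_d^n}(g) + M_{\Z_d^n}(f-g)$, read off $\textsf{r}_{M_{\Z_d^n}(f)}(N^{1-\psi}) \le N^\epsilon$, and transfer back to $H_{d,n}$. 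The hypothesis $n \ge 1/\psi$ with $\psi = \epsilon^2/(400 \log^2(1/\epsilon)\, d\log d)$ is precisely the threshold at which the large-deviation gain in step (3) overtakes the crude $N^{1+o(1)}$ count of permutation classes while the dimension count in step (4) still beats $N$; verifying that this single choice of $t$ makes both inequalities go through simultaneously is the routine-but-delicate computation at the heart of the proof, and the constant $400$ is the slack one needs to absorb the $d$ and $\log d$ factors coming from having $d$ residue classes instead of $2$.
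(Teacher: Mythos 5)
Your opening reductions match the paper: rescale $H_{d,n}$ to a symmetric $M_{\Z_d^n}(f)$ via Claim \ref{rescaling}, and convert the rank of $M_{\Z_d^n}(g)$ into a count of zeros of $P_g$ on the grid $\{\omega^{[J]}\}$ via Claim \ref{rankcomp}. But from there your plan diverges from the paper's, and the divergence introduces a genuine gap at step (4). You propose to obtain $g$ by \emph{truncating} $f$ to balanced exponent vectors, and then to bound $\rank M_{\Z_d^n}(g)$ by observing that $P_g$ lies in a low-dimensional span of symmetric expressions. That inference is invalid: membership of $P_g$ in a low-dimensional space of polynomials says nothing about how often $P_g(\omega^{[J]})$ is nonzero (a single monomial spans a one-dimensional space yet vanishes nowhere on the grid). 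What Claim \ref{rankcomp} requires is that $P_g$ actually \emph{vanish} at all but $N^{1-\psi}$ of the grid points, and simply zeroing out $f$ on unbalanced tuples gives no control whatsoever over the zero set of the resulting polynomial. You have imported the Alman--Williams truncation heuristic into the wrong object: in \cite{old_hadamard} the matrix entry itself is a polynomial in $(x,y)$ and truncation bounds rank by counting distinct $x$-monomials, whereas here the relevant polynomial is the group-algebra symbol $P_f$ and rank is governed by its zero set on the character grid.

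The paper closes exactly this gap with an interpolation argument that your proposal has no analogue of. Rather than truncate, it \emph{solves a linear system}: it modifies $f$ only on the set $T$ of tuples with at least $n(1-\delta)$ coordinates equal to $0$ (so each row and column of $M(f)-M(f')$ has at most $|T|\le d^{\epsilon n}$ nonzero entries --- the per-row sparsity you flagged as the main obstacle is in fact immediate here, since each row of $M(\cdot)$ contains each value of $f-f'$ exactly once), choosing the new values so that $P_{f'}$ vanishes on the prescribed set $S$ of sorted balanced tuples; symmetry of $f'$ then forces vanishing on all of $\perm(S)$, whose complement has measure $d\exp(-\delta^2 n/(2d))$ by Chernoff, giving the rank bound. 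The solvability of that system is the technical heart of the proof (Lemmas \ref{symmetric} and \ref{changes}, which construct symmetric polynomials separating the permutation classes in $S$). Separately, your parameter choice $t\approx\sqrt{\epsilon n\log d}$ cannot work even for the sparsity count: to make the discarded set smaller than $N^{\epsilon-1}\cdot N$ per row, the allowed deviation in the coordinate counts must be linear in $n$, not of order $\sqrt{n}$, which is why the paper's thresholds ($m=\lceil n(1-\delta)/d\rceil$, $\delta=\epsilon/(10\log(1/\epsilon))$) are macroscopic.
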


First we prove a few lemmas about symmetric polynomials that we will use in the proof of Theorem \ref{Hadamard}.
\begin{lemma}\label{symmetric}
  Let $T_m$ denote the set of ordered tuples in $\Z_d^n$ such that at least $m$ entries are equal to $0$.
Let   %
$\rep(T_m) = \{I_1, \dots , I_k \}$.  Consider the  polynomials $P_1(x_1, \dots , x_n), \dots , P_k(x_1, \dots , x_n)$ defined by
\[
P_i(x_1, \dots , x_n) = \sum_{I \in \perm(I_i)} x^{I} \,.
\]
For any complex numbers  $y_1, \dots , y_m$, and any polynomial $Q(x_{m+1}, \dots x_n)$ that is symmetric and degree at most $d-1$ in each of its variables, there exist coefficients $c_1, \dots , c_k$ such that 
\[
Q(x_{m+1}, \dots , x_n) = \sum c_iP_i(y_1, \dots y_m, x_{m+1}, \dots , x_n) \,.
\]
\end{lemma}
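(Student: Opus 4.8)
The plan is to show that the polynomials $P_1,\dots,P_k$, once we substitute fixed values $y_1,\dots,y_m$ into the first $m$ slots, span the space of all polynomials in $x_{m+1},\dots,x_n$ that are symmetric and of degree at most $d-1$ in each variable. Let me first set up the relevant linear-algebraic picture. The target space $\mathcal{S}$ consists of symmetric polynomials in the $n-m$ variables $x_{m+1},\dots,x_n$ with individual degrees at most $d-1$; a natural basis for $\mathcal{S}$ is given by the orbit sums $\{\,\sum_{J' \in \perm(J)} x^{J'} : J \in \rep(T'_0)\,\}$ where $T'_0$ is the full cube $\Z_d^{\,n-m}$ indexed by the last $n-m$ coordinates. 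So the claim is equivalent to: the $k$ functions $(x_{m+1},\dots,x_n)\mapsto P_i(y_1,\dots,y_m,x_{m+1},\dots,x_n)$ span $\mathcal S$. Note $\dim \mathcal S = |\red(\Z_d^{\,n-m})|$, which is exactly $|\rep(T_m)| = k$, since an orbit of a tuple in $\Z_d^n$ having at least $m$ zeros is determined by choosing which of the "free" coordinates get the extra zeros versus the nonzero values — more precisely the map "forget the $m$ designated zero-coordinates" gives a bijection $\red(T_m) \leftrightarrow \red(\Z_d^{\,n-m})$. So it suffices to prove the $P_i|_{y}$ are \emph{linearly independent} as elements of $\mathcal S$, and then a dimension count closes the argument.

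The key computational step is to expand $P_i(y_1,\dots,y_m,x_{m+1},\dots,x_n)$ in the orbit-sum basis of $\mathcal S$ and understand the resulting "change of basis" matrix. Writing $I_i = (a_1,\dots,a_n)$, the sum over $\perm(I_i)$ breaks up according to which multiset of $m$ of the values $a_1,\dots,a_n$ is assigned (in some order) to the $m$ fixed slots: each such assignment contributes a monomial $y_1^{b_1}\cdots y_m^{b_m}$ times an orbit-sum (or partial sum of an orbit) in the $x$-variables indexed by the complementary multiset. After collecting, one finds that $P_i|_y = \sum_{J} c_{iJ}(y) \cdot (\text{orbit sum of } J)$ where $J$ ranges over $\rep(\Z_d^{\,n-m})$, the index $J$ corresponds to a "sub-multiset" of the multiset underlying $I_i$, and $c_{iJ}(y)$ is (up to a positive combinatorial multiplicity) the monomial symmetric polynomial in $y_1,\dots,y_m$ evaluated at the complementary sub-multiset — together with the combinatorial fact that the multiset of $I_i$ is the disjoint union of the multiset of $J$ and the complementary one. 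The upshot is that the transition matrix $(c_{iJ})$ is triangular with respect to a suitable partial order on multisets (refining by, say, the multiset of nonzero entries), and its diagonal entries are nonzero constants (the multiplicities, times a power $y_\bullet$ that can be taken to be the constant term when $J$ has the same number of zeros as $I_i$). Triangularity with nonzero diagonal gives invertibility of the transition matrix, hence linear independence, hence — by the dimension count above — spanning, which is exactly the assertion.

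The main obstacle, and the place where I'd expect to spend real effort, is pinning down the combinatorial bookkeeping in that expansion precisely enough to see the triangular structure: one has to be careful about (i) the multiplicities arising because $\perm(I_i)$ consists of \emph{distinct} permutations of a multiset, not all $n!$ orderings, (ii) the fact that the "orbit sums" appearing in the $x$-variables need not be full orbit sums of tuples in $\Z_d^{\,n-m}$ unless one regroups correctly, and (iii) choosing the partial order on multisets so that the matrix really is triangular and the diagonal survives. A clean way to organize (i)–(iii) is to pass from polynomials to the group-algebra / symmetric-function language: identify $\mathcal S$ with the space spanned by monomial symmetric functions $m_\lambda$ in $x_{m+1},\dots,x_n$ (with parts in $\{0,\dots,d-1\}$), identify each $P_i|_y$ with a specialization of a monomial symmetric function in all $n$ variables, and use the standard expansion $m_\mu(y,x) = \sum_{\lambda} m_{\mu\setminus\lambda}(y)\, m_\lambda(x)$ (sum over sub-partitions $\lambda \subseteq \mu$), which is manifestly unitriangular in the containment order. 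That reduces the whole lemma to this known symmetric-function identity plus the dimension count, and I would present it that way.
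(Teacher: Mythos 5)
Your proposal is correct. The paper's own proof is an induction on the degree of $Q$: it reduces to the case where $Q$ is a single orbit sum $\sum_{I'' \in \perm(I')} x^{I''}$, extends $I'$ to an element $I \in \perm(I_i)$ of $T_m$ by prepending $m$ zeros, and writes this orbit sum as $P_i(y_1,\dots,y_m,x_{m+1},\dots,x_n)$ minus a symmetric polynomial $R$ of strictly lower degree, to which the induction hypothesis applies. That inductive step is exactly the unitriangularity you identify: the terms of $P_i|_y$ in which some nonzero entry of $I_i$ lands in a $y$-slot contribute only lower-degree orbit sums in the $x$-variables, and the unique top-degree contribution is the target orbit sum with coefficient $1$ --- crucially a constant rather than a monomial in the $y_j$, so the argument survives $y_j=0$; your parenthetical about the diagonal being ``the constant term'' is the right thing to insist on. So the two arguments rest on the same expansion; the difference is organizational. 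The paper deduces spanning directly and never needs a dimension count, whereas you prove invertibility of the square transition matrix, which additionally requires the (correct) bijection $\red(T_m)\leftrightarrow\red(\Z_d^{\,n-m})$ obtained by deleting or adjoining $m$ zeros from the underlying multiset. Your version establishes slightly more (the specialized $P_i|_y$ form a basis of $\mathcal S$), at the price of the bookkeeping you flag in points (i)--(iii); the paper's induction sidesteps all of that because at each step it only needs the single leading term of the expansion, not control of the full transition matrix.
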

\begin{proof}
It suffices to prove the statement for all $Q$ of the form 
\[
\sum_{I'' \in \perm(I')} x^{I''}
\]
where $I' \in \Z_d^{n-m}$.  We will prove this by induction on the degree.  Clearly one of the $I_i$ is $(0,0 \dots 0)$, so one of the polynomials $P_i(x_1, \dots , x_n)$ is constant.  This finishes the case when $Q$ has degree $0$.  Now we do the induction step.  Note that we can extend $I'$ to an element of $T_m$ by setting the first $m$ entries equal to $0$.  Call this extension $I$ and say that $I \in \perm(I_i)$.  We have
\[
\sum_{I'' \in \perm(I')} x^{I''} = P_i(y_1, \dots ,y_m, x_{m+1}, \dots , x_n) - R(y_1, \dots , y_m, x_{m+1}, \dots x_n) \,.
\]
$R(y_1, \dots , y_m, x_{m+1}, \dots x_n)$, when viewed as a polynomial in $x_{m+1}, \dots , x_n$ (since $y_1, \dots , y_m$ are complex numbers that we can plug in), is symmetric and of lower degree than the left hand side.  Thus, using the induction hypothesis, we can write $R$ in the desired form.  This completes the induction step.  
\end{proof}

The key ingredient in the proof of Theorem \ref{Hadamard} is the following lemma which closely resembles the main result in \cite{CLP}, but deals with matrices over $\C$.
\begin{lemma}\label{non-rigidity1}
Let $f: \Z_d^n \rightarrow \C$ be a symmetric function on the $n$ variables.  Let $N = d^n$.  Let $0 < \epsilon < 0.01 $ and assume $n \geq 1/\psi$ where \[\psi = \frac{\epsilon^2 }{400\log^2 (1/\epsilon) d \log d} \,. \]  Then 
\[
\textsf{r}_{M(f)}\left(N^{1 - \psi}\right) \leq N^{ \epsilon} \,.
\]

\end{lemma}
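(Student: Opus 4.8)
The plan is to produce a decomposition $M(f)=L+S$ in which $\rank(L)\le N^{1-\psi}$ and $S$ has at most $N^{\epsilon}$ nonzero entries in each row and each column; this is precisely the claim $\textsf{r}_{M(f)}(N^{1-\psi})\le N^{\epsilon}$. The running tool is Claim \ref{Hadamard_diagonalization} / Claim \ref{rankcomp}: for a symmetric $g:\Z_d^n\to\C$, the rank of $M_{\Z_d^n}(g)$ equals $d^n$ minus the number of $J$ with $P_g(\omega^{[J]})=0$, so in particular a symmetric function whose ``Fourier content'' is supported inside $T_m$ (the tuples with at least $m$ zero coordinates) yields a matrix of rank at most $|T_m|$. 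The low-rank part $L$ will be a matrix whose content is forced into $T_m$; what makes this consistent with approximating $M(f)$ off a sparse set is Lemma \ref{symmetric}: once $m$ of the coordinates of $x+y$ are pinned to roots of unity, the restricted orbit polynomials $P_1,\dots,P_k$ of the $T_m$-tuples already span \emph{every} symmetric function of the remaining $n-m$ coordinates, so on each such ``slice'' of the index set $f$ is reproduced exactly by a combination of the $P_i$, with coefficients depending only on the type of the pinned block.

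Carrying this out, I would first fix the truncation level $m$ so that $|T_m|=\sum_{j\le n-m}\binom nj(d-1)^j\le N^{1-\psi}$; a Chernoff bound gives this once $n-m$ is a small multiple of $\psi n$, the $d\log d$ factor in $\psi$ coming from converting a sum of binomial coefficients into a power of $d^{n}$. Then, using the coordinate split $\Z_d^n=\Z_d^m\times\Z_d^{n-m}$, I would write each row of $M(f)$ as a combination over the $\Z_d^m$-frequency of restricted circulants on $\Z_d^{n-m}$, apply Lemma \ref{symmetric} slicewise to express those restrictions through the $P_i$, and form $L$ by keeping only the part of this expansion that stays inside $T_m$ in the ``typical'' (most common) type of the pinned block; everything left over is put into $S$. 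By construction $\rank L\le|T_m|\le N^{1-\psi}$, and $S$ is concentrated on the slices of atypical type, so the final bookkeeping is to count these slices and convert the count into a per-row and per-column bound of $N^{\epsilon}$ — a second Chernoff estimate on how sharply the type of a random block concentrates, and this is where the $\log^{2}(1/\epsilon)$ loss appears.

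The step I expect to be the real obstacle is reconciling the two budgets simultaneously: $m$ must be close enough to $n$ that $|T_m|\le N^{1-\psi}$, yet far enough from $n$ (and the typical type heavy enough) that only $N^{\epsilon}$ entries per row and column need correction. Balancing the two Chernoff tails against each other is what forces $\psi$ to be essentially quadratic in $\epsilon$ and forces the hypothesis $n\ge 1/\psi$, so that both tails are genuinely exponentially small; pinning down the absolute constant ($1/400$) is the calculational heart. Once the decomposition is in place the lemma is immediate, and it in turn gives Theorem \ref{Hadamard}: by Claim \ref{rescaling}, rescaling the rows and columns of $H_{d,n}$ produces a matrix of the form $M_{\Z_d^n}(f)$ with $f$ symmetric, and row/column rescaling leaves regular rigidity unchanged.
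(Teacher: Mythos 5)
There is a genuine gap, and it sits exactly where the real work of the lemma is. Your high-level frame (write $M(f)$ as low-rank plus sparse, control the rank through Claim \ref{rankcomp}, and use Lemma \ref{symmetric} together with two Chernoff estimates) matches the paper, but the two sets you assign to the two budgets are not the right ones, and the step that makes the construction possible is absent. In the paper's argument the \emph{sparse} part is a symmetric perturbation of $f$ supported on $T$, the tuples with at least $n(1-\delta)$ zero coordinates; this set has size at most $N^{\epsilon}$ and that is the sparsity bound. The \emph{rank} of the perturbed matrix is then $d^n$ minus the number of frequencies $J$ with $P_{f'}(\omega^{[J]})=0$, and the vanishing is arranged on the set $\perm(S)$ of approximately balanced tuples (every symbol appearing at least $(1-\delta)n/d$ times), so the rank is bounded by the number of \emph{unbalanced} tuples, which a moderate-deviation Chernoff bound puts at $N^{1-\psi}$. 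Your proposal instead bounds the rank by $|T_m|$ with $n-m=O(\psi n)$; that set (tuples with almost all coordinates equal to zero) has size only $N^{O(\psi\log(1/\psi))}$, far below $N^{1-\psi}$, and no sparse symmetric perturbation of $f$ can force the nonvanishing Fourier evaluations into a set that small --- if it could, the lemma would be dramatically stronger than what is true. The set controlling the rank must be the complement of the balanced tuples, not a ``many zeros'' set.

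The missing idea is the solvability of the interpolation system (Lemma \ref{changes} in the paper): one must show that the values of $f$ on the equivalence classes $\rep(T)$ can be adjusted so that $P_{f'}(\omega^{[I]})=0$ for every $I$ in the balanced template set $S$, after which symmetry propagates the vanishing to all of $\perm(S)$. Proving that this linear system has full column span requires, for each class $I_{i_0}\in\rep(S)$, an explicit symmetric polynomial $Q_{i_0}$, built from products of the factors $(x_j^d-1)/(x_j-\omega^{\cdot})$, that is nonzero at $\omega^{[I'_{i_0}]}$ and vanishes at $\omega^{[I'_{i}]}$ for $i\neq i_0$; Lemma \ref{symmetric} enters only to certify that such a $Q_{i_0}$ lies in the span of the achievable polynomials. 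Your ``slicewise'' use of Lemma \ref{symmetric} to ``reproduce $f$'' on slices of the index set conflates the physical domain (where the perturbation must be sparse) with the Fourier domain (where the vanishing must be dense), and does not yield the separating polynomials or the solvability of the system. Without that step there is no construction of the decomposition, so the proposal does not prove the lemma.
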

Let %
\[
\delta = \frac{\epsilon}{10\log (1/\epsilon)},
   \text{\quad and \quad}
m = \left\lceil n\big( \frac{1 - \delta}{d} \big) \right\rceil
\]
and let $S$ denote the set of all ordered tuples $(i_1, i_2, \dots ,i_n) \in \Z_d^n$ such that the entries indexed
$1,2, \dots ,m$   %
are equal to $0$, the entries indexed $m+1, \dots ,2m$ are equal to $1$ and in general for $0 \leq i \leq d-1$, the entries indexed $im + 1, \dots , (i+1)m$ are equal to $i$.  Note $|S| = d^{n-dm} \approx d^{\delta n} = N^{\epsilon^2}$ (since $n - dm$ is approximately $\delta n$).  

The main idea will be to change $f$ in a small number of locations so that it has many zeros in the set %
$\{\omega^{[I]} \mid I \in \Z_d^n \}$ %
in order to make use of Claim \ref{rankcomp}.  More precisely, first we will change $f$ to $f'$ by changing its values in at most $N^{\epsilon}$ places so that $f'$ is still symmetric in all of the variables and
\begin{equation*} \forall I\in S,\qquad P_{f'}\left( \omega^{[I]} \right) = 0 \,.
\end{equation*}   %
Note that although the size of $S$ is small, the fact that $f'$ is symmetric implies that $f'$ also vanishes on $\perm(S)$, which covers almost all of $\Z_d^n$.  Once we have shown the above, we quantitatively bound the number of entries changed between $M(f)$ and $M(f')$ and also the rank of $M(f')$ to complete the proof of Lemma \ref{non-rigidity1}.  To do the first part, we need the following sub-lemma.
\begin{lemma}\label{changes}
Let $T$ denote the set of all ordered tuples $(i_1, i_2, \dots ,i_n) \in \Z_d^n$  such that at least $n\big(1 - \delta \big)$ of the entries are $0$. By changing the values of $f$ only on elements of $T$, we can obtain $f'$ satisfying 
\begin{equation}\label{roots} \forall I\in S, \qquad
P_{f'}\left( \omega^{[I]} \right) = 0 \,.
\end{equation}
\end{lemma}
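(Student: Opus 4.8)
The plan is to set up a linear system of equations whose unknowns are the adjustments to $f$ on the set $T$ and whose constraints are exactly the $|\red(S)|$ equations coming from (\ref{roots}); the key point is that because $f'$ must remain symmetric, it suffices to impose one equation per permutation class of $S$, and because $S$ is ``spread out'' (its tuples use each residue $0,1,\dots,d-1$ roughly $m$ times), all these permutation classes are distinct, so $|\red(S)| = |S| = d^{n-dm}$. First I would observe that since $f$ is required to remain symmetric, the only free parameters are the values of $f$ on the permutation classes $\red(T)$, and I only plan to modify the classes in $\red(T)$ on which the tuples have at least $n(1-\delta)$ zeros. For each such class one picks a canonical representative $I_j$ and introduces a single scalar unknown $c_j$ (the common new value of $f'$ on $\perm(I_j)$, or equivalently the change relative to $f$).

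The next step is to rewrite each constraint $P_{f'}(\omega^{[I]}) = 0$ for $I \in S$ in terms of these unknowns. Using the $P_i$ notation of Lemma \ref{symmetric}: since $f'$ is symmetric, $P_{f'}(x) = \sum_j c_j' P_{I_j}(x)$ summed over all classes in $\red(\Z_d^n)$, where $c_j'$ is the value of $f'$ on class $j$; evaluating at $x = \omega^{[I]}$ for $I \in S$ gives, after separating the classes we are allowed to touch ($T$-classes) from the rest, an equation of the form $\sum_{j \in \red(T)} c_j P_{I_j}(\omega^{[I]}) = (\text{fixed quantity})$. So (\ref{roots}) becomes a linear system $V c = b$ where the rows of $V$ are indexed by $\red(S)$ and the columns by $\red(T)$ (restricted to high-zero classes), and $V_{I,I_j} = P_{I_j}(\omega^{[I]})$. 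The crux is to show this system is solvable, for which it suffices that the rows of $V$ are linearly independent — equivalently that the map $c \mapsto Vc$ is surjective onto $\C^{\red(S)}$.

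Here is where Lemma \ref{symmetric} does the real work, and this is the step I expect to be the main obstacle. Each $I \in S$ has its first $dm$ coordinates fixed to the pattern $(0^m 1^m \cdots (d-1)^m)$ and only the last $n - dm$ coordinates varying over $\Z_d^{n-dm}$. Plugging the fixed coordinates' $\omega$-powers in as the ``complex numbers $y_1,\dots,y_{dm}$'' of Lemma \ref{symmetric} (with $m$ there replaced by $dm$), that lemma says precisely that the polynomials $P_{I_j}$ with $I_j$ ranging over classes having $\geq dm$ zero-ish structure — I need the relevant $T_{\text{something}}$ to be a subset of the $T$-classes I am allowed to modify, which is why $\delta$ is chosen with room to spare so that $n(1-\delta) \le n - dm$ fails in the wrong direction; one must check $n - dm \geq n(1-\delta)$, i.e. $dm \le \delta n$, which holds up to the ceiling since $m = \lceil n(1-\delta)/d \rceil$ forces $dm \approx n(1-\delta)$, hmm — so actually one wants zeros in the \emph{last} $n-dm$ coordinates, and the tuples in $T$ have $\ge n(1-\delta)$ zeros total, compatible since $n(1-\delta) \ge n - dm$ — span the full space of symmetric polynomials of degree $\le d-1$ in each of the variables $x_{dm+1},\dots,x_n$, when evaluated at the fixed $y$'s. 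Every function on $\Z_d^{n-dm}$ that is symmetric is a combination of such polynomials evaluated at $\omega$-power points, so the evaluation map from $\{P_{I_j}\}$ to $\C^{\red(S)}$ is onto; equivalently $V$ has full row rank and $Vc = b$ is solvable. I would then record that the chosen $c_j$ modify $f$ only on $T$, completing the proof; a final bookkeeping remark notes that the number of classes touched is at most $|\red(T)|$, which will be used (in the proof of Lemma \ref{non-rigidity1}) to bound the number of entries changed by $N^\epsilon$.
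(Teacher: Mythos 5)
Your proposal takes essentially the same route as the paper's proof: one unknown per permutation class of $T$, one equation per permutation class of $S$, and Lemma \ref{symmetric} to reduce solvability to the statement that symmetric polynomials of degree at most $d-1$ in each of the last $n-dm$ variables realize every symmetric function on the point set $\{\omega^{[I']} : I' \in \Z_d^{n-dm}\}$. Three small remarks. First, that interpolation statement --- which you assert in one line --- is exactly where the paper does its only real work: for each class representative $I'_{i_0}$ it exhibits the symmetrized Lagrange polynomial
\[
Q_{i_0} = \sum_{I' \in \perm(I'_{i_0})} \prod_{j=1}^{n-dm} \frac{x_{dm+j}^d-1}{x_{dm+j}-\omega^{I'^{(j)}}}\,,
\]
which is symmetric, of degree $d-1$ in each variable, and vanishes at $\omega^{[I'_i]}$ precisely for $i \neq i_0$; your assertion is true (symmetrize any interpolant of the desired indicator function), but it needs at least this half-line of justification since it is the crux of the lemma. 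Second, the containment you were groping for is simply $dm \ge n(1-\delta)$ (forced by the ceiling in the definition of $m$), which guarantees that every tuple with at least $dm$ zeros --- i.e., every class invoked when applying Lemma \ref{symmetric} with its parameter set to $dm$ --- lies in $T$ and hence may be modified; the inequality $n(1-\delta)\ge n-dm$ you wrote down is not the relevant check. Third, your claim that $|\red(S)| = |S| = d^{n-dm}$ is false (two elements of $S$ whose last $n-dm$ coordinates are permutations of one another lie in the same class), but this is harmless: the argument only needs one equation per class, and having fewer classes only makes the system easier to satisfy.
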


\begin{proof}

We interpret (\ref{roots}) as a system of linear equations where the unknowns are the values of $f'$ at various points.  Let $\rep(T) = \{J_1, J_2, \dots , J_k \}$ for $J_1, J_2, \dots J_k \in T$.  Since we must maintain that $f'$ is symmetric, there are essentially $k$ variables each corresponding to an equivalence class of ordered tuples under permutations.  Each equivalence class is of the form $\perm(J_j)$ and we denote the corresponding variable by $m_j$.  The system of equations in (\ref{roots}) can be rewritten in the form
\[    %
\forall I \in S\qquad 
\sum_{j=1}^k m_j \sum_{J \in \perm(J_j)} \omega^{I \cdot J} + \sum_{J' \notin T}f(J') \omega^{I \cdot J'} = 0 \,.
\]
If we let  $\rep(S) = \{ I_1, I_2, \dots , I_l\}$, the system has exactly $l$ distinct equations corresponding to each element of $\rep(S)$ due to our symmetry assumptions.  Let $M$ denote the $l \times k$ coefficient matrix represented by $M_{ij} = \sum_{J \in \perm(J_j)} \omega^{I_i \cdot J}$.  To show that the system has a solution, it suffices to show that the column span of $M$ is full.  This is equivalent to showing that for each $i = 1,2, \dots, l$ there exist coefficients $a_1,a_2, \dots ,a_k$ such that
\begin{align*}
& \sum_{j=1}^{k} a_j \cdot\sum_{J \in \perm(J_j)} \omega^{I_i \cdot J}  \neq 0\,,  \\
\forall i' \neq i \qquad
& \sum_{j=1}^{k} a_j \cdot\sum_{J \in \perm(J_j)} \omega^{I_{i'} \cdot J} =0  \,.
\end{align*}

Fix an index $i_0$.  We can view each equation above as a polynomial in $\omega^{[I_{i}]}$ given by 
\[
P(x_1, \dots ,x_n) = \sum_{j=1}^k a_j \sum_{J \in \perm(J_j)} x^J
\]
and the problem becomes equivalent to constructing a polynomial that vanishes on $\omega^{[I_{i}]}$ if and only if $i \neq i_0$.  Note that only the entries $x_{dm+1}, \dots , x_n$ matter as we have \[x_1 = \dots = x_m = 1, \dots , x_{(d-1)m+1} = \dots = x_{dm} = \omega^{d-1}\] for all points we consider.

For $I_i = (i_1, i_2, \dots i_n)$, let $I'_i$ denote the (ordered) sub-tuple $(i_{dm+1}, \dots , i_n)$.  The problem is equivalent to constructing a polynomial  
\[
Q(x_{dm+1}, \dots ,x_n) = P(1,1, \dots, \omega^{d-1}, \dots, \omega^{d-1}, x_{dm+1}, \dots x_n)
\]
such that $Q$ vanishes on $\omega^{[I'_{i}]}$ if and only if $i \neq i_0$.

Lemma \ref{symmetric} implies that by choosing the coefficients $a_1, \dots , a_k$, we can make $Q$ be any polynomial that is symmetric in $x_{dm+1} ,\dots ,x_n$ and degree at most $d-1$ in each of the variables.

Now consider the polynomial 
\[
Q_{i_0}(x_{dm+1}, \dots , x_n) = \sum_{I' \in \perm(I'_{i_0})} \bigg( \frac{x_{dm+1}^d-1}{x_{dm+1}-\omega^{I'^{(1)}}} \bigg) \cdots \bigg( \frac{x_n^d-1}{x_n-\omega^{I'^{(n-dm)}}} \bigg) \,.
\]
Note this is a polynomial with coefficients in $\C$ since each of the factors reduces to a degree $d-1$ polynomial.

It is clear that the above polynomial is symmetric in all of the variables and satisfies the degree constraint so we know we can choose suitable coefficients $a_1, \dots  , a_k$.  We claim that the polynomial we construct does not vanish on $\omega^{[I'_{i_0}]}$ but vanishes on $\omega^{[I'_{i}]}$ for $i \neq i_0$.  Indeed, the product
\[
\bigg( \frac{x_{dm+1}^d-1}{x_{dm+1}-\omega^{I'^{(1)}}} \bigg) \cdots \bigg( \frac{x_n^d-1}{x_n-\omega^{I'^{(n-dm)}}} \bigg)
\]
is $0$ if and only if $(x_{dm+1}, \dots , x_{n}) \neq I'$.  However, there is exactly one $I' \in \perm(I'_{i_0})$ with $I' = I'_{i_0}$ and none with $I' = I'_{i}$ for $i \neq i_0$ since $I_1, I_2, \dots , I_l$ are representatives of distinct equivalence classes under permutation of entries.  This means that the polynomial $Q_{i_0}$ we constructed has the desired properties and completes the proof that the system is solvable.
\end{proof}

\begin{proof}[Proof of Lemma \ref{non-rigidity1}] Since $M(f) = (M(f) - M(f')) + M(f')$, to complete the proof of Lemma \ref{non-rigidity1}, it suffices to bound the number of nonzero entries in $M(f) - M(f')$ and the rank of $M(f')$.

The number of nonzero entries in each row and column of $(M(f) - M(f'))$ is at most $|T|$.  This is exactly the number of elements of $\Z_d^n$ with at least $n\big(1- \delta \big)$ entries equal to $0$.  Using standard tail bounds on the binomial distribution (see \cite{binomial_tails}), the probability of a random ordered $n$-tuple having at least that many $0$s is at most
\begin{align*}
\exp\left(-nD\left(1-\delta \mid \mid \frac{1}{d}\right)\right) = \exp\left(-n \left( (1- \delta) \log (d(1-\delta)) + \delta \log\left(\frac{d\delta}{d-1}\right)\right)\right) \\ 
= d^{-n(1-\delta)}\exp\left(-n\left((1-\delta) \log(1-\delta) + \delta \log\left(\frac{d\delta}{d-1}\right)\right)\right)
\end{align*}
where %
\[
D(a \mid \mid b) = a \log \frac{a}{b} + (1-a) \log \frac{1-a}{1-b} 
\]
denotes the KL-divergence between Bernoulli distributions with means $a$ and $b$.

 For $\delta < 0.01$, the above is at most $d^{-n(1- 4\delta \log (1/\delta))}$.  Since $4\delta \log (1/\delta) < \epsilon$, we change at most $d^{ \epsilon n}$ entries in each row and column.

By Claim \ref{rankcomp}, the rank of $M(f')$ is at most $d^n - |\perm(S)|$.  Equivalently, this is the number of ordered $n$-tuples such that some element in $\{0,1, \dots , d-1 \}$ appears less than $ \frac{(1-\delta)n}{d}$ times.  We use the multiplicative Chernoff bound and then union bound over the $d$ possibilities to get the probability that a randomly chosen ordered $n$-tuple in $\Z_d^n$ is outside $\perm(S)$ is at most
\[
d\exp\left(-\frac{\delta^2n}{2d}\right)  = \exp\left(-\frac{\delta^2n}{2d} + \log d\right) \,.
\]

When $n > \frac{4d(\log d)}{\delta^2}$, the above is at most $d^{-(\delta^2 n)/(4 d \log d)}$ and thus the rank of $M(f')$ is at most  
$d^{(1- \psi)n}$
where 
\[
\psi = \frac{\epsilon^2 }{400 \log^2 (1/\epsilon) d \log d} \,,
\]
completing the proof of Lemma \ref{non-rigidity1}.
\end{proof}

\begin{proof}[Proof of Theorem \ref{Hadamard}]
Combine Claim \ref{rescaling} and Lemma \ref{non-rigidity1}. %
\end{proof}

Using Theorem \ref{Hadamard}, Lemma \ref{diagonalization}, and {Claim} \ref{Hadamard_diagonalization}, we get the following result which extends {Lemma} \ref{non-rigidity1} to matrices where $f$ is not symmetric.
\begin{corollary}\label{groupalgebra1}
For any function $f: \Z_d^n \rightarrow \C$ and any $0 < \epsilon < 0.01$ such that $n \geq 1/ \psi$ where 
\[\psi = \frac{\epsilon^2 }{400\log^2 (1/\epsilon) d \log d} \,,\]
 we have 
\[
\textsf{r}_{M(f)}\left(2N^{1 - \psi}\right) \leq N^{ 2\epsilon}
\]
where $N = d^n$.
\end{corollary}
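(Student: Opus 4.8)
The plan is to obtain Corollary \ref{groupalgebra1} from Theorem \ref{Hadamard} by the diagonalization trick of Lemma \ref{diagonalization}, exploiting the fact that \emph{every} matrix of the form $M_{\Z_d^n}(f)$ — whether or not $f$ is symmetric — is diagonalized by the single matrix $H_{d,n}$ (Claim \ref{Hadamard_diagonalization}). Lemma \ref{non-rigidity1} already handles symmetric $f$, but there symmetry was used crucially inside the argument. The point now is that once we know $H_{d,n}$ itself is non-rigid, non-rigidity of \emph{all} $M(f)$ follows formally, at the cost of only a factor of two in the rank parameter and a squaring of the sparsity parameter.

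First I would put $M(f)$ into the conjugated-diagonal form required by Lemma \ref{diagonalization}. By Claim \ref{Hadamard_diagonalization} the matrix $D := H_{d,n} M_{\Z_d^n}(f) H_{d,n}$ is diagonal, and by Claim \ref{orthogonal} we have $H_{d,n}^{-1} = d^{-n} H_{d,n}^*$, so
\[
M_{\Z_d^n}(f) = H_{d,n}^{-1} D\, H_{d,n}^{-1} = d^{-2n}\, H_{d,n}^* D\, H_{d,n}^* = H_{d,n}^*\, \widetilde D\, H_{d,n}^*,
\]
where $\widetilde D := d^{-2n} D$ is still diagonal. This is exactly of the shape $B' = ADA$ with $A = H_{d,n}^*$, so the corresponding clause of Lemma \ref{diagonalization} applies. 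Next I would check that $A = H_{d,n}^*$ has the same regular-rigidity as $H_{d,n}$: transposition permutes entries and preserves rank, while entrywise complex conjugation preserves both the zero/nonzero pattern and the rank, so $\textsf{r}_{H_{d,n}^*}(r) = \textsf{r}_{H_{d,n}}(r)$ for every $r$. (Equivalently, $H_{d,n}^*$ is itself a GWH matrix built from the primitive root $\omega^{-1}$, and the proof of Theorem \ref{Hadamard} is insensitive to the choice of primitive $d$-th root of unity.)

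Finally I would feed in the quantitative bound. Under the hypothesis $n \geq 1/\psi$ with $\psi$ as in the statement, Theorem \ref{Hadamard} gives $\textsf{r}_{H_{d,n}}(N^{1-\psi}) \leq N^{\epsilon}$, hence $\textsf{r}_{A}(N^{1-\psi}) \leq N^{\epsilon}$. Applying the $B' = ADA$ case of Lemma \ref{diagonalization} with $r = N^{1-\psi}$ and $s = N^{\epsilon}$ yields $\textsf{r}_{M(f)}(2N^{1-\psi}) \leq N^{2\epsilon}$, which is the assertion. I expect no genuine obstacle here: all of the mathematical content lives in Theorem \ref{Hadamard}, and the only points demanding a moment's care are (i) using the ``$ADA$'' rather than the ``$A^*DA$'' form of Lemma \ref{diagonalization}, since $H_{d,n}^{-1}$ is a scalar multiple of $H_{d,n}^*$ and not of $H_{d,n}$, and (ii) the conjugation/transposition invariance of regular-rigidity used to transfer Theorem \ref{Hadamard} from $H_{d,n}$ to $H_{d,n}^*$.
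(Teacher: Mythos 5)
Your proposal is correct and is precisely the argument the paper intends (it cites exactly Theorem \ref{Hadamard}, Lemma \ref{diagonalization}, and Claim \ref{Hadamard_diagonalization} without writing out the details). Your two points of care --- using the $ADA$ clause with $A=H_{d,n}^*$ after absorbing the scalar $d^{-2n}$ into the diagonal factor, and noting that $\textsf{r}_{H_{d,n}^*}=\textsf{r}_{H_{d,n}}$ --- are the right ones and are handled correctly.
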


\section{Non-rigidity of  %
DFT matrices of well-factorable size} %
\label{sec_Fourier1}
Our goal in this section is to show that we can find infinitely many values of $N$ for which the %
DFT %
 matrix %
$\DFT_N$ %
 is highly non-rigid.  The integers $N$ we analyze will be products of many distinct primes $q_i$ with the property that $q_i - 1$ is
smooth (has all prime factors small).  For these values of $N$, we can decompose the matrix %
$\DFT_N$ %
 into several submatrices that are closely related to Hadamard matrices.  We then apply the results from the previous section to show that each submatrix is non-rigid and aggregate over the submatrices to conclude that %
$\DFT_N$ %
 is non-rigid.

We first show precisely how to construct $N$.  We rely on the following number theoretic result, found in \cite{smooth_primes}, that allows us to find a large set of primes $q_i$ for which $q_i - 1$ is %
smooth. 
\begin{define}
For a positive integer $m$, let $\rho^+(m)$ denote the largest prime factor of $m$.  For a fixed positive integer $a$, let
$$
\pi_a(x,y) = |\{p \mid a<p \leq x, \rho^+(p-a) \leq y\} |
$$
where $p$ ranges over all primes.  In other words, $\pi_a(x,y)$ is the number of primes at most $x$ such that $p-a$ is $y$-smooth.
\end{define}
\begin{theorem}[\hspace{1sp}\cite{smooth_primes}]\label{analytic_NT}
There exist constants $x_0,C$ such that for $\beta = 0.2961, x > x_0$ and $y \geq x^{\beta}$ we have~\footnote{\cite{smooth_primes} proves the same inequality with $\pi_a(x,y)$ for any integer $a$ where $x_0$ may depend on $a$ and $C$ is an absolute constant.}
$$
\pi_1(x,y) > \frac{x}{(\log x)^C}\,.
$$

\end{theorem}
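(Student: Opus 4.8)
The plan is to read this as a lower-bound sieve problem. Writing $u=\log x/\log y=1/\beta\approx 3.38$, the count $\pi_1(x,y)$ of primes $p\le x$ with $\rho^+(p-1)\le y$ should heuristically be of size $\sim x\rho(u)/\log x$, where $\rho$ is Dickman's function; since $\rho(u)$ is a fixed positive constant for this value of $u$, even the truth is $\gg x/(\log x)$, so the weaker bound $x/(\log x)^C$ in the statement is there only to absorb the losses inherent in a sieve. Equivalently, one wants a positive-proportion lower bound for the number of $y$-smooth integers $n\le x$ such that $n+1$ is prime.

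First I would set up a combinatorial lower-bound sieve — Harman's sieve, or the Rosser--Iwaniec linear sieve — either to detect primality of $n+1$ as $n$ runs over $y$-smooth numbers, or dually to sieve the primes $p\le x$ down to those for which $p-1$ has no prime factor in $(y,x]$. One must use a genuine sieve rather than a naive union bound: $\sum_{y<\ell\le x^{1-\varepsilon}}\pi(x;\ell,1)$ is already comparable to $\pi(x)$ — indeed $\sum_{y<\ell\le x}1/\ell\approx\log(1/\beta)>1$ for our $\beta$ — so Bonferroni-type cancellation is essential. The arithmetic input the sieve requires is an equidistribution statement: primes (respectively $y$-smooth numbers) fall into residue classes $a\bmod d$ with the expected density, on average over $d$ up to $x^{\theta-\varepsilon}$. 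The Bombieri--Vinogradov theorem supplies the level $\theta=1/2$ for primes; level-of-distribution results beyond $1/2$, resting on bilinear-form and Kloosterman-sum estimates (Deshouillers--Iwaniec, Bombieri--Friedlander--Iwaniec) and, for smooth numbers, on work of Fouvry--Tenenbaum and related authors, are what permit the exponent $\beta$ to be pushed down to $0.2961$.

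The core of the argument is to expand the sifting function by Buchstab's identity, iterated a bounded number of times, and to bound each resulting piece by either a Type I sum (handled by Bombieri--Vinogradov) or a Type II bilinear sum (handled by the dispersion method or large-sieve inequalities for Kloosterman fractions). One keeps exactly those configurations whose moduli stay below the available level of distribution, discards the rest with a one-sided loss — which is why we obtain a lower bound, not an asymptotic — and then optimizes the ranges of the decomposition against the admissible level; the value $0.2961$ is precisely the output of that optimization. It remains to check that the surviving main term has size $\gg x/(\log x)^C$ for a suitable absolute constant $C$, which follows since that term is a constant multiple of $x\rho(u)/\log x$ times the unavoidable sieve factors.

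The main obstacle is exactly the equidistribution input. Bombieri--Vinogradov by itself (level $1/2$) is not enough to reach $\beta<1/3$, so one must either invoke level-of-distribution estimates beyond $1/2$ — which depend on deep spectral/Kloosterman-sum bounds and are delicate to apply while the smoothness constraint on $p-1$ is in force — or exploit the flexibility of Harman's sieve to extract more from the level-$1/2$ information through a more careful combinatorial arrangement. Controlling the interplay between the smoothness restriction and the bilinear structure needed for the Type II estimates, and arranging the decomposition so that no term is lost by more than a constant factor, is where essentially all of the difficulty lies.
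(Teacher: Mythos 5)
This statement is not proved in the paper at all: it is imported verbatim from the cited reference (Baker and Harman, \emph{Shifted primes without large prime factors}), and the paper uses it as a black box. So there is no internal proof to compare against; the relevant comparison is with the argument in the cited source. Your outline does correctly describe the strategy of that source: an alternative (Harman-type) sieve lower bound for primes $p\le x$ with $p-1$ being $y$-smooth, Buchstab decompositions into Type I and Type II pieces, equidistribution input from Bombieri--Vinogradov together with level-of-distribution results beyond $1/2$ (Bombieri--Friedlander--Iwaniec, Fouvry) and equidistribution of smooth numbers in arithmetic progressions, with the exponent $0.2961$ emerging from the final numerical optimization. You also correctly identify why a naive union bound fails and why only a lower bound (with polylogarithmic loss, hence the $(\log x)^C$) is obtained.

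That said, what you have written is a roadmap, not a proof. Every genuinely hard step is named and then deferred: the precise arithmetical information (which bilinear and Kloosterman-sum estimates are admissible while the smoothness constraint on $p-1$ is in force), the exact Buchstab decomposition and the verification that the discarded terms are nonnegative, and the optimization yielding $0.2961$ are all asserted to be doable rather than done. These constitute essentially the entire content of the Baker--Harman paper, so the proposal cannot stand on its own as a proof of the theorem. Since the paper itself treats this as an external citation, the correct move here is simply to cite the reference rather than to attempt to reprove it; if you do want to reprove it, you would need to supply the Type II estimates and the explicit decomposition, which is a substantial analytic-number-theory undertaking well beyond the scope of this sketch.
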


Throughout the remainder of this paper, set $C_0 = C+1$ where $C$ is the constant in {Theorem} \ref{analytic_NT}.  The properties that we want $N$ to have are stated in the following two definitions.
\begin{define}
We say a prime $q$ is $(\alpha,x)$-good if the following
conditions hold.    %
\begin{itemize}
    \item $\frac{x}{(\log x)^{C_0}} \leq q \leq x $.
    \item All prime powers dividing $q-1$ are at most $x^{\alpha} $.
\end{itemize}
\end{define}

\begin{define}
We say an integer $N$ is $(l, \alpha, x)$-factorable if the following
conditions hold.    %
\begin{itemize}
    \item $N = q_1 \cdots q_l$ where $q_1, \dots , q_l$ are distinct primes.
    \item $q_1, \dots , q_l$ are all $(\alpha,x)$-good.
\end{itemize}
\end{define}

To show the existence of $(l, \alpha, x)$-factorable integers, it suffices to show that there are many $(\alpha,x)$-good primes.  This is captured in the following lemma.
\begin{lemma}\label{construction}
For a fixed constant $C_0$, any parameter $\alpha > 0.2961$, and sufficiently large $x$ (possibly depending on $\alpha$), there are at least $10x/(\log x)^{C_0}$ distinct $(\alpha,x)$-good primes.
\end{lemma}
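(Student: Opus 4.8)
The plan is to count $(\alpha,x)$-good primes by starting from Baker--Harman's Theorem \ref{analytic_NT}, which guarantees at least $x/(\log x)^{C}$ primes $q\le x$ with $q-1$ being $x^{\beta}$-smooth for $\beta = 0.2961$. Fix any $\alpha > 0.2961$. Since $\alpha > \beta$, applying Theorem \ref{analytic_NT} with $y = x^{\alpha}$ (which certainly satisfies $y \ge x^{\beta}$) gives $\pi_1(x,x^{\alpha}) > x/(\log x)^{C}$ primes $q\le x$ all of whose prime factors are at most $x^{\alpha}$. The two remaining discrepancies with the definition of $(\alpha,x)$-good are: (i) we need the prime \emph{powers} dividing $q-1$, not just the primes, to be bounded by $x^{\alpha}$; and (ii) we need a lower bound $q \ge x/(\log x)^{C_0}$ on the size of $q$ itself, and we want the count to be $10x/(\log x)^{C_0}$ with $C_0 = C+1$ rather than $x/(\log x)^{C}$.

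First I would handle (i). If a prime power $p^k$ divides $q-1$ with $p^k > x^{\alpha}$, then since $p \le x^{\alpha}$ (as $q-1$ is $x^{\alpha}$-smooth) we must have $k \ge 2$, so $q-1$ is divisible by $p^2$ for some prime $p$ with $p^2 > x^{\alpha}$, i.e. $p > x^{\alpha/2}$. The number of $q \le x$ for which $q - 1$ is divisible by the square of some prime exceeding $x^{\alpha/2}$ is at most $\sum_{p > x^{\alpha/2}} x/p^2 \le x \sum_{t > x^{\alpha/2}} t^{-2} = O(x^{1 - \alpha/2})$, which is negligible compared to $x/(\log x)^{C}$. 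Subtracting this off, the number of primes $q \le x$ that are $x^{\alpha}$-smooth-shifted \emph{and} have all prime-power divisors of $q-1$ at most $x^{\alpha}$ is still at least, say, $\tfrac12 x/(\log x)^{C}$ for $x$ large.

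Next I would handle (ii): discard the small primes $q \le x/(\log x)^{C_0}$. There are at most $x/(\log x)^{C_0}$ such primes trivially (they are at most $x/(\log x)^{C_0}$ integers), and since $C_0 = C + 1 > C$ this quantity is $o\big(x/(\log x)^{C}\big)$, so removing them still leaves at least, say, $\tfrac13 x/(\log x)^{C}$ primes. Finally, since $C_0 = C+1$, for all sufficiently large $x$ we have $\tfrac13 x/(\log x)^{C} = \tfrac13 (\log x)\cdot x/(\log x)^{C_0} \ge 10 x/(\log x)^{C_0}$, giving the claimed bound. The only mild subtlety — and the one place one must be slightly careful — is tracking that all three error terms (the $p^2 \mid q-1$ count, the small-prime count, and the constant-factor slack) are simultaneously absorbed by the single gain of a $\log x$ factor coming from $C_0 = C+1$; each is individually routine, but one should note explicitly that ``sufficiently large $x$'' is allowed to depend on $\alpha$ since the threshold $x^{1-\alpha/2} \ll x/(\log x)^C$ depends on $\alpha$. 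I do not anticipate a genuine obstacle here; the content is entirely in Theorem \ref{analytic_NT}, and this lemma is a clean-up step.
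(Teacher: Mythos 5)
Your overall strategy --- invoke Theorem \ref{analytic_NT}, discard the primes violating the prime-power condition and those below $x/(\log x)^{C_0}$, and absorb all losses into the extra factor of $\log x$ coming from $C_0 = C+1$ --- is the same as the paper's, and step (ii) together with the final bookkeeping is fine. But step (i) contains a genuine error. From ``$p^k \mid q-1$ with $p^k > x^{\alpha}$ and $p \le x^{\alpha}$, hence $k \ge 2$'' you conclude that $q-1$ is divisible by $p^2$ for some prime $p > x^{\alpha/2}$. That implication is false: the offending prime may be small with a large exponent (say $p=2$ and $2^k > x^{\alpha}$), in which case $q-1$ need not be divisible by the square of any prime exceeding $x^{\alpha/2}$. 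So the set of primes you need to eliminate is not contained in the set you bound by $\sum_{p > x^{\alpha/2}} x/p^2$, and the elimination step as written does not go through.

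The gap is repairable and your target bound $O(x^{1-\alpha/2})$ is in fact correct; one just has to split on the size of the offending prime. For $p > x^{\alpha/2}$ your estimate applies verbatim, since then $k\ge 2$ really does force $p^2 \mid q-1$ and $\sum_{p>x^{\alpha/2}} x/p^2 = O(x^{1-\alpha/2})$. For $p \le x^{\alpha/2}$, observe that $q-1$ is divisible by the \emph{least} power of $p$ exceeding $x^{\alpha}$, so each such $p$ rules out at most $x^{1-\alpha}$ candidates $q\le x$, and a union bound over the at most $x^{\alpha/2}$ such primes contributes another $x^{1-\alpha/2}$. The paper sidesteps this case analysis by applying Theorem \ref{analytic_NT} with $y = x^{\beta}$, $\beta = 0.2961$, rather than $y = x^{\alpha}$: then every prime power in $[x^{\alpha},x]$ that could divide some $q-1$ has base below $x^{\beta}$, there are at most $x^{\beta}\log x$ of them, and each divides at most $x^{1-\alpha}$ of the shifted primes, for a total loss of $x^{1-\alpha+\beta}\log x = o(x/(\log x)^{C})$ precisely because $\alpha > \beta$. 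Note that with your choice $y = x^{\alpha}$ this counting would only give $x\log x$, so on your route the (corrected) square-divisor argument is genuinely needed.
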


\begin{proof}[Proof of {Lemma} \ref{construction}]  Let $y = x^{\beta}$ where $\beta =  0.2961$.  By {Theorem} \ref{analytic_NT}, for sufficiently large $x$, we can find at least
\[ \left\lceil \frac{x}{(\log x)^C} - \frac{x}{(\log x)^{C_0 }} \right\rceil \] primes $p_1, \dots , p_l$ between $x/(\log x)^{C_0 }$ and $x$ such that all prime factors of $p_i - 1$ are at most $x^{\beta}$.  Eliminate all of the $p_i$ such that one of the prime powers in the prime factorization of $p_i - 1$ is more than $x^{\alpha}$.  Note that there are at most $x^{\beta} \log x$ integers in the range $[x^{\alpha}, x]$  that are powers of primes smaller than $x^{\beta}$.  Each of these prime powers can divide at most $x^{1 - \alpha}$ of the elements $\{ p_1 -1 , \dots , p_l-1\}$, so in total, we eliminate at most $x^{1 - \alpha + \beta} \log x$ of the $p_i$.  Thus, for sufficiently large $x$, the number of $(\alpha ,x)$-good primes is at least
\[
\frac{x}{(\log x)^C} - \frac{x}{(\log x)^{C_0 }} - x^{1 - \alpha + \beta} \log x \geq \frac{x}{2(\log x)^C} \,. \qedhere
\]
\end{proof}

For simplicity, we will set $\alpha = 0.3$ by default. 
\begin{define}
We say a prime is $x$-good %
if it is $(0.3,x)$-good.  %
We say an integer $N$ is %
 $(l,x)$-factorable if it is $(l, 0.3, x)$-factorable.
\end{define}  

{Lemma} \ref{construction} implies that for all sufficiently large $x$ and $l \leq \frac{x}{(\log x)^{C_0}}$ (where $C_0$ is an absolute constant), we can find $(l,x)$-factorable integers.  We now show that if we choose $x$ sufficiently large and $N$ to be $(l,x)$-factorable for some 
\[
\frac{x}{(\log x)^{C_0 + 100}} \leq l \leq\frac{x}{(\log x)^{C_0 + 10}} \,, \]
 then %
$\DFT_N$ %
 is highly non-rigid.

\begin{theorem} \label{main}
Let $0 <\epsilon < 0.01$ be some constant.  For $x$ sufficiently large and $N$ a $(l, x)$-factorable number with \[\frac{x}{(\log x)^{C_0 + 100}} \leq l \leq\frac{x}{(\log x)^{C_0 + 10}}\,, \] we must have
\[
\textsf{r}_{\DFT_N}\left( \frac{N}{\exp\left(\epsilon^6 (\log N)^{0.36}\right)} \right) \leq N^{ 7 \epsilon} \,.
\]
\end{theorem}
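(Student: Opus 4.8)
The plan is to exploit the factorization $N = q_1 \cdots q_l$ to exhibit many large submatrices of $\DFT_N$ that, after relabeling, look like adjusted versions of $M_{\Z_d^l}(f)$ matrices, apply Corollary \ref{groupalgebra1} (via the diagonalization trick of Lemma \ref{diagonalization}) to each, and then aggregate the rank reductions and the per-row/per-column changes across all submatrices. First I would set up the multiplicative-to-additive translation: by CRT, the row (and column) index set $\Z_N$ is identified with $\Z_{q_1} \times \cdots \times \Z_{q_l}$, and restricting to indices that are units in every coordinate gives a submatrix indexed by $\F_{q_1}^{\times} \times \cdots \times \F_{q_l}^{\times}$ whose $(a,b)$ entry is $\omega^{a \cdot b}$ where $a\cdot b$ is computed coordinatewise. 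Fixing a generator $g_i$ of each $\F_{q_i}^{\times}$ and writing $a_i = g_i^{s_i}$, $b_i = g_i^{t_i}$, the entry becomes a function of $s_i + t_i \bmod (q_i - 1)$ in each coordinate — i.e.\ this submatrix is, up to permuting rows and columns, an adjusted $H$-type matrix for the group $\Z_{q_1-1} \times \cdots \times \Z_{q_l-1}$. The cost of passing to units is small: the deleted fraction of indices is $1 - \prod_i (1 - 1/q_i) \le \sum_i 1/q_i = O(l (\log x)^{C_0}/x) = o(1)$, so the unit-submatrix still has side length $N^{1-o(1)}$, and its complement can be absorbed into the "changed entries" budget since it contributes at most $\sum_i q_{\min}^{-1} \cdot (\text{side})$ changes per row.

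Next I would extract the $\Z_d^l$ structure. Each $q_i - 1$ is $x^{0.3}$-smooth with all prime-power divisors at most $x^{0.3}$, so $q_i - 1 = \prod_{p} p^{e_{i,p}}$ with each $p^{e_{i,p}} \le x^{0.3}$. By the pigeonhole principle over the $l$ values $q_1-1,\dots,q_l-1$, and using that $l$ is large ($\ge x/(\log x)^{C_0+100}$) while the number of possible prime-power divisors below $x^{0.3}$ is small (at most $x^{0.3}$), some fixed prime power $d$ divides $q_i - 1$ for many indices $i$ — in fact one can arrange $d \mid q_i - 1$ for roughly $l^{1-o(1)}$ of the $i$'s (quantitatively, for $\ge l / x^{0.3}$ of them, so $\ge N^{1-o(1)}$-many after the logarithms are sorted out). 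Restricting to those coordinates and further restricting each $\Z_{q_i-1}$ to its subgroup of index-$d$ (the multiples of $(q_i-1)/d$), and collapsing the other coordinates by fixing them, yields a submatrix of $\DFT_N$ that after relabeling is exactly (a rescaling of) $M_{\Z_d^{l'}}(f)$ for some $f:\Z_d^{l'}\to\C$ and $l' = l^{1-o(1)}$. Crucially $d$ is a fixed prime power but it need not be small — however, since $d \le x^{0.3}$ and the side length of the submatrix is $d^{l'} = N^{1-o(1)}$, we have $l' \ge (\log N)/(0.3 \log x + O(1)) \ge (\log N)^{1-o(1)}$, which is more than enough for the hypothesis $l' \ge 1/\psi$ of Corollary \ref{groupalgebra1} to hold with $\psi = \epsilon^2/(400 \log^2(1/\epsilon)\, d \log d)$; the extra factor of $d\log d$ in the denominator of $\psi$ costs a factor polynomial in $x$, i.e.\ polynomial in $(\log N)^{1/0.36}$ up to constants, which is why the final rank bound reads $N/\exp(\epsilon^6(\log N)^{0.36})$ rather than $N/\exp(\epsilon^{O(1)}\log N)$.

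Then I would apply Corollary \ref{groupalgebra1} to each submatrix: each one can be made to have rank at most $2 (d^{l'})^{1-\psi}$ by changing at most $(d^{l'})^{2\epsilon}$ entries per row and column. To assemble the global bound I would tile $\DFT_N$ by the family of such submatrices obtained by varying which coordinates are "active" and which residues the inactive coordinates are fixed to — every unit-index of $\DFT_N$ lies in one of these blocks, the number of blocks through any fixed row is bounded, and the blocks through a fixed row are pairwise disjoint in columns (they partition the column set, or can be arranged to), so the per-row change count adds up to at most $N^{O(\epsilon)}$ and the total rank is at most (number of blocks) $\times$ (rank per block) $\le N/\exp(\epsilon^6 (\log N)^{0.36})$ after optimizing. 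Folding in the $o(1)$ non-unit rows/columns (handled by a final cleanup of $o(N)$ rank, or subsumed since a few full rows cost negligible rank) finishes the bound, and tracking constants carefully turns $2\epsilon$-style losses into the stated $7\epsilon$.

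The main obstacle I expect is the combinatorial bookkeeping in the last paragraph: namely, organizing the collection of GWH-type submatrices into a genuine partition (or near-partition) of the entries of $\DFT_N$ so that the per-row/per-column sparsity is additive and the ranks add up correctly, while simultaneously ensuring each block is large enough ($d^{l'} = N^{1-o(1)}$ with $l'$ large enough to invoke Corollary \ref{groupalgebra1}) and the number of blocks is small enough ($N^{o(1)}$, or at worst a quasipolynomial factor) that multiplying it by the per-block rank still beats $N$. Getting the exponent $0.36 = 1 - 0.36\cdots$... actually $0.36$ traces back to the smoothness exponent $\beta = 0.2961$ (via $1 - \beta$ and the loss from $d\log d$ with $d \le x^{0.3}$), so I would need to be careful that the choice $\alpha = 0.3$ and the range $x/(\log x)^{C_0+100} \le l \le x/(\log x)^{C_0+10}$ are exactly what makes $l' = l^{1-o(1)}$ survive the pigeonhole and makes $\log N \asymp l \log x$, pinning down the final exponent.
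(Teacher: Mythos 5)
Your high-level plan (multiplicative-to-additive translation via generators of $\F_{q_i}^{\times}$, extraction of GWH-type structure from the smoothness of the $q_i-1$, then aggregation) is the right one and matches the paper's strategy, but two of your quantitative claims fail in a way that breaks the argument. First, the non-unit entries cannot be absorbed into the changed-entries budget. In a fixed row $i$, the columns $j$ with $q_s \mid ij$ for some $s$ number at least roughly $\sum_s N/q_s \geq Nl/x \geq N/(\log x)^{C_0+100} = N^{1-o(1)}$, vastly exceeding the allowance $N^{7\epsilon}$; nor can these entries be discarded by deleting rows, since every row contains them. The paper instead partitions $[N]\times[N]$ into the sets $T_S$ (indexed by which primes divide $ij$), shows each $T_S$ with $|S|$ large decomposes into copies of a structured matrix $M(S)$ that gets the same rank-reduction treatment, and only discards indices divisible by at least $m/2$ of the primes --- and those it removes as whole rows and columns, which is a rank operation costing only $N/(\log x)^{x^{0.365}}$.

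Second, and more fundamentally, the pigeonhole step producing a single prime power $d$ repeated across $l'$ coordinates yields blocks that are far too small. Since every prime power dividing $q_i-1$ is at most $x^{0.3}$, even with $l'=l$ you get $d^{l'} \leq x^{0.3 l} \approx N^{0.3}$, and with your count $l' \geq l/x^{0.3}$ you get only $d^{l'} = N^{o(1)}$ --- not $N^{1-o(1)}$ as claimed. Reducing the rank of $N^{o(1)}$-sized blocks cannot bring the rank of $\DFT_N$ below $N(1-o(1))$: in the block decomposition the rank bound has the form $(\fact_N(S)/P)\cdot P^{1-\delta} = \fact_N(S)/P^{\delta}$, so one needs the ``structured part'' $P$ to satisfy $P \geq (\fact_N(S))^{1-\epsilon}$. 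The paper achieves this by using \emph{all} prime powers $t$ whose multiplicity $c(t)$ among the factorizations of $q_1-1,\dots,q_k-1$ exceeds $x^{0.62}$ simultaneously, showing the leftover factors contribute at most $x^{x^{0.92}} = (\fact_N(S))^{o(1)}$; since the surviving $t$'s are distinct, this forces the use of Lemma \ref{productbound}, which bounds the rigidity of mixed Kronecker products $H_{t_1,a_1}\otimes\cdots\otimes H_{t_n,a_n}$ with different bases (grouping the $t_j^{a_j}$ into dyadic intervals), rather than a single application of Corollary \ref{groupalgebra1} to one $\Z_d^{l'}$. Your proposal is missing this multi-base machinery, and without it the rank savings per block is negligible no matter how the tiling is organized.
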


In order to prove {Theorem} \ref{main}, we will first prove a series of preliminary results that characterize the structure of %
DFT %
 and 
GWH matrices.   %

\subsection{Structure of generalized Walsh--Hadamard and %
DFT %
 matrices}

\begin{lemma}\label{decomp}
Let $n = x_1x_2\cdots x_j$ for pairwise relatively prime positive integers $x_1, \dots , x_j$. There exists a permutation of the rows and columns of
$\DFT_n$, say $\DFT'$, such that 
\[
\DFT' = \DFT_{x_1} \otimes \cdots \otimes \DFT_{x_j}\,,
\]
where $\otimes$ denotes the Kronecker product.
\end{lemma}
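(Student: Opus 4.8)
The plan is to use the Chinese Remainder Theorem (CRT) at the level of the underlying groups and then invoke Fact \ref{fact:dftproduct}. Since $x_1, \dots, x_j$ are pairwise relatively prime, CRT gives a group isomorphism $\Z_n \cong \Z_{x_1} \times \cdots \times \Z_{x_j}$, sending $k \bmod n$ to $(k \bmod x_1, \dots, k \bmod x_j)$. First I would fix this isomorphism explicitly and use it to relabel the rows of $\DFT_n$ (whose rows are indexed by elements of $\Z_n$) by tuples in $\Z_{x_1} \times \cdots \times \Z_{x_j}$. For the columns, recall that the columns of $\DFT_n$ are indexed by the characters of $\Z_n$; the same CRT isomorphism induces a bijection between characters of $\Z_n$ and characters of the product group, namely a character $\chi$ of $\Z_n$ corresponds to the tuple of characters $(\chi_1, \dots, \chi_j)$ where each $\chi_i$ is a character of $\Z_{x_i}$.

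The key computation is then to check that under this simultaneous relabeling of rows and columns, the $(x,y)$ entry of the relabeled matrix — where $x = (a_1, \dots, a_j)$ and $y$ corresponds to characters $(\chi_1, \dots, \chi_j)$ — equals $\prod_{i=1}^j \chi_i(a_i)$, which is exactly the $(x,y)$ entry of $\DFT_{x_1} \otimes \cdots \otimes \DFT_{x_j}$ (using that the characters of a product group are products of characters of the factors, the content of Fact \ref{fact:dftproduct}). Concretely, if $\omega = e^{2\pi i / n}$, the entry $\omega^{k\ell}$ of $\DFT_n$ gets rewritten: writing $k \leftrightarrow (a_1, \dots, a_j)$ and $\ell \leftrightarrow (b_1, \dots, b_j)$ via CRT, one has $\omega^{k\ell} = \prod_i e^{2\pi i a_i b_i e_i / x_i}$ for appropriate CRT idempotent multipliers $e_i$, and absorbing $e_i$ into the character indexing on the column side (a permutation of the columns within the $i$-th factor, since $\gcd(e_i, x_i) = 1$) yields exactly $\prod_i e^{2\pi i a_i b_i / x_i}$. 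This is a routine but slightly fiddly bookkeeping step, and it is the only real content of the proof.

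I expect the main (mild) obstacle to be keeping the row/column relabelings consistent: the CRT isomorphism permutes rows one way, and one must verify that the induced action on the character side is also a permutation realized by reindexing columns of each tensor factor, rather than something that mixes factors. Alternatively — and this may be the cleanest route — I would simply cite Fact \ref{fact:dftproduct} directly: that fact already states that for $G = H \times K$ there is an ordering of rows and columns with $\DFT_G = \DFT_H \otimes \DFT_K$, and iterating it over the $j$ factors together with the CRT isomorphism $\Z_n \cong \prod_i \Z_{x_i}$ gives the claim immediately. In that case the proof reduces to one sentence invoking CRT plus Fact \ref{fact:dftproduct} and induction on $j$.
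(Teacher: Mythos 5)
Your proposal is correct and matches the paper's approach: the paper's entire proof of Lemma \ref{decomp} is the single sentence ``This follows from Fact \ref{fact:dftproduct},'' i.e., exactly the route you identify at the end (CRT isomorphism $\Z_n \cong \Z_{x_1}\times\cdots\times\Z_{x_j}$ plus the product formula for $\DFT_G$). The explicit bookkeeping with CRT idempotents that you sketch is a correct elaboration of what that citation suppresses, but it is not needed once Fact \ref{fact:dftproduct} is invoked.
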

\begin{proof}
This follows from {Fact} \ref{fact:dftproduct}.
\end{proof}

\begin{lemma}\label{products}
Let $M = A \otimes B$ where $A$ is an $m \times m$ matrix and $B$ is an $n \times n$ matrix.  For any two integers $r_1,r_2$ we have
\[\textsf{r}_M(r_1n + r_2m) \leq \textsf{r}_A(r_1)\textsf{r}_B(r_2) \,. \]
\end{lemma}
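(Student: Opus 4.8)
The plan is to combine low-rank/low-sparsity decompositions of $A$ and $B$ into one for $M = A\otimes B$ using the multiplicativity of the Kronecker product (Fact \ref{product-kronecker}). Write $s_1 = \textsf{r}_A(r_1)$ and $s_2 = \textsf{r}_B(r_2)$, and fix decompositions $A = E_A + L_A$ and $B = E_B + L_B$, where $E_A$ has at most $s_1$ nonzeros per row and column, $\rank(L_A)\le r_1$, and similarly $E_B$ has at most $s_2$ nonzeros per row and column, $\rank(L_B)\le r_2$. The naive expansion $A\otimes B = (E_A+L_A)\otimes(E_B+L_B)$ splits into four terms; the term $E_A\otimes E_B$ will serve as the sparse part, and the remaining three, $L_A\otimes E_B + E_A\otimes L_B + L_A\otimes L_B$, must be shown to have rank at most $r_1 n + r_2 m$.

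First I would handle the sparsity bound: $E_A\otimes E_B$ has, in each row and column, at most $s_1 s_2$ nonzero entries, since a row of $A\otimes B$ is indexed by a pair $(i,k)$ and its nonzeros occur only where both the $i$-th row of $E_A$ and the $k$-th row of $E_B$ are nonzero — giving at most $s_1 \cdot s_2 = \textsf{r}_A(r_1)\textsf{r}_B(r_2)$ of them, and symmetrically for columns. Next I would bound the rank of the leftover. Using $\rank(X\otimes Y) = \rank(X)\rank(Y)$ (a consequence of Fact \ref{product-kronecker}, since the rank of a Kronecker product is the product of ranks), we get $\rank(L_A\otimes L_B)\le r_1 r_2$, $\rank(L_A\otimes E_B)\le r_1 n$ (as $\rank(E_B)\le n$ trivially), and $\rank(E_A\otimes L_B)\le m r_2$. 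Grouping the first two terms as $L_A\otimes(E_B + L_B) = L_A\otimes B$ gives $\rank(L_A\otimes B)\le r_1 n$, and then adding $E_A\otimes L_B$ gives total rank at most $r_1 n + m r_2$. Hence $M = E_A\otimes E_B + (L_A\otimes B + E_A\otimes L_B)$ exhibits $M$ as a matrix with at most $s_1 s_2$ nonzeros per row/column plus a matrix of rank at most $r_1 n + r_2 m$, which is exactly the claim $\textsf{r}_M(r_1 n + r_2 m)\le \textsf{r}_A(r_1)\textsf{r}_B(r_2)$.

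The only real subtlety — and I expect this to be the sole point needing care rather than a genuine obstacle — is keeping the bookkeeping of which three of the four cross terms get absorbed into the rank bound; the natural grouping $L_A\otimes B + E_A\otimes L_B$ avoids any double counting and makes the rank estimate immediate from subadditivity of rank plus the trivial bounds $\rank(B)\le n$, $\rank(L_B)\le r_2$. No field-specific input is needed, so the argument works verbatim over $\C$ or any $\F_q$.
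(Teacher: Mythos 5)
Your proof is correct and follows essentially the same route as the paper: decompose each factor into a sparse part plus a low-rank part, take $E_A\otimes E_B$ as the sparse correction, and observe that the remainder equals $L_A\otimes B + E_A\otimes L_B$, which is exactly the paper's identity $M - E\otimes F = (A+E)\otimes B - E\otimes(B+F)$ in different notation. The sparsity count and the rank bound via multiplicativity of rank under Kronecker products match the paper's argument.
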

\begin{proof}
The proof of this lemma is similar to the proof of {Lemma} \ref{diagonalization}.  There are matrices $E,F$ with at most $\textsf{r}_A(r_1)$ and $\textsf{r}_B(r_2)$ nonzero entries respectively such that $\rank(A+E) \leq r_1$ and $\rank(B+F) \leq r_2$.  We will now show that $\rank(M - E \otimes F) \leq r_1n + r_2m$.  Indeed
\[
M - E \otimes F = (A+E) \otimes B - E \otimes (B+F)
\]
and the right hand side of the above has rank at most $r_1n + r_2m$ since rank multiplies under the Kronecker product.  Clearly $E \otimes F$ has at most $\textsf{r}_A(r_1)\textsf{r}_B(r_2)$ nonzero entries in each row and column so we are done.
\end{proof}

\begin{lemma}\label{productbound}
Consider the matrix 
\[
A = (\underbrace{\DFT_{t_1} \otimes \cdots \otimes \DFT_{t_1}}_\text{$a_1$}) \otimes \cdots \otimes  (\underbrace{\DFT_{t_n} \otimes \cdots \otimes \DFT_{t_n}}_\text{$a_n$})\,. 
\]
Let $0 < \epsilon < 0.01$ be some chosen parameter and $D$ be some sufficiently large constant (possibly depending on $\epsilon$).  Assume 
$t_1 \leq t_2 \leq \dots \leq t_n$ 
and $a_i \geq \max\left(\frac{t_i^2(\log t_i)^2}{\epsilon^{10}}, D \right)$ for all $i$.  Let $P = t_1^{a_1} \cdots t_n^{a_n}$ and $L = \lceil2 \log \log P \rceil$.  Then
\[
\textsf{r}_A\left( P^{1 - \epsilon^6/(10L\,t_n^2 \log t_n)}\right) \leq  P^{5 \epsilon} \,.
\]
\end{lemma}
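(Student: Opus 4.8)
Since $(\DFT_{t_i})^{\otimes a_i}$ is exactly the generalized Walsh--Hadamard matrix $H_{t_i,a_i}$, we have $A = H_{t_1,a_1}\otimes\cdots\otimes H_{t_n,a_n}$ (up to a permutation of rows and columns, which does not affect regular‑rigidity). The idea is to regroup the $\sum_i a_i$ elementary Kronecker factors $\DFT_{t_i}$ into roughly $L$ \emph{balanced blocks}, each of which is itself a GWH matrix $H_{t,m}$ of size about $P^{1/L}$; reduce the rank of each block by applying Theorem~\ref{Hadamard}; and then reassemble $A$ from these blocks by $O(L)$ successive applications of Lemma~\ref{products}. Heuristically, combining $k$ factors of the Kronecker product via Lemma~\ref{products} adds the ``rank ratios'' $r_j/s_j$, so to get total rank ratio $P^{-\gamma}$ each of the $\approx L$ blocks must be pushed down to ratio $\approx P^{-\gamma}/L$, i.e.\ to rank about (block size)$^{1-\Theta(L\gamma)}$; Theorem~\ref{Hadamard} can do this for a block $H_{t,m}$ when $L\gamma \le \psi(t) := \epsilon^2/(400\log^2(1/\epsilon)\,t\log t)$, and taking the worst base $t=t_n$ and inserting $L=\lceil 2\log\log P\rceil$ yields exactly the claimed $\gamma=\epsilon^6/(10Lt_n^2\log t_n)$. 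The factor $L$ in the denominator of $\gamma$ is the price of combining $L$ blocks, and $t_n^2\log t_n$ is the loss in the exponent of Theorem~\ref{Hadamard} for the largest base.

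\textbf{The block reduction and the parameter count.} For each $i$ with enough copies, set $m_i$ as close as possible to $(\log P)/(L\log t_i)$ and partition the $a_i$ copies of $\DFT_{t_i}$ into $\lfloor a_i/m_i\rfloor$ groups of $m_i$ copies each; each group is a GWH matrix $H_{t_i,m_i}$ of size about $P^{1/L}$. The key use of the hypothesis is that $a_i\ge t_i^2(\log t_i)^2/\epsilon^{10}$ leaves enough slack that $m_i\ge 1/\psi(t_i)$ even after dividing by $L=O(\log\log P)$ — concretely the bound $a_i\ge t_i^2(\log t_i)^2/\epsilon^{10}$ exceeds $400\log^2(1/\epsilon)\,t_i\log t_i/\epsilon^2$ by a factor $\Omega(t_i\log t_i/\epsilon^8)$, and one checks this beats $L$ using $\log P\ge a_n\log t_n\ge t_n^2(\log t_n)^3/\epsilon^{10}$ — so Theorem~\ref{Hadamard} applies to each block with parameter $\epsilon$, reducing $H_{t_i,m_i}$ to rank $(t_i^{m_i})^{1-\psi(t_i)}$ while changing at most $(t_i^{m_i})^\epsilon$ entries per row and per column. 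Reassembling the $O(L)$ blocks by Lemma~\ref{products} bounds the rank of $A$ by $P\cdot\sum_{\text{blocks}}(\text{size})^{-\psi(t)}\le O(L)\cdot P^{1-\psi/L}$ with $\psi:=\psi(t_n)\le\min_i\psi(t_i)$, and bounds the number of changes per row and column by $\prod_{\text{blocks}}(\text{size})^\epsilon=P^\epsilon\le P^{5\epsilon}$. Finally $O(L)\,P^{1-\psi/L}\le P^{1-\gamma}$ reduces to $O(\log L)\le(\psi/L-\gamma)\log P$; since $\gamma\le\psi/(2L)$ (equivalently $80\epsilon^4\log^2(1/\epsilon)\le t_n$, true for $0<\epsilon<0.01$ and $t_n\ge2$), it suffices that $\log P$ be large, which is guaranteed by $a_i\ge t_i^2(\log t_i)^2/\epsilon^{10}$ together with $a_i\ge D$.

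\textbf{The main obstacle.} The real work is the bookkeeping for the ``leftover'' Kronecker factors: the remainders $a_i\bmod m_i$ of each split, and any $H_{t_i,a_i}$ whose copy count $a_i$ is too small to fill even one balanced block. Such a light factor $i$ has weight $w_i:=a_i\log t_i<(\log P)/L$, and since the $t_i$ with $w_i$ this small are forced to be small ($t_i^2(\log t_i)^3<\epsilon^{10}(\log P)/L$), there are relatively few distinct ones; moreover $a_i\ge t_i^2(\log t_i)^2/\epsilon^{10}\ge 1/\psi(t_i)$ still holds, so Theorem~\ref{Hadamard} applies to each light factor and makes its rank ratio $t_i^{-a_i\psi(t_i)}\le e^{-t_i(\log t_i)^2/(400\epsilon^8\log^2(1/\epsilon))}$ super‑polynomially small in $t_i$, so that $\sum_{\text{light }i}t_i^{-a_i\psi(t_i)}$ is absorbed when we feed the light factors into the Lemma~\ref{products} reassembly as extra blocks. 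The one genuinely delicate point is a light factor of very small base (e.g.\ base $2$) with $a_i$ near its minimum, whose ratio cannot be driven below a fixed constant by Theorem~\ref{Hadamard}: here the plan is instead to ``tensor up'' over that factor, i.e.\ change \emph{all} of its entries, which costs a factor of $t_i^{a_i}$ in the change count — affordable precisely because when $P$ is large the budget $P^{5\epsilon}$ (versus the $P^\epsilon$ spent on the blocks) is enormous, which is where the slack between $5\epsilon$ and $\epsilon$ in the statement is used. Verifying that these devices cover \emph{every} range of $P$ (in particular when $\log\log P$ is large relative to $t_n$ and $1/\epsilon$), and tracking all the ``sufficiently large'' thresholds against the hypotheses $a_i\ge\max(t_i^2(\log t_i)^2/\epsilon^{10},D)$, is the bulk of the proof and the part most prone to hidden difficulty; I expect it to require iterating the regrouping on the leftover factors with carefully controlled loss.
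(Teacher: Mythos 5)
Your plan is genuinely different from the paper's, and the part you yourself flag as ``the bulk of the proof'' is precisely where your route becomes delicate and where the paper's route is engineered to be easy. The paper never splits a factor $H_{t_i,a_i}$ into balanced sub-blocks: it always applies Theorem~\ref{Hadamard} to the \emph{whole} $H_{t_i,a_i}$, for which the hypothesis $a_i \ge t_i^2(\log t_i)^2/\epsilon^{10} \ge 1/\psi(t_i)$ guarantees applicability unconditionally. It then combines factors with two devices you do not use. First, for factors whose sizes all lie in a window $[B,B^2]$, it expands $\bigotimes_i (A_i+E_i)$ over subsets $S\subseteq[n]$ and splits the sum at $|S|=\epsilon n$: the terms with many low-rank tensorands have rank at most $\binom{n}{\epsilon n}\,P\,B^{-\psi\epsilon n}\le P^{1-\epsilon^5/(4t_n^2\log t_n)}$ (the gain is \emph{multiplicative} in $|S|$, not the additive sum of ratios you get from Lemma~\ref{products}), and the remaining terms are $P^{4\epsilon}$-sparse. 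Second, it sorts the factors into at most $L$ dyadic classes $t_j^{a_j}\in[k^{2^{j-1}},k^{2^j})$, applies the first device inside each class, combines the classes of size at least $P^{\epsilon/(2L)}$ via Lemma~\ref{products}, and dumps \emph{all} remaining classes into a single tensor factor $C$ of dimension at most $P^{\epsilon/2}$, absorbed by the trivial bound $\textsf{r}_{B\otimes C}(|C|\,r)\le |C|\,\textsf{r}_B(r)$. That last step is what replaces your entire light-factor/remainder bookkeeping.

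The concrete gap in your version is that bookkeeping, which you acknowledge but do not carry out, and whose two problem cases are real. (a) The remainders $a_i\bmod m_i$ form GWH matrices $H_{t_i,r}$ with $r<m_i$, possibly below the threshold $1/\psi(t_i)$ for Theorem~\ref{Hadamard} yet of dimension up to $P^{1/L}$; since $P^{1/L}\gg P^{5\epsilon}$ whenever $\log\log P < 1/(10\epsilon)$, ``change all entries'' is not affordable for them, so they must instead be merged into adjacent full blocks (making block sizes between $P^{1/L}$ and $P^{2/L}$) --- a fix you do not state. (b) For a light factor of small absolute weight $w_i=a_i\log t_i$, the ratio $\exp(-w_i\psi(t_i))$ is a constant independent of $P$ and eventually exceeds $P^{-\gamma}$; you propose tensoring up, but bounding the total cost $\exp\bigl(\sum w_i\bigr)$ over all such factors by $P^{O(\epsilon)}$ requires a counting argument (the problematic bases number at most $\min(t_n,\,O(\epsilon^{14}\log^2(1/\epsilon)\log P/(L t_n^2\log t_n)))$ and each has weight $O(\epsilon^4\log^2(1/\epsilon)\log P/(L t_n))$); the naive bound $(\#\text{light})\cdot(\log P/L)$ is \emph{not} $O(\epsilon\log P)$ for large $P$, so without this count the sparsity budget is not verified. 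Likewise your justification that $m_i\ge 1/\psi(t_i)$ for heavy factors (``one checks this beats $L$'') needs the monotonicity of $y/\log y$ applied with $\log P\ge a_i\log t_i\ge t_i^2(\log t_i)^3/\epsilon^{10}$ for \emph{each} $i$, not just $i=n$, since $\log\log P$ can be large entirely because of other factors. All of this appears fixable, but as written the proposal is an outline with its hardest step open, whereas the paper's ``keep factors whole, subset-expand within comparable sizes, dump the small classes'' structure sidesteps the issue entirely.
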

\begin{proof}
First, we consider the case when there exists an integer $B$ such that  $B \leq t_1^{a_1}, \dots , t_n^{a_n} \leq B^2$.  Note that %
\[(\underbrace{\DFT_{t_i} \otimes \cdots \otimes \DFT_{t_i}}_\text{$a_i$}) = H_{t_i,a_i}\,.\]  %
By {Theorem} \ref{Hadamard}, for each $i$ there exists a matrix $E_i$ such that $E_i$ has at most $t_i^{\epsilon a_i}$ nonzero entries in each row and column and 
\[
\rank(H_{t_i,a_i} - E_i) \leq t_i^{a_i\left(1 - \epsilon^4/(t_i^2 \log t_i) \right)} \,.
\]  
Let $A_i  = H_{t_i,a_i} - E_i$.  Then
\begin{align*}
&(\underbrace{\DFT_{t_1} \otimes \cdots \otimes \DFT_{t_1}}_\text{$a_1$}) \otimes \cdots \otimes  (\underbrace{\DFT_{t_n} \otimes \cdots \otimes \DFT_{t_n}}_\text{$a_n$}) = (E_1 + A_1) \otimes \cdots \otimes (E_n + A_n) \\ &= \sum_{S \subset [n]}\left(\bigotimes_{i \in S} A_i \right) \otimes \left(\bigotimes_{i' \notin S} E_{i'} \right)  
= \sum_{S \subset [n], |S| \geq \epsilon n}\left(\bigotimes_{i \in S} A_i \right) \otimes \left(\bigotimes_{i' \notin S} E_{i'} \right) + \sum_{S \subset [n], |S| < \epsilon n}\left(\bigotimes_{i \in S} A_i \right) \otimes \left(\bigotimes_{i' \notin S} E_{i'} \right) \,.
\end{align*}
Let the first term above be $N_1$ and the second term be $N_2$.  We bound the rank of $N_1$ and the number of nonzero entries in each row and column of $N_2$.  Note that by grouping terms in the sum for $N_1$, we can find matrices $E_S$ for all $S \subset [n]$ with $|S| = \epsilon n$ and write
\[
N_1 = \sum_{S \subset [n], |S| = \epsilon n}\left(\bigotimes_{i \in S} A_i \right) \otimes E_S \,.
\]
Now we have
\begin{eqnarray*}
\rank(N_1) \leq \sum_{S \subset [n], |S| = \epsilon n} P\prod_{i \in S}\frac{1}{t_i^{a_i \epsilon^4/(t_i^2 \log t_i)}} \leq \binom{n}{\epsilon n}\frac{P}{\left(B^{\epsilon^4/(t_n^2 \log t_n)}\right)^{\epsilon n}} \\  \leq \frac{(n)^{\epsilon n}}{\left(\frac{\epsilon n}{3}\right)^{\epsilon n}} \frac{P}{\left(B^{\epsilon^4/(t_n^2 \log t_n)}\right)^{\epsilon n}}\leq \left(\frac{3}{\epsilon}\right)^{\epsilon n} \frac{P}{B^{\epsilon^5 n/(t_n^2 \log t_n)}} \,.
\end{eqnarray*}
Since we assumed $a_i \geq \max\left(\frac{t_i^2(\log t_i)^2}{\epsilon^{10}}, D \right)$, we get
\[
B^{\epsilon^4/(t_n^2 \log t_n)} \geq t_n^{a_n \epsilon^4/(2 t_n^2 \log t_n)} \geq \max \left(t_n^{0.5 \log t_n},t_n^{D \epsilon^4/(2 t_n^2 \log t_n)} \right) \,.
\]
Either the first term is larger than $(3/\epsilon)^2$ or $t_n$ is bounded above by some function of $\epsilon$ in which case if we choose $D$ sufficiently large, the second term will be larger than $(3/\epsilon)^2$.  In any case we get
\[
\rank(N_1) \leq \frac{P}{\left(\frac{\epsilon}{3} \cdot  B^{\epsilon^4/(t_n^2 \log t_n)}\right)^{\epsilon n}} \leq \frac{P}{B^{\epsilon^5 n/(2t_n^2 \log t_n)}} \leq P^{1 - \epsilon^5/(4t_n^2 \log t_n)} \,.
\]
Now we bound the number of nonzero entries in each row and column of $N_2$.  This number is at most
\[
2^nB^{2 \epsilon n}P^{\epsilon} \leq 2^n P^{3\epsilon} \leq P^{4\epsilon} \,.
\]
Thus, when we have $B \leq t_1^{a_1}, \dots , t_n^{a_n} \leq B^2$, 
\[
\textsf{r}_A\left( P^{1 - \epsilon^5/(4t_n^2 \log t_n)} \right) \leq  P^{4\epsilon} \,.
\]

Now we move on to the case where we no longer have control over the range of values $t_1^{a_1}, \dots , t_n^{a_n}$.  Fix $k = 2^D$ and consider the intervals $I_1 = [k,k^2), I_2 = [k^2, k^4), \dots , I_j = [k^{2^{j-1}}, k^{2^j}), \dots $ and so on.  Note 
\[
A = \bigotimes_{i \in [L]}\left(\bigotimes_{t_j^{a_j} \in I_i} (\underbrace{\DFT_{t_j} \otimes \cdots \otimes \DFT_{t_j}}_\text{$a_j$})\right) \,.
\]
For an integer $i$, let $P_i = \prod_{t_j^{a_j} \in I_i}t_j^{a_j}$.  Let $T$ be the set of indices $i \in [L]$ such that $P_i \geq P^{\epsilon/(2L)}$.  Then 
\[
A = \left( \bigotimes_{i \in T}\left(\bigotimes_{t_j^{a_j} \in I_i} (\underbrace{\DFT_{t_j} \otimes \cdots \otimes \DFT_{t_j}}_\text{$a_j$})\right) \right) \otimes \left( \bigotimes_{i \notin T}\left(\bigotimes_{t_j^{a_j} \in I_i} (\underbrace{\DFT_{t_j} \otimes \cdots \otimes \DFT_{t_j}}_\text{$a_j$})\right) \right) = B \otimes C
\]

\noindent where naturally $B$ denotes the first term and $C$ denotes the second.

Note that the dimension of the matrix $C$, which we denote by $|C|$, is at most $\left( P^{\epsilon/(2L)} \right)^L = P^{\epsilon/2}$.  We now apply {Lemma} \ref{products} repeatedly to bound the rigidity of $B$.  Let %
\[B_i = \left(\bigotimes_{t_j^{a_j} \in I_i} (\underbrace{\DFT_{t_j} \otimes \cdots \otimes \DFT_{t_j}}_\text{$a_j$})\right) \,.\]
Then we have,
\[
\textsf{r}_B\left( \left(\prod_{i \in T}P_i \right) \left(\sum_{i \in T} \frac{1}{P_i^{\epsilon^5/(4t_n^2 \log t_n)}}\right)\right) \leq \left(\prod_{i \in T}P_i\right)^{4 \epsilon} \,.
\]
{}From the above inequality, the fact that $P_i \geq P^{\epsilon/(2L)}$ for all $i \in T$, and $|C| \leq P^{\epsilon/2}$ we deduce
\[
\textsf{r}_A\left( P^{1 - \epsilon^6/(10L\,t_n^2 \log t_n)}\right) \leq \textsf{r}_A\left( LP^{1 - \epsilon^6/(8L\,t_n^2 \log t_n)}\right)  \leq |C|\textsf{r}_B\left( \frac{ LP^{1 - \epsilon^6/(8L\,t_n^2 \log t_n)}}{|C|}\right)   \leq P^{5 \epsilon} \,. \qedhere
\]
\end{proof}

\subsection{Proof of {Theorem} \ref{main}}
To complete the proof of {Theorem} \ref{main}, we will break %
$\DFT_N$ %
into submatrices, show that each submatrix is non-rigid using techniques from the previous section, and then combine our estimates to conclude that %
$\DFT_N$ %
 is non-rigid.  Recall that $N$ is $(l,x)$-factorable with 
 \[\frac{x}{(\log x)^{C_0 + 100}} \leq l \leq\frac{x}{(\log x)^{C_0+10}} \,,\] meaning $N = q_1q_2 \cdots q_l$ for some distinct primes $q_1, \dots , q_l$ where $q_i - 1$ has no large prime power divisors for all $i$.  Let $\gamma$ be a primitive $N$\ts{th} root of unity.
\begin{define}\label{def_multandfact}
For a subset $S \subset [l]$ define $\mult_N(S) =  \prod_{s \in S} q_s$ and $\fact_N(S) = \prod_{s \in S} (q_s-1)$.
\end{define} 
\begin{define} For all $S \subset [l]$ we will define $T_S$ as the subset of $[N] \times [N]$ indexed by $(i,j)$ such that 
\begin{align*}
\forall s \in S \qquad ij &\not\equiv 0 \mod q_s \,, \\
\forall s \notin S \qquad ij &\equiv 0 \mod q_s  \,.
\end{align*}
Note that as $S$ ranges over all subsets of $[l]$, the sets $T_S$ form a partition of $[N] \times [N]$.
\end{define}
For each $S$, we will divide the set $T_S$ into submatrices such that when filled with the corresponding entries of %
$\DFT_N$ %
, we can apply {Lemma} \ref{productbound} to show that each submatrix is nonrigid.  The key intuition is that for a given prime $q_i$, once we restrict to nonzero residues, the multiplicative subgroup actually has the additive structure of $\Z_{q_i - 1}$.  Since $q_i - 1$ is smooth, $\Z_{q_i - 1}$ is a direct sum of cyclic groups of small order.

\begin{define}\label{def_factormatrix}
For all $S \subset [l]$, we define the $\fact_N(S) \times \fact_N(S)$ matrix $M(S)$ as follows.  Let $R_S$ be the set of residues modulo $ \mult_N(S)$ that are relatively prime to $\mult_N(S)$.  Note that $|R_S| = \fact_N(S)$.  Each row and each column of $M(S)$ is indexed by an element of $R_S$ and the entry in row $i$ and column $j$ is $\theta^{i \cdot j}$ where $\theta$ is a primitive $\mult_N(S)$ root of unity.  The exact order of the rows and columns will not matter for our uses.  Note that replacing $\theta$ with $\theta^k$ for $k$ relatively prime to $\mult_N(S)$ simply permutes the rows so it does not matter which root of unity we choose.
\end{define}

\begin{lemma}\label{division}
Consider the set of entries in %
$\DFT_N$ %
 indexed by elements of $T_S$.  We can partition this set into $\prod_{s \notin S}(2q_s-1)$ submatrices each of size $\fact_N(S) \times \fact_N(S)$ that are equivalent to $M(S)$ up to some permutation of rows and columns.
\end{lemma}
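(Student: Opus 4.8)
The plan is to understand the structure of the index set $T_S$ by the Chinese Remainder Theorem, splitting each index coordinate according to the prime factorization $N = q_1 \cdots q_l$. Write an index $i \in [N]$ via CRT as a tuple $(i_1, \dots, i_l)$ with $i_s \in \Z_{q_s}$, and similarly $j \leftrightarrow (j_1, \dots, j_l)$. The condition $(i,j) \in T_S$ says that for $s \in S$ we have $i_s j_s \not\equiv 0 \pmod{q_s}$, i.e. both $i_s \ne 0$ and $j_s \ne 0$ in $\Z_{q_s}$ (since $q_s$ is prime), while for $s \notin S$ we have $i_s j_s \equiv 0 \pmod{q_s}$, i.e. at least one of $i_s, j_s$ is zero. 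The entry of $\DFT_N$ at $(i,j)$ is $\gamma^{ij} = \prod_{s=1}^l \zeta_{q_s}^{i_s j_s}$ where $\zeta_{q_s}$ is a primitive $q_s$-th root of unity (using $\gamma^{ij} = \prod_s \gamma^{ij \cdot (N/q_s) \cdot c_s}$ for appropriate CRT coefficients, so that the $s$-th factor depends only on $i_s j_s \bmod q_s$).

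Next I would organize $T_S$ as a disjoint union of combinatorial rectangles (submatrices). For each $s \in S$, both $i_s$ and $j_s$ range freely over the nonzero residues $\Z_{q_s}^{\times}$, contributing a factor whose row/column index set is $\Z_{q_s}^{\times} \times \Z_{q_s}^{\times}$ — this is exactly the multiplicative part that will be reindexed additively. For each $s \notin S$, we must have $i_s = 0$ or $j_s = 0$; I would partition this locus into three pieces: (a) $i_s = 0, j_s \ne 0$ ($q_s - 1$ columns, one row value, but the entry is $\zeta_{q_s}^0 = 1$ regardless); (b) $i_s \ne 0, j_s = 0$ (symmetric, $q_s - 1$ row choices); (c) $i_s = 0, j_s = 0$ (a single row/column value). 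Actually to get clean rectangles I would instead split as: $\{i_s = 0\} \times \Z_{q_s}$ union $\Z_{q_s}^\times \times \{j_s = 0\}$, which gives $q_s$ and $q_s - 1$ "slices" respectively — but since in all of these the $s$-th factor of the entry is $1$ (because one of $i_s, j_s$ is $0$), each choice of such a slice just restricts which rows/columns of the global matrix we keep without affecting entry values. Taking the product over $s \notin S$ of these $(q_s) + (q_s - 1) = 2q_s - 1$ slice-choices (I need to double-check the exact bookkeeping so that the slices are genuinely disjoint and cover $T_S$; the natural choice is, for each $s \notin S$, pick one of the $2q_s-1$ sets $\{(i_s,j_s): i_s = a, j_s \text{ arbitrary}\}$ for $a=0$ combined with... — I will set this up so the count is exactly $\prod_{s \notin S}(2q_s - 1)$), we get $\prod_{s \notin S}(2q_s - 1)$ submatrices, each having row and column index set in bijection with $\prod_{s \in S} \Z_{q_s}^\times$, hence of size $\fact_N(S) \times \fact_N(S)$.

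Finally I would identify each such submatrix with $M(S)$. On a fixed submatrix, the $(i,j)$ entry equals $\prod_{s \in S} \zeta_{q_s}^{i_s j_s}$ (the factors for $s \notin S$ all being $1$). Via CRT, $\prod_{s \in S}\Z_{q_s}^\times \cong R_S$, the group of residues mod $\mult_N(S)$ coprime to $\mult_N(S)$, and $\prod_{s \in S} \zeta_{q_s}^{i_s j_s} = \theta^{i' j'}$ where $\theta$ is a primitive $\mult_N(S)$-th root of unity and $i', j' \in R_S$ are the CRT recombinations — this is precisely the definition of $M(S)$, up to the permutation of rows and columns induced by the particular bijection used (which Definition \ref{def_factormatrix} already allows, and which is harmless since replacing $\theta$ by a conjugate permutes rows). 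I expect the main obstacle to be purely bookkeeping: getting the partition of the "$s \notin S$" coordinates into exactly $2q_s - 1$ disjoint slices whose product covers $T_S$ with no overlap, and carefully tracking the CRT isomorphism so that the entry formula collapses to $\theta^{i'j'}$. The root-of-unity manipulations and the rank arguments are not needed here — this lemma is entirely a structural/combinatorial decomposition.
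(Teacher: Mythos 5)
Your proposal is correct and follows essentially the same route as the paper: both fix the pair of residues $(i \bmod q_s,\, j \bmod q_s)$ for each $s \notin S$ (yielding $2q_s-1$ choices per prime since one of the two residues must vanish), let the coordinates for $s \in S$ range over $\Z_{q_s}^{\times}$, and use CRT to identify each resulting $\fact_N(S)\times\fact_N(S)$ block with $M(S)$ for a suitable primitive $\mult_N(S)$\ts{th} root of unity. The paper carries out the last step by computing $a'b' \equiv k\bigl(\prod_{s\notin S}q_s\bigr)ab \pmod{N}$ rather than by factoring $\gamma^{ij}$ into prime-wise factors, but this is the same calculation.
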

\begin{proof}
In $T_S$, for each prime $q_s$ with $s \notin S$, there are $2q_s - 1$ choices for what $i$ and $j$ are $\mod q_s$.  Now fix the choice of $i,j \mod q_s$ for all $s \notin S$.  
We %
restrict to indices with $i \equiv c_1 \mod \prod_{s \notin S}q_s$ and $j \equiv c_2 \mod \prod_{s \notin S}q_s$
for some $c_1,c_2$. %

We are left with a $\fact_N(S) \times \fact_N(S)$ matrix, call it $A$, where $i$ and $j$ run over all residues modulo $\mult_N(S)$ that are relatively prime to $\mult_N(S)$.  Naturally, label all rows and columns of this matrix by what the corresponding indices $i$ and $j$ are modulo $ \mult_N(S)$.  For a row labeled $a$ and a column labeled $b$, we compute the entry $A_{ab}$.  The value is $\gamma^{a' \cdot b'}$ where $a'$ is the unique element of $\Z_N$ such that $a' \equiv a \mod \mult_N(S)$ and $a' \equiv c_1 \mod  \prod_{s \notin S}q_s$ and $b'$ is defined similarly.  We have
\begin{align*}
  a' \cdot b' &\equiv ab \mod \mult_N(S)\,,\\[1ex]
  a' \cdot b' &\equiv c_1c_2 \equiv 0 \mod \prod_{s \notin S}q_s \,.
\end{align*}
Therefore
\[
a'b' \equiv k\prod_{s \notin S}q_s ab \mod \mult_N(S)
\]
where $k$ is defined as an integer such that $k\prod_{s \notin S}q_s \equiv 1 \mod \mult_N(S)$.  Note that $k$ clearly exists since $\prod_{s \notin S}q_s$ and $\mult_N(S)$ are relatively prime.  Since $\gamma^{k\prod_{s \notin S}q_s}$ is a primitive $\mult_N(S)$ root of unity, the matrix $A$ is equivalent to $M(S)$ up to some permutation, as desired.
\end{proof} 

\begin{lemma}\label{blockrigidity}
For a subset $S \subset [l]$ with $|S| = k$ and $M(S)$ (as defined in {Definition} \ref{def_factormatrix})  a  $\fact_N(S) \times \fact_N(S)$ matrix as described above.  we have 
\[
\textsf{r}_{M(S)}\left( \frac{\fact_N(S)}{\exp\left(\epsilon^6 x^{0.37}\right)} \right) \leq \left(\fact_N(S)\right)^{6 \epsilon}
\]
as long as $k \geq \frac{x}{(\log x)^{C_0 + 200}}$.
\end{lemma}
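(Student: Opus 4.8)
The plan is first to recognize $M(S)$ as, essentially, a Kronecker product of discrete Fourier transforms of small prime-power order. By the Chinese Remainder Theorem the index set $R_S$ of $M(S)$ is isomorphic to $\prod_{s\in S}\F_{q_s}^{\times}$, and fixing a generator $g_s$ of each $\F_{q_s}^{\times}$ identifies it with $\prod_{s\in S}\Z_{q_s-1}$. Writing a primitive $\mult_N(S)$-th root of unity as a product $\prod_{s\in S}\zeta_s$ of primitive $q_s$-th roots of unity, the entry $\theta^{ij}$ of $M(S)$ factors as $\prod_{s\in S}\zeta_s^{\,g_s^{\,a_s+b_s}}$, so up to a permutation of rows and columns (which does not affect rigidity) $M(S)=\bigotimes_{s\in S}M_{\Z_{q_s-1}}(f_s)$ with $f_s(c)=\zeta_s^{\,g_s^{\,c}}$, a Kronecker product of adjusted-circulant matrices for cyclic groups. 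By Claim \ref{Fourier_diagonalization}, $\DFT_{q_s-1}M_{\Z_{q_s-1}}(f_s)\DFT_{q_s-1}$ is diagonal, so $M(S)$ is, up to scalars and row/column permutations, of the form $\tilde A\,\Lambda\,\tilde A$ with $\Lambda$ diagonal and $\tilde A$ a permutation of $\bigotimes_{s\in S}\DFT_{q_s-1}$; then Lemma \ref{diagonalization} gives $\textsf{r}_{M(S)}(2r)\le(\textsf{r}_{\tilde A}(r))^{2}$. Since $q_s$ is $x$-good, every prime power exactly dividing $q_s-1$ is at most $x^{0.3}$, and these prime powers are pairwise coprime, so by Lemma \ref{decomp} each $\DFT_{q_s-1}$ is, up to permutation, a Kronecker product of $\DFT$'s of prime-power order at most $x^{0.3}$. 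Collecting equal factors, $\tilde A$ is a permutation of $\bigotimes_i \DFT_{t_i}^{\otimes a_i}$ with distinct prime powers $t_1<\cdots<t_n\le x^{0.3}$ and $P:=\prod_i t_i^{a_i}=\fact_N(S)$.

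\textbf{Step 2 (heavy versus light prime powers).} I would then work with $\epsilon'=\epsilon/2$, fix the constant $D$ provided by Lemma \ref{productbound} for $\epsilon'$, call an index $i$ \emph{heavy} if $a_i\ge\max\!\big(t_i^{2}(\log t_i)^{2}/(\epsilon')^{10},\,D\big)$ (exactly the hypothesis of Lemma \ref{productbound}) and \emph{light} otherwise, and split $\tilde A$ (up to permutation) as $A_{\mathrm{big}}\otimes A_{\mathrm{small}}$, the Kronecker products over the heavy and the light indices respectively. The key estimate is that $A_{\mathrm{small}}$ is tiny: each light $t_i\le x^{0.3}$ contributes only $t_i^{a_i}\le\exp\!\big(O_{\epsilon}(t_i^{2}(\log t_i)^{3})\big)$, and there are at most $O(x^{0.3})$ prime powers below $x^{0.3}$, so $\log|A_{\mathrm{small}}|=O_{\epsilon}(x^{0.9}(\log x)^{3})$; on the other hand, since $k\ge x/(\log x)^{C_0+200}$ and each $q_s-1\ge x/(\log x)^{C_0+1}$ we have $\log P\ge\frac12 k\log x\ge x/(2(\log x)^{C_0+199})$, so for $x$ large $|A_{\mathrm{small}}|\le P^{\epsilon'}$.

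\textbf{Step 3 (apply Lemma \ref{productbound} and assemble).} Now $A_{\mathrm{big}}=\bigotimes_{i\text{ heavy}}H_{t_i,a_i}$ satisfies the hypotheses of Lemma \ref{productbound} with parameter $\epsilon'$, and with $t_{\max}\le x^{0.3}$ and $L=O(\log x)$ its conclusion reads $\textsf{r}_{A_{\mathrm{big}}}\!\big(|A_{\mathrm{big}}|^{1-\delta}\big)\le|A_{\mathrm{big}}|^{5\epsilon'}$ for some $\delta\ge\epsilon^{6}x^{-0.62}$ (valid for $x$ large). Tensoring the corresponding decomposition $A_{\mathrm{big}}=B+E$ with $A_{\mathrm{small}}$, every entry of which is nonzero, gives $\textsf{r}_{\tilde A}\!\big(|A_{\mathrm{big}}|^{1-\delta}|A_{\mathrm{small}}|\big)\le|A_{\mathrm{big}}|^{5\epsilon'}|A_{\mathrm{small}}|\le P^{6\epsilon'}=P^{3\epsilon}$. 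Since $|A_{\mathrm{big}}|^{1-\delta}|A_{\mathrm{small}}|=P\exp(-\delta\log|A_{\mathrm{big}}|)$ and $\log|A_{\mathrm{big}}|\ge(1-\epsilon')\log P\ge x/(4(\log x)^{C_0+199})$, the rank-reduction factor $\exp(\delta\log|A_{\mathrm{big}}|)$ is at least $\exp(2\epsilon^{6}x^{0.37})$ for $x$ large, so $\textsf{r}_{\tilde A}\!\big(P/\exp(2\epsilon^{6}x^{0.37})\big)\le P^{3\epsilon}$. Feeding this into the inequality $\textsf{r}_{M(S)}(2r)\le(\textsf{r}_{\tilde A}(r))^{2}$ from Step 1 gives $\textsf{r}_{M(S)}\!\big(2P/\exp(2\epsilon^{6}x^{0.37})\big)\le P^{6\epsilon}=\fact_N(S)^{6\epsilon}$, and $2P/\exp(2\epsilon^{6}x^{0.37})\le\fact_N(S)/\exp(\epsilon^{6}x^{0.37})$ for $x$ large, which is the claim.

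\textbf{Main obstacle.} The crux is the estimate in Step 2: without a bound on the size of the prime powers one could not control $|A_{\mathrm{small}}|$, and it is precisely $x$-goodness that keeps every prime-power factor below $x^{0.3}$, so that their joint contribution is at most $\exp(x^{0.9+o(1)})$, negligible against $P=\exp(x^{1-o(1)})$ (this is also where the hypothesis $k\ge x/(\log x)^{C_0+200}$ enters, via the lower bound on $\log P$). Everything else is bookkeeping: threading the exponent $\delta\sim\epsilon^{6}x^{-0.62}$ (which comes from $t_{\max}^{2}\le x^{0.6}$ in Lemma \ref{productbound}) through the two Kronecker steps and the final squaring, using $\epsilon'=\epsilon/2$ to absorb the squaring of the sparsity, so as to land exactly on the rank-reduction factor $\exp(\epsilon^{6}x^{0.37})$ and sparsity $\fact_N(S)^{6\epsilon}$ in the statement.
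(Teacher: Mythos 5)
Your proof is correct, and its core strategy is the same as the paper's: convert the multiplicative index structure of $M(S)$ to additive structure via discrete logarithms, factor each $q_s-1$ into prime powers (all at most $x^{0.3}$ by $x$-goodness), separate the prime powers of high multiplicity from those of low multiplicity, apply Lemma \ref{productbound} to the high-multiplicity part, and finish with the diagonalization trick. The one genuine difference is how the low-multiplicity remainder is handled. The paper partitions $M(S)$ into $d^2$ explicit blocks $M(S,A',B')$ indexed by residues modulo the ``light'' part $d_i$ of each $q_i-1$, notes that every block is diagonalized by the same heavy-only matrix $X$, and sums the rank over the $d$ block-rows; you instead diagonalize all of $M(S)$ at once by $\tilde A=\bigotimes_{s}\DFT_{q_s-1}$ and keep the light prime powers as a dense Kronecker factor $A_{\mathrm{small}}$ of the diagonalizer, paying its dimension $|A_{\mathrm{small}}|=\fact_N(S)/|A_{\mathrm{big}}|\le P^{\epsilon'}$ in both the rank and the sparsity. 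The two devices incur the same quantitative loss (the paper's $d$ equals your $|A_{\mathrm{small}}|$), but your version is organizationally cleaner, and your choice $\epsilon'=\epsilon/2$ makes the final squaring land exactly on the claimed sparsity exponent $6\epsilon$ (the paper's own sparsity accounting at this point is slightly sloppier). You also make explicit, via the CRT factorization of $\theta$, that $M(S)$ is literally a Kronecker product of adjusted-circulant matrices, whereas the paper only records that $M(S)_{AB}$ depends on $A+B$; either observation suffices.
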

\begin{proof}
Without loss of generality $S = \{1,2, \dots , k \}$.  Consider the factorizations of $q_1-1, \dots , q_k-1$ into prime powers.  For each prime power $p_i^{e_i} \leq x^{0.3}$, let $c(p_i^{e_i})$ be the number of indices $j$ for which $p_i^{e_i}$ appears (exactly) in the factorization of $q_j-1$.  Consider all prime powers $p_i^{e_i}$ for which $c(p_i^{e_i}) < x^{0.62}$.
\[
 \prod_{t, c(t) \leq x^{0.62}} t^{c(t)} \leq \left((x^{0.3})^{x^{0.62}} \right)^{x^{0.3}} \leq x^{x^{0.92}} \,.
\]
Now consider all prime powers say $t_1, \dots , t_n$ for which $c(t_i) \geq x^{0.62}$.  Let $P = t_1^{c(t_1)} \cdots t_n^{c(t_n)}$.  {}From the above we know that as long as $x$ is sufficiently large
 \begin{equation}\label{separateprimepowers}
P \geq \frac{\fact_N(S)}{x^{x^{0.92}} } \geq \left(\fact_N(S)\right)^{(1 - \epsilon)} \frac{\left(\frac{x}{(\log x)^{C_0 + 1}} \right)^{\epsilon k}}{x^{x^{0.92}} } \geq \left(\fact_N(S)\right)^{(1 - \epsilon)} \,.
 \end{equation}

We will use the prime powers $t_i$ and \expref{Theorem}{Hadamard} to show that $M(S)$ is not rigid.  Note that we can associate each row and column of $M(S)$ %
with  %
an ordered $k$-tuple $(a_1, \dots , a_k)$ where $a_i \in \Z_{q_i-1}$ as follows. First, it is clear that each row and column of $M(S)$ can be associated
with  %
an ordered $k$-tuple 
$(z_1, \dots , z_k) \in \F_{q_1}^{\times} \times \cdots \times \F_{q_k}^{\times}$.
Now $\Z_{q_i}^{\times}$ can be viewed as a cyclic group on $q_i-1$ elements.  This allows us to create a bijection between the rows and columns of $M(S)$ and elements of $\Z_{q_1-1} \times \cdots \times \Z_{q_k-1}$.

Also note that for a row indexed by $A = (a_1, \dots , a_k)$ and a column indexed by $B = (b_1, \dots , b_k)$, the entry $M(S)_{AB}$ is dependent only on $A+B$.  We will now decompose $M(S)$ into several $P \times P$ submatrices.  In particular, we can write $q_i-1 = d_i T_i$ where $T_i$ is a product of some subset of $\{t_1, \dots , t_n \}$ and $d_i$ is relatively prime to $T_i$.  We have $T_1T_2\cdots T_k = P$.  For each $A',B' \in \Z_{d_1} \times \cdots \times \Z_{d_k}$, we can construct a $P \times P$ submatrix $M(S,A',B')$ consisting of all entries $M(S)_{AB}$ of $M(S)$ such that $A \equiv A', B \equiv B'$ (where the equivalence is over $\Z_{d_1} \times \cdots \times \Z_{d_k}$).  This gives us $d^2$ different submatrices where $d =  d_1\cdots d_k$.  Naturally, we can associate each row and column of a submatrix $M(S,A',B')$ with an element of $\Z_{T_1} \times \cdots \times \Z_{T_k}$ such that for a row labeled $I$ and a column labeled $J$, the entry $M(S,A',B')_{IJ}$ only depends on $I+J$.  In particular, this means that $X\left(M(S,A',B')\right)X$ is diagonal where %
$X = \DFT_{T_1} \otimes \cdots \otimes \DFT_{T_k}$ %
.  Now, using \expref{Lemma}{decomp}, we can rewrite 
\[
X = (\underbrace{\DFT_{t_1} \otimes \cdots \otimes \DFT_{t_1}}_\text{$c(t_1)$}) \otimes \cdots \otimes  (\underbrace{\DFT_{t_n} \otimes \cdots \otimes \DFT_{t_n}}_\text{$c(t_n)$}) \,.
\] %

Since for $x$ sufficiently large, $c(t_i) \geq x^{0.62} \geq t_i^2(\log t_i)^2/\epsilon^{10}$, we can use \expref{Lemma}{productbound} and get that
\[
\textsf{r}_X\left( P^{1 - \epsilon^6/(20(\log \log P) x^{0.62})}\right) \leq  P^{5 \epsilon} \,.
\]

Let $E$ be the matrix of changes to reduce the rank of $X$ according to the above.  We have that $E$ has at most $P^{\epsilon}$ nonzero entries in each row and column, and  
\[
\rank(X - E) \leq P^{1 - \epsilon^6/(20(\log \log P) x^{0.62})} \,.
\]
We can write $M(S)$ in block form as 
\[
\begin{bmatrix}
M(S,A_1,B_1) &  M(S,A_1, B_2) &  \dots & M(S, A_1, B_d)\\
M(S, A_2, B_1) & M(S, A_2, B_2) & \dots &  M(S, A_1, B_d)\\
\vdots & \vdots & \ddots & \vdots \\
M(S, A_d, B_1) & M(S, A_d, B_2) & \dots & M(S, A_d, B_d)
\end{bmatrix}
\]
where $A_1, \dots , A_d$ and $B_1, \dots , B_d$ range over the elements of $\Z_{d_1} \times \cdots \times \Z_{d_k}$.  We can rearrange the above as 
\[
\begin{bmatrix}
M(S,A_1,B_1) &  \dots & M(S, A_1, B_d)\\
\vdots  & \ddots & \vdots \\
M(S, A_d, B_1)  & \dots & M(S, A_d, B_d)
\end{bmatrix} = 
\begin{bmatrix}
XD_{11}X &  \dots & XD_{1d}X\\
\vdots  & \ddots & \vdots \\
XD_{d1}X  & \dots & XD_{dd}X
\end{bmatrix}
\]
where the $D_{ij}$ are diagonal matrices.  Now consider the matrix
\[
E(S) = 
\begin{bmatrix}
 ED_{11}E &  \dots & ED_{1d}E\\
\vdots  & \ddots & \vdots \\
ED_{d1}E  & \dots & ED_{dd}E
\end{bmatrix} \,.
\]
We have
\begin{align*}
&M(S) - E(S) = 
\begin{bmatrix}
XD_{11}X - ED_{11}E &  \dots & XD_{1d}X - ED_{1d}E\\
\vdots  & \ddots & \vdots \\
XD_{d1}X - ED_{d1}E  & \dots & XD_{dd}X - ED_{dd}E
\end{bmatrix}
= \\
&\begin{bmatrix}
XD_{11}(X - E)  &  \dots & XD_{1d}(X - E) \\
\vdots  & \ddots & \vdots \\
XD_{d1}(X - E)    & \dots & XD_{dd}(X - E)  
\end{bmatrix}  + 
\begin{bmatrix}
 (X-E)D_{11}E & \dots &  (X-E)D_{1d}E \\
 \vdots  & \ddots & \vdots  \\
 (X-E)D_{d1}E & \dots & (X-E)D_{dd}E
\end{bmatrix} \,.
\end{align*}

In the above expression, each of the two terms has rank at most 
\[
dP^{1 - \epsilon^6/(20(\log \log P) x^{0.62})} = \frac{\fact_N(S)}{P^{\epsilon^6/(20(\log \log P) x^{0.62})}} \leq \frac{1}{2}\left( \frac{\fact_N(S)}{\exp\left(\epsilon^6 x^{0.37}\right)} \right) \,.
\]
Note that when computing the rank, we only multiply by $d$ (and not $d^2$) because the small blocks are all multiplied by the same low rank matrix on either the left or right. The number of nonzero entries in each row and column of $E(S)$ is at most $P^{5\epsilon}d = \frac{\fact_N(S)}{P^{1-5\epsilon}}$.  Since $P \geq \left( \fact_N(S) \right)^{1 - \epsilon}$,  we conclude 
\[
\textsf{r}_{M(S)}\left( \frac{\fact_N(S)}{\exp\left(\epsilon^6 x^{0.37}\right)} \right) \leq \left(\fact_N(S)\right)^{6 \epsilon} \,. \qedhere
\]
\end{proof}

We are now ready to complete the analysis of the non-rigidity of the %
DFT %
matrix %
$\DFT_N$ %
.
\begin{proof}[Proof of \expref{Theorem}{main}]
Set the threshold $m = x^{0.365}$ and $k_0 = l -m$.  The sets $T_S$, as $S$ ranges over all subsets of $[l]$, form a partition of $[N] \times [N]$.  For each $S \subset [l]$ with $|S| \geq k_0$, we will divide $T_S$ into $\fact_N(S) \times \fact_N(S)$ submatrices using \expref{Lemma}{division} and change entries to reduce the rank of every submatrix according to \expref{Lemma}{blockrigidity}.  We will not touch the entries in sets $T_S$ for $|S| < k_0$.  Call the resulting matrix $M'$.  We now estimate the rank of $M'$ and then the maximum number of entries changed in any row or column.

We remove all rows and columns corresponding to integers divisible by at least $\frac{m}{2}$ of the primes $q_1, \dots , q_l$.  The number of rows and columns removed is at most
\[
N\left( \sum_{S \subset [l] , |S| = \frac{m}{2}} \prod_{i \in S} \frac{1}{q_i} \right) \leq \frac{N}{\left(\frac{x}{(\log x)^{C_0 }}\right)^{m/2}} \binom{l}{m/2} < N \left(\frac{l}{\frac{x}{(\log x)^{C_0 }}}\right)^{m/2}\leq \frac{N}{(\log x)^{x^{0.365}}} \,.
\]

The remaining entries must be subdivided into matrices of the form $M(S)$ for various subsets $S \subset [l]$ %
with %
 $|S| \geq k_0$.  
Let   %
 $q_1 < q_2 < \dots < q_l$.  The number of such submatrices is at most 
\[
\begin{multlined}
\frac{N^2}{\left( (q_1-1) \cdots (q_{k_0}-1) \right)^2} \leq (q_{k_0+1} \cdots q_l)^2 \left( \frac{q_1 \cdots q_{k_0}}{(q_1-1) \cdots (q_{k_0}-1)} \right)^2 \leq 3(q_{k_0+1} \cdots q_l)^2 \leq 3x^{2m} \,.
\end{multlined}
\]
Each one of the submatrices has rank at most 
\[
\frac{N}{\exp\left(\epsilon^6 x^{0.37}\right)} \,,
\]
so in total the rank is at most
\[
N\frac{3x^{2m}}{\exp\left(\epsilon^6 x^{0.37}\right)} \leq \frac{N}{\exp\left(\epsilon^6 x^{0.369}\right)} \,.
\]
Combining the two parts we easily get
\[
\rank(M') \leq \frac{N}{\exp\left(\epsilon^6 x^{0.365}\right)} \,.
\]

Now we bound the number of entries changed.  The number of entries changed in each row or column is at most
\[
\begin{multlined}
\frac{N}{\left( (q_1-1) \cdots (q_{k_0}-1) \right)}N^{ 6\epsilon} \leq  (q_{k_0+1} \cdots q_l) \left(\frac{q_1 \cdots q_{k_0}}{(q_1-1) \cdots (q_{k_0}-1)} \right) N^{ 6\epsilon} 
\leq 3N^{6\epsilon + 1.1 m/l} \leq N^{ 7 \epsilon} \,.
\end{multlined}
\]
As $\exp\left(\epsilon^6x^{0.365}\right) \geq \exp\left(\epsilon^6 (\log N)^{0.36}\right)$ for sufficiently large $x$, we conclude
\[
\textsf{r}_{\DFT_N}\left( \frac{N}{\exp\left(\epsilon^6 (\log N)^{0.36}\right)} \right) \leq N^{ 7 \epsilon} \,. \qedhere
\] %
\end{proof}

\section{Non-rigidity of all circulant matrices}\label{sec_Fourier2}
In the previous section, we showed that there exists an infinite set of %
DFT %
 matrices that are not Valiant-rigid.  In this section, we will bootstrap the results from \expref{Section}{sec_Fourier1} to show that in fact, %
no (sufficiently large) DFT matrix is rigid.

The first ingredient will be a stronger form of \expref{Lemma}{construction}.  Recall that a prime $q$ is defined to be $x$-good if $x/(\log x)^{C_0} \leq q \leq x$ and all prime powers dividing $q-1$ are at most $x^{0.3}$ and that an integer $N$ is defined to be $(l,x)$-factorable if it can be written as the product of $l$ distinct $x$-good primes.

To simplify our formulas we use the following notation.   %
\begin{notation}    \label{not:bypolylog}
Define 
\[
g_k(x) = \frac{x}{(\log x)^{C_0 + k}} \,.
\]
\end{notation}

\begin{lemma}\label{scales}
For all sufficiently large integers $K$, there exist $l,x,N$ such that %
the following conditions hold:
\begin{itemize}
\item $g_{100}(x) \leq l \leq g_{10}(x)$,
\item $N$ is $(l,x)$-factorable,
\item$K < N < K (\log K)^2$.
\end{itemize}
\end{lemma}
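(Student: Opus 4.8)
The plan is to fix $x$ as a suitable function of $K$ and then construct $N$ greedily as a product of $x$-good primes (in the sense defined just before Notation~\ref{not:bypolylog}). The guiding intuition: an $x$-good prime has size $x^{1-o(1)}$, so a product of $l$ of them has size $x^{(1+o(1))l}$; hence to obtain a value just above $K$ the number of factors is essentially \emph{forced} to be $l = (1+o(1))\log K/\log x$, and the whole task reduces to arranging that this forced value of $l$ falls inside the prescribed window $[g_{100}(x), g_{10}(x)]$. Since that window spans a multiplicative range of $(\log x)^{90}$, there is plenty of room, provided $\log K/\log x$ is calibrated to sit comfortably between $g_{100}(x)$ and $g_{10}(x)$; concretely, provided $\log K \approx x/(\log x)^{C_0+50}$. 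Calibrating this relationship between $x$ and $K$ is the one genuinely delicate step; everything else is elementary estimation.

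To carry this out, set $h(x) = x/(\log x)^{C_0+50}$. For $x > e^{C_0+50}$ the function $h$ is continuous and strictly increasing to $\infty$, with $h(x+1) - h(x) \le 1/(\log x)^{C_0+50} \to 0$. Thus for every sufficiently large $K$ there is an integer $x = x(K)$ with $h(x) \in [\log K, \log K + 1]$, and $x \to \infty$ as $K \to \infty$. Taking logarithms of $h(x) \in [\log K, \log K+1]$ and using $\log x - (C_0+50)\log\log x = (1-o(1))\log x$ gives $\log x \sim \log\log K$, hence $x = (1+o(1))\,(\log K)(\log\log K)^{C_0+50}$; in particular $x < (\log K)^2$ for $K$ large. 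Moreover $\log K = (1+o(1))\,x/(\log x)^{C_0+50}$, so $\log K/\log x = (1+o(1))\,x/(\log x)^{C_0+51}$.

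Now apply Lemma~\ref{construction} with $\alpha = 0.3$ to obtain a set $\mathcal{P}$ of at least $10x/(\log x)^{C_0}$ distinct $x$-good primes. Order $\mathcal{P}$ arbitrarily and let $l$ be the least index for which the product of the first $l$ primes exceeds $K$; this is well defined since even $g_0(x)^{|\mathcal{P}|}$ vastly exceeds $K$, and then $l < |\mathcal{P}|$. Let $N$ be that product, so $N$ is $(l,x)$-factorable and $N > K$. By minimality of $l$, the product of the first $l-1$ primes is at most $K$, and the last prime is at most $x$, so $N \le Kx < K(\log K)^2$, which is the third condition. For the first condition, bound $N$ below by $g_0(x)^l$ and above trivially by $x^l$: from $x^l \ge N > K$ we get $l > \log K/\log x$, and from $g_0(x)^l \le N \le Kx$ together with $\log g_0(x) = \log x - C_0\log\log x = (1-o(1))\log x$ and $\log x = o(\log K)$ we get $l \le (1+o(1))\log K/\log x$. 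Hence $l = (1+o(1))\log K/\log x = (1+o(1))\,x/(\log x)^{C_0+51}$. Since $10 < 51 < 100$, for $x$ large this lies in $\big[x/(\log x)^{C_0+100},\, x/(\log x)^{C_0+10}\big] = [g_{100}(x), g_{10}(x)]$, the $(1+o(1))$ factor being harmless against the $(\log x)^{41}$ and $(\log x)^{49}$ of slack on the two sides. All three conditions hold, so we are done.

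As noted, the main obstacle is the calibration in the second paragraph: because the count $l$ is dictated by the constraint $K < N$, the argument works only because the exponent $C_0+50$ in $h$ was chosen so that the resulting $l$ lands strictly inside the polylogarithmic window $[g_{100}(x), g_{10}(x)]$; the greedy construction and the size bound $N < K(\log K)^2$ are then routine.
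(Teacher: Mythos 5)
Your proof is correct, but it takes a genuinely different route from the paper's. The paper argues by an extremal/perturbation method: it lets $N_0$ be the \emph{largest} well-factorable integer at most $K$, say $(l,x)$-factorable, and then does a three-way case analysis --- if $l<\lfloor g_{10}(x)\rfloor$ it appends one more $x$-good prime; if $l=\lfloor g_{10}(x)\rfloor$ but the primes used are not the $l$ largest $x$-good primes it swaps one prime for a larger one (the maximality of $N_0$ then forces the new product above $K$); and in the remaining case it observes that the same primes are $x'$-good for $x'=2x$, which enlarges the window $\lfloor g_{10}(x')\rfloor$ and reduces to the first case. You instead calibrate $x=x(K)$ up front so that $\log K\approx x/(\log x)^{C_0+50}$ sits in the middle (in the exponent) of the window, and then build $N$ greedily; the count of factors is then \emph{forced} to be $(1+o(1))x/(\log x)^{C_0+51}$, which lands inside $[g_{100}(x),g_{10}(x)]$ with polylogarithmic room to spare. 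Your approach buys an explicit formula $x\sim(\log K)(\log\log K)^{C_0+50}$ and an explicit $l$, and it avoids all case analysis (in particular the rescaling-to-$2x$ trick); the paper's approach avoids having to solve for $x$ in terms of $K$ and works with whatever scale the maximal $N_0$ happens to live at, at the cost of the case analysis. I checked the delicate points of your argument --- the existence of an integer $x$ with $h(x)\in[\log K,\log K+1]$, the asymptotic $\log x\sim\log\log K$ and hence $x<(\log K)^2$, the well-definedness of the greedy index $l\le|\mathcal{P}|$ against the supply of $10x/(\log x)^{C_0}$ primes from Lemma~\ref{construction}, the two-sided estimate $\log K/\log x< l\le(1+o(1))\log K/\log x$ via $g_0(x)^l\le N\le Kx$, and the final window check with slacks $(\log x)^{49}$ and $(\log x)^{41}$ --- and all of them hold.
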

\begin{proof}

Call an $N$ well-factorable if it is $(l,x)$-factorable for some $x$ and $g_{100}(x) \leq l \leq g_{10}(x)$.  Let $N_0$ be the largest integer that is well-factorable with $N_0 \leq K$.  %
Assume %
$N_0$ is $(l ,x)$-factorable.

We have $N_0 = q_1 \cdots q_l$ where $q_1, \dots , q_l$ are distinct, $x$-good primes.  If $l < \lfloor g_{10}(x) \rfloor$ then by \expref{Lemma}{construction}, we can find another $x$-good prime $q_{l+1}$.  We can then replace $N_0$ with $q_{l+1}N_0$.  $q_{l+1}N_0 > K$ by the maximality of $N_0$ and also $q_{l+1}N_0 \leq N_0x \leq N_0 (\log N_0)^2$ so $q_{l+1}N_0$ satisfies the desired 
conditions. %

We now consider the case where $l = \lfloor g_{10}(x) \rfloor$.  First, if $q_1, \dots , q_l$ are not the $l$ largest $x$-good primes then we can replace one of them say $q_1$ with $q_1'>q_1$.  The number $N' = q_1'q_2 \cdots q_l$ is well-factorable and between $N_0$ and $N_0(\log x)^{C_0}$.  Using the maximality of $N_0$, we deduce that $N'$ must be in the desired range.  

On the other hand if $q_1, \dots , q_l$ are the $l$ largest $x$-good primes, we know they are actually all between $3x/(\log x)^{C_0}$ and $x$.  This is because by \expref{Lemma}{construction}, there are at least $10x/(\log x)^{C_0}$ distinct $x$-good primes.  Let $x' = 2x$.  The above implies that $q_1, \dots , q_l$ are $x'$-good and clearly $g_{100}(x') \leq l \leq g_{10}(x')$.  Furthermore, $g_{10}(x') >g_{10}(x) + 1$ so $l = \lfloor g_{10}(x) \rfloor <  \lfloor g_{10}(x') \rfloor$ and we can now repeat the argument from the first case.
\end{proof}

We can now complete the proof that %
 circulant matrices are not rigid. 
\begin{theorem}\label{main_circulant}
Let $0 < \epsilon < 0.01$ be a given parameter.  For all sufficiently 
large $N$, if $M$ is an $N \times N$ %
circulant (or Toeplitz) %
matrix, then %
\[
\textsf{r}_{M}\left(\frac{N}{\exp\left(\epsilon^6(\log N)^{0.35}\right)}\right) \leq N^{15 \epsilon} \,.
\]
\end{theorem}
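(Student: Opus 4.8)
The plan is to reduce the general case to the well-factorable case established in Theorem~\ref{main}, combining the diagonalization trick with an embedding of Toeplitz matrices into larger circulant matrices. Since an $N\times N$ circulant matrix is in particular Toeplitz, it suffices to prove the claimed bound for Toeplitz matrices.

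First I would apply Lemma~\ref{scales} with $K=2N$ to obtain integers $l,x$ and a $(l,x)$-factorable integer $N^*$ with $g_{100}(x)\le l\le g_{10}(x)$ and $2N<N^*<2N(\log 2N)^2$; in particular $\log N^*\ge\log N$ and $N^*=N\cdot\mathrm{polylog}(N)$. Since $N^*\to\infty$ with $N$, Theorem~\ref{main} applies (with the same $\epsilon$) and gives
\[
\textsf{r}_{\DFT_{N^*}}\!\left(\frac{N^*}{\exp\!\left(\epsilon^6(\log N^*)^{0.36}\right)}\right)\le (N^*)^{7\epsilon}.
\]
Next, by Claim~\ref{Fourier_diagonalization} every $N^*\times N^*$ adjusted-circulant matrix $C$ satisfies $\DFT_{N^*}\,C\,\DFT_{N^*}=D$ for a diagonal $D$, hence $C=ADA$ with $A=\DFT_{N^*}^{-1}$; the matrix $\DFT_{N^*}^{-1}$ is a scalar multiple of a DFT-type matrix (it is $\frac{1}{N^*}\DFT_{N^*}$ with $\omega$ replaced by $\omega^{-1}$, i.e.\ a row/column permutation of $\frac{1}{N^*}\DFT_{N^*}$), so it has the same regular-rigidity profile as $\DFT_{N^*}$. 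Lemma~\ref{diagonalization} then yields $\textsf{r}_C\!\big(2N^*/\exp(\epsilon^6(\log N^*)^{0.36})\big)\le (N^*)^{14\epsilon}$, and the same bound holds for ordinary $N^*\times N^*$ circulant matrices since these are row-permutations of adjusted-circulant matrices.

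Now let $M$ be an arbitrary $N\times N$ Toeplitz matrix, say $M_{ij}=t_{i-j}$. I would realize $M$ as the top-left $N\times N$ block of the $N^*\times N^*$ circulant matrix $M^*$ whose first row $(c_0,\dots,c_{N^*-1})$ is given by $c_k=t_{-k}$ for $0\le k\le N-1$, $c_{N^*-k}=t_{k}$ for $1\le k\le N-1$, and $c_k=0$ otherwise; this is consistent precisely because $N^*\ge 2N$. By the previous step there is an $N^*\times N^*$ matrix $E^*$ with at most $(N^*)^{14\epsilon}$ nonzero entries per row and column such that $\mathrm{rank}(M^*-E^*)\le 2N^*/\exp(\epsilon^6(\log N^*)^{0.36})$. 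Restricting $E^*$ and $M^*-E^*$ to the top-left $N\times N$ block writes $M=B+E$ with $E$ having at most $(N^*)^{14\epsilon}$ nonzeros per row and column and $\mathrm{rank}(B)\le 2N^*/\exp(\epsilon^6(\log N^*)^{0.36})$.

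It remains to translate the $N^*$-parameters into $N$-parameters. Using $N^*\le 2N(\log 2N)^2$ one gets $(N^*)^{14\epsilon}\le N^{15\epsilon}$, since the factor $\big(2(\log 2N)^2\big)^{14\epsilon}$ is at most $N^{\epsilon}$ for all large $N$; and using $\log N^*\ge\log N$ together with $2N^*\le 4N(\log 2N)^2$ one gets $2N^*/\exp(\epsilon^6(\log N^*)^{0.36})\le N/\exp(\epsilon^6(\log N)^{0.35})$, because $\exp\!\big(\epsilon^6((\log N)^{0.36}-(\log N)^{0.35})\big)$ eventually exceeds $4(\log 2N)^2$. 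This gives $\textsf{r}_M\!\big(N/\exp(\epsilon^6(\log N)^{0.35})\big)\le N^{15\epsilon}$, and the circulant case follows since circulant matrices are Toeplitz. The substantive work — the non-rigidity of well-factorable DFT matrices (Theorem~\ref{main}) and the existence of well-factorable numbers in short intervals (Lemma~\ref{scales}, via Baker--Harman) — is already in place, so the only real points of care are getting the embedding direction right (Toeplitz into a \emph{larger} circulant, which forces $N^*\ge 2N$ and hence the use of Lemma~\ref{scales} on the interval just above $2N$) and keeping track of the logarithmic losses so that the exponents $0.35$ and $15$ emerge.
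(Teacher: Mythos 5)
Your proposal is correct and follows essentially the same route as the paper: apply Theorem \ref{main} to a well-factorable $N^*$ slightly above $2N$ (obtained from Lemma \ref{scales}), transfer non-rigidity to $N^*\times N^*$ circulant matrices via the diagonalization Lemma \ref{diagonalization}, embed the given Toeplitz/circulant matrix into the top-left corner of such a circulant matrix, and absorb the polylogarithmic losses into the exponents $0.35$ and $15\epsilon$. The only difference is that you spell out the embedding and the bookkeeping in more detail than the paper does.
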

\begin{proof}
First we analyze circulant matrices of size $N_0$ where $N_0$ is $(l,x)$-factorable for some $g_{100}(x) \leq l \leq g_{10}(x)$.  \expref{Theorem}{main} and \expref{Lemma}{diagonalization} imply that for $M_0$ an $N_0 \times N_0$ circulant matrix where $N_0$ satisfies the previously mentioned 
conditions, %
\[
\textsf{r}_{M_0}\left(\frac{2N_0}{\exp\left(\epsilon^6(\log N_0)^{0.36}\right)}\right) \leq N_0^{14 \epsilon} \,.
\]
Now for a circulant matrix $M$ of arbitrary size $N \times N$, note that it is possible to embed an $M$ in the upper left corner of a circulant matrix of any size at least $2N$.  By \expref{Lemma}{scales}, there exists an $N_0$ that is $(l,x)$-factorable for some $g_{100}(x) \leq l \leq g_{10}(x)$ such that
\[
\frac{N_0}{(\log N_0)^2} \leq N \leq \frac{N_0}{2} \,.
\]
We deduce
\[
\textsf{r}_{M}\left(\frac{2N_0}{\exp\left(\epsilon^6(\log N_0)^{0.36}\right)}\right) \leq N_0^{14 \epsilon} \,.
\]
Rewriting the bounds in terms of $N$ we get
\[
\textsf{r}_{M}\left(\frac{N}{\exp\left(\epsilon^6(\log N)^{0.35}\right)}\right) \leq N^{15 \epsilon} \,. \qedhere
\]
\end{proof}
\begin{remark}
Note that our proof actually shows something slightly stronger, namely that the changes to reduce the rank of a circulant matrix are actually fixed linear combinations of the entries.  See \expref{Definition}{group_reducibility} and \expref{Claim}{main_stronger} for a more precise statement.
\end{remark}
{}From the above and \expref{Claim}{rescaling}, we immediately deduce that %
DFT %
 matrices are not rigid.
\begin{theorem}\label{full_fourier}
Let $0 < \epsilon < 0.01$ be a given parameter.  For all sufficiently large $N$,
 \[
\textsf{r}_{\DFT_N}\left(\frac{N}{\exp\left(\epsilon^6(\log N)^{0.35}\right)}\right) \leq N^{15 \epsilon} \,.
\]
\end{theorem}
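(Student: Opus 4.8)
The plan is to derive Theorem~\ref{full_fourier} directly from Theorem~\ref{main_circulant} (non-rigidity of circulant matrices) together with Claim~\ref{rescaling} (rows and columns of $\DFT_N$ can be rescaled into an adjusted-circulant matrix). The one conceptual ingredient needed is that the regular-rigidity function $\textsf{r}_{(\cdot)}(r)$ is invariant under permuting rows, permuting columns, and multiplying on the left or right by an invertible diagonal matrix. For permutations this is obvious. For a rescaling $M \mapsto D_1 M D_2$ with $D_1, D_2$ invertible and diagonal, I would observe that if $M = A + B$ with $\rank(B) \le r$ and $A$ having at most $s$ nonzero entries in each row and column, then $D_1 M D_2 = D_1 A D_2 + D_1 B D_2$, where $\rank(D_1 B D_2) = \rank(B) \le r$ and $D_1 A D_2$ has exactly the same support as $A$; running the same argument with $D_1^{-1}, D_2^{-1}$ shows the two regular-rigidity values coincide.

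Given this, the steps are short. First I would apply Claim~\ref{rescaling} in the case $d = N$, $n = 1$ to write $\DFT_N = D_1\, M_{\Z_N}(f)\, D_2$ for suitable invertible diagonal matrices $D_1, D_2$ and some $f : \Z_N \to \C$, where $M_{\Z_N}(f)$ is the adjusted-circulant matrix with $(x,y)$-entry $f(x+y)$. By construction $M_{\Z_N}(f)$ is obtained from the ordinary $N \times N$ circulant matrix with symbol $z \mapsto f(-z)$ by the row permutation $x \mapsto -x$, so it is a circulant matrix up to a permutation of its rows. Next I would invoke Theorem~\ref{main_circulant}: for a fixed $0 < \epsilon < 0.01$ and all sufficiently large $N$, every $N \times N$ circulant matrix $M$ satisfies $\textsf{r}_M\!\left(N/\exp(\epsilon^6 (\log N)^{0.35})\right) \le N^{15\epsilon}$. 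Pushing this bound first through the row permutation $x \mapsto -x$ and then through the diagonal rescalings $D_1, D_2$, using the invariance above, yields $\textsf{r}_{\DFT_N}\!\left(N/\exp(\epsilon^6 (\log N)^{0.35})\right) \le N^{15\epsilon}$, which is exactly the claimed inequality.

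There is essentially no obstacle here: the argument is a chain of elementary reductions, which is why the statement follows "immediately." The only points requiring attention are that each intermediate operation preserves the \emph{regular}-rigidity measure $\textsf{r}$ and not merely the ordinary rigidity $\textsf{R}$ (which is true since permutations and diagonal rescalings do not change the number of nonzeros per row or per column), and that the implicit "sufficiently large $N$" threshold in Theorem~\ref{main_circulant} is uniform, so that the conclusion transfers verbatim to $\DFT_N$ for all large $N$. Both are immediate to check.
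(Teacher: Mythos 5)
Your proposal is correct and follows exactly the paper's route: the paper deduces Theorem~\ref{full_fourier} from Theorem~\ref{main_circulant} together with Claim~\ref{rescaling}, and the details you supply (invariance of the regular-rigidity function $\textsf{r}$ under row/column permutations and invertible diagonal rescalings, plus the identification of the adjusted-circulant matrix as a row-permuted circulant) are precisely the ``immediate'' steps the paper leaves implicit.
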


\section{Non-rigidity of
 $G$-circulant matrices for abelian groups}\label{sec_groupalgebra}
Using the results from the previous section, we can show that %
$\DFT_G$ and %
$G$-circulant matrices are not Valiant-rigid for any
infinite class of finite abelian groups $G$.  %
Our proof follows the same strategy as
the proof of \expref{Lemma}{productbound}.

\begin{theorem}\label{DFTabelian}
Let $0 < \epsilon < 0.01$ be fixed.  Let $G$ be an abelian group and $f: G \rightarrow \C$ be a function.  If $|G|$ is sufficiently large then
\[
\textsf{r}_{\DFT_G}\left(\frac{|G|}{\exp\left(\epsilon^8 (\log |G|)^{0.32}\right)}\right) \leq  |G|^{19 \epsilon} \,.
\]
\end{theorem}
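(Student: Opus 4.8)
The plan is to prove Theorem \ref{DFTabelian} by decomposing the abelian group $G$ into cyclic factors and handling the "many small factors" and "few large factors" regimes separately, exactly as was done inside the proof of Lemma \ref{productbound}. By the fundamental theorem of finite abelian groups, write $G = \Z_{m_1} \times \cdots \times \Z_{m_s}$ with $m_1 \le m_2 \le \cdots \le m_s$, so that by Fact \ref{fact:dftproduct} we have $\DFT_G = \DFT_{m_1} \otimes \cdots \otimes \DFT_{m_s}$. Group the factors into two blocks: let $B$ be the Kronecker product of the $\DFT_{m_i}$ with $m_i$ bounded by some threshold $t$ (to be chosen, e.g. $t$ a small power of $\log|G|$), and let $C$ be the Kronecker product of the remaining factors with $m_i > t$. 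Then $\DFT_G = B \otimes C$ and we can apply Lemma \ref{products} to combine rigidity bounds for $B$ and $C$.

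First I would handle the "large factor" block $C$. Each surviving $\DFT_{m_i}$ has $m_i > t$, and by Theorem \ref{full_fourier} each such $\DFT_{m_i}$ satisfies a QNR-type bound $\textsf{r}_{\DFT_{m_i}}(m_i/\exp(\epsilon^6(\log m_i)^{0.35})) \le m_i^{15\epsilon}$ once $m_i$ is large enough (which holds since $m_i > t$ and $t$ grows with $|G|$). Iterating Lemma \ref{products} over these factors — just as Lemma \ref{productbound} iterates over the $\DFT_{t_i}$ blocks — gives a rank bound of the form $|C|/\exp(\text{(something like }\epsilon^6 (\log|C|)^{0.35}/(\text{number of factors})\text{)})$ against a number of changes like $|C|^{O(\epsilon)}$; one has to be careful that the number of large factors is at most $\log_t |G| = O(\log|G|/\log\log|G|)$, so the exponent loss $(\log|C|)^{0.35}$ divided by the number of factors is still a quasipolynomial-type quantity, accounting for the drop from exponent $0.35$ to $0.32$ and the blow-up from $15\epsilon$ to $19\epsilon$.

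Next I would handle the "small factor" block $B$: here all factors $\DFT_{m_i}$ have $m_i \le t$. Since there are few distinct values of $m_i \le t$ (at most $t$ of them), by pigeonhole many factors share the same modulus $m$, so $B$ contains a large sub-Kronecker-power $\underbrace{\DFT_m \otimes \cdots \otimes \DFT_m}_{a} = H_{m,a}$; collecting over all distinct small moduli, $B$ is (up to reordering) a Kronecker product of GWH matrices $H_{m,a}$ to which Theorem \ref{Hadamard} applies whenever the relevant multiplicity $a$ is large compared to $m^2\log m/\epsilon^{\text{const}}$. This is precisely the situation covered by Lemma \ref{productbound} (with the $t_i$ there being the distinct small moduli here and $a_i$ the multiplicities), so I would invoke Lemma \ref{productbound} directly — after discarding any small moduli whose total contribution to the dimension is negligible (at most $|G|^{\epsilon/2}$, absorbed into $C$ as in the proof of Lemma \ref{productbound}) — to obtain $\textsf{r}_B(|B|^{1 - \epsilon^6/(10 L t^2 \log t)}) \le |B|^{5\epsilon}$ where $L = \lceil 2\log\log|B|\rceil$. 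Finally, combine the bounds for $B$ and $C$ via Lemma \ref{products}, choose the threshold $t$ to balance the two error terms (making $t$ a suitable slowly-growing function of $|G|$ so that both $t^2\log t$ and the number of large factors contribute only polylogarithmically), and conclude. The stated bound is for $\DFT_G$; the version for $M_G(f)$ / $G$-circulant matrices then follows immediately from the diagonalization lemma (Lemma \ref{diagonalization}, using that $\DFT_G$ diagonalizes all $G$-circulant matrices), which is presumably stated as the next theorem.

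\textbf{Main obstacle.} The delicate point is the bookkeeping in the "large factor" regime: unlike in Lemma \ref{productbound} where one assumed each block lies in a dyadic range and then reduced the general case to that via a further partition into $L = O(\log\log)$ groups, here the large cyclic factors $m_i$ can have wildly different sizes, and Theorem \ref{full_fourier} only gives a rigidity bound with a $(\log m_i)^{0.35}$-type savings that degrades badly if a single huge factor dominates. So the real work is to again partition the large factors by the dyadic scale of $m_i$ (or of a truncated version), verify that within each scale the product of sizes is either negligible or large enough to profit from Theorem \ref{full_fourier} with a uniform exponent, and track how the number of scales ($O(\log\log|G|)$) and number of factors per scale interact when Lemma \ref{products} is iterated — ensuring the final exponent on $\log|G|$ stays at $0.32$ and the count of changes stays at $|G|^{O(\epsilon)}$. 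Everything else is a routine (if tedious) combination of Lemma \ref{products}, Theorem \ref{Hadamard}, Lemma \ref{productbound}, and Theorem \ref{full_fourier}.
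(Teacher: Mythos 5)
Your overall architecture --- decompose $\DFT_G$ into a Kronecker product of cyclic DFT's, treat small moduli of high multiplicity via the GWH machinery, treat large moduli via Theorem \ref{full_fourier} after a dyadic grouping by scale, discard blocks of negligible total dimension, and combine with Lemma \ref{products} --- is exactly the paper's. But there is a genuine quantitative gap in your treatment of the large-factor block. Within a single dyadic scale containing factors $x_1\le\dots\le x_b$ (so $x_i\le x_1^2$), ``iterating Lemma \ref{products}'' only \emph{adds} the rank losses: with $P=x_1\cdots x_b$ you get $\textsf{r}\bigl(P\sum_i \exp(-\epsilon^6(\log x_i)^{0.35})\bigr)\le\prod_i x_i^{15\epsilon}$, so the savings factor of the product is at best $\exp(\epsilon^6(\log x_1)^{0.35})$ --- the savings of a \emph{single} factor. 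When a scale consists of many moduli of size, say, $\mathrm{polylog}|G|$, this is only $\exp(\mathrm{poly}(\log\log|G|))$, nowhere near the claimed $\exp(\epsilon^8(\log|G|)^{0.32})$; and the formula you wrote, $\exp\bigl(\epsilon^6(\log|C|)^{0.35}/(\text{number of factors})\bigr)$, is weaker still (with the number of factors $\approx\log|G|/\log\log|G|$ that exponent tends to $0$, i.e.\ no savings at all). The missing idea is the amplification used inside Lemma \ref{productbound} and in the paper's actual proof of this theorem: write $\DFT_{x_1}\otimes\cdots\otimes\DFT_{x_b}=\bigotimes_i(A_{x_i}+E_{x_i})$, expand over subsets $S\subseteq[b]$, and split at $|S|\ge\epsilon b$. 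The terms with $|S|\ge\epsilon b$ have total rank at most $\binom{b}{\lceil\epsilon b\rceil}\,P\,\exp(-\epsilon^6\lceil\epsilon b\rceil(\log x_1)^{0.35})\le P/\exp(\epsilon^7(\log P)^{0.33})$ --- the savings now \emph{multiply} over an $\epsilon$-fraction of the factors, and $b(\log x_1)^{0.35}\gtrsim(\log P)^{0.35}$ since $P\le x_1^{2b}$ --- while the terms with $|S|<\epsilon b$ contribute only $P^{O(\epsilon)}$ changes per row and column. Lemma \ref{products} is reserved for combining across the $O(\log\log|G|)$ scales (and the small-modulus GWH blocks), where merely summing the losses is harmless.

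A secondary point: the paper takes the small/large threshold to be a fixed constant $k$ rather than a slowly growing $t$. With a growing threshold you must also handle moduli $m\le t$ whose multiplicity falls below the $m^2(\log m)^2/\epsilon^{10}$ requirement of Theorem \ref{Hadamard} but whose total contribution to the dimension is not negligible; these would have to be reassigned to the large-factor regime (legitimate, since Theorem \ref{full_fourier} applies to any sufficiently large modulus, but it needs to be said). With the subset-expansion amplification restored, the rest of your plan goes through as in the paper, and the passage to $G$-circulant matrices via Lemma \ref{diagonalization} is indeed how the paper proceeds.
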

\begin{proof}
By the Fundamental Theorem of Finite Abelian Groups we can write 
$G = \Z_{n_1} \times \cdots \times \Z_{n_a}$\,.
By \expref{Fact}{fact:dftproduct},
we have $\DFT_G = \DFT_{n_1} \otimes \cdots \otimes \DFT_{n_a}$.  
Let us write $F = \DFT_G$. 
 
Without loss of generality, $n_1 \leq n_2 \leq \dots \leq n_a$.  We will choose $k$ to be a fixed, sufficiently large positive integer.  By \expref{Theorem}{full_fourier}, we can ensure that for $N > k$
\[
\textsf{r}_{\DFT_N}\left(\frac{N}{\exp\left(\epsilon^6 (\log N)^{0.35}\right)} \right) \leq N^{15 \epsilon} \,.
\]
Consider the ranges $I_1 = [k,k^2), I_2 = [k^2, k^4), \dots I_j = [k^{2^{j-1}},k^{2^j}) \dots $ and so on.  Let $S_j$ be a multiset defined by $S_j = I_j \cap \{n_1, \dots, n_a \}$.  Fix a $j$ and let the elements of $S_j$ be $x_1 \leq \dots \leq x_b$.  By \expref{Theorem}{full_fourier}, for each $x_i$, there are matrices $E_{x_i}$ and $A_{x_i}$ such that %
$\DFT_{x_i} = A_{x_i} + E_{x_i}$ %
, $E_{x_i}$ has at most $x_i^{15\epsilon}$ nonzero entries in each row and column, and 
\[
\rank(A_{x_i}) \leq \frac{x_i}{\exp\left(\epsilon^6 (\log x_i)^{0.35}\right)} \,.
\]
Now we can write

\begin{align*}
M_j = \DFT_{x_1} \otimes \cdots \otimes \DFT_{x_b}   = (A_{x_1} + E_{x_1}) \otimes \cdots \otimes (A_{x_b} + E_{x_b}) = \sum_{S \subset [b]}\left(\bigotimes_{i \in S}A_{x_i}\right) \otimes \left(\bigotimes_{i' \notin S}E_{x_i'}\right) \\ = \sum_{S \subset [b], |S| \geq \epsilon b}\left(\bigotimes_{i \in S}A_{x_i}\right)\otimes \left(\bigotimes_{i' \notin S}E_{x_i'} \right)+ \sum_{S \subset [b], |S| < \epsilon b}\left(\bigotimes_{i \in S}A_{x_i}\right)\otimes \left(\bigotimes_{i' \notin S}E_{x_i'} \right) \,.
\end{align*}
Let the first term above be $N_1$ and the second term be $N_2$.  We will bound the rank of $N_1$ and the number of nonzero entries in each row and column of $N_2$. Note that by grouping the terms in the sum for $N_1$ we can write it in the form
\[
\sum_{S \subset [b], |S| = \lceil \epsilon b \rceil}\bigotimes_{i \in S}A_{x_i} \otimes E_S
\]
where %
$E_S$ is some matrix for each $S$. %
 This implies that 
\begin{align*}
\rank(N_1) \leq \binom{b}{\lceil \epsilon b \rceil} \frac{x_1 \cdots x_b}{\left(\exp\left(\epsilon^6 (\log x_1)^{0.35}\right)\right)^{\lceil \epsilon b \rceil}} \leq \frac{b^{\lceil \epsilon b \rceil}}{( \frac{\epsilon b}{3})^{\lceil \epsilon b \rceil}}\frac{x_1 \cdots x_b}{\left(\exp\left(\epsilon^6 (\log x_1)^{0.35}\right)\right)^{\lceil \epsilon b \rceil}} \\ = x_1 \cdots x_b\left( \frac{3}{\epsilon \exp\left(\epsilon^6 (\log x_1)^{0.35}\right)} \right)^{\lceil \epsilon b \rceil} \,.
\end{align*}
As long as $k$ is sufficiently large, we have
\begin{eqnarray*}
\rank(N_1) \leq x_1 \cdots x_b\left( \frac{3}{\epsilon \exp\left(\epsilon^6 (\log x_1)^{0.35}\right)} \right)^{\lceil \epsilon b \rceil} \leq x_1 \cdots x_b\left( \frac{1}{\exp\left(\epsilon^6 (\log x_1)^{0.34}\right)} \right)^{\lceil \epsilon b \rceil}  \\ \leq \frac{ x_1 \cdots x_b}{\exp\left(\epsilon^7 (\log x_1 \cdots x_b )^{0.33}\right)}
\end{eqnarray*}
where in the last step we used the fact that $x_i \leq x_1^2$ for all $i$.  The number of nonzero entries in each row or column of $N_2$ is at most 
\[
2^b x_b \cdots x_{b - \lfloor \epsilon b\rfloor + 1}(x_{b - \lfloor \epsilon b\rfloor} \cdots x_1)^{15\epsilon} = 2^b (x_1 \cdots x_b)^{15\epsilon}(x_b \cdots x_{b - \lfloor \epsilon b\rfloor + 1})^{1 - 15\epsilon} \leq (x_1 \cdots x_b)^{18\epsilon} \,.
\]
Note in the last step above, we used the fact that $x_i \leq x_1^2$.

For each integer $c$ between $2$ and $k$, let $m_c$ be the number of copies of $c$ in the set $\{n_1, \dots , n_a \}$.  If $m_c \geq k^2 (\log k)^2/\epsilon^4 $ then by \expref{Theorem}{Hadamard}, if we define %
$A_c = \underbrace{\DFT_c \otimes \cdots \otimes \DFT_c}_{\text{$m_c$}}$ %
 then 
\[
\textsf{r}_{A_c}\left(c^{m_c(1 - \epsilon^4/(k^2 \log k))}\right) \leq c^{m_c \epsilon} \,.
\]
Let $L = \lceil 2\log \log |G| \rceil$ and ensure that $|G|$ is sufficiently large so that $L > k$.  Let $T$ be the set of integers $c$ between $2$ and $k$ such that $c^{m_c} \geq |G|^{\epsilon/(2L)}$ (note that as long as $|G|$ is sufficiently large, all elements of $T$ must satisfy $m_c \geq k^2 (\log k)^2/\epsilon^4 $).  Let $R$ be the set of indices $j$ for which $\prod_{x \in S_j}x \geq |G|^{\epsilon/(2L)}$.  Since $S_j$ is clearly empty for $j \geq L$, the matrix $F$ can be written as 
\[
F = \left( \bigotimes_{2 \leq c < k} \left( \underbrace{\DFT_c \otimes \cdots \otimes \DFT_c}_{\text{$m_c$}}\right) \right) \otimes \left(\bigotimes_{1 \leq j \leq L} M_j \right) \,.
\]
Define 
\[B = \left( \bigotimes_{c \notin T} \left( \underbrace{\DFT_c \otimes \cdots \otimes \DFT_c}_{\text{$m_c$}}\right) \right) \otimes \left(\bigotimes_{j \notin R} M_j \right) \,. \]
Note that the size of $B$ is at most 
\[
\left( |G|^{\epsilon/(2L)}\right)^{k + L} \leq |G|^{\epsilon} \,.
\]
Also $F = B \otimes D$ where
\[
D =  \left( \bigotimes_{c \in T} \left( \underbrace{\DFT_c \otimes \cdots \otimes \DFT_c}_{\text{$m_c$}}\right) \right) \otimes \left(\bigotimes_{j \in R} M_j \right) \,.
\]
For any rank $r$, %
we have %
$\textsf{r}_M(|B|r) \leq |B|\textsf{r}_D(r)$.  Applying \expref{Lemma}{products} iteratively, we get 
\begin{align*}
\textsf{r}_D\left(\frac{|G|}{|B|}\left( \sum_{c \in T}\frac{1}{c^{m_c\epsilon^4/(k^2 \log k)}} + \sum_{j \in R}\frac{1}{\exp\left(\epsilon^7 (\log \prod_{x \in S_j}x )^{0.33}\right)} \right)\right) \leq \left(\frac{|G|}{|B|}\right)^{18 \epsilon} \,.
\end{align*}
Note that 
\begin{align*}
\left( \sum_{c \in T}\frac{1}{c^{m_c \epsilon^4/(k^2 \log k)}} + \sum_{j \in R}\frac{1}{\exp\left(\epsilon^7 (\log \prod_{x \in S_j}x )^{0.33}\right)} \right) \leq \frac{k}{|G|^{\epsilon^5/(2Lk^2 \log k)}} + \frac{L}{\exp\left(\epsilon^8 (\log |G|/2L)^{0.33}\right)} \\ \leq \frac{1}{\exp\left(\epsilon^8 (\log |G|)^{0.32}\right)} \,.
\end{align*}
Overall, we conclude
\[
\textsf{r}_F\left(\frac{|G|}{\exp\left(\epsilon^8 (\log |G|)^{0.32}\right)}\right) \leq |B|\left(\frac{|G|}{|B|}\right)^{18 \epsilon} \leq |G|^{19 \epsilon} \,.\qedhere
\]
\end{proof}

\begin{theorem}\label{allabelian}
  Let $0 < \epsilon < 0.01$ be fixed.  Let $G$ be an abelian group and $f: G \rightarrow \C$ be a function. Let $M = M_G(f)$ be a %
$G$-circulant matrix.  If $|G|$ is sufficiently large then
\[
\textsf{r}_M\left(\frac{2|G|}{\exp\left(\epsilon^8 (\log |G|)^{0.32}\right)} \right) \leq |G|^{38 \epsilon} \,.
\]
\end{theorem}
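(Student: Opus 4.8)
The plan is to obtain Theorem~\ref{allabelian} from Theorem~\ref{DFTabelian} via the diagonalization trick, in exactly the way Corollary~\ref{groupalgebra1} is obtained from Theorem~\ref{Hadamard}. All the genuine work has been done already in establishing the non-rigidity of $\DFT_G$; the only new ingredient I need is the (routine) observation that every adjusted $G$-circulant matrix is diagonalized by $\DFT_G$, which is the general-abelian-group analog of Claim~\ref{Hadamard_diagonalization}.

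First I would record that analog. Fix a nondegenerate symmetric bilinear pairing $e$ on $G$ (for $G=\Z_{n_1}\times\cdots\times\Z_{n_a}$ take $e(x,y)=\prod_i e^{2\pi i x_i y_i/n_i}$), so that $(\DFT_G)_{x,y}=e(x,y)$ is symmetric and $\DFT_G\DFT_G^{*}=|G|I$ (the general form of Claim~\ref{orthogonal}). A direct computation shows $\DFT_G\,M_G(f)\,\DFT_G$ is diagonal: expanding $\sum_{y,z}e(x,y)f(y+z)e(z,w)$ and substituting $u=y+z$ gives $\sum_u f(u)e(x,u)\sum_z e(w-x,z)$, and the inner sum is $|G|$ when $w=x$ and $0$ otherwise. (Alternatively this follows from $\DFT_G=\DFT_{n_1}\otimes\cdots\otimes\DFT_{n_a}$, Fact~\ref{fact:dftproduct}, the block form of $M_G(f)$, Fact~\ref{product-kronecker}, and the cyclic case.) Inverting, this yields $M_G(f)=\DFT_G^{*}\,D\,\DFT_G^{*}$ for a diagonal matrix $D$. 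Since $\DFT_G^{*}$ differs from $\DFT_G$ only by transposition and entrywise complex conjugation — neither of which changes rank or the number of nonzeros in a row or column — we have $\textsf{r}_{\DFT_G^{*}}(r)=\textsf{r}_{\DFT_G}(r)$ for every $r$.

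Next I would apply Lemma~\ref{diagonalization} in its $B'=ADA$ form with $A=\DFT_G^{*}$. Theorem~\ref{DFTabelian} gives, for $|G|$ sufficiently large, $\textsf{r}_{\DFT_G^{*}}(r)\le s$ with $r=\tfrac{|G|}{\exp(\epsilon^8(\log|G|)^{0.32})}$ and $s=|G|^{19\epsilon}$, so Lemma~\ref{diagonalization} yields $\textsf{r}_{M_G(f)}(2r)\le s^2=|G|^{38\epsilon}$, which is exactly the claimed bound $\textsf{r}_{M_G(f)}\!\bigl(\tfrac{2|G|}{\exp(\epsilon^8(\log|G|)^{0.32})}\bigr)\le|G|^{38\epsilon}$. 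Finally, since the usual $G$-circulant matrix $M_{xy}=f(x-y)$ is obtained from the adjusted one by the row permutation $x\mapsto -x$, the same bound transfers to it. I do not anticipate a real obstacle here: the only points needing care are getting the placement of conjugates and inverses in the diagonalization identity right, so that $M_G(f)$ is literally of the form $ADA$ for a matrix $A$ whose regular-rigidity we control, and the (immediate) remark that conjugate-transposition leaves $\textsf{r}$ unchanged.
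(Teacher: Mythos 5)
Your proposal is correct and is essentially identical to the paper's proof, which also derives Theorem~\ref{allabelian} by combining Theorem~\ref{DFTabelian} with Lemma~\ref{diagonalization} via the observation that $\DFT_G$ diagonalizes $M_G(f)$. You have simply spelled out the routine details (the orthogonality computation and the invariance of $\textsf{r}$ under conjugate transposition) that the paper leaves implicit.
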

\begin{proof}

Note $\DFT_G$ diagonalizes $M$.  Thus, combining \expref{Theorem}{DFTabelian}
with  \expref{Lemma}{diagonalization} gives the
desired conclusion.
\end{proof}

The rigidity results we proved hold over $\C$.  By examining the proofs more carefully, we can actually show that when $G$ is an abelian group, the same results hold for $G$-circulant matrices over an
abelian   %
extension of $\Q$ of degree $\tilde{O}(N^3)$.
\begin{theorem}\label{thm:extension_degree}
Let $G$ be an abelian group of order $N$.  Then there exists $m = \tilde{O}(N^3)$, depending only on $G$, such that $G$-circulant matrices with entries in $\Q$ satisfy
\[
\textsf{r}_M\left(\frac{2|G|}{\exp\left(\epsilon^8 (\log |G|)^{0.32}\right)} \right) \leq |G|^{38 \epsilon}
\]
over the $m$\ts{th} cyclotomic field.  The same bound holds for the matrix $\DFT_G$.
\end{theorem}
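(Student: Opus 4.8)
The plan is to re-run the entire chain of arguments leading to Theorem \ref{allabelian} (equivalently Theorem \ref{DFTabelian} composed with Lemma \ref{diagonalization}), keeping careful track of which roots of unity actually appear in the matrices that get constructed, and then to bound the least common multiple of all their orders. The only place where new field elements are introduced (beyond the entries of $\DFT_G$ itself, which live in $\Q[\zeta_N]$) is in the construction of the low-rank approximants. Concretely: (i) in Lemma \ref{changes}/Lemma \ref{non-rigidity1}, the corrected function $f'$ is obtained by solving a linear system whose coefficients are powers of a primitive $d$-th root of unity $\omega$, so $f'$ takes values in $\Q[\zeta_d]$, and the matrix of changes $M(f)-M(f')$ and the low-rank part both have entries in $\Q[\zeta_d]$; here $d$ is one of the small prime-power factors $t_i$ arising from the $q_j-1$, so $d \le x^{0.3} = \tilde O(N^{0.3})$; (ii) in Claim \ref{rescaling} one needs a square root $\zeta$ of $\omega$, i.e. a $2d$-th root of unity, which only doubles $d$; (iii) in Lemma \ref{blockrigidity}/Lemma \ref{productbound}, the diagonalizing matrix is $X=\DFT_{T_1}\otimes\cdots\otimes\DFT_{T_k}$ with $T_i \mid q_i-1 \le x$, so the entries of $X$ (hence of $E$, $E\otimes F$, etc.) lie in $\Q[\zeta_{\lcm(T_1,\dots,T_k)}]$, and $\lcm(T_1,\dots,T_k) \mid \lcm(q_1-1,\dots,q_l-1)$, which divides $\lcm(1,2,\dots,x) = e^{(1+o(1))x}$ — far too large; (iv) in Lemma \ref{diagonalization} and the embedding arguments of Section \ref{sec_Fourier2} and Theorem \ref{DFTabelian}, no genuinely new roots of unity are introduced beyond those already present (rescaling columns in Theorem \ref{fullFourier_intro}'s embedding uses $N$-th roots of unity, already available).

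The main obstacle, therefore, is that naively $\lcm$ of all the $T_i$'s appearing across all the blocks $M(S)$ is exponential in $x \approx N$. The fix is to observe that we never need $\DFT_{T}$ for $T$ a product of many distinct primes simultaneously: in Lemma \ref{blockrigidity} and Lemma \ref{productbound}, after applying Lemma \ref{decomp} we only ever invoke Theorem \ref{Hadamard} on the pure tensor powers $H_{t_i, a_i} = \DFT_{t_i}\otimes\cdots\otimes\DFT_{t_i}$ for a single prime power $t_i$, and the low-rank approximant $E_i$ for $H_{t_i,a_i}$ has entries only in $\Q[\zeta_{2 t_i}]$. The tensor products $\bigotimes A_i \otimes \bigotimes E_{i'}$ then have entries in $\Q[\zeta_{2t}]$ where $t = \lcm$ of the \emph{distinct prime powers} $t_i$ — but crucially the number of distinct prime-power values $t_i$ that occur with multiplicity $c(t_i) \ge x^{0.62}$ is small: there are at most $O(x^{0.3})$ prime powers below $x^{0.3}$, and only those with $c(t)\ge x^{0.62}$ matter, so $t = \lcm$ of at most $O(x^{0.3})$ distinct prime powers each at most $x^{0.3}$, giving $t \le (x^{0.3})^{O(x^{0.3})} = \exp(\tilde O(x^{0.3}))$ — still too big. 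To get down to polynomial degree one must be more careful: restrict attention, in each block, to the \emph{single} prime power $t$ with the largest $c(t)$ (or a bounded number of them) that already provides enough rank reduction; since each such $t$ contributes a factor $t^{c(t)}$ with $c(t) \ge x^{0.62}$ and $\log(\text{total}) \approx \log N \le x \log x$, there are at most $O(x^{0.38})$ relevant prime powers, and one can in fact select $O(1)$ of them whose product already exceeds $N^{1-\epsilon}$...

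Let me restate the clean approach: I would \emph{modify} the construction so that in Lemma \ref{blockrigidity} we only use prime powers $t$ from a set $\mathcal T$ with $|\mathcal T| = O(\log N / \log x^{0.62}) = O(\log N / \log N) = O(1)$ — wait, that over-reduces. The honest bound is as follows. Collect the distinct prime powers $t_1 < \dots < t_n \le x^{0.3}$ with $c(t_i)\ge x^{0.62}$. Their product (with multiplicities) is $P \ge N^{1-\epsilon}$, and since each $t_i \ge 2$ we have $2^{\sum c(t_i)} \le P \le N$, so $\sum c(t_i) \le \log_2 N$; combined with $c(t_i) \ge x^{0.62}$ this forces $n \le \log_2 N / x^{0.62} = \tilde O(x^{0.38})$. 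Hence $\operatorname{rad}(t_1\cdots t_n) \le \prod_{i} t_i \le (x^{0.3})^{\tilde O(x^{0.38})} = \exp(\tilde O(x^{0.68}))$. This is still superpolynomial, so to reach the claimed $m = \tilde O(N^3)$ one must instead appeal to a second idea: \emph{in each block we only need ONE prime power}, because Theorem \ref{Hadamard} applied to the single largest-multiplicity prime power $t^\star$ with $c(t^\star)\ge x^{0.62}$ already reduces the rank of that tensor factor to $ (t^\star)^{c(t^\star)(1-\psi)}$, and since we are free to absorb all the other tensor factors into the "unchanged'' part, the overall rank of the block drops by the multiplicative factor $(t^\star)^{c(t^\star)\psi} \ge 2^{x^{0.62}\psi}$, which is super-polynomially large and hence more than sufficient. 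This sacrifices some rank reduction but only by a quasipolynomial amount, which is exactly the slack QNR allows. With this modification each block's approximant has entries in $\Q[\zeta_{2t^\star}]$ with $t^\star \le x^{0.3}$, and aggregating over all blocks $M(S)$ we need the \lcm over all $S$ of the chosen $t^\star_S$; but these range over at most $O(x^{0.3})$ distinct prime-power values, so $m_0 := \lcm$ of them divides $\prod_{p^e \le x^{0.3}} p^e = e^{(1+o(1)) x^{0.3}}$ — and this STILL is not polynomial.

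Given the subtlety, let me commit to the cleanest honest route, which I believe is the intended one: take $m = \lcm(N,\, 2,\, 3,\, \dots,\, \lceil x^{0.3}\rceil,\, \text{and } 2)$ replaced by the observation that we may choose the single prime power $t^\star$ in each block to be a \emph{fixed} prime power $p^e$ with $p^e = \Theta(x^{0.3})$ and $p$ prime — such a $p^e$ exists dividing many $q_j-1$ by a pigeonhole/Linnik-type count, or we simply fix $p$ to be any prime of size about $x^{0.15}$ and $e=2$ — so that across all blocks the same $t^\star$ is used; then all approximants lie in $\Q[\zeta_{2 N}]$ plus $\Q[\zeta_{t^\star}]$ with $t^\star = O(x^{0.3})$, and since $\DFT_G$'s entries need $\Q[\zeta_{N}]$ and the scaling steps need up to $\Q[\zeta_{2N}]$, the total extension is generated by $\zeta_m$ with $m = \lcm(2N, t^\star) = \tilde O(N)$ per factor; tensoring over the $a = O(\log N)$ cyclic factors of $G$ and over the $L = O(\log\log N)$ scales multiplies the relevant modulus by at most a further polynomial factor, landing at $m = \tilde O(N^3)$ as claimed (the cube coming from: one factor $N$ for $\zeta_N$ in $\DFT_G$, one factor $\tilde O(N)$ for the square-root/scaling roots of unity $\zeta_{2N}$ used in Claim \ref{rescaling} and the DFT-into-circulant embedding, and one factor $\tilde O(N)$ for the small roots of unity $\zeta_{t^\star}$ and the $\lcm$ over the $\tilde O(\log N)$ group components). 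The final step is purely bookkeeping: re-examine each of Theorems \ref{Hadamard}, \ref{main}, \ref{main_circulant}, \ref{full_fourier}, \ref{DFTabelian}, \ref{allabelian} and verify that every matrix written down has entries in $\Q[\zeta_m]$ for this $m$, that $\Q[\zeta_m]/\Q$ is abelian (immediate, it is cyclotomic), that $[\Q[\zeta_m]:\Q] = \phi(m) = \tilde O(N^3)$, and that the rigidity bounds, being statements about rank and sparsity of matrices over a field, are preserved verbatim when the ambient field is $\Q[\zeta_m]$ rather than $\C$. I expect the genuine difficulty to be precisely the one I circled above — controlling the $\lcm$ of the small prime powers $t_i$ without losing too much rank reduction — and the resolution is to trade a quasipolynomial amount of rank reduction (harmless under QNR) for using only $O(1)$ or a fixed prime power per block.
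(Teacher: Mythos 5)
Your overall strategy --- re-run the whole argument while tracking which roots of unity actually get introduced, and bound the conductor of the resulting cyclotomic field --- is exactly the paper's strategy, and you correctly locate the crux: bounding the orders of the roots of unity introduced in Lemma \ref{blockrigidity}/Lemma \ref{productbound}, which all divide $\lcm(q_1-1,\dots,q_l-1)$. But you then bound this lcm by $\lcm(1,2,\dots,x)=e^{(1+o(1))x}$ and conclude it is ``far too large,'' which sends you down a sequence of increasingly ad hoc fixes. The observation you are missing is a one-liner: the lcm of a set of positive integers divides their \emph{product}, so $\lcm(q_1-1,\dots,q_l-1) \le (q_1-1)\cdots(q_l-1) < q_1\cdots q_l = N'=\tilde O(N)$. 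Hence a \emph{single} additional root of unity $\alpha$ of order at most $N'$ (together with the primitive $N'$\ts{th} root $\omega$ and the square roots from Claim \ref{rescaling}) suffices for every block $M(S)$ simultaneously, and the construction needs no modification at all. This gives $\Q[\omega][\alpha]$ of degree $\tilde O(N^2)$, and combined with the original $N$\ts{th} roots of unity in $\DFT_G$ one lands in a cyclotomic field of conductor/degree $\tilde O(N^3)$, exactly as in the paper's proof (which then notes that the extension to general abelian $G$ via Theorem \ref{allabelian} introduces nothing new).

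Your fallback route does not repair the gap. Restricting each block to a single prime power $t^\star$ changes the construction and, as you concede, weakens the rank reduction, so the specific bound $\textsf{r}_M\bigl(2|G|/\exp(\epsilon^8(\log|G|)^{0.32})\bigr)\le |G|^{38\epsilon}$ claimed in the theorem would no longer follow verbatim. Worse, the existence of a \emph{fixed} prime power $p^e=\Theta(x^{0.3})$ with $c(p^e)\ge x^{0.62}$ in every block is not justified: nothing in the construction guarantees that any particular prime power divides $q_j-1$ for a positive fraction of the $j$, and ``fix $p$ to be any prime of size about $x^{0.15}$'' certainly fails since such a $p$ need not divide any $q_j-1$ at all. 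None of this machinery is needed once one uses $\lcm \le \prod$.
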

\begin{proof}
First note that it is immediate from our proof that the GWH matrix $H_{d,n}$ is not rigid over $\Q[\omega]$ where $\omega$ is a
$d$\ts{th}  %
primitive root of unity. 
Now for well-factorable integers $N = p_1p_2 \cdots p_k$, the additional roots of unity that we need to adjoin for non-rigidity of $\DFT_N$ are all roots of unity with order dividing $(p_1-1)(p_2-1) \cdots (p_k - 1)$.  Thus, $\DFT_N$ is not rigid over an extension $\Q[\omega][\alpha]$ where $\alpha$
is a root of unity of order at most $N$.  %
Finally for other values of $N$, we find a well-factorable integer $N' = \tilde{O}(N)$ and embed $\DFT_N$ into an $N' \times  N'$ circulant matrix.  It is now immediate from \expref{Lemma}{diagonalization} that $\DFT_N$ is not rigid over a cyclotomic field of order $\tilde{O}(N^3)$.  The proof in \expref{Theorem}{allabelian} for  $\DFT_G$ and $G$-circulant matrices generalizes directly.  
\end{proof}
\section{Finite field case}\label{sec_finitefield}  %
In this section, we sketch how to modify the proofs in the previous sections to deal with matrices over a finite field.  The main difficulty that arises when attempting to extend the above methods to finite fields is that the entries of the corresponding %
DFT %
 matrix might not exist in the field.  Furthermore, for a finite field $\F_q$ and integer $k$ with $\gcd(k,q) > 1$, there are no primitive $k$\ts{th} roots of unity %
in %
any extension of $\F_q$.  Because this section involves a significant amount of abstract algebra, we begin by giving a brief overview of the algebraic tools that we will use.

\subsection{Preliminaries about Galois theory and finite fields}
\label{sec:galois}
Our standard reference for field extensions, Galois theory,
and finite fields is Chapters~V and~VI of
Lang's Algebra~\cite{lang}.  The monograph by
Lidl and Niederreiter~\cite{lidl} is entirely dedicated  %
to finite fields.

\begin{define}
A field extension $\K/\F$ means that $\K$ is a field and $\F$ is
a subfield.  The \emph{degree} of the extension is the dimension
of $\K$ as a vector space over $\F$.  Given a field extension $\K/\F$,
we say that $\alpha \in \K$ is \emph{algebraic} over $\F$ if $\alpha$
is a root of some nonzero polynomial $P$ with coefficients in $\F$.
We say that $\K/\F$ is an \emph{algebraic extension} if every
element of $\K$ is algebraic over $\F$.  A \emph{finite extension}
is an extension of finite degree. 
\end{define}

\begin{fact.}
Every finite extension is algebraic.
\end{fact.}

\begin{define}
Given a field extension $\K/\F$ and $\alpha \in \K$ that is
algebraic over $\F$, we say that the polynomial $P$
over $\F$ is the \emph{minimal polynomial} of $\alpha$
if $P$ is monic and has minimal degree among all nonzero
polynomials over $\F$ that have $\alpha$ as a root.
The \emph{degree} of $\alpha$ over $\F$ is the degree of its
minimal polynomial.
\end{define}
It is not difficult to see that $P$ exists, is
unique, and must be irreducible over $\F$.

\begin{fact.}
Given a field extension $\K/\F$, 
let $\alpha \in \K$ be algebraic over $\F$.
The set $\F[\alpha]$, defined as the set of all polynomials of
$\alpha$ with coefficients in $\F$, is a field.
The degree of the extension $\F[\alpha]/\F$
is the degree of $\alpha$ over $\F$.
\end{fact.}
$\F[\alpha_1,\alpha_2]$ denotes $\F[\alpha_1][\alpha_2]$. 
\begin{define}
Let $\K/\F$ be a field extension 
and let $\alpha \in \K$ be algebraic over $\F$.
Let $P$ be the minimal polynomial of $\alpha$.  
The \emph{conjugates} of $\alpha$ are the roots of $P$
(including $\alpha$ itself)
in an extension of $\K$ over which $P$ decomposes into linear factors. %
\end{define}

\begin{define}[Galois extensions]
  An algebraic field extension $\K/\F$ is \emph{normal} if for
  every irreducible polynomial $P$ over $\F$, if $P$ has a root
  in $\K$ then $P$ splits into linear factors over $\K$.
  The extension $\K/\F$ is \emph{Galois} if it is normal
  and for all $\alpha\in\K$, all roots of the minimal polynomial
  of $\alpha$ over $\F$ are distinct.
\end{define}

\begin{fact.}[{\cite[Ch. V, Thm. 5.5]{lang}}]
If $\K$ is a finite field then every extension $\K/\F$ is Galois.
\end{fact.}

\begin{define}[Galois group]
For a Galois extension $\K/\F$ we write $\Gal(\K/\F)$
to denote the set of those automorphisms of $\K$ that
fix $\F$ elementwise.
\end{define}
  
We begin by stating some basic facts.  

\begin{fact.}   \label{fact:galois-extension-facts}
Let $\K/\F$ be a finite Galois extension of degree $g$
and let $\alpha\in \K$ have degree $m$.  Let $G=\Gal(\K/\F)$.
Then the following hold.
\begin{itemize}
\item[(i)] $|G|=g$.
\item[(ii)] \ $m\mid g$.
\item[(iii)] The conjugates of $\alpha \in \K$ are the elements
  $\pi(\alpha)$ for all $\pi \in \Gal(\K/\F)$.  The
  list $(\pi(\alpha)\mid \pi\in G)$ includes each
  conjugate of $\alpha$ exactly $g/m$ times.
  In particular, if
  $\K = \F[\alpha]$ (i.e., $m=g$) then the degree of
  this extension is the number of conjugates of $\alpha$.
\item[(iv)] $\F$ is precisely the set of common fixed points of
$\Gal(\K/\F)$.
\end{itemize}
\end{fact.}

The following consequence of item (iv) is immediate.
\begin{fact.}\label{symmetric_poly}
Let $\K/\F$ be a Galois extension.
Let $\alpha \in \K$ and let $\alpha_1, \dots , \alpha_m$
be the conjugates of $\alpha$ with $\alpha_1 = \alpha$.  Then
$Q(\alpha_1, \dots , \alpha_m) \in \F$ for any symmetric polynomial $Q$.
\end{fact.}

\begin{fact.}[{\cite[Ch. V, Thm. 5.4]{lang}}]
\label{frobenius}
Let $\K$ be a finite field and $\F$ a subfield.  
If $\F = \F_q$ then $\Gal(\K/\F)$  
is a cyclic group, generated by the Frobenius automorphism
$x \mapsto x^q$. 
\end{fact.}

We have the following consequence.
\begin{fact.} \label{order1}
Let $\K$ be a finite field and $\F = \F_q$ a subfield.  Let $\alpha \in \K$.
Then the conjugates of $\alpha$ over $\F$ are precisely the  
elements of the form $\alpha^{q^j}$ for nonnegative integers $j$.
In particular, if $\K = \F_{q^m} = \F[\alpha]$ then $m$ is the degree
of $\alpha$ over $\F$ and the conjugates of $\alpha$ are exactly
$\{\alpha^{q^0}, \alpha^{q^1},  \dots,  \alpha^{q^{m-1}} \}$.  
Moreover, if $n$ is the order of $\alpha$ in the multiplicative group
of $\K$ then $m = \ord_q(n)$, the order of $q$ modulo $n$.
\end{fact.}

Now we introduce the concept of primitive 
roots of unity over finite fields and prove some
of their basic properties. 
\begin{fact.}[{\cite[Ch. V, Thm. 5.3]{lang}}]
The multiplicative group of a finite field is cyclic.
(In fact, the finite subgroups of the multiplicative group
of any field are cyclic.)
\end{fact.}

\begin{definition}
Let $\F$ be a field.  We say that $\alpha\in\F$ is an
$n$\ts{th} root of unity if $\alpha^n=1$.  We say that
$\alpha\in\F$ is a \emph{primitive $n$\ts{th} root of unity}
if $\alpha\neq 0$ and the order of $\alpha$ in
 $\F^{\times}$ is $n$.
\end{definition}

\begin{fact.}[{\cite[Thm. 2.47(ii)]{lidl}}]
A primitive $n$\ts{th} root of unity exists in the finite
field of order $q$ if and only if $n\mid q-1$.
\end{fact.}

\begin{fact.}   \label{sumofroots}
Let $\F$ be a finite field of order $q$ and let $n\mid q-1$.
Then the number of $n$\ts{th} roots of unity is precisely $n$,
they are the powers of any primitive $n$\ts{th} root of unity,
and their sum is $0$ if $n\ge 2$ and $1$ if $n=1$.
\end{fact.}
\begin{proof}
The $n$\ts{th} roots of unity are precisely the roots of
the polynomial $x^n-1$.  They form a multiplicative group
which is therefore cyclic.  The primitive $n$\ts{th} roots
of unity in $\F$ are precisely the generators of this group.
The sum of the roots of $x^n-1$ is the negative of the
coefficient of $x^{n-1}$ in $x^n-1$.
\end{proof}
 
\begin{fact.}\label{order}
Let $\F_q$ be the finite field of order $q$ 
and let $n$ be an integer with $\gcd(q,n) = 1$.  
Let $\omega$ be a primitive $n$\ts{th} root of unity
in some extension field of $\F_q$.
Then the degree of the minimal polynomial of $\omega$ over
$\F_q$ is $\ord_q(n)$, the order of $q$ modulo $n$.
The conjugates of $\omega$ are
\[
\omega, \omega^q, \dots , \omega^{q^{\ord_q(n)-1}} \,.
\]
\end{fact.}
\begin{proof}
Immediate from \expref{Fact}{order1}.
\end{proof}

\subsection{Modifications to the main proofs}
In this section, we sketch how to modify the main proofs to work over finite fields.  We work over a finite field $\F_q$ where $q$ is a fixed constant (when we say parameters are chosen to be sufficiently large, they may be chosen in terms of $q$).  We will first define the $\DFT$ matrices over finite fields.
\begin{definition}
Let $\F_q$ be a finite field and $N$ be an integer with $\gcd(N,q) = 1$.  Pick a canonical primitive $N$\ts{th} root of unity $\omega$ in some extension of $\F_q$.  The $(x,y)$ entry of the $N \times N$ matrix $\DFT_{N,\F_q}$ ($0 \leq x,y \leq N - 1$) is $\omega^{xy}$.  We will omit the second subscript $\F_q$ and just write $\DFT_N$ when the base field is clear from context.
\end{definition}
\begin{remark}\label{finitefield_basicprops}
The matrix $\DFT_{N, \F_q}$ is well-defined over any extension of $\F_q$ that contains $\omega$.  It can easily be verified that the properties proved in \expref{Section}{sec_prelim} (namely that $\DFT_N$ diagonalizes circulant matrices) also hold in the finite field setting.
\end{remark}

The first lemma in this section allows us to lift to a field extension and then argue that if a matrix is highly non-rigid over some low-degree extension then it also cannot be rigid over the base field.

\begin{lemma}\label{conjugates}
Consider a finite field $\F_q$ and
a finite extension
$\F_q[\gamma]$ where $\gamma \neq 0$.  If the degree of
$\gamma$ over $\F_q$ 
is $g$ then for any matrix $M \in \F_{q}^{n \times n}$
and any positive integer $r$,
\[
\textsf{r}^{\F_q}_M(gr) \leq \textsf{r}^{\F_q[\gamma]}_M(r) \,.
\]
\end{lemma}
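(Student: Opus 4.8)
The plan is to start from an optimal decomposition over the larger field $\F_q[\gamma]$ and ``project'' it down to $\F_q$ coefficient-wise. Concretely, suppose $\textsf{r}^{\F_q[\gamma]}_M(r) = s$, so we may write $M = A + B$ over $\F_q[\gamma]$ where $A$ has at most $s$ nonzero entries in each row and each column and $\rank_{\F_q[\gamma]}(B) \le r$. Since $\gamma$ has degree $g$ over $\F_q$, the elements $1, \gamma, \dots, \gamma^{g-1}$ form an $\F_q$-basis of $\F_q[\gamma]$; let $\phi \colon \F_q[\gamma] \to \F_q$ be the $\F_q$-linear map sending an element to its coefficient of $1$ in this basis. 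The two features of $\phi$ that I will use are that it restricts to the identity on $\F_q$ and that $\phi(0) = 0$. Applying $\phi$ entrywise to $M = A + B$ and using that $M$ has entries in $\F_q$ (so its image is $M$ itself), I get a decomposition $M = \phi(A) + \phi(B)$ over $\F_q$.

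It then remains to check the two properties of a witnessing decomposition. Sparsity of $\phi(A)$ is immediate: $\phi$ kills $0$, so every zero entry of $A$ stays zero, and hence $\phi(A)$ still has at most $s$ nonzero entries in each row and each column. For the rank of $\phi(B)$ I would argue at the level of column spaces. Let $V \subseteq \F_q[\gamma]^n$ be the $\F_q[\gamma]$-column space of $B$, so $\dim_{\F_q[\gamma]} V \le r$ and therefore $\dim_{\F_q} V \le gr$ once $V$ is regarded as an $\F_q$-vector space. The coordinate-wise extension $\Phi \colon \F_q[\gamma]^n \to \F_q^n$ of $\phi$ is $\F_q$-linear, so $\dim_{\F_q} \Phi(V) \le gr$. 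Every column of $\phi(B)$ equals $\Phi$ applied to the corresponding column of $B$, which lies in $V$; hence the $\F_q$-column space of $\phi(B)$ is contained in $\Phi(V)$ and $\rank_{\F_q}(\phi(B)) \le gr$. Combining, $M = \phi(A) + \phi(B)$ witnesses $\textsf{r}^{\F_q}_M(gr) \le s = \textsf{r}^{\F_q[\gamma]}_M(r)$.

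The only step that deserves care is the rank estimate, so I would phrase it via the $\F_q$-dimension of the column space as above rather than through a rank-one decomposition of $B$; this is what makes the loss exactly the factor $g$. I would also note that this argument works in every characteristic and needs no coprimality of $g$ with $\operatorname{char}\F_q$, in contrast to the alternative of averaging the conjugate decompositions $M = \pi(A) + \pi(B)$ over $\pi \in \Gal(\F_q[\gamma]/\F_q)$, which would force one to divide by $g$. The remaining ingredients — that $1,\gamma,\dots,\gamma^{g-1}$ is a basis, that $\phi$ fixes $\F_q$ and annihilates $0$, and that an entrywise $\F_q$-linear map cannot increase $\F_q$-rank — are routine, and the same proof applies verbatim to row-rigidity or to the unrestricted rigidity $\textsf{R}$, since $\phi$ preserves the support of $A$ exactly.
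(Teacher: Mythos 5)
Your proof is correct, and it takes a genuinely different route from the paper's. The paper works with the Galois conjugates $\gamma_1,\dots,\gamma_g$ of $\gamma$: it forms the conjugate matrices $E_1,\dots,E_g$ of the sparse part, picks an exponent $k$ with $\gamma_1^k+\dots+\gamma_g^k\neq 0$ (which exists by a Vandermonde argument, and is needed precisely because naive averaging would require dividing by $g$, impossible when $\mathrm{char}\,\F_q \mid g$), and takes the weighted combination $E'=(\gamma_1^k+\dots+\gamma_g^k)^{-1}\sum_i \gamma_i^k E_i$, whose entries are symmetric in the conjugates and hence lie in $\F_q$; the rank bound $gr$ then comes from writing $M-E'$ as a sum of $g$ matrices each of rank at most $r$. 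Your argument replaces all of this by a single $\F_q$-linear coefficient-extraction map $\phi$ (projection onto the coefficient of $1$ in the basis $1,\gamma,\dots,\gamma^{g-1}$), applied entrywise, with the rank loss of a factor $g$ coming from the dimension count $\dim_{\F_q}V = g\cdot\dim_{\F_q[\gamma]}V$. This is more elementary — it needs no Galois theory, no conjugates, and no power-sum trick — and, as you note, it sidesteps the characteristic issue entirely; it also generalizes immediately to any finite extension of any field, whereas the paper's argument is tailored to (Galois) extensions of finite fields. Both proofs yield exactly the bound $\textsf{r}^{\F_q}_M(gr)\leq \textsf{r}^{\F_q[\gamma]}_M(r)$, and your observation that $\phi$ preserves the support of $A$ exactly (so the same argument covers row-rigidity and $\textsf{R}$) is a small bonus the paper does not state.
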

\begin{proof}
  Let the conjugates of $\gamma$ be $\gamma_1, \dots , \gamma_g$ where $\gamma_1 = \gamma$.  Let $k$ be a positive integer such that $\gamma_1^k + \dots + \gamma_g^k \neq 0$.
Such $0\le k\le g-1$ exists because the columns of the Vandermonde matrix
generated by the $\gamma_i$ are linearly independent. %
Let $s = \textsf{r}^{\F_q[\gamma]}_M(r)$.  There must be a matrix $E \in \F_q[\gamma]^{n \times n}$ with at most $s$ nonzero entries in each row and column such that $\rank_{\F_q[\gamma]}(M - E) \leq r$.  Now consider the $g$ matrices $E_1 = E, E_2, \dots , E_g$ where $E_i$ is obtained by taking $E$ and replacing $\gamma$ with its $i$\ts{th} conjugate, $\gamma_i$.  While naturally, we would like to consider the matrix $(E_1 + \dots + E_g)/g$ and write
\[
M  - \frac{E_1 + \dots + E_g}{g} = \frac{M - E_1}{g} + \dots + \frac{M - E_g}{g} \,,
\]
the above expression is only valid when $\gcd(g,q) = 1$ so we will need a slight modification.  Define the matrix $E'$ as follows.
\[
E' = \frac{1}{\gamma_1^k + \dots + \gamma_g^k} \cdot
\left(\gamma_1^k E_1 + \dots + \gamma_g^kE_g \right)\,.
\]

Note that $\gamma_1^k + \dots + \gamma_g^k \in \F_q$ and also $\gamma_1^kE_1 + \dots + \gamma_g^kE_g \in \F_{q}^{n \times n}$ since the entries are symmetric polynomials in $(\gamma_1, \dots , \gamma_g)$.  Thus $E' \in \F_{q}^{n \times n}$ and $E'$ clearly has at most $s$ nonzero entries in each row and column.  Next we observe that %
\[
M -  E'= \frac{1}{\gamma_1^k + \dots + \gamma_g^k}\cdot
 \left( \gamma_1^k(M - E_1) + \dots + \gamma_g^k(M - E_g)\right) \,.
\]
Note that $\rank_{\F_q[\gamma_i]}(M - E_i) \leq r$ for all $i$.  This is because the determinant of every $r \times r$ submatrix of $M - E$ can be written as a formal polynomial in $\gamma$ with coefficients in $\F_q$ and since $\gamma$ is a root of each of these polynomials, $\gamma_i$ must be as well, implying that the determinant of each of the $r \times r$ submatrices of $M - E_i$ is $0$.  Thus, we conclude that $\rank_{\F_q}(M - E') \leq gr$.  Writing $M = (M - E') + E'$, we immediately get the desired conclusion.
\end{proof}

Following the proof of \expref{Theorem}{Hadamard}, we can prove an analogue over finite fields.  All we needed in \expref{Theorem}{Hadamard} was that we were working over a field that contained the roots of unity in the definition of the GWH matrix.  Over finite fields, it suffices to work over an extension that contains the necessary roots of unity. 

\begin{theorem} \label{Hadamard_finitefield}
Let $\F_q$ be a finite field and $N = d^n$ for positive integers $d,n,q$ with $\gcd(d,q) = 1$.  Let $\omega$ be a primitive $d$\ts{th} root of unity in some extension of $\F_q$.  Let $0 < \epsilon < 0.01 $ and assume $n \geq 1/\psi$ where \[\psi = \frac{\epsilon^2 }{400\log^2 (1/\epsilon) d \log d} \,.\]  Let $H_{d,n} = \underbrace{\DFT_d \otimes \cdots \otimes \DFT_d}_{n}$. Then
\[
\textsf{r}_{H_{d,n}}^{\F_q[\omega]}\left(N^{1 - \psi}\right) \leq N^{ \epsilon} \,.
\]
\end{theorem}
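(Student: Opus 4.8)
The plan is to run the proof of Theorem~\ref{Hadamard} essentially verbatim, replacing $\C$ by $\F_q[\omega]$ and checking that each ingredient survives. Recall that Theorem~\ref{Hadamard} is the combination of the rescaling Claim~\ref{rescaling} with the non-rigidity Lemma~\ref{non-rigidity1}, and that Lemma~\ref{non-rigidity1} rests on Lemma~\ref{symmetric}, Lemma~\ref{changes}, and the diagonalization/rank machinery of Claims~\ref{orthogonal}, \ref{Hadamard_diagonalization} and \ref{rankcomp} (whose finite-field validity is the content of Remark~\ref{finitefield_basicprops}). The purely combinatorial parts of the proof --- the binomial tail bound controlling the size of $T$ and the Chernoff bound controlling the size of $\perm(S)$ --- are assertions about subsets of $\Z_d^n$ and carry over with no change; only the field-specific steps need attention.

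First I would isolate the two algebraic facts that power the linear algebra: since $\gcd(d,q)=1$ we have $d\neq 0$ in $\F_q$, and since $\omega$ is a \emph{primitive} $d$\ts{th} root of unity, the geometric sum $\sum_{k=0}^{d-1}\omega^{k(a-b)}$ equals $d$ when $a\equiv b\pmod d$ and $0$ otherwise (Fact~\ref{sumofroots}). These yield the analogue of Claim~\ref{orthogonal}: $H_{d,n}$ times the matrix with entries $\omega^{-I\cdot J}$ equals $d^{n}I$ (there is no complex conjugation over $\F_q$, but this matrix plays the role of $H_{d,n}^{*}$). Hence $H_{d,n}$ is invertible over $\F_q[\omega]$, Claim~\ref{Hadamard_diagonalization} goes through, and Claim~\ref{rankcomp} reads off that $\rank(M(f))=d^{n}-C$, where $C$ is the number of roots of $P_f$ among $\{\omega^{[I]}:I\in\Z_d^n\}$ --- a count that is just as meaningful over $\F_q[\omega]$ as over $\C$.

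Next I would check the two polynomial lemmas. Lemma~\ref{symmetric} is a formal identity about expressing symmetric polynomials of bounded per-variable degree as combinations of the orbit-sum polynomials $P_i$; its inductive proof uses nothing about $\C$, and in its use inside Lemma~\ref{changes} the substituted values are the coordinates $1,\omega,\dots,\omega^{d-1}$ of the points $\omega^{[I]}$, all in $\F_q[\omega]$. In Lemma~\ref{changes} the one place the base field matters is the construction of the separating polynomials
\[
Q_{i_0}(x_{dm+1},\dots,x_n)=\sum_{I'\in\perm(I'_{i_0})}\ \prod_{j=1}^{n-dm}\frac{x_{dm+j}^{d}-1}{x_{dm+j}-\omega^{(I')^{(j)}}}.
\]
Because $\gcd(d,q)=1$ and $\omega$ has exact order $d$, the polynomial $x^{d}-1$ is separable over $\F_q[\omega]$ with $x^{d}-1=\prod_{c=0}^{d-1}(x-\omega^{c})$, so each factor $\tfrac{x^{d}-1}{x-\omega^{a}}=\prod_{c\neq a}(x-\omega^{c})$ is a genuine degree-$(d-1)$ polynomial over $\F_q[\omega]$ that vanishes at $\omega^{b}$ exactly when $b\neq a$ and is nonzero at $\omega^{a}$ (each factor $\omega^{a}-\omega^{c}$ with $c\neq a$ being nonzero). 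This is precisely the ``indicator'' behaviour exploited in the complex proof, so the linear system defining $f'$ remains solvable over $\F_q[\omega]$; combining this with the counting bounds and Claim~\ref{rankcomp} gives Lemma~\ref{non-rigidity1} over $\F_q[\omega]$.

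Finally comes the rescaling Claim~\ref{rescaling}, which needs an element $\zeta$ with $\zeta^{2}=\omega$ in order to convert $H_{d,n}$ into an adjusted $\Z_d^n$-circulant matrix $M(f)$ via the invertible row/column rescalings $I\mapsto\zeta^{I\cdot I}$, $J\mapsto\zeta^{J\cdot J}$ (such rescalings preserve regular-rigidity exactly). When $d$ is odd one may simply take $\zeta=\omega^{(d+1)/2}\in\F_q[\omega]$, and nothing further is needed. When $d$ is even, $\zeta$ is a primitive $2d$\ts{th} root of unity and need not lie in $\F_q[\omega]$; there I would run the argument over $\F_q[\omega,\zeta]$, which is an extension of $\F_q[\omega]$ of degree at most $2$ (and of $\F_q$ of bounded degree), and then, if one wants the bound literally over $\F_q[\omega]$, descend via the conjugate-averaging trick of Lemma~\ref{conjugates} at the cost of only a constant factor in the rank parameter --- immaterial for the asymptotic statement. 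I expect this square-root bookkeeping for even $d$ to be the only genuinely new wrinkle; the rest is a mechanical check that the complex proof used nothing beyond ``$d$ is invertible'' and ``$\omega$ has exact order $d$''.
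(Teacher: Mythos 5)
Your proposal is correct and follows essentially the same route as the paper, which gives no separate proof of Theorem \ref{Hadamard_finitefield} beyond the remark that the argument for Theorem \ref{Hadamard} carries over once one works over an extension containing the relevant roots of unity; your check that each ingredient (orthogonality, Claim \ref{rankcomp}, Lemmas \ref{symmetric} and \ref{changes}, and the field-independent tail bounds) uses only that $d$ is invertible and that $\omega$ has exact order $d$ is exactly the intended verification. Your observation about Claim \ref{rescaling} for even $d$ --- that an element $\zeta$ with $\zeta^{2}=\omega$ may require a quadratic extension of $\F_q[\omega]$, remedied by working over $\F_q[\omega,\zeta]$ and descending via the conjugate-averaging argument of Lemma \ref{conjugates} (with base field $\F_q[\omega]$) at a cost easily absorbed into the slack of the rank bound --- is a genuine subtlety the paper glosses over, and your fix is valid.
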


With the above, we can now prove a finite field version of \expref{Lemma}{productbound}.  The proof is the same as the proof of \expref{Lemma}{productbound}, using \expref{Theorem}{Hadamard_finitefield} in place of \expref{Theorem}{Hadamard}.  The only necessary change is that we need to work over an extension of $\F_q$ that contains all of the necessary roots of unity. 
\begin{lemma}\label{finitefield_productbound}
Let $0 < \epsilon < 0.01$ be some chosen parameter, $\F_q$ be a fixed finite field, and $D$ be some sufficiently large constant (possibly depending on $\epsilon$ and $q$).  Consider positive integers $t_1 \leq t_2 \dots \leq t_n$ with $\gcd(t_i,q) = 1$ for all $i$.  Also assume $a_i \geq \max\left(\frac{t_i^2(\log t_i)^2}{\epsilon^{10}}, D \right)$ for all $i$.  Let $P = t_1^{a_1} \cdots t_n^{a_n}$ and $L = \lceil2 \log \log P \rceil$.  Consider the field extension $\F_q[\omega_1, \dots , \omega_n]$ where $\omega_i$ is a primitive $t_i$\ts{th} root of unity.\footnote{The $\omega_i$ need not be distinct.  We only need $\F_q[\omega_1, \dots , \omega_n]$ to be an extension that contains all of $\omega_1, \dots , \omega_n$.}  Let %
$\DFT_{t_i}$ %
be the $t_i \times t_i$ %
DFT %
matrix with entries %
in %
the field extension.  Let
 \[
A = (\underbrace{\DFT_{t_1} \otimes \cdots \otimes \DFT_{t_1}}_\text{$a_1$}) \otimes \cdots \otimes  (\underbrace{\DFT_{t_n} \otimes \cdots \otimes \DFT_{t_n}}_\text{$a_n$}) \,.
\]
Then we have
\[
\textsf{r}^{\F_q[\omega_1, \dots , \omega_n]}_A\left( P^{1 - \epsilon^6/(10L\,t_n^2 \log t_n)}\right) \leq  P^{5 \epsilon} \,.
\]
\end{lemma}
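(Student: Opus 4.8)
The plan is to mimic the proof of \expref{Lemma}{productbound} essentially verbatim, working throughout over the single fixed extension $\F_q[\omega_1,\dots,\omega_n]$ in place of $\C$, and invoking \expref{Theorem}{Hadamard_finitefield} wherever the complex proof used \expref{Theorem}{Hadamard}. The first thing I would note is that $\F_q[\omega_1,\dots,\omega_n]$ is a finite extension of $\F_q$ containing every $\omega_i$, so each $H_{t_i,a_i} = \DFT_{t_i}\otimes\cdots\otimes\DFT_{t_i}$ (with $\DFT_{t_i}$ defined using $\omega_i$) has entries in it, as does $A$ itself; moreover the rank of any fixed matrix is unchanged under field extension, so a rank bound obtained over a subfield $\F_q[\omega_i]$ transfers without loss to $\F_q[\omega_1,\dots,\omega_n]$.

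First I would handle the balanced case, i.e. when there is an integer $B$ with $B \le t_i^{a_i} \le B^2$ for all $i$. By \expref{Theorem}{Hadamard_finitefield}, applied over $\F_q[\omega_i]$, for each $i$ there is a matrix $E_i$ with at most $t_i^{\epsilon a_i}$ nonzero entries in each row and column such that, setting $A_i = H_{t_i,a_i} - E_i$, one has $\rank(A_i) \le t_i^{a_i(1-\epsilon^4/(t_i^2\log t_i))}$. Expanding $A = \bigotimes_i(A_i+E_i)$ and splitting the $2^n$ terms according to whether $|S| \ge \epsilon n$ gives $A = N_1 + N_2$ exactly as in the complex proof. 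The rank of $N_1$ is bounded by the same binomial-coefficient estimate, using the hypothesis $a_i \ge \max(t_i^2(\log t_i)^2/\epsilon^{10}, D)$ to force the contracting factor $B^{\epsilon^4/(t_n^2\log t_n)}$ to dominate $(3/\epsilon)^2$ (choosing $D$ sufficiently large in terms of $\epsilon$ and $q$), yielding $\rank(N_1) \le P^{1-\epsilon^5/(4t_n^2\log t_n)}$; and $N_2$ has at most $2^n B^{2\epsilon n}P^\epsilon \le P^{4\epsilon}$ nonzero entries per row and column. Hence in the balanced case $\textsf{r}_A(P^{1-\epsilon^5/(4t_n^2\log t_n)}) \le P^{4\epsilon}$. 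Each step here — the Kronecker expansion, rank multiplicativity under $\otimes$, and the counting of nonzeros — uses nothing beyond the field axioms and is valid over $\F_q[\omega_1,\dots,\omega_n]$.

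For the general case I would partition the prime powers $t_i^{a_i}$ into the dyadic intervals $I_j = [k^{2^{j-1}}, k^{2^j})$ with $k = 2^D$, write $A = \bigotimes_{i\in[L]} B_i$ where $B_i$ collects the factors landing in $I_i$ and $L = \lceil 2\log\log P\rceil$, and peel off the block $C$ corresponding to intervals whose product is below $P^{\epsilon/(2L)}$, so that $|C| \le P^{\epsilon/2}$ and $A = B \otimes C$. Applying the balanced case to each $B_i$ and then \expref{Lemma}{products} iteratively to $B$ (the statement and proof of \expref{Lemma}{products} hold over an arbitrary field), together with $|C| \le P^{\epsilon/2}$, yields $\textsf{r}^{\F_q[\omega_1,\dots,\omega_n]}_A(P^{1-\epsilon^6/(10L\,t_n^2\log t_n)}) \le P^{5\epsilon}$, as claimed.

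I do not expect a genuine obstacle here: the only point requiring care is the bookkeeping of fields — ensuring that all the matrices $H_{t_i,a_i}$ and all the error matrices $E_i$ are regarded as living in the \emph{single} extension $\F_q[\omega_1,\dots,\omega_n]$ rather than in their individual extensions $\F_q[\omega_i]$, which is immediate since $\F_q[\omega_1,\dots,\omega_n]$ contains every $\omega_i$, and in observing that rank is insensitive to which of these fields one computes over. The combinatorial core of the argument — the balance between the sparsity parameter $\epsilon$, the rank-reduction exponent, the dyadic grouping, and the lower bounds on the $a_i$ — is identical to the complex case and uses nothing about $\C$.
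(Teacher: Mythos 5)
Your proposal is correct and follows exactly the route the paper takes: the paper itself proves this lemma by declaring that the argument of Lemma \ref{productbound} goes through verbatim with Theorem \ref{Hadamard_finitefield} in place of Theorem \ref{Hadamard}, working over an extension of $\F_q$ containing all the needed roots of unity. Your write-up just spells out the same substitution and field-bookkeeping in more detail.
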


We also need a slight modification in the proof of \expref{Lemma}{blockrigidity}.  We will use the following definitions from \expref{Section}{sec_Fourier1}.
\begin{itemize}
\item $x$ is a sufficiently large integer.
\item $l$ is an integer such that 
\[
g_{100}(x) \leq l \leq g_{10}(x)
\]
where $g_k(x)$ is defined as in \expref{Notation}{not:bypolylog}.
\item $N$ is $(l,x)$-factorable.
\item $\mult_N(S), \fact_N(S)$ are defined as in \expref{Definition}{def_multandfact} and the matrix $M(S)$ is defined as in \expref{Definition}{def_factormatrix}.
\end{itemize}
\begin{remark}
Note that since $x$ is sufficiently large and $N$ is $(l,x)$-factorable, $\gcd(N,q) = 1$.  This will be important later on.  
\end{remark}
\begin{remark}
Note that if $\gamma$ is a primitive $N$\ts{th} root of unity, the matrix $M(S)$ is defined over the extension $\F_q[\gamma]$ for all subsets $S$.
\end{remark}

\begin{lemma}\label{lem_finitefieldblocks}
Let $\F_q$ be a fixed finite field.  Let $x$ be sufficiently large and $N = q_1q_2\cdots q_l$ be an $(l, x)$-factorable number with $\gcd(N,q) = 1$ and $g_{100}(x) \leq l \leq g_{10}(x)$. Let $t_1, \dots , t_a$ be the set of prime powers at most $x^{0.3}$ that are relatively prime to $q$.  Let $\omega_1, \dots , \omega_a$ be primitive $t_1\ts{th}, \dots , t_a\ts{th}$ 
roots of unity and let $\gamma$ be a primitive $N$\ts{th} root of unity.  For a subset $S \subset [l]$ with $|S| = k$ and $M(S)$ (as defined in \expref{Definition}{def_factormatrix})  a  $\fact_N(S) \times \fact_N(S)$ matrix,  we have 
\[
\textsf{r}^{\F_q[\gamma, \omega_1, \dots , \omega_a]}_{M(S)}\left( \frac{\fact_N(S)}{\exp\left(\epsilon^6 x^{0.37}\right)} \right) \leq \left(\fact_N(S)\right)^{6 \epsilon}
\]
as long as $k \geq \frac{x}{(\log x)^{C_0 + 200}}$.
\end{lemma}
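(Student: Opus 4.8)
The plan is to transcribe the proof of Lemma~\ref{blockrigidity} almost verbatim, with one structural change: over $\F_q$ we may only exploit Walsh--Hadamard structure coming from prime powers coprime to $q$, since $\DFT_t$ has no realization over any extension of $\F_q$ once $\gcd(t,q)>1$ (there are no primitive $t$-th roots of unity there). So, after setting $S=\{1,\dots,k\}$ and factoring $q_1-1,\dots,q_k-1$ into prime powers, I would take as the ``heavy'' prime powers $t_1,\dots,t_n$ exactly those $t_i\le x^{0.3}$ with $\gcd(t_i,q)=1$ and multiplicity $c(t_i)\ge x^{0.62}$, and set $P=\prod_i t_i^{c(t_i)}$. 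Writing $q_s-1=d_sT_s$ with $T_s$ the product of the heavy prime powers dividing $q_s-1$, one has $\prod_sT_s=P$ and $\prod_sd_s=\fact_N(S)/P$; the only new feature is that $d_s$ now absorbs, besides the low-multiplicity $q$-coprime prime powers, the full $p$-part of $q_s-1$, where $p$ is the characteristic of $\F_q$.

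The one genuinely new thing to prove is the analogue of the estimate~(\ref{separateprimepowers}), namely $P\ge\fact_N(S)^{1-\epsilon}$, in the presence of these $p$-parts. The low-multiplicity $q$-coprime contribution to $\prod_sd_s$ is still at most $x^{x^{0.92}}=\fact_N(S)^{o(1)}$, so it suffices to bound $\prod_s(\text{$p$-part of }q_s-1)$ by $\fact_N(S)^{\epsilon/2}$. I would split the indices $s\in S$ by whether the $p$-part of $q_s-1$ is at most or more than a threshold $x^{\epsilon/3}$ (say). The ``at most'' indices contribute at most $x^{(\epsilon/3)k}\le\fact_N(S)^{\epsilon/2}$, using $\fact_N(S)\ge(x/(\log x)^{C_0+1})^k$ for $x$ large. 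Each ``more than'' index forces $q_s\equiv 1\pmod{m}$ for a fixed prime power $m\in(x^{\epsilon/3},\,p\,x^{\epsilon/3}]$, and the number of primes up to $x$ in a fixed residue class modulo $m$ is at most $x/m+1\le 2x^{1-\epsilon/3}$ (the elementary arithmetic-progression bound suffices; Brun--Titchmarsh would also do), so there are only $O(x^{1-\epsilon/3})$ such indices, and since every prime power in sight is $\le x^{0.3}$ they contribute at most $\exp(O(x^{1-\epsilon/3}\log x))=\fact_N(S)^{o(1)}$, because $k\ge x/(\log x)^{C_0+200}$ forces $k\log x\gg x^{1-\epsilon/3}\log x$. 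Combining these, $P\ge\fact_N(S)^{1-\epsilon}$ for $x$ sufficiently large, after which every exponent estimate in Lemma~\ref{blockrigidity} carries over unchanged.

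With $P\ge\fact_N(S)^{1-\epsilon}$ in hand the rest is a faithful copy of Lemma~\ref{blockrigidity}. Identify the rows and columns of $M(S)$ with $\prod_s\Z_{q_s-1}$ via $\F_{q_s}^{\times}\cong\Z_{q_s-1}$; its entry $\theta^{ij}$, with $\theta=\gamma^{N/\mult_N(S)}$ a primitive $\mult_N(S)$-th root of unity, which exists in $\F_q[\gamma]$ because $\mult_N(S)\mid N$ and $\gcd(N,q)=1$, depends only on $A+B$. Cut $M(S)$ into $d^2$ blocks of size $P\times P$ indexed by $\prod_s\Z_{d_s}$ (so $d=\prod_sd_s$), each equal to $XDX$ with $D$ diagonal and $X=\DFT_{T_1}\otimes\cdots\otimes\DFT_{T_k}$, and rewrite $X=\bigotimes_t(\DFT_t)^{\otimes c(t)}$ over the heavy prime powers $t$ by Lemma~\ref{decomp}. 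Here the extension $\F_q[\gamma,\omega_1,\dots,\omega_a]$ is exactly what is needed: it contains primitive $t$-th roots of unity for every $q$-coprime prime power $t\le x^{0.3}$. Since $\gcd(t_i,q)=1$ and $c(t_i)\ge x^{0.62}$ exceeds both $t_i^2(\log t_i)^2/\epsilon^{10}$ and $D$ for $x$ large, Lemma~\ref{finitefield_productbound} applies and yields $\textsf{r}^{\F_q[\gamma,\omega_1,\dots,\omega_a]}_X(P^{1-\epsilon^6/(20(\log\log P)x^{0.62})})\le P^{5\epsilon}$. The block manipulation with the change matrix $E(S)$ whose blocks are $ED_{ij}E$, the splitting of $M(S)-E(S)$ into two summands each of rank at most $dP^{1-\epsilon^6/(20(\log\log P)x^{0.62})}$, and the bound of $P^{5\epsilon}d$ nonzeros per row and column of $E(S)$ then go through verbatim and give the stated conclusion.

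I expect the only real obstacle to be the number-theoretic bookkeeping of the second step: showing that the primes whose $q_s-1$ carries a large power of $p$ are rare enough that their aggregate $p$-part is negligible against $\fact_N(S)$, using $|S|=k\ge x/(\log x)^{C_0+200}$. Everything algebraic---the identification of $M(S)$ with a group-algebra matrix, the diagonalization by $X$, and the Kronecker bookkeeping---is a straightforward translation of Lemma~\ref{blockrigidity} provided the extension $\F_q[\gamma,\omega_1,\dots,\omega_a]$ is carried along throughout, invoking Lemma~\ref{finitefield_productbound} (hence ultimately Theorem~\ref{Hadamard_finitefield}) in place of the complex-field statements.
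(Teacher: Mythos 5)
Your proposal is correct and follows essentially the same route as the paper: exclude the prime powers not coprime to $q$ from the heavy set, re-establish $P\ge\fact_N(S)^{1-\epsilon}$ by showing the aggregate $p$-part of $\prod_s(q_s-1)$ is negligible, and then run the block decomposition of Lemma~\ref{blockrigidity} with Lemma~\ref{finitefield_productbound} in place of Lemma~\ref{productbound}. The only (immaterial) difference is in the bookkeeping for the $p$-part: the paper splits the powers $p^i$ at a polylogarithmic threshold and bounds the multiplicities $d(p^i)\le\min(k,\,x/p^i)$, whereas you split at the threshold $x^{\epsilon/3}$ and count primes in a fixed residue class; both are the same elementary rarity argument and give the required bound.
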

\begin{proof}[Proof Sketch]
Without loss of generality $S = \{1,2, \dots , k \}$.  Recall that in the proof of \expref{Lemma}{blockrigidity}, we argued that the matrix $M(S)$ is $\Z_{q_1-1} \times \cdots \times \Z_{q_k - 1}$ circulant.  Then, using the prime factorizations of each of $q_1 -1, \dots , q_k - 1$, we wrote $\Z_{q_1-1} \times \cdots \times \Z_{q_k - 1}$ as a direct product of cyclic groups of prime power order.  Since each $q_i - 1$ must factor into prime powers that are at most $x^{0.3}$, we argued that some of these cyclic groups must appear many times in the direct product and then we could apply \expref{Lemma}{productbound}.  Over finite fields, the only necessary change in the proof of \expref{Lemma}{blockrigidity} is due to the fact that for an integer $b$ with $\gcd(b,q) > 1$, primitive $b$\ts{th} roots of unity do not exist over an extension of $\F_q$.  Thus, in the direct product of cyclic groups of prime power order, we cannot use those factors whose order is not relatively prime to $q$ (because we cannot diagonalize $\Z_b$-circulant matrices when $\gcd(b,q) > 1$).  To deal with this, we will use a more precise bound than (\ref{separateprimepowers}) where prime powers not relatively prime to $q$ are also excluded from the product on the left hand side.

Consider the factorizations of $q_1-1, \dots , q_k-1$ into prime powers.  For each prime power $p_i^{e_i}$ with $p_i^{e_i} \leq x^{0.3}$, let $c(p_i^{e_i})$ be the number of indices $j$ for which $p_i^{e_i}$ appears (exactly) in the factorization of $q_j-1$.  Also let $p$ be the characteristic of the finite field $\F_q$ that we are working over (so $q$ is a power of $p$).  Note that
 \[
 (q_1-1) \cdots (q_k-1) = \prod_{t}t^{c(t)} = p^{c(p)}p^{2c(p^2)}
 \cdots p^{fc(p^f)}\prod_{gcd(t,p) = 1}t^{c(t)} \,,
 \]
 where $t$ ranges over all prime powers at most $x^{0.3}$ and $p^f$ is the largest power of $p$ that is at most $x^{0.3}$.  For a power of $p$, say $p^i$, let $d(p^i)$ be the number of indices $j$ such that $q_j - 1$ is divisible (not necessarily exactly divisible) by $p^i$.  Let $L = \lfloor (1000 + C_0) \log_p \log x \rfloor $.  Then we have
 \begin{align*}
 p^{c(p)}p^{2c(p^2)} \cdots p^{fc(p^f)} &= p^{d(p) + d(p^2) + \dots + d(p^f)} \leq p^{\sum_{i =1}^L d(p^i) + \sum_{i= L+1}^f d(p^i)}  \leq p^{Lk + f x/(\log x)^{1000 + C_0}} \\ &\leq (\log x)^{(1000 + C_0)k}x^{x/(\log x)^{1000 + C_0}} \,.
 \end{align*}
 Next, consider all prime powers $p_i^{e_i}$ for which $c(p_i^{e_i}) < x^{0.62}$.  These satisfy
 \[
 \prod_{t, c(t) \leq x^{0.62}} t^{c(t)} \leq \left((x^{0.3})^{x^{0.62}} \right)^{x^{0.3}} \leq x^{x^{0.92}} \,.
 \]
Now without loss of generality, say $\{ t_1, \dots , t_n \}$ is the subset of $\{t_1, \dots , t_a \}$ ($n \leq a$) consisting of the set of prime powers for which $\gcd(t_i, p) = 1$ and $c(t_i) \geq x^{0.62}$.  
Let $P = t_1^{c(t_1)} \cdots t_n^{c(t_n)}$.  
{}From the above we know that as long as $x$ is sufficiently large
\[
P \geq \frac{\fact_N(S)}{x^{x^{0.92}} (\log x)^{(1000 + C_0)k}x^{g_{1000}(x)}} \geq \left(\fact_N(S)\right)^{(1 - \epsilon)}\cdot\frac{\left(\frac{x}{(\log x)^{C_0 + 1}} \right)^{\epsilon k}}{x^{x^{0.92}} (\log x)^{(1000 + C_0)k}x^{g_{1000}(x)}} \geq \left(\fact_N(S)\right)^{(1 - \epsilon)} \,.
\]
Recall $g_{1000}(x) = x/(\log x)^{1000 + C_0}$ is as defined in \expref{Notation}{not:bypolylog}.
The remainder of the proof can be completed in the same way as \expref{Lemma}{blockrigidity} using \expref{Lemma}{finitefield_productbound} in place of \expref{Lemma}{productbound}.
\end{proof}

Using the above we can prove the following analogue of \expref{Theorem}{main}.

\begin{theorem}\label{factorable_finitefield}
Let $\F_q$ be a fixed finite field and $0 <\epsilon < 0.01$ be some constant.  Let $x$ be sufficiently large and
$N = q_1q_2\cdots q_l$ be an
$(l, x)$-factorable number with $\gcd(N,q) = 1$ and $g_{100}(x) \leq l \leq g_{10}(x)$. Let $t_1, \dots , t_a$ be the set of prime powers at most $x^{0.3}$ that are relatively prime to $q$.  Let $\omega_1, \dots , \omega_a$ be primitive
$t_1\ts{th}, \dots , t_a\ts{th}$ 
roots of unity and let $\gamma$ be a primitive $N$\ts{th} root of unity.  Then 
\[
\textsf{r}^{\F_q[\gamma, \omega_1, \dots , \omega_n]}_{\DFT_N}\left( \frac{N}{\exp\left(\epsilon^6 (\log N)^{0.36}\right)} \right) \leq N^{ 7 \epsilon} \,.
\]
\end{theorem}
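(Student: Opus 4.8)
The plan is to mimic the proof of Theorem~\ref{main} almost verbatim, replacing each complex-analytic ingredient by its finite-field counterpart developed in this section. First I would observe that Lemma~\ref{division} is purely combinatorial in the index sets $T_S$ together with elementary arithmetic of the root of unity $\gamma$; since $\mult_N(S)\mid N$ and $\gamma\in\F_q[\gamma]$ is a primitive $N$-th root of unity, the proof of that lemma applies word for word over $\F_q[\gamma]$. Hence, partitioning $[N]\times[N]$ into the sets $T_S$ for $S\subseteq[l]$ and applying Lemma~\ref{division}, the entries of $\DFT_N$ indexed by $T_S$ split into $\prod_{s\notin S}(2q_s-1)$ blocks, each equivalent up to row and column permutations to the matrix $M(S)$, now viewed over $\F_q[\gamma]$. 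The key remark for the finite-field case is that \emph{all} the roots of unity that occur in \emph{any} of these blocks, over \emph{all} $S$, have order dividing $N\cdot\lcm(t_1,\dots,t_a)$ and therefore lie in the single fixed field $\F_q[\gamma,\omega_1,\dots,\omega_a]$; so the local rank-reduction matrices produced below all live over one common field and may be summed there without any compatibility issue.

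Next I would run the aggregation argument of Theorem~\ref{main} unchanged. Set $m=x^{0.365}$ and $k_0=l-m$, and discard the rows and columns indexed by integers divisible by at least $m/2$ of the primes $q_1,\dots,q_l$; exactly as in Theorem~\ref{main}, using $q_i\ge x/(\log x)^{C_0}$ and $l\le g_{10}(x)$, this removes at most $N/(\log x)^{x^{0.365}}$ rows and columns. Every remaining entry lies in a block $M(S)$ with $|S|\ge k_0=l-m\ge g_{100}(x)-x^{0.365}\ge x/(\log x)^{C_0+200}$ for $x$ large, which is precisely the hypothesis of Lemma~\ref{lem_finitefieldblocks}. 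That lemma supplies, over $\F_q[\gamma,\omega_1,\dots,\omega_a]$, a change matrix for $M(S)$ with at most $(\fact_N(S))^{6\epsilon}\le N^{6\epsilon}$ nonzeros per row and column which reduces the rank of $M(S)$ to at most $\fact_N(S)/\exp(\epsilon^6 x^{0.37})$.

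Assembling these over all blocks exactly as in Theorem~\ref{main}: there are at most $3(q_{k_0+1}\cdots q_l)^2\le 3x^{2m}$ blocks, each of rank at most $N/\exp(\epsilon^6 x^{0.37})$ after the change, so together with the discarded rows and columns the rank of the modified matrix is at most $N/\exp(\epsilon^6 x^{0.365})$, which is at most $N/\exp(\epsilon^6(\log N)^{0.36})$ since $\log N\le x\log x$. Likewise the number of entries altered in any one row or column is at most $3(q_{k_0+1}\cdots q_l)\cdot N^{6\epsilon}\le 3x^m N^{6\epsilon}\le N^{7\epsilon}$, using $x^m\le N^{1.1m/l}$ and $m/l=o(1)$. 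This yields the claimed bound $\textsf{r}^{\F_q[\gamma,\omega_1,\dots,\omega_a]}_{\DFT_N}\bigl(N/\exp(\epsilon^6(\log N)^{0.36})\bigr)\le N^{7\epsilon}$ (the extension $\F_q[\gamma,\omega_1,\dots,\omega_a]$ being the one named in the statement).

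I do not expect a genuine obstacle here: all the substantive work of passing to finite fields is already isolated in Lemmas~\ref{finitefield_productbound} and~\ref{lem_finitefieldblocks}, which I am allowed to assume. The single place where the characteristic $p$ of $\F_q$ actually intervenes is in Lemma~\ref{lem_finitefieldblocks}, where one must delete the powers of $p$ from the product $\prod_t t^{c(t)}$ — no primitive $p^i$-th root of unity exists in any extension of $\F_q$, so those cyclic factors cannot be diagonalized — and then verify that the surviving prime powers still account for a $(1-\epsilon)$-fraction of $\fact_N(S)$ in the exponent, which is where the estimate $p^{c(p)}\cdots p^{fc(p^f)}\le(\log x)^{O(k)}x^{x/(\log x)^{1000+C_0}}$ is used. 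Granting those lemmas, the present theorem is pure bookkeeping identical to the complex case; the only point requiring a moment's care, already flagged above, is the observation that one ambient field $\F_q[\gamma,\omega_1,\dots,\omega_a]$ simultaneously contains every root of unity needed for every block, so that the block-wise changes genuinely assemble into a single change matrix over that field.
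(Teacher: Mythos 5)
Your proposal is correct and follows essentially the same route as the paper: the paper's own proof of Theorem \ref{factorable_finitefield} is exactly the reduction you describe — note that Lemma \ref{division} carries over to finite fields, discard the rows and columns divisible by too many of the primes $q_i$, apply Lemma \ref{lem_finitefieldblocks} to the surviving blocks $M(S)$, and repeat the bookkeeping of Theorem \ref{main} verbatim. Your additional remark that all the block-wise corrections live in the single field $\F_q[\gamma,\omega_1,\dots,\omega_a]$ is a point the paper leaves implicit but is handled correctly here.
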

\begin{proof}
As in the proof of \expref{Theorem}{main}, we can subdivide the matrix $\DFT_N$ into submatrices of the form $M(S)$ for various subsets $S \subset [l]$ using \expref{Lemma}{division} (it is easily verified that \expref{Lemma}{division} also holds over finite fields).  We can remove all of the rows and columns corresponding to integers divisible by too many of the primes $q_1, \dots , q_l$ because the contribution of these rows and columns is low-rank.  The remaining entries can be subdivided into matrices of the form $M(S)$ where $|S|$ is sufficiently large so we can then apply \expref{Lemma}{lem_finitefieldblocks} to change a small number of entries in each row and column to reduce the rank significantly.  The precise computations are exactly the same as in \expref{Theorem}{main}.
\end{proof}

We will now combine \expref{Theorem}{factorable_finitefield} with \expref{Lemma}{conjugates} to get our main theorem for circulant matrices over finite fields.  
\begin{theorem}\label{circulant_finitefield}
Let $0 < \epsilon < 0.01$ be a given parameter and $\F_q$ be a fixed finite field.  For all sufficiently large $N$, if $M$ is an $N \times N$ %
 circulant or Toeplitz matrix %
 then   %
\[
\textsf{r}_{M}^{\F_q}\left(\frac{N}{\exp\left(\epsilon^6(\log N)^{0.35}\right)}\right) \leq N^{15 \epsilon} \,.
\]
\end{theorem}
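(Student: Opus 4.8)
The plan is to follow the proof of Theorem~\ref{main_circulant} almost verbatim, substituting its two ingredients --- Theorem~\ref{main} and Lemma~\ref{scales} --- by Theorem~\ref{factorable_finitefield} and Lemma~\ref{scales} (the latter is purely number-theoretic and needs no change), and inserting one extra step: a descent from a field extension back to $\F_q$ via Lemma~\ref{conjugates}. First I would fix an $(l,x)$-factorable number $N_0$ with $g_{100}(x)\le l\le g_{10}(x)$; for $x$ large the prime factors $q_1,\dots,q_l$ of $N_0$ all exceed $q$, so $\gcd(N_0,q)=1$. Working over the extension $\mathbb{L}:=\F_q[\gamma,\omega_1,\dots,\omega_n]$ supplied by Theorem~\ref{factorable_finitefield} (with $\gamma$ a primitive $N_0$-th root of unity and the $\omega_i$ primitive roots of unity whose orders are the prime powers $t_i\le x^{0.3}$ coprime to $q$), the matrix $\DFT_{N_0}$ diagonalizes adjusted-circulant matrices exactly as over $\C$ (the diagonalization of Section~\ref{sec_prelim} goes through over any field containing the relevant roots of unity), so combining Theorem~\ref{factorable_finitefield} with the $B'=ADA$ form of Lemma~\ref{diagonalization} --- just as Theorem~\ref{main_circulant} is deduced from Theorem~\ref{main} --- gives, for every $N_0\times N_0$ circulant matrix $M_0$ over $\F_q$,
\[
\textsf{r}^{\mathbb{L}}_{M_0}\!\left(\frac{2N_0}{\exp\!\big(\epsilon^6(\log N_0)^{0.36}\big)}\right)\le N_0^{14\epsilon}\,.
\]

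Next I would apply Lemma~\ref{conjugates} to descend from $\mathbb{L}$ to $\F_q$; this multiplies the target rank by $g:=[\mathbb{L}:\F_q]$. The crucial observation --- and the one new ingredient beyond the complex case --- is that $g$ is subpolynomial in $N_0$ precisely because $N_0$ is well-factorable. Indeed $\mathbb{L}=\F_q[\zeta]$ for a primitive $m$-th root of unity $\zeta$ with $m=\lcm(N_0,t_1,\dots,t_n)$, so $g=\ord_q(m)=\lcm\big(\{\ord_q(q_j)\}_j\cup\{\ord_q(t_i)\}_i\big)$. Since each $q_j$ is $x$-good, every prime power dividing $q_j-1$ is at most $x^{0.3}$, hence $\ord_q(q_j)\mid q_j-1$ divides $\lcm\{1,2,\dots,\lfloor x^{0.3}\rfloor\}$; likewise $\ord_q(t_i)\le\phi(t_i)<x^{0.3}$ divides the same lcm. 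Therefore $g$ divides $\lcm\{1,2,\dots,\lfloor x^{0.3}\rfloor\}$, so $\log g\le\psi(x^{0.3})=O(x^{0.3})$ by Chebyshev's bound. Because $\log N_0\asymp l\log x\asymp x/(\log x)^{\Theta(1)}$ we have $x=(\log N_0)^{1+o(1)}$, whence $\log g=O\big((\log N_0)^{0.3+o(1)}\big)=o\big((\log N_0)^{0.36}\big)$. Feeding this into Lemma~\ref{conjugates} and absorbing the $g$ into the exponent gives, for $N_0$ large,
\[
\textsf{r}^{\F_q}_{M_0}\!\left(\frac{2N_0}{\exp\!\big(\epsilon^6(\log N_0)^{0.355}\big)}\right)\le N_0^{14\epsilon}\,.
\]

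Finally, for a circulant (resp.\ Toeplitz) matrix $M$ of arbitrary size $N\times N$ I would embed $M$ into the top-left corner of an $N_0\times N_0$ circulant matrix over $\F_q$, choosing by Lemma~\ref{scales} an $(l,x)$-factorable $N_0$ with $g_{100}(x)\le l\le g_{10}(x)$ and $N_0/(\log N_0)^2\le N\le N_0/2$ (a Toeplitz matrix of size $\le N_0/2$ likewise embeds into an $N_0\times N_0$ circulant). Restricting a rank-plus-sparse decomposition of the $N_0\times N_0$ matrix to its top-left block preserves both the rank bound and the per-row/column sparsity, and rewriting the displayed bound in terms of $N$ --- as in the last lines of the proof of Theorem~\ref{main_circulant}, where the gap between the exponents $0.355$ and $0.35$ swallows the polylogarithmic factor $N_0/N$ --- yields $\textsf{r}^{\F_q}_M\big(N/\exp(\epsilon^6(\log N)^{0.35})\big)\le N^{15\epsilon}$, the claim. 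I expect the genuinely new point to be the middle paragraph: bounding the degree of the extension forced on us by the roots of unity that $\F_q$ is missing. Everything hinges on that degree being subpolynomial in $N$, which is exactly why the smoothness of $q_j-1$ is built into the definition of an $x$-good prime, and why the argument succeeds with a fixed $q$ whereas the characteristic-zero version genuinely needs a degree-$\tilde O(N^3)$ extension.
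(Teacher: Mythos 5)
Your proposal is correct and follows essentially the same route as the paper's proof: diagonalize over the extension via Theorem~\ref{factorable_finitefield} and Lemma~\ref{diagonalization}, bound the degree of the extension using the smoothness of the $q_j-1$, descend to $\F_q$ with Lemma~\ref{conjugates}, and finish by embedding via Lemma~\ref{scales}. The only (immaterial) difference is that you bound the extension degree by $\lcm\{1,\dots,\lfloor x^{0.3}\rfloor\}$ via Chebyshev, whereas the paper uses the cruder bound $(x^{0.3})!$; both give a degree subpolynomial in $N_0$, which is all that is needed.
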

\begin{proof}
First we analyze circulant matrices of size $N_0$ where $N_0$ is $(l,x)$-factorable for some 
\[
g_{100}(x) \leq l \leq g_{10}(x) \,.
\]
Note that as long as $x$ is sufficiently large, $N_0$ must be relatively prime to $q$.  Since $\DFT$ matrices  diagonalize circulant matrices (even over finite fields), \expref{Theorem}{factorable_finitefield} and \expref{Lemma}{diagonalization} imply that for $M_0$, an $N_0 \times N_0$ circulant matrix where $N_0$ satisfies the previously mentioned 
conditions, %
\[
\textsf{r}_{M_0}^{\F_q[\gamma, \omega_1, \dots , \omega_a]}\left(\frac{2N_0}{\exp\left(\epsilon^6(\log N_0)^{0.36}\right)}\right) \leq N_0^{14 \epsilon} 
\]
where $\gamma$ is a primitive $N$\ts{th} root of unity and $\omega_1, \dots , \omega_a$ are primitive $t_1\ts{th}, \dots , t_a\ts{th}$ roots of unity for $t_1, \dots , t_a$ being the set of prime powers at most $x^{0.3}$ that are relatively prime to $q$.  Now we analyze the degree of the extension $\F_q[\gamma, \omega_1, \dots , \omega_a]$.  Note $\F_q[\gamma, \omega_1, \dots , \omega_a] \subset \F_q[\eta]$ where $\eta$ is a primitive root of unity of order $C = N_0\lcm(t_1,t_2,\dots t_a)$.  By 
\expref{Fact}{order},  %
the degree of the extension $\F_q[\eta]$ is the order of $q$ modulo $C$. %
Since $N_0$ factors into a product of distinct $x$-good primes, %
by Fermat's little theorem, the order of $q$ modulo $N_0$ divides $\left(x^{0.3}\right)!$.  Also, all prime powers dividing $\lcm(t_1,t_2,\dots, t_a)$ are at most $x^{0.3}$.  Thus, the order of $q$ modulo $C$ divides $\left(x^{0.3}\right)!$.  Overall, the order of $q \mod C$ is at most
\[
\left(x^{0.3}\right)! < x^{0.3x^{0.3}} \leq \exp\left((\log N_0)^{0.31}\right) \,.
\]
Thus the degree of the extension $\F_q[\gamma, \omega_1, \dots , \omega_a]$ is at most $\exp\left((\log N_0)^{0.31}\right)$.  By \expref{Lemma}{conjugates}
\[
\textsf{r}_{M_0}^{\F_q}\left(\frac{N_0}{\exp\left(\epsilon^6(\log N_0)^{0.359}\right)}\right) \leq N_0^{14 \epsilon} \,.
\]
To complete the proof, we can simply repeat the arguments in the proof of \expref{Theorem}{main_circulant}.  For a circulant matrix $M$ of arbitrary size $N \times N$, note that it is possible to embed an $M$ in the upper left corner of a circulant matrix of any size at least $2N$.  By \expref{Lemma}{scales}, there exists an $N_0$ that is $(l,x)$-factorable for some $g_{100}(x) \leq l \leq g_{10}(x)$ such that
\[
\frac{N_0}{(\log N_0)^2} \leq N \leq \frac{N_0}{2} \,.
\]
We deduce
\[
\textsf{r}_{M}^{\F_q}\left(\frac{N_0}{\exp\left(\epsilon^6(\log N_0)^{0.359}\right)}\right) \leq N_0^{14 \epsilon} \,.
\]
Rewriting the bounds in terms of $N$ we get
\[
\textsf{r}_{M}^{\F_q}\left(\frac{N}{\exp\left(\epsilon^6(\log N)^{0.35}\right)}\right) \leq N^{15 \epsilon} \,. \qedhere
\]
\end{proof}

\section{$G$-circulant matrices over finite fields} %
\label{sec_groupalgebra_finitefield}
We will now generalize \expref{Theorem}{allabelian} to matrices over a finite field $\F_q$ except we will require the additional condition that $\gcd(|G|,q) = 1$.  Write the underlying abelian group $G$ as a direct product of cyclic groups $\Z_{n_1} \times \cdots \times \Z_{n_a}$.  While for matrices with entries in $\C$, it sufficed to work with the Kronecker product of the DFT 
matrices $\DFT_{n_1} \otimes \cdots \otimes \DFT_{n_a}$, 
we require slightly different techniques for rigidity over a fixed finite field as an extension containing all of the necessary roots of unity could have too high degree.  Instead of working through DFT 
matrices, we will work directly with the $G$-circulant matrices themselves.  

While for sufficiently large cyclic groups, we did not require the condition that $\gcd(|G|,q) = 1$ (see \expref{Theorem}{circulant_finitefield}), we require the condition for general abelian groups because we need to use 
\expref{Theorem}{Hadamard_finitefield} to deal with the case when $G$ contains the direct product of many copies of a small cyclic group.  In particular, our techniques do not handle a group such as $\Z_{p^2} \times \cdots \times \Z_{p^2}$ where $p$ is equal to the characteristic of the field $\F_q$.  It is an interesting open question to see if the condition that $\gcd(|G|,q) = 1$ can be eliminated.  The work in \cite{CLP} deals with the case where $q = p^a$ for a prime $p$ and $G$ is a direct product of many cyclic groups of order $p$ but not the case when $G$ is a direct product of many cyclic groups of order $p^2$ (or some other power of $p$).

The first important observation is that \expref{Theorem}{main_circulant} can be slightly strengthened so that to reduce the rank of any circulant matrices, the locations to be changed are fixed and the changes are fixed linear combinations of the entries of the circulant matrix.  More precisely, we make the following definition.
\begin{define}\label{group_reducibility}
Given a group $G$   %
of order   %
$|G| = n$, we say $G$ is $(r,s)$-reducible over
$\F_q$ if the following %
condition holds.   %
There exist
\begin{itemize}
\item a set $S \subset [n] \times [n]$ of positions where $S$
  contains at most $s$
  positions in each row and column,
\item  matrices $A,B \in \F_q^{n\times n}$ where
   $\rank(A), \rank(B) \leq r$,
\item matrices $E_1, \dots , E_n \in \F_q^{n \times n}$ with
  all nonzero entries in $S$, and
\item matrices $Y_1, \dots , Y_n, Z_1, \dots , Z_n \in \F_q^{n \times n}$ 
\end{itemize}
such that for any $G$-circulant matrix $M$ with top row
$(x_1, \dots , x_n)$, we have  %
    \[
    M = A(x_1Y_1 + \dots + x_nY_n) + (x_1Z_1 + \dots + x_nZ_n)B + (x_1E_1 + \dots + x_nE_n) \,.
    \]

In such a decomposition, the matrices $A,B$ will be called $(r,s)$-reduction matrices and the matrices
$Y_1, \dots , Y_n,Z_1, \dots , Z_n, E_1, \dots , E_n$ will be called
$(r,s)$-reduction helpers. %
We write $Y_M = x_1Y_1 + \dots + x_nY_n$ and
similarly  %
for $Z_M$ and $E_M$.
\end{define}

Following the proof of \expref{Theorem}{main_circulant}, 
we can show that $\Z_N$ is 
\[
\left(\frac{N}{\exp\left(\epsilon^6(\log N)^{0.35}\right)}\,,\,N^{15 \epsilon} \right)
\]
reducible over $\C$.  We now prove an analogue of this result for finite fields.
\begin{claim}\label{main_stronger}
For fixed $0 < \epsilon < 0.01$ and all sufficiently large $N$, the group $\Z_N$ is
\[
\left( \frac{N}{\exp\left(\epsilon^6(\log N)^{0.35}\right)},  N^{15 \epsilon}\right) 
\]
reducible over $\F_q$.
\end{claim}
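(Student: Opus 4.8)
The plan is to revisit the proof of \expref{Theorem}{circulant_finitefield} and observe that every rank‑reducing modification used there is a \emph{linear} function of the entries of the circulant matrix and is supported on a \emph{fixed} set of positions. Concretely, I would first establish the corresponding reducibility statement over the large field $\K=\F_q[\gamma,\omega_1,\dots,\omega_a]$ appearing in \expref{Theorem}{factorable_finitefield}. That theorem supplies a \emph{fixed} matrix $E$ over $\K$ with at most $N_0^{7\epsilon}$ nonzero entries per row and column such that $\rank_{\K}(\DFT_{N_0}-E)\le r$, where $r=N_0/\exp(\epsilon^6(\log N_0)^{0.36})$ and $N_0$ is $(l,x)$‑factorable with $g_{100}(x)\le l\le g_{10}(x)$. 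Writing $R:=\DFT_{N_0}-E=R_1R_2$ with fixed $R_1\in\K^{N_0\times r}$, $R_2\in\K^{r\times N_0}$, and using that $\DFT_{N_0}$ diagonalizes adjusted‑circulant matrices over $\K$ (so $M=\DFT_{N_0}D_M\DFT_{N_0}$ with $D_M=\sum_k x_kD^{(k)}$ diagonal and linear in the top row of $M$), the identity behind \expref{Lemma}{diagonalization} reads
\[
M \;=\; R_1\bigl(R_2D_ME\bigr)\;+\;\bigl(\DFT_{N_0}D_MR_1\bigr)R_2\;+\;ED_ME .
\]
The first two terms have the shape $R_1\cdot Y_M$ and $Z_M\cdot R_2$ with $Y_M,Z_M$ linear in the $x_k$ (padded to size $N_0\times N_0$), so $R_1,R_2$ serve as fixed reduction matrices of rank $\le r$; the third term equals $\sum_k x_k(ED^{(k)}E)$, is linear in the $x_k$, and is supported in the fixed set $S$ obtained as the ``square'' of the support of $E$, which has at most $N_0^{14\epsilon}$ positions per row and column. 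After a fixed row permutation (to pass from adjusted‑circulant to $G$‑circulant) this says $\Z_{N_0}$ is $\bigl(r,\,N_0^{14\epsilon}\bigr)$‑reducible over $\K$.

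Next I would prove the reducibility analogue of \expref{Lemma}{conjugates}: if $\Z_N$ is $(r,s)$‑reducible over a degree‑$g$ extension $\F_q[\gamma]$ then it is $(gr,s)$‑reducible over $\F_q$. Starting from $M=AY_M+Z_MB+E_M$ over $\F_q[\gamma]$ with $A,B$ fixed and $Y_M,Z_M,E_M$ linear in the $\F_q$‑valued entries of $M$, I would apply every power of the Frobenius $\sigma\colon z\mapsto z^q$ (which fixes $M$ and each entry $x_k$ entrywise) and take $\sum_{j=0}^{g-1}\sigma^j(\beta\,\cdot\,)$ for a fixed $\beta$ with $\Tr_{\F_q[\gamma]/\F_q}(\beta)=1$. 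The left side collapses to $M$; the $E$‑term becomes $\sum_k x_k\sum_j\sigma^j(\beta\hat E_k)$, which is $\sigma$‑invariant hence lies in $\F_q^{N\times N}$ and is still supported in $S$; and the $A$‑term becomes $\Sigma_A:=\sum_j\sigma^j(\beta AY_M)$, which is likewise over $\F_q$, linear in the $x_k$, and has all of its columns in the space $U^{\ast}:=\sum_j\sigma^j(\text{column space of }A)$. Since $U^{\ast}$ is $\sigma$‑stable, Galois descent gives $U^{\ast}=(U^{\ast}\cap\F_q^N)\otimes_{\F_q}\F_q[\gamma]$ with $\dim_{\F_q}(U^{\ast}\cap\F_q^N)=\dim_{\F_q[\gamma]}U^{\ast}\le g\cdot\rank(A)\le gr$; fixing an $\F_q$‑basis $P\in\F_q^{N\times gr}$ of $U^{\ast}\cap\F_q^N$ with a fixed left inverse $P^{+}$, we obtain $\Sigma_A=P\,(P^{+}\Sigma_A)$, exhibiting it as $A'\cdot Y'_M$ for a fixed $A'\in\F_q^{N\times N}$ of rank $\le gr$ and $Y'_M$ linear over $\F_q$. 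The $B$‑term is handled symmetrically through row spaces. Using the extension‑degree bound $g\le\exp((\log N_0)^{0.31})$ from the proof of \expref{Theorem}{circulant_finitefield}, this makes $\Z_{N_0}$ a $\bigl(N_0/\exp(\epsilon^6(\log N_0)^{0.359}),\,N_0^{14\epsilon}\bigr)$‑reducible group over $\F_q$.

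Finally, to reach an arbitrary $N$ I would repeat the embedding argument of \expref{Theorem}{main_circulant}: by \expref{Lemma}{scales} pick an $(l,x)$‑factorable $N_0$ with $N_0/(\log N_0)^2\le N\le N_0/2$, write a $\Z_N$‑circulant $M$ with top row $(x_1,\dots,x_N)$ as the top‑left $N\times N$ block of the $\Z_{N_0}$‑circulant $\tilde M$ with top row $(x_1,\dots,x_N,0,\dots,0)$ (so $\tilde M$ is linear in the $x_i$), restrict the $\Z_{N_0}$‑decomposition of $\tilde M$ to that block, and refactor the truncated fixed matrices (still of rank $\le gr$) into $N\times N$ pieces as in the first step. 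Rewriting the bounds in terms of $N$ — using that $2N\le N_0\le N(\log N_0)^2$ forces $\log N_0=(1+o(1))\log N$ — yields reducibility of $\Z_N$ over $\F_q$ with parameters $\bigl(N/\exp(\epsilon^6(\log N)^{0.35}),\,N^{15\epsilon}\bigr)$, which is exactly the claim.

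The main obstacle is the descent step. The averaging in \expref{Lemma}{conjugates} controls only the rank of $M-E$, not the ``fixed low‑rank factor'' structure demanded by \expref{Definition}{group_reducibility}, and recovering that structure is precisely what the Galois‑descent argument on the $\sigma$‑stable column and row spaces accomplishes. One has to verify that the only place the rank inflates is through $\dim_{\F_q}(U^{\ast}\cap\F_q^N)\le g\cdot\rank(A)$ and that taking the trace does not enlarge supports; everything else is routine bookkeeping, in particular the repeated padding of rectangular fixed matrices to square ones.
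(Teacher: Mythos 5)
Your proposal follows essentially the same route as the paper's proof: reducibility over the large extension via the three-term splitting of $\DFT_{N_0}D_M\DFT_{N_0}$ into a fixed-left-factor piece, a fixed-right-factor piece, and a sparse piece; then descent to $\F_q$ by averaging over conjugates (your trace-dual element $\beta$ is exactly the paper's weights $\eta_i^k/\sum_j\eta_j^k$, and your Galois-descent bound on $U^{\ast}\cap\F_q^N$ is the paper's bound on the $\F_q$-dimension of the intersection of the span of the conjugate column spaces with $\F_q^{N_0}$); and finally the embedding into a well-factorable size via \expref{Lemma}{scales}. The one detail to fix is the embedding: padding the top row with zeros, $(x_1,\dots,x_N,0,\dots,0)$, does \emph{not} make the top-left $N\times N$ block of the $\Z_{N_0}$-circulant equal to $M$ (the negative differences $i-j$ pick up the zero entries, so you only recover the lower-triangular part); you need the wrap-around top row $(x_1,\dots,x_N,0,\dots,0,x_2,\dots,x_N)$, which is the embedding the paper uses and still has every entry equal to some entry of $M$, so the rest of your restriction argument goes through unchanged.
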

\begin{proof}
  First consider an integer $N_0$ that is $(l,x)$-factorable for some $
g_{100}(x) \leq l \leq g_{10}(x)$.  As long as $x$ is sufficiently large, $\gcd(N_0,q) = 1$.  Let $M_0$ be a $N_0 \times N_0$ circulant matrix (i.e., a
$G$-circulant    %
matrix for
$G=\Z_{N_0}$)  %
over $\F_q$ and 
let the entries in its top row be %
$x_1, \dots , x_{N_0}$.  Let $\gamma$ be a primitive $N_0$\ts{th} root of unity and $t_1, \dots t_n$ be the set of prime powers at most $x^{0.3}$ that are relatively prime to $q$.  Let $\omega_1, \dots , \omega_n$ be roots of unity of order $t_1, \dots , t_n$ respectively.  By \expref{Theorem}{factorable_finitefield}, there exists a matrix $E$ over $\F_q[\gamma, \omega_1, \dots , \omega_n]$ with at most $N_0^{7\epsilon}$ nonzero entries in each row and column such that 
\[
\rank(\DFT_{N_0} - E) \leq \frac{N_0}{\exp\left(\epsilon^6(\log N_0)^{0.36}\right)} \,.
\]
Now write
\[
M_0 = \DFT_{N_0}^* \cdot D \cdot \DFT_{N_0} = (\DFT_{N_0} - E)^*  D \cdot \DFT_{N_0} + E^*D(\DFT_{N_0} - E) + E^*DE
\]
where $D$ is a diagonal matrix whose entries are linear combinations of $x_1,\dots , x_{N_0}$.  Note that all of the above matrices have entries contained in $\F_q[\gamma, \omega_1, \dots , \omega_n] \subseteq \F_q[\eta]$ where $\eta$ is a primitive root of unity of order $C = N_0\lcm(t_1, \dots , t_n)$.  As argued before in the proof of \expref{Theorem}{circulant_finitefield}, the degree of the extension is at most $\exp\left((\log N_0)^{0.31}\right)$.  Let the conjugates of $\eta$ be $\eta_1 = \eta, \eta_2, \dots , \eta_m$.  Let %
$\DFT_{N_0}^1, \dots , \DFT_{N_0}^m$ 
be obtained by taking %
$\DFT_{N_0}$ %
and replacing $\eta$ with its conjugates. Define $D^1, \dots , D^m, E^1, \dots , E^m$ similarly.  As in the proof of \expref{Lemma}{conjugates}, there exists an integer $k$ such that $\eta_1^k +  \dots + \eta_m^k \neq 0$.  We now have
\[
M_0 = \frac{1}{\eta_1^k + \dots + \eta_m^k}\left(\sum_{i=1}^m \eta_i^k(\DFT_{N_0}^i - E^i)^*D^i \cdot \DFT_{N_0}^i + \sum_{i=1}^m \eta_i^k{E^i}^*D^i(\DFT_{N_0}^i - E^i) + \sum_{i=1}^m \eta_i^k{E^i}^*D^iE^i\right) \,.
\]
Note that $1/(\eta_1^k + \dots + \eta_m^k) \in \F_q$ and all three of the sums are matrices whose entries are linear combinations of $x_1, \dots , x_{N_0}$ with coefficients in $\F_q$.  The last term satisfies the desired sparsity constraint as it has at most $N_0^{14 \epsilon}$ nonzero entries in each row and column and the locations of these entries are independent of $M_0$.  

It remains to argue that the first two terms satisfy the desired rank constraint.  Note that the span of the columns of 
$(\DFT_{N_0}^1 - E^1), \dots , (\DFT_{N_0}^m - E^m)$
has dimension at most 
\[
\frac{mN_0}{\exp\left(\epsilon^6(\log N_0)^{0.36}\right)} \leq \frac{N_0}{\exp\left(\epsilon^6(\log N_0)^{0.359}\right)}
\]
over $\F_q[\eta]^{N_0}$.  Therefore, the dimension of the intersection of this subspace with $\F_q^{N_0}$, say $V$, has dimension at most 
\[
\frac{N_0}{\exp\left(\epsilon^6(\log N_0)^{0.359}\right)} \,.
\]
In particular we can write
\[
\sum_{i=1}^m \eta_i^k(\DFT_{N_0}^i - E^i)^*D^i\cdot \DFT_{N_0}^i = x_1C_1  + \dots  + x_{N_0}C_{N_0}
\]
for some fixed matrices $C_1, \dots , C_{N_0}$ with entries %
in %
$\F_q$.  Also all columns of $C_1, \dots , C_{N_0}$ must be in $V$ so each can be written as $AY_i$ where $A$ is a fixed matrix with rank at most \[\frac{mN_0}{\exp\left(\epsilon^6(\log N_0)^{0.36}\right)} \,.\]  Thus there exists fixed matrices $Y_1, \dots , Y_{N_0} \in \F_q^{n \times n}$ and a matrix $A$ satisfying the desired rank constraint such that
\[
\sum_{i=1}^m \eta_i^k(\DFT_{N_0}^i - E^i)^*D^i \cdot \DFT_{N_0}^i = A(x_1Y_1 + \dots + x_{N_0}Y_{N_0}) \,.
\]
A similar argument shows that the second term can also be written in the desired form.

Now to extend to arbitrary $N$ (not necessarily $(l,x)$-factorable),
simply note that any circulant matrix of size $N$ can be embedded
into a circulant matrix of
any given  %
size at least $2N$ where
each entry of the larger matrix is equal to some entry %
of the original matrix.  
We can then apply \expref{Lemma}{scales} and complete the proof in the same way as \expref{Theorem}{main_circulant}.
\end{proof}

\expref{Claim}{main_stronger} allows us to deal with large cyclic groups.  We will also need a way of dealing with a direct product of many copies of a small cyclic group.  
\begin{claim}\label{smallgroups}
Let $\F_q$ be a finite field and $N = d^n$ for positive integers $d,n,q$ with $\gcd(d,q) = 1$.  Let $0 < \epsilon < 0.01 $ and assume $n \geq 2/\psi$ where \[\psi = \frac{\epsilon^2 }{400\log^2 (1/\epsilon) d \log d} \,.\]  Let $G = \underbrace{\Z_d \otimes \cdots \otimes \Z_d}_{n}$. Then $G$ is $\left(N^{1 - \psi/2},  N^{ 2\epsilon} \right)$ reducible over $\F_q$.
\end{claim}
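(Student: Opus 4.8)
The plan is to transfer the proof of \expref{Claim}{main_stronger} from the cyclic setting to the setting $G=\Z_d^n$, replacing the matrix $\DFT_{N_0}$ by the generalized Walsh--Hadamard matrix $H_{d,n}$ and \expref{Theorem}{factorable_finitefield} by \expref{Theorem}{Hadamard_finitefield}. Write $N=d^n$. Fix a primitive $d$-th root of unity $\omega$ in some extension of $\F_q$. Since $d$ is a constant, \expref{Fact}{order} shows that $\F_q[\omega]/\F_q$ is a Galois extension of degree $g=\ord_q(d)\le d-1$ and that all conjugates $\omega=\omega_1,\dots,\omega_g$ of $\omega$ are powers of $\omega$, hence already lie in $\F_q[\omega]$ (and each is again a primitive $d$-th root of unity). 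Over $\F_q[\omega]$ we have $H_{d,n}H_{d,n}^{*}=d^nI$ by \expref{Claim}{orthogonal} (valid here since $\gcd(d,q)=1$), where $H_{d,n}^{*}$ denotes $H_{d,n}$ with $\omega$ replaced by $\omega^{-1}$. Applying \expref{Theorem}{Hadamard_finitefield} to $H_{d,n}^{*}$ (a GWH matrix for the primitive root $\omega^{-1}$) produces a \emph{fixed} matrix $E$ (depending only on $d,n$), with at most $N^{\epsilon}$ nonzero entries in each row and column and nonzeros contained in a fixed position set $S\subset[N]\times[N]$, such that $\rank_{\F_q[\omega]}(H_{d,n}^{*}-E)\le N^{1-\psi}$.

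Now let $M$ be a $G$-circulant matrix (equivalently, after a fixed permutation of rows, $M_{\Z_d^n}(f)$) with top row $(x_1,\dots,x_N)$. By \expref{Claim}{Hadamard_diagonalization} (whose proof over $\F_q[\omega]$ only uses \expref{Claim}{orthogonal}), $H_{d,n}MH_{d,n}$ is a diagonal matrix $D=D(M)$ whose diagonal entries are $\F_q[\omega]$-linear forms in $x_1,\dots,x_N$; since $H_{d,n}^{-1}=d^{-n}H_{d,n}^{*}$ this gives $M=d^{-2n}\,H_{d,n}^{*}DH_{d,n}^{*}$. Because $M$ has entries in $\F_q$, applying the automorphism $\omega\mapsto\omega_i$ yields $M=d^{-2n}\,H^{*(i)}D^{(i)}H^{*(i)}$ for every $i$, where the superscript $(i)$ denotes the substitution $\omega\mapsto\omega_i$ (so $H^{*(i)}$, $E^{(i)}$ satisfy $\rank_{\F_q[\omega]}(H^{*(i)}-E^{(i)})\le N^{1-\psi}$, by the minor argument of \expref{Lemma}{conjugates}). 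Pick $0\le k\le g-1$ with $c:=\omega_1^k+\dots+\omega_g^k\neq 0$ (possible by the Vandermonde argument in \expref{Lemma}{conjugates}; here $c\in\F_q$ as a symmetric function of the conjugates, by \expref{Fact}{symmetric_poly}). Writing $H^{*(i)}=(H^{*(i)}-E^{(i)})+E^{(i)}$ and expanding,
\[
M=\frac{1}{c\,d^{2n}}\sum_{i=1}^{g}\omega_i^k\Bigl((H^{*(i)}-E^{(i)})D^{(i)}H^{*(i)}+E^{(i)}D^{(i)}(H^{*(i)}-E^{(i)})+E^{(i)}D^{(i)}E^{(i)}\Bigr)\,.
\]
Each of the three sums is fixed by $\Gal(\F_q[\omega]/\F_q)$, hence has entries in $\F_q$, and is $\F_q$-linear in $x_1,\dots,x_N$.

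The third sum is the candidate for $E_M$: its nonzeros lie in the fixed set $S$, at most $(N^{\epsilon})^2=N^{2\epsilon}$ per row and column; writing $D^{(i)}=\sum_j x_j D^{(i)}_j$ over fixed diagonal matrices exhibits it as $\sum_j x_j E_j$ with fixed $E_j$ supported on $S$. For the first sum, expand $D^{(i)}=\sum_j x_j D^{(i)}_j$ likewise, collect the coefficient $C_j$ of $x_j$, and note that every column of $C_j$ lies in the $\F_q[\omega]$-subspace $W=\sum_i\operatorname{colspace}(H^{*(i)}-E^{(i)})$, of $\F_q[\omega]$-dimension at most $gN^{1-\psi}$. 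Since $C_j$ has entries in $\F_q$, its columns lie in $V:=W\cap\F_q^{N}$ and $\dim_{\F_q}V\le\dim_{\F_q[\omega]}W\le gN^{1-\psi}$. Choosing a fixed $A\in\F_q^{N\times N}$ with column space $V$ (so $\rank A\le gN^{1-\psi}$) and writing $C_j=AY_j$ for fixed $Y_j$, the first sum equals $A\,Y_M$ with $Y_M=\sum_j x_j Y_j$. The second sum is handled symmetrically using row spaces, yielding $Z_M\,B$ for a fixed $B$ with $\rank B\le gN^{1-\psi}$. Finally, since $g\le d-1<d$ we have $\log_d g<1$, and since $n\ge 2/\psi$ we have $n\psi/2\ge 1>\log_d g$, so $gN^{1-\psi}=N^{1-\psi}\,d^{\log_d g}\le N^{1-\psi/2}$; hence $\rank A,\rank B\le N^{1-\psi/2}$ and $E_M$ has at most $N^{2\epsilon}$ nonzeros per row and column, giving that $G$ is $(N^{1-\psi/2},N^{2\epsilon})$-reducible over $\F_q$. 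The one point demanding care is the bookkeeping needed to verify that $A$, $B$, $S$, and the reduction helpers $Y_j,Z_j,E_j$ depend only on $d$ and $n$ and not on $M$; this is the same bookkeeping already carried out in \expref{Claim}{main_stronger}, and I expect it (rather than any new idea) to be the main obstacle.
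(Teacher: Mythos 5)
Your proposal is correct and follows essentially the same route as the paper: the paper's proof of \expref{Claim}{smallgroups} is precisely "apply \expref{Theorem}{Hadamard_finitefield} to get a sparse $E$ with $H_{d,n}-E$ of low rank, then repeat the argument of \expref{Claim}{main_stronger} using the fact that $H_{d,n}$ diagonalizes $G$-circulant matrices," which is what you carry out in detail (including the Galois-averaging over the conjugates of $\omega$ and the intersection-with-$\F_q^N$ rank argument). You even supply a detail the paper leaves implicit, namely that the hypothesis $n\ge 2/\psi$ is exactly what absorbs the factor $g=\ord_q(d)\le d-1$ coming from the number of conjugates, turning the rank bound $gN^{1-\psi}$ into $N^{1-\psi/2}$.
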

\begin{proof}
Let $\omega$ be a primitive $d$\ts{th} root of unity in some extension of $\F_q$.  We use \expref{Theorem}{Hadamard_finitefield} to find a sparse matrix $E$ with entries in  $\F_q[\omega]$ such that $H_{d,n} - E$ has low rank where
\[
H_{d,n} = \underbrace{\DFT_d \otimes \cdots \otimes \DFT_d}_{n} \,.
\]
We can then repeat the same argument as in the proof of \expref{Claim}{main_stronger}, using the fact that $H_{d,n}$ diagonalizes any $G$-circulant matrix.
\end{proof}

Now we introduce the main technical result of this section that allows us to deal with direct products of different groups without going through the corresponding $\DFT$ matrices.
\begin{claim}\label{productrigidity}
Consider a list of
abelian  %
groups, $G_1, \dots , G_a$, such that $|G_i| = n_i$.  
Assume  
for each $1 \leq i \leq a$\,, $G_i$ is $(r_i,s_i)$-reducible over $\F_q$.
Let %
$G = G_1 \times \cdots \times G_a$   %
and $|G| = n = n_1n_2 \cdots n_a$.  Then the group $G$ is $(r,s)$-reducible over $\F_q$ where 
\begin{align*}
    r = \sum_{S \subset [a], |S| = l} 2^l\prod_{i \in S} \sqrt{r_i n_i} \prod_{i' \notin S} n_{i'}\,, \\
    s = \sum_{S \subset[a], |S| < l} 2^{|S|}\prod_{i \in S} n_i \prod_{i' \notin S} s_{i'} \,.
\end{align*}
\end{claim}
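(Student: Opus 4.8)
The plan is to build the required $(r,s)$-reduction of $G = G_1\times\cdots\times G_a$ out of the given reductions of the $G_i$, combining them multiplicatively across the Kronecker product. The starting point is that the space of $G$-circulant matrices is spanned by the pure tensors $M_1\otimes\cdots\otimes M_a$ with each $M_i$ a $G_i$-circulant matrix: if $f=f_1\otimes\cdots\otimes f_a$ then, for a suitable ordering of the rows and columns of $G$, the $G$-circulant matrix of $f$ is $M(f_1)\otimes\cdots\otimes M(f_a)$, and such $f$ span all functions $G\to\F_q$. So first I would fix, for each $i$, the $(r_i,s_i)$-reduction matrices $A_i,B_i$ and the reduction helpers of $G_i$, making $M_i\mapsto Y_{M_i}$, $M_i\mapsto Z_{M_i}$, $M_i\mapsto E_{M_i}$ into $\F_q$-linear maps with $M_i=A_iY_{M_i}+Z_{M_i}B_i+E_{M_i}$. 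For a pure tensor $M=M_1\otimes\cdots\otimes M_a$ I would expand $\bigotimes_i\bigl(A_iY_{M_i}+Z_{M_i}B_i+E_{M_i}\bigr)$ into $3^a$ summands indexed by type functions $\tau:[a]\to\{A,Z,E\}$; writing $S_A,S_Z,S_E$ for the preimages, $S=S_A\cup S_Z$, and $T_i$ for the factor chosen at coordinate $i$, the mixed-product property (Fact~\ref{product-kronecker}) gives
\[
\bigotimes_i T_i=\Bigl(\bigotimes_{i\in S_A}A_i\otimes\bigotimes_{i\notin S_A}I\Bigr)\Bigl(\bigotimes_{i\in S_A}Y_{M_i}\otimes\bigotimes_{i\in S_Z}Z_{M_i}\otimes\bigotimes_{i\in S_E}E_{M_i}\Bigr)\Bigl(\bigotimes_{i\in S_Z}B_i\otimes\bigotimes_{i\notin S_Z}I\Bigr)=:\mathcal A_\tau\,\Psi_\tau(M)\,\mathcal B_\tau,
\]
where $\mathcal A_\tau,\mathcal B_\tau$ are \emph{fixed} matrices of rank $\prod_{i\in S_A}r_i\prod_{i\notin S_A}n_i$ and $\prod_{i\in S_Z}r_i\prod_{i\notin S_Z}n_i$, and $\Psi_\tau(M)$ is the middle Kronecker product.

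Next I would route the $3^a$ summands using the threshold $l$. Summands with $|S|\ge l$ go into the low-rank part: since $\rank(\mathcal A_\tau)\rank(\mathcal B_\tau)=\prod_{i\in S}(r_in_i)\prod_{i\notin S}n_i^2$, the smaller of $\mathcal A_\tau,\mathcal B_\tau$ has rank at most $\prod_{i\in S}\sqrt{r_in_i}\prod_{i\notin S}n_i$, so I place $\bigotimes_iT_i$ into $\mathcal A_\tau\cdot\bigl(\Psi_\tau(M)\mathcal B_\tau\bigr)$ if $\mathcal A_\tau$ is the smaller side and into $\bigl(\mathcal A_\tau\Psi_\tau(M)\bigr)\cdot\mathcal B_\tau$ otherwise (if $S_A$ or $S_Z$ is empty I am forced onto the other side, but then all of $S$ lies in the nonempty part and that side already has rank $\prod_{i\in S}r_i\prod_{i\notin S}n_i\le\prod_{i\in S}\sqrt{r_in_i}\prod_{i\notin S}n_i$ since $r_i\le n_i$). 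Summands with $|S|<l$ have at least $a-l+1$ sparse factors $E_{M_i}$; I route $\bigotimes_iT_i$ into the error part unchanged, using that, with the $|S|$ non-sparse factors treated as arbitrary dense matrices, it has at most $\prod_{i\in S}n_i\prod_{i\notin S}s_i$ nonzero entries per row and column, supported in a set depending only on the fixed supports of the $E_{M_i}$, not on $M$.

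The point that makes this work for every $G$-circulant $M$ and not just pure tensors is that each summand $\bigotimes_iT_i$ is multilinear in $(M_1,\dots,M_a)$, hence extends uniquely to an $\F_q$-linear map $\Phi_\tau$ on the span of the pure tensors, i.e.\ on the space of $G$-circulant matrices; and since the outer factors $\mathcal A_\tau,\mathcal B_\tau$ are the \emph{same} fixed matrices for all pure tensors, $\Phi_\tau$ retains the shape of a fixed matrix $\mathcal A_\tau$ times an $\F_q$-linear function of $M$'s top row (respectively an $\F_q$-linear function times $\mathcal B_\tau$, respectively an $\F_q$-linear function supported in a fixed sparse set). Collecting the summands routed to the left, stacking the corresponding $\mathcal A_\tau$ horizontally, factoring through its column space, and padding with zeros yields a single $A\in\F_q^{n\times n}$ with $\rank(A)\le\sum_{S:\,|S|\ge l}2^{|S|}\prod_{i\in S}\sqrt{r_in_i}\prod_{i\notin S}n_i$ (there are $2^{|S|}$ type functions with non-sparse set $S$) together with helpers $Y_1,\dots,Y_n$ read off from the linear extension; symmetrically for $B$ and $Z_1,\dots,Z_n$; and the error summands give $E_1,\dots,E_n$ supported in a set with at most $\sum_{S:\,|S|<l}2^{|S|}\prod_{i\in S}n_i\prod_{i\notin S}s_i=s$ positions per row and column. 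Since $M=\sum_\tau\Phi_\tau(M)$ for all $G$-circulant $M$, this is the desired $(r,s)$-reduction; identifying $\sum_{|S|\ge l}$ with the stated $\sum_{|S|=l}$ uses that in the regime of interest $r_i\ll n_i$, so this sum is dominated up to a bounded factor by its $|S|=l$ term.

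I expect the main obstacle to be exactly this promotion step: turning the per-pure-tensor identity into a genuine reduction with a single fixed pair $(A,B)$ and a single fixed support set valid for all $M$ simultaneously — which forces the multilinear-extension argument together with the mixed-product property so the fixed pieces $\mathcal A_\tau,\mathcal B_\tau$ can be pulled outside the $M$-dependent middle factor — and ensuring that rank adds rather than multiplies when the many low-rank contributions are amalgamated into $A$, which is why the column-space factorization is needed. The remaining ingredients (the $\sqrt{r_in_i}$ balancing deciding whether a summand attaches to $A$ or to $B$, the count of type functions, and the behaviour of supports under Kronecker products) are routine.
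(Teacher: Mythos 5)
Your decomposition is essentially the paper's: the same $3^a$-term ternary expansion of $\bigotimes_i\bigl(A_iY_{M_i}+Z_{M_i}B_i+E_{M_i}\bigr)$ over the delta-function (pure-tensor) basis, the same threshold on the number of non-sparse coordinates, and the same $\sqrt{r_in_i}$ balancing obtained by attaching each low-rank summand to whichever of the two sides is cheaper --- the paper encodes this choice as the partition into $R_1$ (where $\prod_{i\in S^1}r_i/n_i\le\prod_{i\in S^2}r_i/n_i$) and $R_2$. The ``promotion'' step you single out as the main obstacle is handled in the paper simply by summing over group elements $(g_1,\dots,g_a)$ from the outset, so that part of your plan is fine and equivalent. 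The sparse side, including the $2^{|S|}$ count of type functions and the per-row support bound, matches the paper exactly.

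The one genuine gap is the final rank count. Your routing sends \emph{every} summand with $|S|\ge l$ to the low-rank part and then adds ranks, giving $\rank(A)\le\sum_{|S|\ge l}2^{|S|}\prod_{i\in S}\sqrt{r_in_i}\prod_{i'\notin S}n_{i'}$, whereas the claim asserts the sum restricted to $|S|=l$. Your proposed fix --- that the tail is ``dominated up to a bounded factor'' because $r_i\ll n_i$ --- is not available: the claim carries no such hypothesis (it must hold, e.g., with the trivial $(n_i,0)$-reductions, where all terms of the sum are comparable), and even when $r_i\ll n_i$, bounding $\sum_{j\ge l}\binom{a}{j}(\cdot)^j$ by its first term requires a quantitative condition you have not supplied. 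The paper closes this exactly rather than approximately via a nesting argument: it fixes a tensor-product basis $v^1_{j_1}\otimes\cdots\otimes v^a_{j_a}$ adapted to the left kernels of the $A^i$, observes that the set of basis vectors \emph{not} annihilated on the left by $M_I$ only shrinks as $S^1(I)$ grows, and hence the codimension of the common left kernel of the whole sum $P_1$ is already accounted for by the terms with exactly $l$ non-sparse coordinates; the $|S|>l$ terms contribute nothing new. Your step of stacking the $\mathcal A_\tau$ horizontally and factoring through the column space discards precisely this containment, which is why you are left with the larger sum.
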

\begin{proof}
Let $M$ be a $G$-circulant matrix.  For each $1 \leq i \leq a$, let $A^i,B^i$ be the $(r_i,s_i)$-reduction matrices for the group $G_i$.  Note that the reducibility assumption means that any $G_i$-circulant matrix can be written as a sum of three matrices where the first contains a fixed high dimensional subspace in its left nullspace, the second contains a fixed high dimensional subspace in its right nullspace, and the third is sparse.  The first step in our proof will involve writing $M$ as a sum of $3^a$ matrices.  Roughly, each of these $3^a$ matrices corresponds to choosing one of the three possible components (large left nullspace, large right nullspace, or sparse) for each of the groups $G_i$.  

More formally, for each $i \in [a]$, consider the group $G_i$. Let its $(r_i,s_i)$-reduction helpers be $\{Y_{g_i} \}$, $\{Z_{g_i} \}$, $\{E_{g_i}\}$ for $g_i \in G_i$.   Let $Y(g_i) = A^{i}Y_{g_i}$, $Z(g_i) = Z_{g_i}B^i$ and  $E(g_i) = E_{g_i}$.  By definition, for a $G$-circulant matrix $M_{G_i}$ with top row given by $\{x_{g} \}$ for  $g \in G_i$,
\[
M_{G_i} = \sum_{g_i \in G} (Y(g_i) + Z(g_i) + E(g_i))x_{g_i} \,.
\]
Thus, for fixed $g_i, h_i,k_i \in G_i$, the entry of $Y( g_i) + Z( g_i) + E(g_i)$ indexed by $(h_i,k_i)$ is equal to  $1$ if $h_i + k_i = g_i$ (in the group $G_i$) and $0$ otherwise.

We index the rows and columns of $M$ with ordered tuples $(h_1, \dots , h_a)$ and $(k_1, \dots , k_a)$ respectively (where $h_i,k_i \in G_i$).  Let the entries in the top row of $M$ be $x_{g_1, \dots , g_a}$ where $(g_1, \dots , g_a)$ ranges over  $ G_1 \times \cdots \times G_a$. Now for each ordered tuple $I = (i_1, \dots ,i_a) \in \{1,2,3 \}^a$, we will construct a $|G| \times |G|$ matrix $M_I$.  Let $S^1(I), S^2(I), S^3(I) \subset [a]$ denote the subsets of locations where the entry of $I$ is $1,2$ or $3$ respectively.  We define
\[
M_I = \sum_{(g_1, \dots , g_a)} x_{g_1, \dots , g_a} \left(\bigotimes_{i \in S^1(I)}Y( g_i) \right) \otimes \left(\bigotimes_{i \in S^2(I)}Z( g_i)\right) \otimes \left(\bigotimes_{i \in S^3(I)}E(g_i) \right)
\] 
where the sum is over all $(g_1, \dots , g_a) \in  G_1 \times \cdots \times G_a$.  The first important observation is that 
\begin{equation}\label{eq_sum}
M =  \sum_{I \in \{1,2,3 \}^a} M_I \,.
\end{equation}
To see this, it suffices to note that the coefficients of $x_{g_1, \dots , g_a}$ on the right hand side for fixed $g_1, \dots , g_a$ are given by the matrix 
\begin{align*}
\sum_{I \in \{1,2,3 \}^a}  \left(\bigotimes_{i \in S^1(I)}Y( g_i) \right) \otimes \left(\bigotimes_{i \in S^2(I)}Z( g_i)\right) \otimes \left(\bigotimes_{i \in S^3(I)}E(g_i) \right) = \\
\bigotimes_{i \in [a]}\left(Y( g_i) + Z(g_i) + E(g_i) \right) \,.
\end{align*}
The entry indexed by $(h_1, \dots , h_a)$ and $(k_1, \dots , k_a)$ on the right hand side is equal to $1$ if $h_i + k_i = g_i$ for all $i$ and $0$ otherwise.  This completes the proof of \eqref{eq_sum}.

We would like to write $M $ as a sum of three matrices, say $P_1, P_2, P_3$, whose entries are linear forms in the variables $x_{g_1, \dots , g_a}$ and such that $P_1 = AY, P_2 = ZB$ for some fixed low-rank matrices $A,B$ and $P_3$ is sparse.  Write
\[
M = \sum_{\substack{I \in \{1,2,3 \}^a \\ |S^3(I)| \leq  a - l}}M_{I}  + \sum_{\substack{I \in \{1,2,3 \}^a \\ |S^3(I)| > a - l}}M_{I} \,.
\]
We will prove that $P_1,P_2$ can be obtained by splitting the first sum and we can set $P_3$ to be equal to the second sum.  For each $1 \leq i \leq a$, there exists a set of linearly independent vectors $v_1^i, \dots , v_{n_i - r_i}^i$ such that $v_{j}^iA^i = 0$ and a set of linearly independent vectors $u_1^i, \dots , u_{n_i - r_i}^i$ such that $B^iu_j^i = 0$ for all $1 \leq j \leq n_i - r_i$.  We can complete the set  $\{ v_1^i, \dots , v_{n_i - r_i}^i \}$ to a basis $\{ v_1^i, \dots , v_{n_i }^i \}$ and similar for $\{u_1^i, \dots , u_{n_i}^i \}$.  Consider the basis of $\F_q^{n}$ consisting of the vectors $v^1_{j_1} \otimes v^2_{j_2} \otimes \cdots \otimes v^a_{j_a}$ where $(j_1, \dots, j_a) \in [n_1] \times \cdots \times [n_a]$.  Now 
assume %
we are given a matrix $M_{I}$ with $I \in \{1,2,3 \}^a$.    The key observation is that if %
$j_{i} \leq n_{i} - r_{i}$ for some index $i \in S^1(I)$, %
then 
\begin{equation}\label{eq_null}
\left(v^1_{j_1} \otimes v^2_{j_2} \otimes \cdots \otimes v^a_{j_a}\right)M_I = 0\,.
\end{equation}
This is because $Y(g_i) = A^iY_{g_i}$ so by construction, $v^i_{j_i} Y(g_i) = 0$ for all $g_i \in G_i$.  Using this observation and examining the definition of $M_I$, we immediately get \eqref{eq_null}.  Similarly, we get that if $j_{i} \leq n_{i} - r_{i}$ for some index $i \in S^2(I)$ then 
\[
M_I\left(u^1_{j_1} \otimes u^2_{j_2} \otimes \cdots \otimes u^a_{j_a}\right) = 0 \,.
\]
Let $R_1 \subset \{1,2,3 \}^a$ be the set of ordered tuples $I$ such that 
\[
\prod_{i \in S^1(I)} \frac{r_i}{n_i} \leq \prod_{i \in S^2(I)} \frac{r_i}{n_i} \,.
\]
Let $R_2 = \{1,2,3 \}^a \backslash R_1$.  We now write
\[
\sum_{\substack{I \in \{1,2,3 \}^a \\ |S^3(I)| \leq  a - l}}M_{I} = \sum_{\substack{I \in R_1 \\ |S^3(I)| \leq  a - l}}M_{I} + \sum_{\substack{I \in R_2 \\ |S^3(I)| \leq  a - l}}M_{I}
\]
and will argue that the first term, which we call $P_1$, has a fixed, high dimensional subspace contained in its left nullspace while the second term, which we call $P_2$, has a fixed, high dimensional subspace contained in its right nullspace.  We work with the basis $v^1_{j_1} \otimes v^2_{j_2} \otimes \cdots \otimes v^a_{j_a}$ where $(j_1, \dots, j_a) \in [n_1] \times \cdots \times [n_a]$ and count the number of these basis vectors that are not in the left nullspace of $P_1$.  For each $I \in R_1$, by \eqref{eq_null}, $M_I$ contributes at most 
\[
\prod_{i \in S^1(I)}r_i \prod_{i \in [a] \backslash S^1(I)}n_i 
\]
basis vectors for which $vM_I$ is nonzero.  Furthermore if $S^1(I) \subset S^1(I')$ for two distinct ordered tuples $I$ and $I'$, the contributions of $M_{I'}$ are redundant with the contributions of $M_I$.  Thus, we can ignore the contributions of $M_I$ for ordered tuples $I$ for which $|S^3(I)| < a - l$.   Overall, the number of basis vectors outside the left nullspace of $P_1$ is at most 
\begin{equation}\label{eq-rankbound}
\sum_{\substack{I \in R_1 \\ |S^3(I)| = a - l}} \prod_{i \in S^1(I)}r_i \prod_{i \in [a] \backslash S^1(I)}n_i \leq \sum_{\substack{I \in \{1,2,3 \}^a \\ |S^3(I)| = a - l}} \prod_{i \in [a] \backslash S^3(I)}\sqrt{r_in_i}\prod_{i \in S^3(I)}n_i = \sum_{S \subset [a], |S| = l} 2^l\prod_{i \in S} \sqrt{r_i n_i} \prod_{i' \notin S} n_{i'}  
\end{equation}
where to obtain the above inequality, we first used the fact that $I \in R_1$ and then used that for a fixed set $S^3(I)$, there are $2^l$ possible ordered tuples $I$.  Note that the basis $v^1_{j_1} \otimes v^2_{j_2} \otimes \cdots \otimes v^a_{j_a}$ where $(j_1, \dots, j_a) \in [n_1] \times \cdots \times [n_a]$ is fixed (i.e., independent of the entries of $M$).  Thus we can write $P_1 = AX$ where the entries of $X$ are linear forms in the entries of $M$ and $A$ is a fixed matrix with rank bounded above by the expression in \eqref{eq-rankbound}.  A similar argument allows us to write $P_2 = YB$ for a fixed matrix $B$ with the desired rank.  

Now it remains to bound the sparsity of 
\[
\sum_{\substack{I \in \{1,2,3 \}^a \\ |S^3(I)| > a - l}}M_{I} \,.
\]
We claim that the number of nonzero entries in each row and column of $M_I$ is at most
\[
  \prod_{i \in S^3(I)}s_i \prod_{i \in [a] \backslash S^3(I)}n_i\,.
\]
To see this, note that for each $i$, the matrices $E(g_i)$, as $g_i$
ranges over all of $G_i$, have all of their nonzeros contained in a
fixed set $S_i$ where $S_i$ contains at most $s_i$ distinct locations
in each row and column.  For each fixed subset $ S^3(I)$, there are
exactly $2^{|S^3(I)|}$ possible ordered tuples $I$.  Thus the number
of nonzero entries in each row and column of the sum is at most
\[
\sum_{\substack{I \in \{1,2,3 \}^a \\ |S^3(I)| > a - l}}\prod_{i \in S^3(I)}s_i \prod_{i \in [a] \backslash S^3(I)}n_i = \sum_{S \subset [a],  |S| < l}2^{|S|}\prod_{i \in [a]\backslash S}s_i \prod_{i \in S}n_i \,.
\]
This completes the proof that the group $G$ is $(r,s)$-reducible over $\F_q$.
\end{proof}

We are now ready to prove the main theorem about rigidity of
$G$-circulant    %
matrices over finite fields.
\begin{theorem}\label{finitefield_allabelian}
Let $\F_q$ be a fixed finite field and $\epsilon < 0.01$ be a fixed constant.  Let $G$ be an abelian group.  As long as $|G|$ is sufficiently large and $\gcd(|G|,q) = 1$, for any 
$G$-circulant    %
matrix $M$
over $\F_q$, we have
\[
\textsf{r}_M^{\F_q}\left( \frac{|G|}{\exp\left(\epsilon^{20}(\log |G|)^{0.3}\right)}\right) \leq |G|^{100 \epsilon} \,.
\]
\end{theorem}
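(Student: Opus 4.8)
The plan is to prove the (formally stronger) statement that $G$ is $(r,s)$-reducible over $\F_q$ in the sense of Definition \ref{group_reducibility}, with $r$ at most roughly $|G|/\exp(\epsilon^{20}(\log|G|)^{0.3})$ and $s \le |G|^{100\epsilon}$. This immediately gives the theorem, because the reducibility identity $M = AY_M + Z_M B + E_M$ writes any $G$-circulant matrix $M$ as the sum of $AY_M + Z_M B$, which has rank at most $2r$, and $E_M$, which has at most $s$ nonzero entries in each row and column. Phrasing everything through reducibility rather than rigidity is the crux: reducibility is what can be pushed from an extension down to $\F_q$ (Claim \ref{main_stronger} does exactly this for a single large cyclic group by averaging over Galois conjugates), and Claim \ref{productrigidity} combines reducible groups over $\F_q$ \emph{with no field extension whatsoever}. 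This is what circumvents the difficulty that an extension of $\F_q$ holding primitive $n_i$-th roots of unity for all the cyclic factors $\Z_{n_i}$ of $G$ may have degree $\lcm(n_1,\dots,n_a)$, which can be superpolynomial in $|G|$; going through $\DFT_G$ directly, as over $\C$, is therefore not available.

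First I would write $G = \Z_{n_1}\times\cdots\times\Z_{n_a}$ by the Fundamental Theorem of Finite Abelian Groups, fix a large constant $k$ (depending only on $\epsilon$ and $q$) for which Claim \ref{main_stronger} applies to $\Z_N$ whenever $N\ge k$ and the binomial estimates below hold, and set $L = \lceil 2\log\log|G|\rceil$. Mirroring the proof of Theorem \ref{DFTabelian}, I regroup the factors: for each $2\le c\le k$ collect the $m_c := \#\{i:n_i=c\}$ copies of $\Z_c$ into one factor $\Z_c^{m_c}$, and partition the factors with $n_i>k$ by the dyadic ranges $I_j = [k^{2^{j-1}},k^{2^j})$ into factors $P_j := \prod_{n_i\in I_j}\Z_{n_i}$ (only $j = O(\log\log|G|)$ ranges are nonempty). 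When $m_c$ is large, the hypothesis $\gcd(|G|,q)=1$ forces $\gcd(c,q)=1$, so Claim \ref{smallgroups} makes $\Z_c^{m_c}$ reducible with rank-ratio $c^{-m_c\psi_c/2}$ and per-row sparsity $c^{2\epsilon m_c}$; \emph{this single invocation is the only place the coprimality hypothesis is used}, and it is exactly the ``many copies of one small cyclic group'' case flagged in the section introduction. For a dyadic range $I_j$, each $\Z_{n_i}$ in it is reducible by Claim \ref{main_stronger} with rank-ratio $\exp(-\epsilon^6(\log n_i)^{0.35})$; applying Claim \ref{productrigidity} to $P_j$ with the \emph{scaling} threshold $\lceil\epsilon b_j\rceil$, $b_j := \#(I_j\cap\{n_i\})$, and using that all $\log n_i$ in a range are within a factor $2$ of one another (so $\binom{b_j}{\lceil\epsilon b_j\rceil}2^{\lceil\epsilon b_j\rceil}$ is overwhelmed by $\exp(\epsilon^6(\log k)^{0.35}\epsilon b_j)$ once $k$ is large), one gets that $P_j$ is reducible with rank-ratio $\exp(-\Theta(\epsilon^7(\log|P_j|)^{0.35}))$ and per-row sparsity $|P_j|^{O(\epsilon)}$. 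This step is the reducibility analogue of Lemma \ref{productbound} and its use in Theorem \ref{DFTabelian}.

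Next I would separate off a small part. Let $T = \{c\le k : c^{m_c}\ge |G|^{\epsilon/(2L)}\}$ and $R = \{j : |P_j|\ge |G|^{\epsilon/(2L)}\}$, and write $G = B'\times D'$ where $B'$ is the product of the $\Z_c^{m_c}$ with $c\notin T$ and the $P_j$ with $j\notin R$, and $D' = D'_1\times\cdots\times D'_p$ is the product of the remaining (``good'') pieces. Since $B'$ has at most $k+L$ factors, each of size below $|G|^{\epsilon/(2L)}$, we have $|B'|\le |G|^\epsilon$, and $B'$ is trivially $(|B'|,0)$-reducible (take $A = I$, all $E_i = 0$, and the $Y_i$ equal to the position matrices). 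For $|G|$ large, every $c\in T$ has $m_c$ above the threshold of Claim \ref{smallgroups}, so every $D'_t$ is reducible, and the bounds $c^{m_c},|P_j|\ge |G|^{\epsilon/(2L)}$ together with $L = O(\log\log|G|)$ give each $D'_t$ rank-ratio at most $\exp(-\epsilon^8(\log|G|)^{0.32})$ and per-row sparsity at most $|D'_t|^{19\epsilon}$. I would then combine $D'_1\times\cdots\times D'_p$ via Claim \ref{productrigidity} with threshold $l = 1$ — valid precisely because no $D'_t$ is trivially reducible, so every singleton subset picks out a genuinely small ratio — getting $D'$ reducible with rank-ratio at most $\exp(-\epsilon^9(\log|G|)^{0.31})$ and per-row sparsity at most $|D'|^{19\epsilon}$ (the factor $p = O(\log\log|G|)$ is absorbed into the exponent). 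Finally, applying Claim \ref{productrigidity} to $G = B'\times D'$ with threshold $l = 2$ — one larger, so that the unique size-$2$ subset must pair the trivial factor $B'$ with the already-reduced $D'$, and the vanishing of the sparsity parameter of $B'$ kills all but one term of the sparsity bound — yields $G$ reducible with rank-ratio at most $4\sqrt{\rho_{D'}}\le \exp(-\epsilon^{10}(\log|G|)^{0.3})$ and per-row sparsity at most $2|B'|\,|D'|^{19\epsilon}\le |G|^{100\epsilon}$, which proves the theorem with room to spare.

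The main obstacle is the combinatorial bookkeeping inside Claim \ref{productrigidity}: extracting the right rank-versus-sparsity trade-off in each regime — within a dyadic range, where a \emph{scaling} threshold $\lceil\epsilon b_j\rceil$ is needed so the binomial coefficient is swallowed by the product of individual gains, which in turn is why the large factors are split by dyadic ranges (so all $n_i$ in one block are comparable); and in the two closing combinations, where the thresholds must be exactly $1$ and then $2$ to avoid a ``full-rank'' contribution from the trivial factor $B'$ — and checking that all of this, including the many appeals to ``$|G|$ sufficiently large'', is consistent with one fixed choice of the absolute constant $k$.
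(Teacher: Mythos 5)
Your proposal is correct and follows essentially the same route as the paper's proof: establish reducibility (not just non-rigidity) over $\F_q$ via \expref{Claim}{main_stronger} for the large cyclic factors and \expref{Claim}{smallgroups} for repeated small cyclic factors, group the large factors into dyadic ranges and combine within each range using \expref{Claim}{productrigidity} with threshold $\lceil\epsilon b\rceil$, split off a negligible factor $B$ of size at most $|G|^{\epsilon}$, and combine the remaining pieces with threshold $1$. The only (harmless) deviation is the final step, where you declare $B$ trivially $(|B|,0)$-reducible and combine with threshold $2$, whereas the paper declares $B$ trivially $(0,|B|)$-reducible and combines with threshold $1$; both yield the stated bounds.
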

\begin{proof}
By the Fundamental Theorem of Finite Abelian Groups we can write $G = \Z_{n_1} \times \cdots \times \Z_{n_a}$.  The proof will essentially follow the same method as the proof of 
\expref{Theorem}{DFTabelian} 
except using \expref{Claim}{productrigidity} to deal with direct products
of cyclic groups that are roughly the same
order.   %

  Without loss of generality, $n_1 \leq n_2 \leq \dots \leq n_a$.  We will choose $k$ to be a fixed, sufficiently large positive integer (possibly depending on $q,\epsilon$).  Consider the ranges $I_1 = [k,k^2), I_2 = [k^2, k^4), \dots I_j = [k^{2^{j-1}},k^{2^j}) \dots $ and so on.  Let $S_j$ be a multiset defined by $S_j = I_j \cap \{n_1, \dots, n_a \}$.  Fix a $j$ and let the elements of $S_j$ be $x_1 \leq \dots \leq x_b$.  By \expref{Claim}{main_stronger}, we have that (since $k$ sufficiently large) for each $x_i$, the group $\Z_{x_i}$ is 
\begin{equation}\label{eq-reducibility}
\left( \frac{x_i}{\exp\left(\epsilon^6(\log x_i)^{0.35}\right)},  x_i^{15 \epsilon}\right) 
\end{equation}
reducible over $\F_q$.  Now we will use \expref{Claim}{productrigidity} to argue about the group $G_j = \Z_{x_1} \times \cdots \times \Z_{x_b}$.  Set $l = \lceil \epsilon b \rceil$ in \expref{Claim}{productrigidity}.  We get that $G_j = \Z_{x_1} \times \cdots \times \Z_{x_b}$ is $(r,s)$ reducible for some $r,s$ that are obtained by plugging \eqref{eq-reducibility} into the expressions in \expref{Claim}{productrigidity}.  We bound $r$ and $s$ more carefully below.  We have
\begin{align*}
r \leq \binom{b}{\lceil \epsilon b \rceil}\frac{2^{\lceil \epsilon b \rceil}x_1 \cdots x_b}{\left(\exp\left(\epsilon^6 (\log x_1)^{0.35}\right)\right)^{\lceil \epsilon b \rceil/2}} \leq \frac{(2b)^{\lceil \epsilon b \rceil}}{( \frac{\epsilon b}{3})^{\lceil \epsilon b \rceil}}\frac{x_1 \cdots x_b}{\left(\exp\left(\epsilon^6 (\log x_1)^{0.35}\right)\right)^{\lceil \epsilon b \rceil/2}} \\ = x_1 \cdots x_b\left( \frac{36}{\epsilon^2 \exp\left(\epsilon^6 (\log x_1)^{0.35}\right)} \right)^{\lceil \epsilon b \rceil/2} \,.
\end{align*}

As long as $k$ is sufficiently large, we have
\begin{eqnarray*}
r \leq x_1 \cdots x_b\left( \frac{36}{\epsilon^2 \exp\left(\epsilon^6 (\log x_1)^{0.35}\right)} \right)^{\lceil \epsilon b \rceil/2} \leq x_1 \cdots x_b\left( \frac{1}{\exp\left(\epsilon^6 (\log x_1)^{0.34}\right)} \right)^{\lceil \epsilon b \rceil/2}  \\ \leq \frac{ x_1 \cdots x_b}{\exp\left(\epsilon^7 (\log x_1 \cdots x_b )^{0.33}\right)}
\end{eqnarray*}
where in the last step we used the fact that $x_i \leq x_1^2$ for all $i$.  Next we bound the sparsity $s$.  We have
\[
s \leq 4^b x_b \cdots x_{b - \lfloor \epsilon b\rfloor + 1}(x_{b - \lfloor \epsilon b\rfloor} \cdots x_1)^{15\epsilon} = 4^b (x_1 \cdots x_b)^{15\epsilon}(x_b \cdots x_{b - \lfloor \epsilon b\rfloor + 1})^{1 - 15\epsilon} \leq (x_1 \cdots x_b)^{18\epsilon}
\]
where in the last step above, we used the fact that $x_i \leq x_1^2$ for all $i$.  Thus, we have shown that the group $G_j = \Z_{x_1} \times \cdots \times \Z_{x_b}$ is 
\[
\left( \frac{ x_1 \cdots x_b}{\exp\left(\epsilon^7 (\log x_1 \cdots x_b )^{0.33}\right)}, (x_1 \cdots x_b)^{18\epsilon}\right)
\]
reducible over $\F_q$.

The above allows us to deal with direct products of large cyclic groups that are all of roughly the same order.  In the direct product $G = \Z_{n_1} \times \cdots \times \Z_{n_a}$, we will split the terms into cyclic groups of small order, which can be dealt with using \expref{Claim}{smallgroups}, and several products of large cyclic groups that can each be dealt with using the above. We now formalize this argument.  For each integer $c$ between $2$ and $k$ with $\gcd(c,q) = 1$, let $m_c$ be the number of copies of $c$ in the set $\{n_1, \dots , n_a \}$.  If $m_c \geq k^2 (\log k)^2/\epsilon^4 $ then by \expref{Claim}{smallgroups}, the group $\underbrace{\Z_c \times \cdots \times \Z_c}_{m_c}$ is 
\[
\left( c^{m_c(1 - \epsilon^4/(k^2 (\log k)^2)}, c^{2m_c \epsilon} \right)
\]
reducible over $\F_q$.  Let $L = \lceil 2\log \log |G| \rceil$ and ensure that $|G|$ is sufficiently large so that $L > k$.  Let $T$ be the set of integers $c$ between $2$ and $k$ with $\gcd(c,q) = 1$ such that $c^{m_c} \geq |G|^{\epsilon/(2L)}$.  Note that as long as $|G|$ is sufficiently large, all elements of $T$ must satisfy $m_c \geq k^2 (\log k)^2/\epsilon^4 $.  Let $R$ be the set of indices $j$ for which $\prod_{x \in S_j}x \geq |G|^{\epsilon/(2L)}$.  Note that $S_j$ is clearly empty for $j \geq L$.  Recall that $\gcd(|G|,q) = 1$ so the group $G$ can be written as 

\[
G = \left( \bigtimes_{\substack{2 \leq c < k \\ \gcd(c,q) = 1}} \left( \underbrace{\Z_c \times \cdots \times \Z_c}_{\text{$m_c$}}\right) \right) \times \left(\bigtimes_{1 \leq j \leq L} G_j \right) \,.
\]

Define 

\[B = \left( \bigtimes_{c \notin T} \left( \underbrace{\Z_c \times \cdots \times \Z_c}_{\text{$m_c$}}\right) \right) \times \left(\bigotimes_{j \notin R} G_j \right) \,. \]

Note that 
\[
|B| \leq \left( |G|^{\epsilon/(2L)}\right)^{k + L} \leq |G|^{\epsilon} \,.
\]
Also $G = B \times D$ where

\[
D =  \left( \bigtimes_{c \in T} \left( \underbrace{\Z_c \otimes \cdots \otimes \Z_c}_{\text{$m_c$}}\right) \right) \times \left(\bigtimes_{j \in R} G_j \right) \,.
\]
Now we apply \expref{Claim}{productrigidity} again on $D$ where we view $D$ as a direct product of groups of the form $\underbrace{\Z_c \otimes \cdots \otimes \Z_c}_{\text{$m_c$}}$ and $G_j$ and we set $l = 1$.  We get that $D$ is $(r_D,s_D)$ reducible over $\F_q$ where
\begin{align*}
r_D &\leq 2|D|\left( \sum_{c \in T}\frac{1}{c^{m_c\epsilon^4/(2k^2 \log k)}} + \sum_{j \in R}\frac{1}{\exp\left(0.5\epsilon^7 (\log \prod_{x \in S_j}x )^{0.33}\right)} \right)  \leq \frac{|D|}{\exp\left(\epsilon^8 (\log |G|)^{0.32}\right)}\,, \\
s_D &\leq |D|^{18\eps} \,.
\end{align*}
Finally, note that the group $B$ is trivially $(0, |B|)$ reducible over $\F_q$.  Thus, by \expref{Claim}{productrigidity}, $G = B \times D$ is $(r_G,s_G)$ reducible over $\F_q$ for
\begin{align*}
r_G &\leq \frac{2|D| \cdot |B| }{\exp\left(0.5\epsilon^8 (\log |G|)^{0.32}\right)} \leq \frac{|G|}{\exp\left(\epsilon^{8}(\log |G|)^{0.31}\right)}\,, \\
s_G &\leq |B| \cdot |D|^{18\eps} \leq |G|^{19\eps} \,.
\end{align*}
The above immediately implies that for any $G$-circulant matrix $M$ with $\gcd(|G|,q) = 1$,
\[
\textsf{r}_M^{\F_q}\left( \frac{|G|}{\exp\left(\epsilon^{20}(\log |G|)^{0.3}\right)}\right) \leq |G|^{100 \epsilon} \,.
\]
This completes the proof. 

\end{proof}

\ignore{  %
In this section, we
prove  
two   %
simple consequences of our main results,
pointed out to us by Bohdan Kivva.  %
\begin{definition}[Quadratic character]
Let $q$ be an odd prime power.  We define the quadratic character
$\chi : \F_q\to\C$ by setting
\[
  \chi(a) = \begin{cases} 0  &  \text{if } a=0, \\
    1 & \text{if } a=x^2 \text{ for some } x\in\F_q\setminus\{0\},\\
    -1 & \text{otherwise. } \end{cases}
\]
\end{definition}     %
\begin{definition}[Paley--Hadamard matrices]
Let $q$ be an odd prime power.  %
For a finite field $\F_q$, the Paley--Hadamard matrix $P$ is constructed
as follows.  %
\begin{itemize} 
\item Construct the $q \times q$ matrix $Q$ where
\begin{itemize}
\item The rows and columns of $Q$ are indexed by elements of $\F_q$.
\item For $a,b \in \F_q$,  $Q_{ab} = \chi(a - b)$.
\end{itemize}
\item If $q \equiv 3 \pmod 4$ then $P$ is a $(q+1) \times (q+1)$ matrix
given by
\[
P = I + \begin{bmatrix} 0 & j^T \\ -j & Q \end{bmatrix}
\]
where $j$ is the length-$q$ all ones vector and $I$ is the $(q+1) \times (q+1)$ identity matrix.
\item If $q \equiv 1 \pmod 4$ then $P$ is a $2(q+1) \times 2(q+1)$
matrix constructed as follows.
\begin{itemize}
\item Define 
\[
P' = \begin{bmatrix}0 & j^T \\ j & Q \end{bmatrix} \,.
\]
\item To construct $P$ from $P'$ do the following:
\begin{itemize}
\item Replace all $0$ entries of $P'$ with 
\[
\begin{bmatrix} 1 & -1 \\ -1 & -1 \end{bmatrix} \,.
\] 
\item Replace all $\pm 1$ entries of $P'$ with 
\[
\pm \begin{bmatrix} 1 & 1 \\ 1 & -1 \end{bmatrix} \,.
\] 
\end{itemize}
\end{itemize}
\end{itemize}
\end{definition}

\begin{definition}
The Vandermonde matrix $V_n(x_1, \dots , x_n)$ with generators $x_1, \dots , x_n$ is an $n \times n$ matrix with rows given by $(x_1^i, x_2^i, \dots , x_n^i)$ for integers $0 \leq i \leq n-1$.
\end{definition}

\begin{corollary}  \label{cor:kivva1}
Let $\epsilon < 0.01$ be a fixed constant and $P$ be an $N \times N$ Paley--Hadamard matrix with $N$ sufficiently large.  Then
\[
\textsf{r}_P^{\C}\left( \frac{4N}{\exp\left(\epsilon^{20}(\log N)^{0.3}\right)}\right) \leq 4N^{100 \epsilon} \,.
\]
\end{corollary}
\begin{proof}
By \expref{Theorem}{allabelian}, we get that $Q + cI$, for any choice of constant $c$, satisfies the desired non-rigidity bound.  In the case where $P$ is constructed from a finite field of order $q$ with $q \equiv 3 \mod 4$, we are immediately done because the difference
\[
P - \begin{bmatrix}0 & 0 \\ 0 & Q + I \end{bmatrix}
\]
is a rank-$2$ matrix.  %

In the case where $P$ is constructed from a finite field of order $q$ with $q \equiv 1 \mod 4$, by the same argument as above, it suffices to only consider the matrix obtained by replacing the entries of $Q$ using the specified procedure.  In this case, we can partition the resulting $2q \times 2q$ matrix into $4$ blocks of size $q \times q$ where each block is equal to $\pm Q \pm I$.  Thus, in this case $P$ also satisfies the desired non-rigidity bound and we are done.
\end{proof}

\begin{corollary}    \label{cor:kivva2}
Let $\F$ be a fixed finite field or the complex numbers.  Let $\epsilon < 0.01$ be a fixed constant and $M$ be an $N \times N$ Vandermonde matrix $V_N(x_1, \dots, x_N)$ whose generators form a geometric progression and such that $N$ is sufficiently large.  Then
\[
\textsf{r}_M^{\F}\left( \frac{N}{\exp\left(\epsilon^{20}(\log N)^{0.3}\right)}\right) \leq N^{100 \epsilon} \,.
\]
\end{corollary}
\begin{proof}
Let the ratio of the geometric progression be $b$,
so we have   %
$x_i = ab^{i-1}$ for all $i$.  Now the entries of $M$ are
\[
M_{ij} = a^{i-1}b^{(i-1)(j-1)} \,.
\]  
Multiply  
the $i$\ts{th} row of $M$ by $a^{1 - i}b^{i(i+1)/2 - 1}$ and
multiply   %
the $j$\ts{th} column of $M$ by $b^{j(j+1)/2}$ for all $i,j$.
The entries of the resulting matrix $M'$ will be
\[
M'_{ij} = b^{(i+j)(i+j-1)/2} \,,
\]
meaning that $M'$ is a Hankel matrix.
Scaling the rows and columns does not change rigidity.  %
Combining \expref{Theorem}{allabelian} and 
\expref{Theorem}{circulant_finitefield} completes the proof.
\end{proof}
}  %

\section{Final remarks and open questions}
\label{sec_conclusion} 
Our main results,
\expref{Theorems}{allabelian}, \ref{thm:extension_degree},
\expref{and}{finitefield_allabelian},
naturally raise
some     %
open questions.
Recall that the $N\times N$ DFT (Discrete Fourier Transform)
matrix is the matrix $(\omega^{ij})$ $(i,j=0,\dots, N-1)$
where $\omega$ is a primitive $N\ts{th}$ root of unity.
\begin{itemize}
\item Are the $N \times N$ DFT matrices rigid over
  the $N$\ts{th} cyclotomic field $\Q[\omega]$
  (where $\omega$ is a primitive $N$\ts{th} root of unity) ?
  (Compare this question with \expref{Theorem}{thm:extension_degree}.)
\item Do there exist circulant matrices, or $G$-circulant matrices for
   some class of abelian groups $G$, that are rigid over $\Q$\, ? 
  (Again, compare with \expref{Theorem}{thm:extension_degree}.)
\item Does there exist a finite field $\F_q$ and $G$-circulant matrices for
   some class of abelian groups $G$ with $\gcd(|G|, q ) > 1$ that are rigid over $\F_q$\, ? 
  (Compare this question with \expref{Theorem}{finitefield_allabelian}.)
\item Do there exist rigid $G$-circulant matrices over $\C$ 
  for some class of (necessarily non-abelian) groups $G$ ?
\end{itemize}

When $G$ is non-abelian, it is no longer possible to simultaneously 
diagonalize the matrices $M_G(f)$ for all $f$  
but there is a change of basis matrix $A$ such that $AM_G(f)A^*$
is block-diagonal where the diagonal blocks correspond to the irreducible
representations of $G$.  When all of the irreducible representations of $G$
have small degree (dimension),  
it may be possible to use similar techniques to the ones used here.
On the other hand, this suggests that perhaps $M_G(f)$ is a candidate
for rigidity when all irreducible representations of $G$
have large degree.  Frobenius proved in 1896 
that the group $SL_2(\F_p)$ of $2 \times 2$ matrices over $\F_p$ with
determinant $1$ has no nontrivial irreducible representations
of degree less than $(p-1)/2$ over $\C$ 
and thus has highly nonabelian structure \cite{frobenius1896}.
(See \cite{davidoff} for an accessible presentation.)  %
Thus we make the following conjecture. %
\begin{conjecture}
For large primes $p$, a random $G$-circulant $(0,1)$-matrix
$M_G(f)$ for $G = SL_2(\F_p)$ is Valiant-rigid over $\C$ with
high probability.  Here by ``random'' we mean the function
$f:SL_2(\F_p) \rightarrow \{0,1\}$ is chosen randomly.
\end{conjecture}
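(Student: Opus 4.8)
\medskip
\noindent\textbf{A proof proposal (research program).}
Since this is a conjecture I outline the line of attack I would pursue rather than a complete argument. Fix $G=SL_2(\F_p)$, write $N=|G|=p^3-p$, and let $A$ be the (fixed, unitary) change of basis realizing the Wedderburn decomposition $\C[G]\cong\bigoplus_\rho M_{d_\rho}(\C)$, so that $A\,M_G(f)\,A^{*}=\bigoplus_\rho\bigl(\widehat f(\rho)^{\oplus d_\rho}\bigr)$ with $\widehat f(\rho)=\sum_g f(g)\,\rho(g)$. Here $\rho$ runs over the $\Theta(p)=\Theta(N^{1/3})$ irreducible representations of $G$, all of dimension $d_\rho\ge(p-1)/2=\Theta(N^{1/3})$ by Frobenius \cite{frobenius1896}, and $\sum_\rho d_\rho^2=N$. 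The first observation I would record is a heuristic that both identifies the right target exponent and shows why the large $d_\rho$ matter: in the Wedderburn basis $M_G(f)$ is block diagonal with $\sum_\rho d_\rho\approx N^{2/3}$ blocks, each of size $\approx N^{1/3}$, so if those blocks behaved like generic matrices one would need $\gtrsim(N-r)^2/N^{2/3}\approx N^{4/3}$ entry changes to reduce the rank to $r=N/\log\log N$ (by convexity of $\sum_i(d_i-\rho_i)^2$ subject to $\sum_i\rho_i\le r$). Thus the conjecture should hold with $\eps$ close to $1/3$, and the argument must exploit precisely that $G$ has no small‑dimensional representations --- the abelian case of Sections \ref{sec_Hadamard}--\ref{sec_groupalgebra} is exactly the opposite regime (bounded $d_\rho$, $\approx N$ tiny blocks), which is why it is non‑rigid.

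\smallskip
\noindent\textbf{Step 1 (the random blocks are well behaved).}
By Schur orthogonality, the vectors $\{\sqrt{d_\rho/N}\;\rho(\cdot)_{ij}\}_{\rho,i,j}$ form an orthonormal basis of $\C^{G}$, so the entries of $\widehat f(\rho)$ are, up to the scalar $\sqrt{N/d_\rho}$, simply the coordinates of $f$ in a fixed orthonormal basis. For $f$ uniform in $\{0,1\}^G$ these are approximately i.i.d.\ Gaussians (mean $0$ for nontrivial $\rho$ since the all‑ones vector is the trivial coordinate), so $\widehat f(\rho)$ is, up to a quantitative multivariate CLT, a scaled Ginibre matrix; together with the quasirandomness and spectral‑gap properties of $SL_2(\F_p)$ (Gowers; Bourgain--Gamburd; Sarnak--Xue; see \cite{davidoff}) this should give, with probability $1-N^{-\omega(1)}$, that \emph{all} blocks $\widehat f(\rho)$ are simultaneously polynomially well‑conditioned. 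In particular $M_G(f)$ is invertible and every submatrix of it of codimension $o(N)$ has rank $N-o(N)$, which already kills the abelian‑style non‑rigidity route.

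\smallskip
\noindent\textbf{Step 2 (the rigidity lower bound) and the main obstacle.}
Suppose $M_G(f)=L+S$ with $\rank(L)\le r$ and $S$ having at most $N^{1+\eps}$ nonzeros; deleting the $O(N^{1-\eps/2})$ heaviest rows and columns of $S$ leaves a $(1-o(1))N$‑square submatrix $M'=L'+S'$ with $\rank(L')\le r$ and $O(N^{\eps/2})$ nonzeros per row and column of $S'$. The plan is to show that a random group‑convolution operator on $G$ is too ``spread out'' to admit such a decomposition: for every fixed choice of deleted rows/columns and every fixed sparsity pattern $P$ of $S'$, the $f\in\C^{G}$ for which the corresponding restriction of $M_G(f)$ lies in $(\rank\le r)+(\text{supported on }P)$ should be confined to a polynomial locus that a typical $f$ avoids, after which one union‑bounds. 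The available leverage is (i) a Valiant‑type dimension count inside the $N$‑dimensional family $\{M_G(f)\}$, using that every row of $M_G(f)$ is the \emph{same} multiset permuted by the left‑regular action, so $L'$ and $S'$ are heavily over‑constrained, and (ii) the $G\times G$ translation action, which collapses sparsity patterns into $O(N^2)$ orbits. The step I expect to be the genuine obstruction is exactly this union bound, and it is the obstruction blocking \emph{all} approaches to Valiant‑rigidity: a uniform $f\in\{0,1\}^G$ carries only $N$ bits of entropy, whereas a correction $S$ with $N^{1+\eps}$ nonzeros has $\widetilde\Theta(N^{1+\eps})$ bits of freedom, so the naive count costs $2^{\widetilde\Theta(N^{1+\eps})}$ and the symmetry/dimension arguments only save polynomial factors. (This is why even random Toeplitz matrices, which also form an $N$‑parameter family, are only known to satisfy the non‑Valiant bounds of \cite{toeplitz}.) Circumventing it should require genuinely arithmetic input --- turning the Weil‑type cancellation in the Gauss/Kloosterman‑sum matrix coefficients of $SL_2(\F_p)$ into a statement that a random convolution operator ``fools'' all sparse low‑rank structures, i.e.\ that the Wedderburn change of basis $A$ itself is incoherent enough that it cannot transport a sparse‑plus‑low‑rank decomposition from the $M_G(f)$‑basis to the block‑diagonal basis. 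A realistic intermediate target along the way, which Steps 1--2 have a good chance of reaching, is to prove the negative statement that random $SL_2(\F_p)$‑circulant matrices are \emph{not} QNR: one cannot reduce the rank below $N^{1-\eps'}$ with only $N^{1+\eps}$ changes, in sharp contrast with the abelian case.
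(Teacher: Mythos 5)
This statement is a \emph{conjecture} that the paper explicitly leaves open; the paper contains no proof of it, so there is nothing to match your attempt against, and your proposal --- as you yourself say --- is a research program, not a proof. The motivating picture you draw is exactly the one the paper sketches in its final section: block-diagonalize $M_G(f)$ via the Wedderburn decomposition, note that all nontrivial irreducibles of $SL_2(\F_p)$ have dimension at least $(p-1)/2$ (Frobenius), and argue that the resulting $\Theta(N^{2/3})$ blocks of size $\Theta(N^{1/3})$ leave no room for the abelian-style attack of the earlier sections. Your back-of-the-envelope count giving $\approx N^{4/3}$ necessary changes for \emph{generic} blocks is a reasonable heuristic for why $\eps$ close to $1/3$ is the right target.

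The genuine gap is the one you name yourself, and it is fatal to the argument as it stands. First, the $N^{4/3}$ heuristic lives in the block-diagonal basis, and rigidity is not invariant under conjugation by $A$; the paper's own Lemma on diagonalization transfers \emph{non}-rigidity from $A$ to $A^*DA$, not rigidity, so you would additionally need a quantitative incoherence statement for the Wedderburn basis of $SL_2(\F_p)$ that nobody currently has. Second, and more fundamentally, your Step 2 union bound cannot close: $f$ carries only $N$ bits of randomness while the family of sparse corrections $S$ with $N^{1+\eps}$ nonzeros has $\widetilde\Theta(N^{1+\eps})$ degrees of freedom, so the probability that a fixed pattern-and-rank class captures $M_G(f)$ would have to be bounded by roughly $2^{-N^{1+\eps}}$ for the count to work, which no dimension or anti-concentration argument over an $N$-parameter family can deliver. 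This is the same barrier that keeps random Toeplitz matrices (also an $N$-parameter family) stuck at the non-Valiant bounds of the Goldreich--Tal type cited in the paper. Your proposed intermediate target --- showing random $SL_2(\F_p)$-circulants are not QNR, i.e., ruling out rank reduction below $N^{1-\eps'}$ with $N^{1+\eps}$ changes --- is more plausible and would already be a meaningful contrast with the abelian results of the paper, but it too would need Step 1 (simultaneous well-conditioning of all $\widehat f(\rho)$, which you assert via an unproved quantitative CLT plus quasirandomness) to be carried out rigorously. In short: correct intuition, correctly identified obstruction, no proof.
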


\section*{Acknowledgments}
We thank Lajos R\'onyai for suggesting 
the literature references
cited in \expref{Section}{sec:galois}.
We would like to thank Laci Babai and the
editorial team at ToC for many important comments.
\nocite{conf-version}

\bibliographystyle{plain}  
\bibliography{bibliography}

\end{document}